
\documentclass[reqno,letterpaper,11pt]{article}
\pdfoutput=1

\usepackage[title]{appendix}

\usepackage{hyperref}

\usepackage{comment}

\usepackage{palatino}

\usepackage[autostyle]{csquotes}

\usepackage{amsmath}
\usepackage{amsfonts,amssymb,amsmath,latexsym,mathrsfs,bbm,stmaryrd}
\usepackage[all]{xy}
\usepackage[usenames]{color}
\usepackage{epsfig}

\usepackage{graphicx}
\usepackage{subcaption}
\usepackage{float}

\usepackage{amsthm}
\usepackage{thmtools}
\usepackage{thm-restate}
\declaretheorem[numberwithin=section]{theorem}
\declaretheorem[sibling=theorem]{proposition}
\declaretheorem[sibling=theorem]{definition}
\declaretheorem[sibling=theorem]{corollary}
\declaretheorem[sibling=theorem]{lemma}

\declaretheorem[sibling=theorem,style=remark]{remark}

\numberwithin{equation}{section}

\usepackage[mathcal]{euscript}
\usepackage{times}

\def\R{\mathbb R}
\def\C{\mathbb C}

\def\E{\mathbb E}

\def\1{\mathbbm{1}}

\def\bbD{\mathbb D}

\def\A{\mathscr{A}}

\def\adots{
	\mathinner{\mkern1mu\raise1pt\hbox{.}\mkern2mu\raise4pt\hbox{.}
		\mkern2mu\raise7pt\vbox{\kern7pt\hbox{.}}\mkern1mu}}

\newcommand{\Tr}{\mathrm{Tr}\,}


\long\def\symbolfootnote[#1]#2{\begingroup%
\def\thefootnote{\fnsymbol{footnote}}\footnote[#1]{#2}\endgroup}

\setlength{\textwidth}{6.8in}
\setlength{\textheight}{9in}
\oddsidemargin -0.15in
\evensidemargin -0.15in
\topmargin -0.75in
\pagestyle{plain}

\newcommand{\Lt}{\Lambda_t}

\newcommand{\LtC}{\overline{\Lambda_t}}

\newcommand{\St}{\Sigma_{t,\bar{\mu}}}

\newcommand{\Pt}{\Phi_{t,\bar{\mu}}}

\newcommand{\Ot}{\Omega_{t,\bar{\mu}}}

\newcommand{\Dt}{\Delta_{t,\mu}}

\newcommand{\bT}{\mathbb{T}}

\providecommand{\keywords}[1]{\textbf{\textit{Keywords:}} #1}

\begin{document}

\title{Brown measures of free circular and multiplicative Brownian motions with self-adjoint and unitary initial conditions}
\author{
	Ching-Wei Ho and Ping Zhong 
}

\date{}
\maketitle

\begin{abstract}
	Let $Z_N$ be a Ginibre ensemble and let $A_N$ be a Hermitian random matrix independent from $Z_N$ such that $A_N$ converges in distribution to a self-adjoint random variable $x_0$ in a $W^*$-probability space $(\A,\tau)$. For each $t>0$, the random matrix $A_N+\sqrt{t}Z_N$ converges in $\ast$-distribution to $x_0+c_t$, where $c_t$ is the circular variable of variance $t$, freely independent from $x_0$.
	We use the Hamilton--Jacobi method to compute the Brown measure $\rho_t$ of $x_0+c_t$. The Brown measure has a density that is \emph{constant along the vertical direction} inside the support. The support of the Brown measure of $x_0+c_t$ is related to the subordination function of the free additive convolution of $x_0+s_t$, where $s_t$ is the semicircular variable of variance $t$, freely independent from $x_0$. Furthermore, the push-forward of $\rho_t$ by a natural map is the law of $x_0+s_t$. 
	
	Let $G_N(t)$ be the Brownian motion on the general linear group and let $U_N$ be a unitary random matrix independent from $G_N(t)$ such that $U_N$ converges in distribution to a unitary random variable $u$ in $(\A,\tau)$. The random matrix $U_NG_N(t)$ converges in $\ast$-distribution to $ub_t$ where $b_t$ is the free multiplicative Brownian motion, freely independent from $u$. 
	 We compute the Brown measure $\mu_t$ of $ub_t$, extending the recent work by Driver--Hall--Kemp, which corresponds to the case $u=I$. The measure has a density of the special form 
	\[\frac{1}{r^2}w_t(\theta)\]
	 in polar coordinates in its support. The support of $\mu_t$ is related to the subordination function of the free multiplicative convolution of $uu_t$ where $u_t$ is the free unitary Brownian motion, freely independent from $u$. The push-forward of $\mu_t$ by a natural map is the law of $uu_t$.
	
	In the special case that $u$ is Haar unitary, the Brown measure $\mu_t$ follows the \emph{annulus law}. The support of the Brown measure of $ub_t$ is an annulus with inner radius $e^{-t/2}$ and outer radius $e^{t/2}$. In its support, the density in polar coordinates is given by 
	\[\frac{1}{2\pi t}\frac{1}{r^2}.\]

\end{abstract}

\keywords{free probability, Brown measure, random matrices, free Brownian motion, circular law}

\tableofcontents

\section{Introduction}

It is a classical theorem by Wigner \cite{Wigner1955} that the eigenvalue distribution of a Gaussian unitary ensemble (GUE) $G_N$ converges to the semicircle law. An operator $a\in\A$, where $\A$ is a tracial von Neumann algebra, is said to be a limit in $\ast$-distribution of a sequence of $N\times N$ self-adjoint random matrices $A_N$ if, for any polynomial in two noncommuting variables,
\begin{equation}
	\label{eq:limitdistribution}
\lim_{N\to\infty}\frac{1}{N}\E\Tr[p(A_N, A_N^*)] = \tau[p(a, a^*)].
\end{equation}
In other words, the GUE has the limit in $\ast$-distribution as an operator having the semicircle law as its spectral distribution. The operators in $\A$ are called random variables.

Voiculescu \cite{Voiculescu1991} discovered that free probability can be used to study the large-$N$ limit of eigenvalue distributions of $X_N+G_N$, where $X_N$ is a sequence of self-adjoint random matrix independent from $G_N$, or a sequence of deterministic matrix that has a limit in distribution.

Biane proved that the limit in $\ast$-distribution of the unitary Brownian motion $U_N(t)$ on the unitary group $U(N)$ is the free unitary Brownian motion in a tracial von Neumann algebra \cite{Biane1997b}.
If $V_N$ is a sequence of unitary random matrices independent from $U_N$ or a sequence of deterministic unitary matrices that has a limit in $\ast$-distribution, then free probability can also be used to study the limit of the eigenvalue distribution of $V_NU_N(t)$.

Given a self-adjoint random variable $a\in \A$,
the spectral distribution, or the law, of $a$ is a probability measure on $\R$ defined to be the trace of the projection-valued spectral measure, whose existence is guaranteed by the spectral theorem. The law of $a$ can be identified and computed by the Cauchy transform 
\begin{equation}
\label{CauchyTrans}
G_a(z) = \tau((z-a)^{-1}), \qquad z\in \mathbb{C}^+.
\end{equation}
The spectral distribution of a unitary operator in $\A$ is a probability measure on the unit circle $\mathbb{T}$. 
When we consider non-normal random variables, the spectral theorem is no longer valid. Instead, we look at the Brown measure \cite{Brown1986}, which has been called the spectral distribution measure of a not-necessarily-normal random variable. The Brown measure of the free random variables provides a natural candidate of the limit of the eigenvalue distribution of the non-normal random matrices.

In this article, we calculate the density formulas for the Brown measure of the free circular Brownian motion with self-adjoint initial condition, as well as the free multiplicative Brownian motion with unitary initial condition. The latter extends the recent work of Driver, Hall and Kemp \cite{DHKBrown} for the case of free multiplicative Brownian motion starting at the identity operator. (See also the expository paper \cite{Hall2019pde}, which provides a gentle introduction to the PDE methods used in this paper.) Our result indicates that the Brown measure of the free \emph{circular} Brownian motion with self-adjoint initial condition is closely related to the free \emph{semicircular} Brownian motion with the same self-adjoint initial condition. Similarly, the free \emph{multiplicative} Brownian motion with unitary initial condition is closely related to the free \emph{unitary} Brownian motion with the same unitary initial condition. 

After the first version of this paper appeared on arXiv, there are subsequent work using a similar strategy to compute the Brown measure of free Brownian motions with nontrivial initial conditions. Demni and Hamdi \cite{DemniHamdi2020} analyze the free unitary Brownian motion with projection initial condition; Hall and the first author \cite{HallHo2020} compute the Brown measure of the imaginary multiple of free semicircular Brownian motion with bounded self-adjoint initial condition.

\subsection{Additive Case}
\label{AddIntro}
One of the fundamental non-normal random matrix models is the \emph{Ginibre ensemble} $Z_N$, which is a sequence of $N\times N$ random matrices with i.i.d. complex Gaussian entries, with variance $1/N$. The limiting empirical eigenvalue distribution, which is a normalized counting measure $\frac{1}{N}\sum_{j=1}^N \delta_{\lambda_j}$ of the eigenvalues $\{\lambda_j\}$ of $Z_N$, converges to the uniform probability measure on the unit disk \cite{Ginibre1965}. The reader is referred to \cite{Bordenave-Chafai-circular} for a survey on the circular law. The limit random variable, in the sense of $\ast$-distribution, is called the circular operator~\cite{DVV-circular}. If we consider the \emph{process} $Z_N(t)$ of random matrices with i.i.d. entries of complex Brownian motion at time $t/N$, the limiting empirical eigenvalue distribution at time $t$ is the uniform probability measure on the disk of radius $\sqrt{t}$. The limit in $\ast$-distribution of $Z_N(t)$ is a ``free stochastic process," a one-parameter family of random variables in $\A$,  which is called the free circular Brownian motion $c_t$. At each $t>0$, $Z_N(t)$ has the same distribution as $\sqrt{t}Z_N$, and $c_t$ has the same distribution as $\sqrt{t} c_1$.

The standard free circular Brownian motion starts with the condition $c_0=0$. We consider a more general free stochastic process: a free stochastic process that starts at an arbitrary random variable $x_0\in\A$ and has the same increments as the standard free circular Brownian motion. Such a process has the form $x_0+c_t$ where $x_0$ and $c_t$ are freely independent.

The circular operator $c_t$ is an $R$-diagonal operator \cite[Lecture 15]{SpeicherNicaBook}. Given a variable $x_0\in \A$ which may not be normal and $r\in\A$ which is $R$-diagonal and freely independent from $x_0$, Biane and Lehner \cite[Section 3]{BianeLehner2001} studied the Brown measure of $x_0+r$. They obtained an explicit density formula for particular $x_0$ and $r$. In Section 5 of the same paper, they studied, more specifically, the Brown measure of $x_0+c_t$ where $x_0\in\A$ may not be normal; the density at $\lambda\not\in \sigma(x_0)$ is given by
$$\frac{1}{\pi}\partial_{\bar{\lambda}}\left(\int_{t_\lambda}^t \frac{\partial_\lambda v_s(\lambda)}{s^2}ds-\frac{v_{t_\lambda}( \lambda)^2}{t_\lambda^2}\partial_\lambda t_\lambda\right)$$
where \[v_s(\lambda) = \inf\left\{v\geq 0: \int_\R \frac{d\mu_{|\lambda-x_0|}(x)}{x^2+v^2}\leq \frac{1}{s}\right\}\]
and $t_\lambda = \inf\{t: v_t(\lambda)>0\}$. It is  mentioned in their paper that $v_s(\lambda)$ and $t_\lambda$ are  related to the subordination function of $\tilde{x}_0+s_t$ with respect to $\tilde{x}_0$, where $\tilde{x}_0$ is the symmetrization of $|\lambda-x_0|$, and $s_t$ is the free semicircular Brownian motion, the real part of $\sqrt{2}\,c_t$. The quantities are not very explicit; there is an integration in the time variable. Furthermore, the formula is only valid outside the spectrum of $x_0$.

Our method shows that density formula for the special case when $x_0$ is self-adjoint can be computed more explicitly. Our results also illustrate unexpected connections between the Brown measure of $x_0+c_t$ and the subordination function for the free convolution of $x_0$ with $s_t$. 
 Suppose $x_0$ is self-adjoint in the rest of the paper. 
 Denote by $\mu$ the spectral distribution of $x_0$. In this case, the following function $v_t$ defined on $\R$ is fundamental in our analysis
 \begin{equation}
 \label{intro:vt}
 v_t(a) =  \inf\left\{b\geq 0: \int_\R \frac{d\mu(x)}{(x-a)^2+b^2}\leq \frac{1}{t}\right\}.
 \end{equation}
This definition of $v_t$ also coincides with the definition of Biane and Lehner's; the measure $\mu_{|\lambda-x_0|}$ is the push-forward of $\mu_{x_0}$ by the map $x\mapsto |\lambda-x|$, for each $\lambda$, because $x_0$ is self-adjoint.

Our main result in Section \ref{CircularCase} gives an explicit description to the Brown measure of $x_0+c_t$, using the Hamilton--Jacobi method used in the recent paper \cite{DHKBrown}.  In this paper, the Brown measure of $x_0+c_t$ is computed as an absolutely continuous measure on $\R^2$. The density is computed explicitly; the density is not given as an integral over the time parameter.

The operator $x_0+c_t$ is the limit in $\ast$-distribution (in the sense of~\eqref{eq:limitdistribution}) of the random matrix model $X_N+Z_N(t)$, by \cite[Theorem 2.2]{Voiculescu1991}, where $X_N$ is an $N\times N$ self-adjoint deterministic matrix or random matrix classically independent of $Z_N$, with $x_0$ in the limit of $X_N$ in $\ast$-distribution. On the level of Brown measure convergence, \'Sniady \cite{Sniady2002} showed that the empirical eigenvalue distribution of $X_N+Z_N(t)$ almost surely converges weakly to the Brown measure of $x_0+c_t$, without computing the Brown measure of $x_0+c_t$ explicitly. (Apply \cite[Theorem 6]{Sniady2002} by taking the not-necessarily-normal random matrix $A^{(N)}$ as the $X_N$ here.) Consequently, the result in this paper gives a formula for the weak limit of the empirical eigenvalue distribution of $X_N+Z_N(t)$.

In physics literature, Burda et al. \cite{BGNTW2014, BGNTW2015} studied the limiting eigenvalue distribution of the sum of the Ginibre ensemble $Z_N$ and a deterministic matrix $A_N$ (which is not assumed to be normal) using PDE methods. The formal large-$N$ limit of the PDE obtained in their work \cite[Eq. (31)]{BGNTW2015} is the same as the PDE we obtain for the additive case. They transformed the PDE into one that can be solved using the method of characteristics, whereas we use the Hamilton--Jacobi method to solve the PDE in this paper. In \cite[Section 6.2]{BGNTW2015}, they computed explicitly the limiting eigenvalue distribution of $A_N+Z_N$ when the eigenvalue distribution of $A_N$ is the Bernoulli distribution. It can be checked that the domain where the limiting eigenvalue distribution of $A_N+Z_N$ is nonzero agrees with our result. The density of the limiting eigenvalue distribution computed in their paper also agrees with the Brown measure density computed in our paper, after correcting minor algebraic errors.
We note, however, that, \cite{BGNTW2014, BGNTW2015} did not compute the limiting eigenvalue distribution of $A_N+Z_N$ for general self-adjoint matrix $A_N$, which is the main purpose of Section~\ref{CircularCase} of our paper.

Let $G_{x_0}(z)$ be the Cauchy transform of $x_0$ as defined in \eqref{CauchyTrans} and let $$H_t(z) = z+tG_{x_0}(z), \qquad z\in \mathbb{C}\setminus \sigma
(x_0).$$
It is known \cite{Biane1997} that $H_t$ maps the curves $\{(a, \pm v_t(a)): a\in \R\}$, which are analytic at the points where $v_t(a) > 0$, to $\R$. The restriction of $H_t$ to the set $\{a+ib \in \C^+: b>v_t(a)\}$ is the inverse of an analytic self-map on the upper half plane $F_t:\C^+\to\C^+$ such that
$$G_{x_0+s_t}(z) = G_{x_0}(F_t(z))$$ where $s_t$ is a semicircular variable with variance $t$ \cite{Biane1997}. The map $F_t$ is the \emph{subordination function with respect to $x_0$} and allows one to compute $G_{x_0+s_t}$ from $G_{x_0}$. The following theorem summarizes the results proved in Theorems \ref{ThAdd}, \ref{AddBound} and Proposition \ref{addunique}.

\begin{theorem}
	\label{IntroTh1}
	The Brown measure $\rho_t$ of $x_0+c_t$ has the following properties.
\begin{enumerate}
	\item The support of $\rho_t$ is the closure of an open set. More precisely, $\textrm{supp}\:\rho_t = \LtC$ where
	\[\Lt = \{a+ib\in \C: |b|<v_t(a)\},\] 
	where $v_t$ is defined in \eqref{intro:vt}.
	\item The measure $\rho_t$ is absolutely continuous and its density is constant in the vertical direction inside the support. That is the density $w_t$ has the property $w_t(a+ib) = w_t(a)$ in the support of $\rho_t$. Moreover, we have
	\[w_t(a)\leq \frac{1}{\pi t},\]
	and the inequality is strict unless $x_0$ is a scalar.
	
	\item Define $\Psi_t(a+ib) = H_t(a+iv_t(a))$ on $\textrm{supp}\, \rho_t$  agreeing with $H_t$ on the boundary of $\textrm{supp}\,\rho_t$ and constant along the vertical segments (the image of $\Psi$ only depends on $a$). Then the push-forward of $\rho_t$ by $\Psi$ is the law of $x_0+s_t$.
	\item The preceding properties uniquely define $\rho_t$. That is, $\rho_t$ is the unique measure whose support agrees with $\textrm{supp}\,\rho_t$, density is constant along the vertical segments and the push-forward under $\Psi$ is the distribution of $x_0+s_t$. This is because $\Psi$ restricted to $\R$ is a homeomorphism (\cite{Biane1997}) and the density is constant along the vertical direction.
	\end{enumerate}
\end{theorem}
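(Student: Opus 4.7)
The plan is to follow the Hamilton--Jacobi/PDE approach of Driver--Hall--Kemp \cite{DHKBrown}, now adapted to the additive circular case in Section \ref{CircularCase}. The starting point is the regularized log Fuglede--Kadison determinant
\[
S_t(\lambda,\epsilon) = \tau\bigl(\log\bigl((x_0+c_t-\lambda)(x_0+c_t-\lambda)^* + \epsilon^2\bigr)\bigr),
\]
whose Laplacian in $\lambda$ is, as $\epsilon\downarrow 0$, equal to $4\pi$ times the Brown measure $\rho_t$ in the distributional sense. First I would use free stochastic calculus for $c_t$ to derive a first-order nonlinear PDE of Hamilton--Jacobi type for $S_t$, and solve it by the method of characteristics. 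The characteristic curves in $\lambda$ are driven by the Cauchy transform $G_{x_0}$, and after integration they produce precisely the flow $H_t(z) = z + tG_{x_0}(z)$ in the upper half-plane, matching Biane's subordination picture.

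Once the explicit solution is in hand, statement (1) comes from locating where $S_t$ fails to be harmonic in $\lambda$: outside $\LtC$ the characteristics remain well separated and $S_t(\cdot)$ is harmonic, so $\rho_t$ vanishes there; the boundary curve $\{a\pm i v_t(a)\}$ is characterized by $\mathrm{Im}\,H_t(a+iv_t(a))=0$, which via the identity $\mathrm{Im}\,G_{x_0}(a+ib) = -b\int d\mu(x)/((x-a)^2+b^2)$ is exactly equivalent to the defining condition \eqref{intro:vt} of $v_t$. For the density in statement (2), I would compute $\partial_\lambda\partial_{\bar\lambda} S_t$ inside $\Lt$ directly from the characteristic solution; the key observation is that the $b$-dependence cancels because the characteristic issuing from $(a, v_t(a))$ moves purely in the vertical direction under $c_s$ for $s \leq t$, giving $w_t(a+ib) = w_t(a)$. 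The bound $w_t(a) \leq 2/(\pi t)$ then follows because equality in \eqref{intro:vt} at $b = v_t(a)$ reads $\int d\mu(x)/((x-a)^2+v_t(a)^2) = 1/t$, and this same quantity governs the density up to constants.

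For statement (3), note that on the boundary $b = v_t(a)$ the value $H_t(a+iv_t(a))$ is real, so $\Psi_t$ is a well-defined real-valued function on $\LtC$. I would verify $\Psi_{t*}\rho_t = \mathrm{law}(x_0 + s_t)$ by computing the Cauchy transform of the left-hand side: integrating $1/(\Psi_t(\lambda)-w)$ against $\rho_t$ and using vertical-constancy of $w_t$, the inner vertical integral (of length $2v_t(a)$) combines with Biane's subordination identity $G_{x_0+s_t}(H_t(a+iv_t(a))) = G_{x_0}(a+iv_t(a))$ to reproduce $G_{x_0+s_t}(w)$. Statement (4) is then a direct measure-theoretic uniqueness: by \cite{Biane1997}, $\Psi_t|_\R$ is a homeomorphism onto its image, so specifying the push-forward determines the $a$-marginal of any candidate vertically-constant density, and vertical-constancy pins down the full density on $\LtC$.

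The main obstacle I expect is the sharp identification of the support, specifically verifying that the positivity region of $\partial_\lambda\partial_{\bar\lambda} S_t$ is exactly $\Lt$ even at points $\lambda$ over the spectrum of $x_0$. The Biane--Lehner formula in \cite{BianeLehner2001} is only proved off $\sigma(x_0)$, and the characteristic method must be shown to extend to the sharp boundary $\{a+iv_t(a)\}$ uniformly in $a \in \R$, including where $v_t$ vanishes. Controlling the $\epsilon \to 0$ limit near these exceptional points is where the bulk of the technical work in Section \ref{CircularCase} will concentrate.
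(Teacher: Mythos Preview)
Your overall strategy is right and aligns with the paper: regularize the Fuglede--Kadison determinant, derive a Hamilton--Jacobi PDE via free It\^o calculus, and analyze $S(t,\lambda,0)$ through the Hamiltonian flow. But your description of the mechanism is off in one essential respect, and this affects how you expect the rest of the argument to go.

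In the additive case the PDE that comes out of the It\^o computation is
\[
\frac{\partial S}{\partial t} = \varepsilon\left(\frac{\partial S}{\partial \varepsilon}\right)^2,
\]
which involves only $t$ and $\varepsilon$; there are \emph{no} $\lambda$-derivatives, so $\lambda$ is a pure parameter and the characteristics do not move in the $\lambda$-plane at all. Your picture of ``characteristic curves in $\lambda$ driven by $G_{x_0}$, integrating to the flow $H_t(z)=z+tG_{x_0}(z)$'' does not describe what actually happens here (it is closer to the multiplicative case, where the PDE genuinely couples $\lambda$ and $\varepsilon$). The Hamiltonian is one-dimensional, $H(\varepsilon,p_\varepsilon)=-\varepsilon p_\varepsilon^2$, and the system solves explicitly: $p_\varepsilon(s)=(1/p_0-s)^{-1}$, $\varepsilon(s)=\varepsilon_0(1-sp_0)^2$, with lifetime $t_\ast=1/p_0$ where $p_0=\int d\mu(x)/(|\lambda-x|^2+\varepsilon_0)$. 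The functions $v_t$ and $H_t$ enter not through a flow but through the \emph{choice of initial condition}: for $\lambda=a+ib\in\Lambda_t$ one picks $\varepsilon_0=v_t(a)^2-b^2$, which forces $t_\ast=t$ and $\varepsilon(t)=0$, and then the Hamilton--Jacobi formula gives $S(t,\lambda,0)$ in closed form. Differentiating that formula directly yields the density and its $b$-independence.

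Two smaller points. For the bound $w_t(a)\le 2/(\pi t)$, the identity $\int d\mu/((x-a)^2+v_t(a)^2)=1/t$ alone is not enough; the paper writes $w_t(a)=\frac{1}{2\pi t}\psi_t'(a)$ and then proves $\psi_t'\le 2$ via $|1-H_t'(z)|<1$ for $\mathrm{Im}\,z>v_t(a)$, which gives $\mathrm{Re}\,F_t'\ge 1/2$ on the boundary. For your anticipated obstacle at the boundary and over $\sigma(x_0)$: the paper sidesteps pointwise analysis there entirely by showing that $\frac{1}{4\pi}\int_{\Lambda_t}\Delta_\lambda S(t,\lambda,0)\,da\,db=1$ via the push-forward identity, so no mass can sit on $\partial\Lambda_t$. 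This is cleaner than trying to control the $\varepsilon\to 0$ limit uniformly at degenerate points.
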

The support of the Brown measure is symmetric about the real line. Figure \ref{IntroFigAdd} shows a random matrix simulation to the Brown measure of $x_0+c_t$ where $x_0$ has law $\frac{1}{4}\delta_{-0.8}+\frac{3}{4}\delta_{0.8}$ and $t=1$.

\begin{figure}[h]
	\begin{center}
		\includegraphics[width=0.5\linewidth]{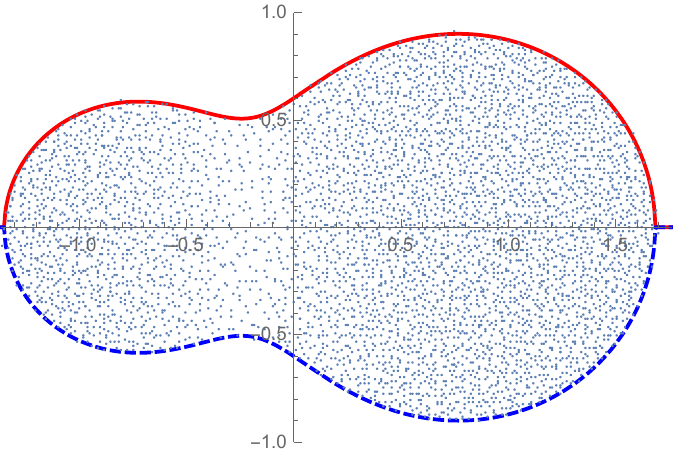}
		\caption{\label{IntroFigAdd} Matrix simulation of eigenvalues for $x_0+c_1$, where $x_0$ has distribution $\frac{1}{4}\delta_{-0.8}+\frac{3}{4}\delta_{0.8}$. \\
			The graphs of $v_t(a)$ (blue dashed) and $-v_t(a)$ (red) are superimposed.}
	\end{center}
\end{figure}

\subsection{Multiplicative Case}
The process $Z_N(t)$ can be viewed as a Brownian motion on the Lie algebra $gl(N, \C)$ of the general linear group $GL(N, \C)$, under the Hilbert-Schmidt inner product such that the real and imaginary parts are orthogonal.
The Brownian motion on $GL(N, \C)$ is defined to be the solution of the matrix-valued stochastic differential equation
$$d G_N(t) = G_N(t) \,dZ_N(t), \quad G_N(0)=I_N.$$
Kemp \cite{Kemp2016} proved that the limit of $G_N(t)$ in $\ast$-distribution is the free stochastic process $b_t$ that can be obtained by solving the free stochastic differential equation (see, for example, \cite{BianeSpeicher1998, KummererSpeicher1992})
\[
  db_t=b_t\,dc_t, \quad b_0=I.
\]
The process $b_t$, starting at the identity $I$, is called the \emph{free multiplicative Brownian motion}. Hall and Kemp \cite{HallKemp2019} showed that the Brown measure of $b_t$ is supported in certain compact set in $\C$. Later the Brown measure of $b_t$ was computed by Driver, Hall and Kemp \cite{DHKBrown}.

In this paper, we consider the free multiplicative Brownian motion starting at a unitary random variable $u$; such a process has the form $ub_t$, where $u$ is freely independent of $b_t$. 
Before we state the results of the Brown measure of $ub_t$, we need to introduce the free unitary Brownian motion $u_t$, the multiplicative analogue of the semicircular Brownian motion. The free unitary Brownian motion is the solution of the free stochastic differential equation
$$du_t = iu_t\,ds_t - \frac{1}{2}u_t\,dt, \quad u_0=I. $$
The law $\nu_t$ of $u_t$ was computed by Biane \cite{Biane1997b}. The law of free unitary Brownian motion with an arbitrary unitary initial condition was computed by the second author \cite{Zhong2015}.

Suppose that $a\in\A$ is a unitary random variable with spectral distribution, or the law, $\mu$. Then $\mu$ can be identified by the moment generating function
\[\psi_{a}(z) = \int_{\mathbb{T}}\frac{\xi z}{1-\xi z}\,d\mu(\xi), \qquad z\in\bbD.\]
Now, we consider the random variable $ub_t$ where $u$ is any unitary random variable freely independent from $b_t$. Let $\bar{\mu}$ be the spectral distribution of $u^*$ and set
\[\Phi_t(z) = z\exp\left(\frac{t}{2}\int_{\mathbb{T}}\frac{1+\xi z}{1-\xi z}d\bar{\mu}(\xi)\right), \qquad z\in \mathbb{C}\setminus \sigma
(u).\]
Consider the function 
\begin{equation}
\label{AddvtForm}
r_t(\theta)=\sup\left\{0<r<1: \frac{r^2-1}{2\log r}\int_{-\pi}^\pi \frac{1}{|1-re^{i(\theta+x)}|^2}\,d\bar{\mu}(e^{ix})<\frac{1}{t}\right\}, \theta\in(-\pi, \pi].
\end{equation}
The set over which the supremum is taken is nonempty, because $\lim_{r\to 0}\frac{r^2-1}{2\log r}\int_{-\pi}^\pi \frac{1}{|1-re^{i(\theta+x)}|^2}\,d\bar{\mu}(e^{ix})= 0$.
It is known \cite{Zhong2015} that the map $\Phi_t$ maps the curve $\{r_t(\theta)e^{i\theta}: \theta\in(-\pi, \pi]\}$, which is analytic whenever $r_t(\theta)<1$, to the unit circle. The restriction of $\Phi_t$ to the set $\{z\in\bbD: |z|<r_t(\theta)\}$ is the inverse of the analytic self-map $\omega_t:\bbD\to\bbD$ such that
$$\psi_{u^*u_t}(z) = \psi_{u^*}(\omega_t(z)).$$
Analogous to the additive case, the map $\omega_t$ is the \emph{subordination function with respect to $u^*$} and is of significance to the study in free probability. The regularity results of $r_t$ are summarized in Proposition~\ref{characterization-0}.

 We establish the following results in Theorem \ref{sLaplacian.main} and Proposition \ref{multunique}.
\begin{theorem}
	The Brown measure $\mu_t$ of $ub_t$ satisfies the following properties.
\begin{enumerate}
	\item The support of $\mu_t$ is the closure of an open set. More precisely, $\textrm{supp}\:\mu_t = \overline{\Delta_t}$, where
	\[\Delta_t = \left\{re^{i\theta}: r_t(\theta)<r<\frac{1}{r_t(\theta)}\right\}\]
	where $r_t$ is defined in \eqref{AddvtForm}.
	\item The measure $\mu_t$ is absolutely continuous and, inside the support, its density in polar coordinates has the form
	$$W_t(r,\theta)=\frac{1}{r^2}w_t(\theta),$$
	for some function $w_t$ depending on the argument only. That is, the density of $\mu_t$ is inversely proportional to $r^2$ along the radial direction, and the proportionality constant depends only on the argument $\theta$. The Brown measure $\mu_t$ is invariant under $z\mapsto 1/\bar{z}$. Moreover, we have
	\[w_t(\theta) < \frac{1}{\pi t}.\]
	\item For each point $\lambda\in \textrm{supp }\mu_t$, there exists a unique point $\lambda_\theta\in  \bar\bbD\cap \partial(\textrm{supp }\mu_t)$ such that $\lambda_\theta$ and $\lambda$ have the same argument (mod $2\pi$). 
	Define $\Gamma_t(\lambda) = \Phi_t(\lambda_\theta)$ agreeing with $\Phi_t$ on $\bar\bbD\cap \partial(\textrm{supp }\mu_t)$, constant along the radial segments. Then the push-forward of $\mu_t$ by $\Gamma_t$ is the distribution of $uu_t$. 
	\item The preceding properties uniquely define $\mu_t$. That is, $\mu_t$ is the unique measure whose support agrees with $\textrm{supp}\,\mu_t$, density has the form of $(1/r^2)\phi(\theta)$ in polar coordinates and the push-forward under $\Gamma_t$ is the distribution of $uu_t$. This is because $\Gamma_t$ restricted to $\bar\bbD\cap \partial(\textrm{supp }\mu_t)$ is a homeomorphism (\cite{Zhong2015}) and the density of $\mu_t$ is inversely proportional to $r^2$ along the radial direction.
\end{enumerate}
\end{theorem}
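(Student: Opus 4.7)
The plan is to adapt the Hamilton--Jacobi PDE method of Driver--Hall--Kemp to the presence of the unitary initial condition $u$, using the subordination machinery for $u^*u_t$ of [Zhong2015] in place of Biane's explicit free unitary Brownian motion. Define the regularized log-determinant
$$S(t,\lambda,\epsilon)=\tau\bigl[\log\bigl((\lambda-ub_t)^*(\lambda-ub_t)+\epsilon^2\bigr)\bigr],$$
whose $\lambda$-Laplacian at $\epsilon\to 0^+$ equals $4\pi$ times the Brown measure $\mu_t$. Using the free SDE $d(ub_t)=ub_t\,dc_t$ and the freeness of $u$ from $c_t$, I would derive a first-order nonlinear PDE for $S$ of the same Hamilton--Jacobi type as in the identity-initial-condition case, but with initial datum $\int_{\mathbb{T}}\log(|\lambda-\xi|^2+\epsilon^2)\,d\mu(\xi)$ encoding the spectral data of $u$, and then solve by characteristics.

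The characteristic ODEs should integrate in closed form, and I expect their time-$t$ flow map to coincide with the boundary map $\Phi_t$ from the excerpt. Matching this identification is the main technical hurdle, because it is what converts implicit information about $S$ into the explicit description through $r_t$ and $\Phi_t$. Once it is in place, the critical regularization time below which the characteristic through a point $\lambda=re^{i\theta}$ requires $\epsilon>0$ is exactly the $t$ for which $r=r_t(\theta)$ or $r=1/r_t(\theta)$; the second branch comes from the $\lambda\mapsto 1/\bar\lambda$ symmetry of the PDE, which reflects that $(ub_t)^{-1}$ carries the same subordination structure. This identifies $\textrm{supp}\,\mu_t$ as the closed annular region $\bar\Delta_t$ in part (1), and simultaneously yields the invariance under $z\mapsto 1/\bar z$ in part (2).

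Inside the support, the characteristic construction should force the solution $S_0(t,\lambda)$ to depend on $r=|\lambda|$ only through a term linear in $\log r$. Since $\Delta=\partial_r^2+\frac{1}{r}\partial_r+\frac{1}{r^2}\partial_\theta^2$ annihilates such a linear term, the density automatically takes the form $W_t(r,\theta)=w_t(\theta)/r^2$; justifying this rigidity from the $\epsilon\to 0^+$ analysis along the characteristics is the second delicate step. For part (3), the radial integral $\int_{r_t(\theta)}^{1/r_t(\theta)}(w_t(\theta)/r^2)\,r\,dr=-2w_t(\theta)\log r_t(\theta)$ must match the mass that the law of $uu_t$ assigns to the boundary arc swept out under $\Gamma_t$; this both pins down $w_t(\theta)$ and yields the push-forward statement. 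The bound $w_t(\theta)\leq 1/(\pi t)$ is the multiplicative analogue of the additive bound $2/(\pi t)$ and should follow from Biane's sup-norm estimate on $\nu_t$ combined with the subordination relation, or by extremizing the explicit formula for $w_t$. Finally, part (4) is immediate: any measure on $\bar\Delta_t$ with a density of the form $\phi(\theta)/r^2$ is determined by its push-forward under $\Gamma_t$, since $\Gamma_t$ restricted to $\partial(\bar\Delta_t\cap\bar{\bbD})$ is a homeomorphism onto $\mathbb{T}$ by [Zhong2015] and the radial profile is rigid.
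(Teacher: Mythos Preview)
Your overall strategy matches the paper's: derive the same Hamilton--Jacobi PDE as Driver--Hall--Kemp with the modified initial datum $S(0,\lambda,\varepsilon)=\int_{\mathbb T}\log(|\lambda-\xi|^2+\varepsilon)\,d\mu(\xi)$, solve the six-dimensional Hamiltonian system, identify the lifetime function with $T(\lambda)$, and read off the density and push-forward from the constants of motion. Two points, however, need correction.

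First, your claim that $s_t(\lambda)$ depends on $r$ \emph{only through a term linear in $\log r$} is wrong. The paper computes (using the constant of motion $\Psi=\varepsilon p_\varepsilon+\tfrac12(ap_a+bp_b)$ and the formula $\log|\lambda(t)|=Ct/2$) that $\partial s_t/\partial\rho=2\rho/t+1$, so $s_t$ is \emph{quadratic} in $\rho=\log r$. The density still takes the form $w_t(\theta)/r^2$, but for the right reason: in logarithmic polar coordinates $\Delta=r^{-2}(\partial_\rho^2+\partial_\theta^2)$, and $\partial_\rho^2 s_t=2/t$ is constant while $\partial_\theta s_t=m_t(\theta)$ is independent of $\rho$, so $(\partial_\rho^2+\partial_\theta^2)s_t=2/t+m_t'(\theta)$ is a function of $\theta$ alone. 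Your ``Laplacian annihilates a linear term'' mechanism would miss the crucial $2/t$ contribution.

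Second, the characteristic flow $\lambda_0\mapsto\lambda_t(\lambda_0)$ does \emph{not} coincide with $\Phi_t$. In the paper it is a self-homeomorphism of $\overline{\Delta_t}\setminus(\partial\Delta_t\cap\mathbb T)$ that fixes the boundary pointwise; $\Phi_t$ is a separate object mapping $\overline{\Omega_{t,\bar\mu}}$ onto $\overline{\mathbb D}$. The link to $\Phi_t$ enters only after the density is computed, via the identity $w_t(\theta)=\frac{1}{2\pi t}\frac{d\phi}{d\theta}$ with $\phi=\arg\Phi_t(r_t(\theta)e^{i\theta})$; this is what makes the push-forward computation and the bound $w_t\le 1/(\pi t)$ work. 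The bound itself is obtained not from Biane's sup-norm estimate but from a direct derivative estimate $\bigl|1-w\Phi_t'(w)/\Phi_t(w)\bigr|<1$ on $\partial\Omega_{t,\bar\mu}$, which gives $\mathrm{Re}\bigl(z\eta_t'(z)/\eta_t(z)\bigr)\ge 1/2$ and hence $d\phi/d\theta\le 2$.
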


Denote the Haar unitary random variable by $h$; which means $h$ is a unitary operator, whose spectral distribution is the Haar measure on the unit circle. In the special case when $u=h$, the Brown measure of $hb_t$ is the \emph{annulus law}. It is absolutely continuous, supported in the annulus $A_t = \{e^{-t/2}\leq |z|\leq e^{t/2}\}$, rotationally invariant, and the density $W_t(r,\theta)$ is given by
\[W_t(r,\theta) = \frac{1}{2\pi t}\frac{1}{r^2}\]
on the annulus $A_t$. The density $W_t$ is independent of $\theta$ because the Brown measure is rotationally invariant.
We can also calculate the Brown measure of $hb_t$ using Haagerup-Larsen's formula for the Brown measures of $R$-diagonal operators \cite{HaagerupLarsen2000}. Indeed,  
the random variable $hb_t$ is $R$-diagonal \cite[Proposition 15.8]{SpeicherNicaBook} and the Brown measure of $hb_t$ can be calculated from the distribution of $(b_t^*b_t)^{1/2}$ (see Appendix for details).  

 Figure \ref{IntroFigMult} shows a random matrix simulation of the eigenvalue distribution of $ub_{0.8}$ where $u$ has distribution $\frac{1}{3}\delta_{e^{i\pi/3}}+\frac{2}{3}\delta_{e^{4\pi i/5}}$. It should be noted that, in this multiplicative case, it is an open problem to give a mathematical proof of that even when the initial condition is the identity, the empirical eigenvalue distribution of $G_N(t)$ converges to the Brown measure of $b_t$.

\begin{figure}[h]
	\begin{center}
		\includegraphics[width=0.55\linewidth]{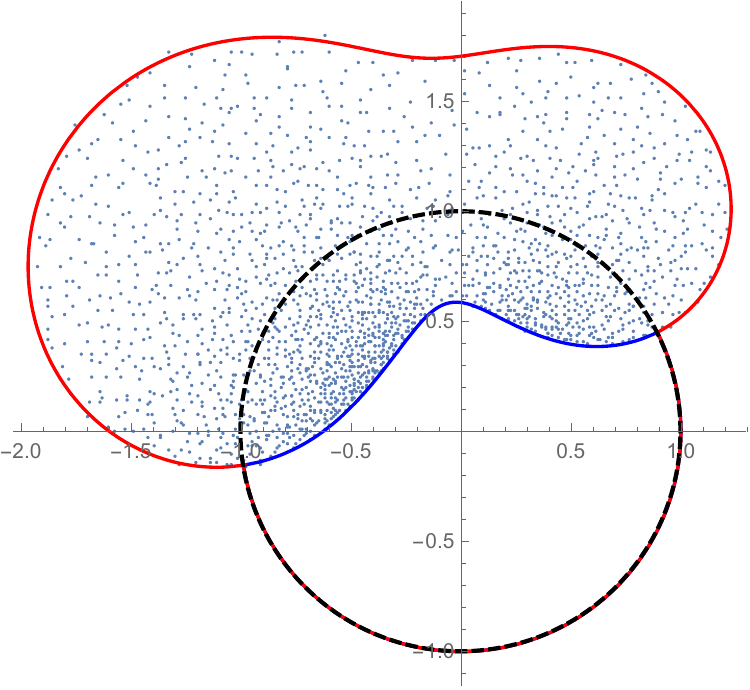}
		\caption{\label{IntroFigMult}Matrix simulations of eigenvalues for $ub_{0.8}$, $u$ has distribution $\frac{1}{3}\delta_{e^{i\pi/3}}+\frac{2}{3}\delta_{e^{4\pi i/5}}$. The curves $r_t(\theta)e^{i\theta}$ (blue), $\frac{1}{r_t(\theta)}e^{i\theta}$ (red), and the unit circle (black dashed) are superimposed.}
	\end{center}
\end{figure}

The paper is organized as follows. Section \ref{Prelim} consists of some background and preliminaries of free probability theory and the definition of the Brown measure. The distributions of the sum of two self-adjoint free random variables and the product of two unitary free random variables will be described using the subordination functions. In Section \ref{CircularCase}, we compute the Brown measure of the random variable $x_0+c_t$. The Brown measure is closely related to the subordination function and the distribution of $x_0+s_t$. 
Section \ref{MultCase} is concerned about the Brown measure of the random variable $ub_t$, for both cases when $u$ is and is not a Haar unitary, using the same Hamilton--Jacobi analysis but different initial conditions from \cite{DHKBrown}. The support and the density of the Brown measure are again related to a subordination function; but the subordination function is the one for $u^*u_t$ rather than $uu_t$. The Brown measure is connected to the distribution of $uu_t$ (but not $u^*u_t$).

\section{Preliminaries}
\label{Prelim}
\subsection{Free Probability}
\label{FreeProb}
  A {\bf $W^*$-probability space} is a pair $(\A, \tau)$ where $\A$ is a finite von Neumann algebra and $\tau$ is a normal, faithful tracial state on $\A$. The elements in $\A$ are called (noncommuntative) random variables.

The unital $\ast$ - subalgebras $\A_1, \cdots \A_n\subseteq \A$ are said to be free or freely independent in the sense of Voiculescu if, given any $i_1, i_2,\cdots, i_m\in\{1,\cdots,n\}$ with $i_k\not= i_{k+1}$, and $a_{i_j}\in\A_{i_j}$ satisfying $\tau(a_{i_k})=0$ for all $1\leq k\leq m$, 
	we  have $\tau(a_{i_1}a_{i_2}\cdots a_{i_m})=0$. The random variables $a_1,\cdots, a_m$ are free or freely independent if the unital $\ast$-subalgebras generated by them are free. 

For any self-adjoint (resp. unitary) element $a\in\A$, the {\bf law} or the {\bf distribution} $\mu$ of $a$ is a probability measure on $\R$ (resp. $\mathbb{T}$) such that whenever $f$ is a bounded continuous function on $\R$ (resp. on $\mathbb{T}$), we have
		$$\int f\;d\mu = \tau(f(a)).$$

For a measure $\mu$ on the real line, the Cauchy transform of $\mu$ is given by
$$G_\mu(z) := \int_\R\frac{1}{z-x}\:d\mu(x) = \tau((z-a)^{-1}), \quad z\in\C^+.$$
The Cauchy transform $G_\mu$ maps the upper half plane 
$\mathbb{C}^+$ into the lower half plane $\mathbb{C}^-$. It satisfies the asymptotic property $\lim_{y\uparrow
 +\infty}iyG_\mu(iy) = \mu(\R)$. The reader is referred to \cite{AkhiezerBook} for results about Cauchy transform.
The measure $\mu$ can be recovered from its Cauchy transform $G_\mu$ using the Stieltjes inversion formula, that expresses $\mu$ as a weak limit
\begin{equation}
\label{CauchyInv}
d\mu(x) = \lim_{y\downarrow 
 0} -\frac{1}{\pi}\textrm{Im}\:G_\mu(x+iy)\:dx.
\end{equation}

The $R$-transform of $\mu$ is defined by
\begin{equation}
\label{Rtransform}
  \mathcal{R}_\mu(z)=G_\mu^{\langle -1 \rangle}(z)-\frac{1}{z}
\end{equation}
where $G_\mu^{\langle -1\rangle}$ means the inverse function to $G_\mu$ in a truncated Stolz angle $\{z\in\C\vert\; \mathrm{Im}\,z>\beta, \left\vert \mathrm{Re}\,z\right\vert<\alpha\,\mathrm{Im}\,z\}$ for some $\alpha, \beta>0$.

For a measure $\mu$ on the unit circle $\mathbb{T}$, we consider the moment generating function on the open unit disk $\bbD$:
$$\psi_\mu(z) = \int_{\mathbb{T}}\frac{\xi {z}}{1-\xi{z}}\;d\mu(\xi)\quad z\in \bbD.$$
The $\eta$-transform of $\mu$ is defined as
\[
\eta_\mu(z) =\frac{\psi_\mu(z)}{1+\psi_\mu(z)}.
\]
Then the measure $\mu$ can be recovered using the Herglotz representation theorem, as a weak limit
\begin{equation}
\label{MultInv}
d\mu(e^{-i\theta}) = \lim_{r\uparrow
 1}\frac{1}{2\pi}\textrm{Re}\left(\frac{1+\eta_\mu(re^{i\theta})}{1-\eta_\mu(re^{i\theta})}\right)d\theta.
\end{equation}
When $\eta_\mu'(0)\neq 0$ (which is equivalent to the condition that $\mu$ has nonzero first moment), the $\Sigma$-transform and $S$-transform of $\mu$ are defined to be
\begin{equation}
\label{eq:Stransform}
  \Sigma_\mu(z)=\frac{\eta_\mu^{\langle -1 \rangle}(z)}{z},\quad \text{and}\quad S_\mu(z)=\Sigma_\mu\left(\frac{z}{1+z}\right),
\end{equation}
where these functions are defined in a neighborhood of zero.

\subsection{Free Brownian Motions}
In free probability, the semicircle law plays a similar role as the Gaussian distribution in classical probability. 
The semicircle law $\sigma_t$ with variance $t$ is compactly supported in the interval $[-2\sqrt{t}, 2\sqrt{t}]$ with density
$$d\sigma_t(x) = \frac{1}{2\pi t}\sqrt{4t-x^2}.$$

\begin{definition} 
	\begin{enumerate}
		\item	A free semicircular Brownian motion $s_t$ in a $W^*$-probability space $(\A, \tau)$ is a weakly continuous free stochastic process $(x_t)_{t\geq 0}$ with free and stationary semicircular increments.
	\item	A free circular Brownian motion $c_t$ has the form $\frac{1}{\sqrt{2}}(s_t+i s_t')$ where $s_t$ and $s_t'$ are two freely independent free semicircular Brownian motions.
	\end{enumerate}
		\end{definition}

In the unitary group $U(N)$, we can consider a Brownian motion $X_t$ on the Lie algebra $gl(N)$, after fixing an $\textrm{Ad}$-invariant inner product. Taking the exponential map gives us a unitary Brownian motion. More precisely, the unitary Brownian motion $U_t=U_t(N)$ can be obtained by solving the It\^{o} differential equation 
$$dU_t = iU_t\,dX_t - \frac{1}{2}U_t\,dt, \quad U_0=I.$$

\begin{definition}
In free probability, the \emph{free} unitary Brownian motion can be obtained by solving the \emph{free} It\^{o} differential equation
\begin{equation}\label{eqn:UBM}
du_t = iu_t\,ds_t - \frac{1}{2}u_t\,dt, \quad u_0=I. 
\end{equation}
where $s_t$ is a free semicircular Brownian motion. The free multiplicative Brownian motion $b_t$ is the solution for the 
\emph{free} It\^{o} stochastic differential equation
\begin{equation}\label{eqn:GBM}
    db_t=b_t\,dc_t, \quad b_0=I. 
\end{equation}
\end{definition}
We note that the \emph{right} increments of the free unitary Brownian motion $u_t$ are free. In other words, for every $0<t_1<t_2<\cdots<t_n$ in $\mathbb{R}$, the elements
\[
      u_{t_1}, u_{t_1}^{-1}u_{t_2}, \cdots, u_{t_{n-1}}^{-1}u_{t_n}
\]
form a free family. Similarly, one can show that the process $b_t$ has free right increments. These stochastic processes were introduced by Biane in \cite{Biane1997b}. He proved that the large-$N$ limit in $*$-distribution of the unitary Brownian motion $U_t=U_t(N)$ is the free unitary Brownian motion, and conjectured that the large $N$ limit of the Brownian motion on $GL(N; \mathbb{C})$ is the free multiplicative Brownian motion $b_t$. Kemp \cite{Kemp2016} proved that $b_t$ is the limit in $\ast$-distribution of the Brownian motion on $GL(N;\C)$.

The connection between the Brownian motions on the Lie groups $U(N)$ and $GL(N;\mathbb{C})$ is natural. The heat kernel function on $GL(N; \mathbb{C})$ is the analytic continuation of that on $U(N)$ (see 
\cite{Hall2001, Liao} for instance). Consider now the free unitary Brownian motion with initial condition $uu_t$ where $u$ is a unitary random variable freely independent from $\{ u_t\}_{t>0}$. The process $uu_t$ is the solution of the free stochastic differential equation in (\ref{eqn:UBM}) with initial condition $u$. Similarly, the solution $g_t$ of the free stochastic differential equation
\begin{equation}\label{eqn:GBMu}
   dg_t=g_t\,dc_t, \quad g_0=u.
\end{equation}
has the form $g_t=ub_t$.

\subsection{Free Additive Convolution}
\label{FAConv}
Our main results show that the Brown measure of the free circular Brownian motion with self-adjoint initial condition has direct connections with the spectral distribution of the free additive Brownian motion with the same self-adjoint initial condition and some analytic functions related to it. In this section, we review some relevant facts about the free additive convolution to setup the notations. 

Suppose that the self-adjoint random variables $x,y\in \A$ are freely independent. It is known that the distribution of $x+y$ is determined by the distributions of $x$ and $y$. The free additive convolution of $x$ and $y$ is then defined to be the distribution of $x+y$. 
The subordination relation in free convolution was first established by Voiculescu~\cite{Voiculescu1993} for free additive convolutions under some generic conditions, and was further developed by Biane \cite{Biane1998} to free multiplicative convolutions and again by Voiculescu \cite{DVV-general} to a very general setting (see also \cite{BB2007new}). There exists a unique pair of analytic self-map $\omega_1, \omega_2: \mathbb{C}^+ \to\mathbb{C}^+$ such that
	\begin{enumerate}
		\item $\textrm{Im}\, \omega_j(z)\geq \textrm{Im}\, z$ for all $z\in \mathbb{C}^+$, $j=1,2$;
		\item $G_x(\omega_1(z)) = G_y(\omega_2(z)) = (\omega_1(z)+\omega_2(z)-z)^{-1}$ for all $z\in\mathbb{C}^+$;
		\item $G_{x+y}(z) =  G_x(\omega_1(z)) = G_y(\omega_2(z)) $ for all $z\in \mathbb{C}^+$.
	\end{enumerate}
Point 3 tells us if we could compute (one of) the subordination functions $\omega_1$ and $\omega_2$, we could compute the Cauchy transform of $x+y$ in terms of the Cauchy transform of $x$ or $y$. The Cauchy transform of $x+y$ then determines the law of $x+y$ by \eqref{CauchyInv}. Although the subordination functions, in general, cannot be computed explicitly, a lot of regularity results can be deduced from the subordination relation (see \cite{Belinschi2008, BercoviciVoiculescu1998} or the survey \cite[Chapter 6]{survey-2013}). Denote by $\mu$ and $\nu$ the spectral distributions of $x$ and $y$ respectively. The free additive convolution of $\mu$ and $\nu$ is defined to be the spectral distribution of $x+y$ and is denoted as $\mu\boxplus \nu$. 
The $\mathcal{R}$-transform~\eqref{Rtransform} linearizes the free additive convolution in the sense that $\mathcal{R}_{\mu\boxplus \nu}(z)=\mathcal{R}_\mu(z)+\mathcal{R}_\nu(z)$ in the domain where all the three $\mathcal{R}$-transforms are defined. 

In the special case when $x$ is an arbitrary self-adjoint variable $x_0$ with law $\mu$ and $y$ is the semicircular variable $s_t$, we denote $\omega_1$ by $F_t$ the subordination function such that
$$G_{x_0+s_t}(z) = G_{x_0}(F_t(z)).$$
Biane \cite{Biane1997} computed that \[H_t(z) = z+tG_{x_0}(z), \qquad z\in\C^+\]
 is the \emph{left inverse} of $F_t$; that is, $H_t(F_t(z)) = z$ for $z\in\C^+$.

Define the function 
\begin{equation}\label{def:v_t}
v_t(a) = \inf\left\{b>0: \int_{\mathbb{R}} \frac{d\mu(x)}{(a-x)^2+b^2}\leq \frac{1}{t}\right\}, \quad a\in\R.
\end{equation}
So, whenever $v_t(a)>0$, $v_t(a)$ is the unique positive $b$ with 
\begin{equation}
\label{b.eq}
\int_{\mathbb{R}} \frac{d\mu(x)}{(a-x)^2+b^2} = \frac{1}{t}.
\end{equation}
Therefore, $v_t$ satisfies the equality given in the following lemma.
\begin{lemma}[Lemma 2 of \cite{Biane1997}]\label{lemma:identity-vt}
	\label{IneqT}
	If $v_t(a) > 0$, then 
		\begin{equation}\label{eq:identity-v-t-a}
	\int_{\mathbb{R}} \frac{d\mu(x)}{(a-x)^2+v_t(a)^2} = \frac{1}{t}.
	\end{equation}
\end{lemma}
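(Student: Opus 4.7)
The plan is to recognize this as a routine continuity-and-monotonicity statement about the integral as a function of the vertical parameter, and then unpack the infimum in the definition of $v_t$. Fix $a\in\R$ and define
\[
f(b) = \int_{\R} \frac{d\mu(x)}{(a-x)^2+b^2}, \qquad b>0.
\]
The strategy is to establish that $f$ is continuous and strictly decreasing on $(0,\infty)$, and then to invert the inequality defining $v_t(a)$.

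First I would verify continuity of $f$ on $(0,\infty)$ by dominated convergence: for $b$ in a compact subinterval $[\e,M]\subset (0,\infty)$, the integrand is bounded pointwise in $x$ by $1/((a-x)^2+\e^2)$, which is a bounded function on $\R$, hence $\mu$-integrable since $\mu$ is a probability measure. Strict monotonicity is immediate from differentiating under the integral (or just from the pointwise monotonicity of $b\mapsto 1/((a-x)^2+b^2)$) together with the fact that $\mu$ is not the zero measure. In particular $f(b)\to 0$ as $b\to\infty$, so the set $\{b>0: f(b)\leq 1/t\}$ is nonempty.

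Next, by strict monotonicity and continuity of $f$, the sublevel set $\{b>0:f(b)\leq 1/t\}$ is a half-line of the form $[b_*,\infty)$, and the definition \eqref{def:v_t} gives $v_t(a)=b_*$ (the infimum is the same whether we require $b\geq 0$ or $b>0$, under the assumption $b_*>0$). Assuming $v_t(a)>0$, continuity at $b_*$ yields
\[
f(v_t(a)) = \lim_{b\downarrow v_t(a)} f(b) \leq \frac{1}{t}
\quad\text{and}\quad
f(v_t(a)) = \lim_{b\uparrow v_t(a)} f(b) \geq \frac{1}{t},
\]
where the second inequality uses that for any $b<v_t(a)$ we must have $f(b)>1/t$ (otherwise $b$ would belong to the sublevel set, contradicting minimality of $v_t(a)$). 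Combining the two gives the claimed equality \eqref{eq:identity-v-t-a}.

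There is no real obstacle here; the only mild subtlety is handling the behavior at $b=0$, where $f$ may fail to be finite (e.g.\ if $\mu$ has an atom at $a$ or too concentrated a density there). But this is irrelevant under the hypothesis $v_t(a)>0$, since the argument above is entirely carried out on $(0,\infty)$, where $f$ is well-behaved. The uniqueness statement mentioned in the surrounding text (``unique positive $b$'') follows from strict monotonicity in the same single stroke.
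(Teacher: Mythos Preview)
Your argument is correct and is exactly the reasoning behind the paper's brief remark preceding the lemma (``whenever $v_t(a)>0$, it is the unique positive $b$ with \eqref{b.eq}''); the paper does not spell out a proof but simply cites Biane, and the continuity/strict-monotonicity unpacking you give is the standard one.
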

The function $v_t$ is analytic at $a$ whenever $v_t(a)>0$. And $H_t$ takes $\{a+iv_t(a): a\in\R\}$ to the real line, since, for each $a\in\R$, if there exists a $b>0$ such that
\[\textrm{Im}\:H_t(a+ib) = b\left(1-t\int_\R\frac{1}{(a-x)^2+b^2}\,d\mu(x)\right)=0,\] 
then it is the unique $b$ which satisfies \eqref{b.eq}. 

We summarize Biane's result as follow.

\begin{proposition}[Proposition 1 of \cite{Biane1997}]
	\label{HFMap}
	The subordination function $F_t$ satisfying 
	$$G_{x_0+s_t}(z) = G_{x_0}(F_t(z))$$
	defined on $\mathbb{C}^+$
	is a one-to-one conformal mapping into $\mathbb{C}^+$.
	The inverse of $F_t$ can be computed explicitly as
	$$H_t(z) = z+tG_{x_0}(z)$$
	which is conformal from $\{a+ib\in \mathbb{C}^+: b > v_t(a)\}$ to $\mathbb{C}^+$. The function $H_t$ extends to a homeomorphism from $\overline{\{a+ib\in \mathbb{C}^+: b > v_t(a)\}}$ to $\overline{\mathbb{C}^+}$ since the domain of $H_t$ is a Jordan domain.
\end{proposition}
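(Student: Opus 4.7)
My plan is to derive the explicit formula for the left inverse $H_t$ by combining the subordination equations of Subsection~\ref{FAConv} with the $R$-transform of the semicircle law, then locate the natural domain of conformality from the imaginary part of $H_t$, and finally invoke Carath\'eodory's theorem for the boundary homeomorphism.

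For the formula, I would use that $s_t$ is semicircular of variance $t$, so $\mathcal{R}_{s_t}(w)=tw$ and hence $G_{s_t}^{\langle-1\rangle}(w)=tw+w^{-1}$. Feeding $w=G_{s_t}(\omega_2(z))=G_{x_0+s_t}(z)$ into this relation gives
\[
  \omega_2(z)=tG_{x_0+s_t}(z)+\frac{1}{G_{x_0+s_t}(z)},
\]
and then the identity $\omega_1(z)+\omega_2(z)-z=1/G_{x_0+s_t}(z)$ forces $\omega_1(z)=z-tG_{x_0+s_t}(z)$. Since $F_t=\omega_1$, applying $G_{x_0}$ yields $G_{x_0+s_t}(z)=G_{x_0}(F_t(z))$, and substituting back shows
\[
  H_t(F_t(z))=F_t(z)+tG_{x_0}(F_t(z))=z,
\]
so $H_t$ is a left inverse of $F_t$ on $\mathbb{C}^+$.

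Next I would pin down the conformality domain. A direct calculation yields
\[
  \textrm{Im}\,H_t(a+ib)=b\left(1-t\int_{\mathbb{R}}\frac{d\mu(x)}{(a-x)^2+b^2}\right),
\]
which is strictly positive exactly when $b>v_t(a)$; call this region $\Omega$. The identity $H_t(F_t(z))=z\in\mathbb{C}^+$ forces $F_t(\mathbb{C}^+)\subseteq\Omega$, so $F_t\colon\mathbb{C}^+\to\Omega$ is injective and $H_t\colon\Omega\to\mathbb{C}^+$ is surjective. Univalence of $H_t$ on $\Omega$ would then follow from the identity theorem: the composition $F_t\circ H_t$ is analytic on the connected set $\Omega$ and equals the identity on the nonempty open subset $F_t(\mathbb{C}^+)$ (open by the open mapping theorem), hence equals the identity throughout $\Omega$. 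Thus $H_t$ and $F_t$ are mutual inverses, and $H_t$ is a conformal bijection $\Omega\to\mathbb{C}^+$.

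For the boundary extension, the closure $\overline{\Omega}$ on the Riemann sphere is bounded by the curve $\{a+iv_t(a):a\in\mathbb{R}\}\cup\{\infty\}$, which I would argue is Jordan: $v_t$ is continuous on $\mathbb{R}$ and real-analytic wherever positive, by the implicit function theorem applied to \eqref{eq:identity-v-t-a}, and $H_t(z)\sim z$ at infinity. Hence $\Omega$ is a Jordan domain, and Carath\'eodory's theorem promotes $H_t$ to a homeomorphism $\overline{\Omega}\to\overline{\mathbb{C}^+}$. The most delicate step I anticipate is ensuring continuity of $v_t$ at the transition points where it drops to zero (so that the bounding curve is actually a Jordan curve on the sphere), and for atomic or singular $\mu$ verifying that $F_t(\mathbb{C}^+)$ has nonempty interior inside $\Omega$ so the identity-theorem argument for univalence really applies; both should reduce to bare-hands analysis of the defining relation \eqref{def:v_t}.
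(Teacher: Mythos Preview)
The paper does not give its own proof of this proposition; it is quoted from Biane (Proposition~1 of \cite{Biane1997}) in the preliminaries, introduced by ``We summarize Biane's result as follow.''  So there is nothing in the paper to compare against.

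Your outline is a correct reconstruction of the standard argument.  Two remarks.  First, your closing worry about $F_t(\mathbb{C}^+)$ possibly failing to have interior in $\Omega$ when $\mu$ is atomic or singular is unfounded: $F_t$ is analytic and nonconstant on $\mathbb{C}^+$ (for instance because $\mathrm{Im}\,F_t(z)\geq\mathrm{Im}\,z$), so the open mapping theorem makes $F_t(\mathbb{C}^+)$ open in $\Omega$ regardless of $\mu$, and your identity-theorem step goes through as written.  Second, the point you do flag---continuity of $v_t$ at the transition points where it drops to zero, so that $\{a+iv_t(a):a\in\mathbb{R}\}\cup\{\infty\}$ is genuinely a Jordan curve on the sphere---is the real content; this is Lemma~2 of \cite{Biane1997} and is handled by a direct estimate from the defining relation~\eqref{def:v_t} (roughly, if $v_t(a_n)\to\ell>0$ along $a_n\to a$ with $v_t(a)=0$, dominated convergence in \eqref{eq:identity-v-t-a} forces a contradiction).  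With that lemma in hand, Carath\'eodory applies exactly as you say.
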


When $v_t(a)>0$, the point $a+iv_t(a)\in \C^+$ is mapped to the real line by $H_t$. In fact, the law of $x_0+s_t$ at the point $H_t(a+iv_t(a))$ is computed and expressed in terms of $v_t$.

\begin{proposition}[Corollary 3 and Lemma 5 of \cite{Biane1997}]
	\label{AddDensity}
	Let 
	$$\psi_t(a) = H_t(a+iv_t(a)) = a+t\int_\R\frac{(a-x)\,d\mu(x)}{(a-x)^2+v_t(a)^2}.$$
	Then $\psi_t:\R\to\R$ is a homeomorphism and at the point $\psi_t(a)$ the law $\mu_t$ of $x_0+s_t$ has the density given by
	$$p_t(\psi_t(a))=\frac{v_t(a)}{\pi t}.$$
	Moreover, the function $\psi_t$ satisfies
	$$\psi_t'(a)\geq \frac{2}{t}v_t(a)^2(1+v_t'(a)^2)>0$$
	for any $a\in U_t$.
	\end{proposition}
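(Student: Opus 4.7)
The plan is to decompose the proposition into three claims — the density formula, the homeomorphism, and the derivative inequality — each following from Proposition \ref{HFMap} and Lemma \ref{IneqT} combined with a short computation.

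For the density formula, I would apply the Stieltjes inversion formula \eqref{CauchyInv} to $G_{x_0+s_t}$. Subordination gives $G_{x_0+s_t}(z) = G_{x_0}(F_t(z))$, and Proposition \ref{HFMap} provides a boundary extension so that $F_t(\psi_t(a)) = a+iv_t(a)$. Substituting yields
\[
p_t(\psi_t(a)) = -\frac{1}{\pi}\textrm{Im}\,G_{x_0}(a+iv_t(a)) = \frac{v_t(a)}{\pi}\int_\R \frac{d\mu(x)}{(a-x)^2+v_t(a)^2},
\]
and Lemma \ref{IneqT} collapses the integral to $1/t$. The homeomorphism claim then follows because $a\mapsto a+iv_t(a)$ is a continuous parametrization of the boundary of the Jordan domain $\{a+ib:b>v_t(a)\}$ in Proposition \ref{HFMap}, and the boundary extension of $H_t$ restricts to a homeomorphism of this curve onto $\R$ (with behavior at infinity controlled by $H_t(z)\sim z$).

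The derivative bound is the main computational obstacle. Writing $\psi_t(a) = H_t(a+iv_t(a))$, the holomorphic chain rule gives $\psi_t'(a) = H_t'(a+iv_t(a))(1+iv_t'(a))$, where $H_t'(z) = 1 - t\int d\mu(x)/(z-x)^2$. Expanding $H_t'(a+iv_t(a)) = (1-tA) + itB$ with
\[
A = \int_\R\frac{(a-x)^2-v_t(a)^2}{((a-x)^2+v_t(a)^2)^2}\,d\mu(x),\qquad B = \int_\R\frac{2(a-x)v_t(a)}{((a-x)^2+v_t(a)^2)^2}\,d\mu(x),
\]
and using that $\psi_t'(a)\in\R$ to eliminate $tB = -v_t'(a)(1-tA)$, I obtain the factorization
\[
\psi_t'(a) = (1-tA)(1+v_t'(a)^2).
\]
The algebraic identity $A = 1/t - 2v_t(a)^2 C$ with $C = \int d\mu(x)/((a-x)^2+v_t(a)^2)^2$, obtained by inserting Lemma \ref{IneqT}, combined with the Cauchy--Schwarz estimate $C \geq 1/t^2$ (valid since $\mu$ is a probability measure and $1/t = \int d\mu(x)/((a-x)^2+v_t(a)^2)$), gives $1-tA = 2tv_t(a)^2 C \geq 2v_t(a)^2/t$ and hence the claimed inequality. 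The crux is recognizing that the reality of $\psi_t'(a)$ forces the clean factorization above; once that is in hand, Cauchy--Schwarz converts Lemma \ref{IneqT} into the desired lower bound.
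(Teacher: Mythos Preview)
Your proof is correct. The proposition is quoted from Biane \cite{Biane1997} and the present paper does not supply its own proof, so there is no in-paper argument to compare against directly. That said, the computations you carry out are closely related to those the paper performs later in the proof of Theorem~\ref{AddBound}: there the identity $\mathrm{Re}\,H_t'(a+iv_t(a)) = 2t\int v_t(a)^2/((a-x)^2+v_t(a)^2)^2\,d\mu$ (your $1-tA = 2tv_t(a)^2 C$) is derived by the same splitting of the integrand via Lemma~\ref{IneqT}, and the chain-rule relation between $\psi_t'$, $H_t'$, and $1+iv_t'(a)$ is used as well, though in the direction of proving the \emph{upper} bound $\psi_t'(a)\le 2$ rather than your lower bound. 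Your additional observation that Cauchy--Schwarz applied to $1/t = \int d\mu/((a-x)^2+v_t(a)^2)$ gives $C\ge 1/t^2$, and hence $1-tA\ge 2v_t(a)^2/t$, is exactly the missing ingredient to recover Biane's inequality from the machinery already present in the paper.
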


\begin{proposition}[Proposition 3 of \cite{Biane1997}]
	\label{Acomponent}
The support of the law $\mu_t$ of $x_0+s_t$ is the closure of its interior and the number
of connected components of $U_t$ is a non-increasing function of $t$. 
\end{proposition}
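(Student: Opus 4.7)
The first assertion is immediate from Proposition~\ref{AddDensity}. That result expresses the density of $\mu_t$ at $\psi_t(a)$ as $v_t(a)/(\pi t)$, while $\psi_t\colon\mathbb{R}\to\mathbb{R}$ is a homeomorphism. Since $v_t$ is continuous, the set $U_t=\{a:v_t(a)>0\}$ is open, and so its image $\psi_t(U_t)$ is an open subset of $\mathbb{R}$. The measure $\mu_t$ is absolutely continuous (a standard consequence of convolving with a semicircular law), hence $\mathrm{supp}(\mu_t)=\overline{\psi_t(U_t)}$, which is visibly the closure of its interior.

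For the second assertion, my plan is first to rewrite $U_t$ as a super-level set. By Lemma~\ref{IneqT} and the monotonicity of $b\mapsto\int d\mu(x)/((a-x)^2+b^2)$, one checks that $v_t(a)>0$ if and only if $g(a):=\int d\mu(x)/(a-x)^2>1/t$; thus $U_t=\{g>1/t\}$, where $g$ is lower semicontinuous as a monotone increasing limit of continuous functions as $b\downarrow 0$. In particular $U_s\subset U_t$ whenever $s<t$.

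The heart of the argument is a local principle. Given any component $D$ of $U_t$ and any $a_0\in D$, we have $g(a_0)>1/t$, hence $g(a_0)>1/s$ for all $s$ in a left neighborhood $(t-\epsilon_D,t)$ of $t$. For such $s$ the component of $U_s$ containing $a_0$ is connected, lies inside $U_s\subset U_t$, and passes through $a_0\in D$, so it is entirely contained in $D$. Taking the minimum $\epsilon_D$ over the (finitely many) components of $U_t$ shows that for $s$ slightly less than $t$, every component of $U_t$ contains a distinct component of $U_s$; in particular, the number of components of $U_s$ is at least that of $U_t$. To upgrade this to global monotonicity in $t$, I would argue by contradiction: a minimal witness to an upward jump in the component count as $t$ increases across some $t^*$ would violate the local principle just applied at $t^*$.

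The main obstacle is the rigorous bookkeeping of components: we need that $U_t$ has only finitely many connected components, so that $\min_i\epsilon_{D_i}$ is positive, and we must chain the local non-decreases into global monotonicity. Finiteness rests on the analyticity and strict convexity of $g$ on each component of $\mathbb{R}\setminus\mathrm{supp}(\mu)$ (since $g''(a)=6\int d\mu(x)/(a-x)^4>0$ there), which prevents isolated new components of $U_t$ from spontaneously appearing outside $\mathrm{supp}(\mu)$ as $t$ grows.
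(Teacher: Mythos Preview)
This proposition is quoted from Biane without proof in the paper, so there is no in-paper argument to compare against. Your treatment of the first assertion is fine.

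For the second assertion you have the right object---$U_t=\{g>1/t\}$ with $g(a)=\int d\mu(x)/(a-x)^2$ strictly convex on each gap of $\mathrm{supp}(\mu)$---but the local-to-global scheme has real gaps. The finiteness of components is neither proved (convexity of $g$ on gaps does not give it when $\mathrm{supp}(\mu)$ itself has infinitely many gaps) nor needed. More importantly, your ``minimal witness'' contradiction fails: the property ``for each $t$ there is a left-neighbourhood on which the component count is $\ge$ its value at $t$'' does \emph{not} force monotonicity, because the size of that neighbourhood can shrink to zero as $t$ approaches a jump from the right. A piecewise-constant $N$ with a single upward jump already satisfies your local principle at every point yet is not non-increasing.

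The convexity you noted actually gives the result directly, with no chaining. If a component $D=(\alpha,\beta)$ of $U_t$ were disjoint from $\mathrm{supp}(\mu)$, it would lie in a single gap $I$; since $\alpha,\beta\notin U_t$ one checks that the one-sided limits satisfy $g(\alpha^+)=g(\beta^-)=1/t$, and convexity of $g$ on $I$ then forces $g\le 1/t$ on $D$, contradicting $D\subset U_t$. Hence every component $D$ of $U_t$ meets $\mathrm{supp}(\mu)$; taking a small interval $J\subset D$ around a point of $\mathrm{supp}(\mu)$ and applying Fubini gives $\int_D g(a)\,da\ge\int_J\!\int_J(a-x)^{-2}\,d\mu(x)\,da=\infty$, so $\sup_D g=\infty$. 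Thus $D$ meets $U_s=\{g>1/s\}$ for \emph{every} $s>0$, and the surjection from components of $U_s$ onto those of $U_t$ holds for all $s<t$ at once.
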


\subsection{Free Multiplicative Convolution}\label{section:multiplicative_convolution}
We will show that the Brown measure for the free multiplicative Brownian motion with unitary initial condition can be described by certain analytic functions and their geometric properties related to the free unitary Brownian motion with the same unitary initial condition. We review some basic facts about free multiplicative convolution in this section. 

Let $u, v\in\A$ be two freely independent unitary random variables, with spectral distributions $\mu$ and $\nu$ respectively. The distribution of $uv$ is determined by $\mu$ and $\nu$ and is denoted by $\mu\boxtimes \nu$; it is called the free multiplicative convolution of $\mu$ and $\nu$. The $\Sigma$-transform~\eqref{eq:Stransform} has the property that $\Sigma_{\mu\boxtimes \nu}(z)=\Sigma_\mu(z)\Sigma_\nu(z)$ in the domain where all these $\Sigma$-transforms are defined. We refer the readers to \cite{BB2005, BV1992} for more details on free multiplicative convolution on $\mathbb{T}$. The subordination relation for free additive convolution was extended by Biane \cite{Biane1998} to the multiplicative case.

\begin{theorem}[\cite{ BB2007new, Biane1998}]\label{thm:sub-2}
Let $(\A, \tau)$ be a $W^*$-probability space, and $u, v\in \A$ two unitary random variables that are free to each other with distributions $\mu$ and $\nu$, respectively. If $\mu$ is not the Haar measure on $\mathbb{T}$ and $\nu$ has nonzero first moment,
then there exists a unique pair of analytic self-maps $\omega_1, \omega_2: \mathbb{D}\rightarrow \mathbb{D}$ such that
\begin{enumerate}
 \item For $i=1, 2$, $|\omega_i(z)|\leq |z|$ for $z\in \mathbb{D}$. In particular, $\omega_i(0)=0$.
 \item $\omega_1(z)\omega_2(z)=z\eta_{\mu\boxtimes\nu}(z)$, for all $z\in\mathbb{D}$. 
 \item $\eta_{\mu}(\omega_1(z))=\eta_\nu(\omega_2(z))=\eta_{\mu\boxtimes \nu}(z)$. 
\end{enumerate}
The functions $\eta_\mu, \eta_\nu$ are defined in Section \ref{FreeProb}.
\end{theorem}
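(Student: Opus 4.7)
The plan is to reformulate conditions (2) and (3) as a single fixed-point equation on $\mathbb{D}$ and solve it by a hyperbolic contraction argument. Introduce the auxiliary functions $h_\mu(w) := \eta_\mu(w)/w$ and $h_\nu(w) := \eta_\nu(w)/w$, which are holomorphic on $\mathbb{D}$ because $\eta_\mu(0) = \eta_\nu(0) = 0$, and which satisfy $|h_\mu|, |h_\nu| \leq 1$ by Schwarz's lemma. Combining (2) with (3) gives $\omega_1\omega_2 = z\eta_\mu(\omega_1) = z\omega_1 h_\mu(\omega_1)$, hence $\omega_2 = z h_\mu(\omega_1)$; symmetrically, $\omega_1 = z h_\nu(\omega_2)$. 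The system thus reduces to the single equation $\omega_1 = G_z(\omega_1)$, where
\begin{equation*}
G_z(w) := z\,h_\nu\!\bigl(z\,h_\mu(w)\bigr).
\end{equation*}

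Next I would establish existence and uniqueness of a fixed point for $G_z$. Since $|h_\mu|, |h_\nu| \leq 1$, we have $|G_z(w)| \leq |z|$ for all $w \in \mathbb{D}$, so $G_z$ maps $\mathbb{D}$ into the compactly contained subdisk $\{|w| \leq |z|\}$; this makes $G_z$ a strict contraction in the Poincar\'e metric, and the Banach fixed point theorem furnishes a unique $\omega_1(z) \in \mathbb{D}$ with $|\omega_1(z)| \leq |z|$ and $\omega_1(0) = 0$. Define $\omega_2(z) := z\,h_\mu(\omega_1(z))$; then $|\omega_2(z)| \leq |z|$. Analytic dependence of $\omega_1$ and $\omega_2$ on $z$ follows from the implicit function theorem applied to the analytic map $(z,w) \mapsto w - G_z(w)$, whose $w$-derivative at the fixed point is bounded strictly away from zero by the contraction estimate.

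It remains to identify the common value $\tilde\eta(z) := \eta_\mu(\omega_1(z)) = \eta_\nu(\omega_2(z))$ with $\eta_{\mu\boxtimes\nu}(z)$, which is the crux of the proof. Near $z = 0$, the hypothesis that $\nu$ has nonzero first moment makes $\eta_\nu$ conformal at $0$, so $\Sigma_\nu$ is well-defined in a neighborhood of the origin. When $\mu$ also has nonzero first moment, the multiplicativity $\Sigma_{\mu\boxtimes\nu} = \Sigma_\mu \Sigma_\nu$ yields the explicit local formula $\omega_1(z) = \eta_{\mu\boxtimes\nu}(z)\,\Sigma_\mu(\eta_{\mu\boxtimes\nu}(z))$, which by uniqueness of the fixed point must agree with the constructed $\omega_1$; this forces $\tilde\eta = \eta_{\mu\boxtimes\nu}$ near $0$, and hence on all of $\mathbb{D}$ by analytic continuation. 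The main obstacle is the degenerate case where $\mu$ has zero first moment (but is still not Haar), since then $\Sigma_{\mu\boxtimes\nu}$ is undefined at $0$ and the $\Sigma$-transform shortcut fails; in that case I would instead match Taylor coefficients of $\tilde\eta$ and $\eta_{\mu\boxtimes\nu}$ at $0$ using the combinatorial description of $\ast$-moments of $uv$ via non-crossing partitions, and conclude by analyticity. Uniqueness of any triple $(\omega_1,\omega_2,\eta_{\mu\boxtimes\nu})$ satisfying (1)--(3) is then immediate, because any solution produces a fixed point of $G_z$, which is unique.
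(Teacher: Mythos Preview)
The paper does not prove this theorem; it is quoted as background from Biane (1998) and Belinschi--Bercovici (2007), and no argument is given in the text. So there is no in-paper proof to compare against.

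Your outline is essentially the Belinschi--Bercovici fixed-point approach, and the contraction part is correct: $G_z$ maps $\mathbb{D}$ into $\overline{\mathbb{D}(0,|z|)}$, hence is a strict contraction for the hyperbolic metric, giving a unique analytic fixed point $\omega_1$ with $|\omega_1(z)|\le|z|$, and $\omega_2:=z\,h_\mu(\omega_1)$ then satisfies $\eta_\mu(\omega_1)=\eta_\nu(\omega_2)$ automatically. Two points deserve more care. First, in the uniqueness direction you divide by $\omega_1$ (and $\omega_2$) to recover the fixed-point equation; you should argue that $\omega_1\not\equiv 0$. Under the stated hypotheses this holds: if $\omega_1\equiv 0$ then (3) forces $\eta_\nu(\omega_2)\equiv 0$, and since $\eta_\nu'(0)\neq 0$ this gives $\omega_2\equiv 0$, whence (3) says $\eta_{\mu\boxtimes\nu}\equiv 0$; but your own construction produces a nonzero $\tilde\eta=\eta_\mu(\omega_1)$ (because $\mu$ is not Haar and $\omega_1$ is nonconstant), so once the identification $\tilde\eta=\eta_{\mu\boxtimes\nu}$ is established this case is excluded. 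Second, and more seriously, the identification $\tilde\eta=\eta_{\mu\boxtimes\nu}$ when $\mu$ has vanishing first moment is only gestured at. ``Match Taylor coefficients via non-crossing partitions'' is a plan, not a proof; a cleaner route is to approximate $\mu$ by measures $\mu_\varepsilon$ with nonzero first moment (e.g.\ convolve with a small rotation), use the $\Sigma$-transform identity $\Sigma_{\mu_\varepsilon\boxtimes\nu}=\Sigma_{\mu_\varepsilon}\Sigma_\nu$ to get the identification for each $\varepsilon$, and pass to the limit using weak continuity of $\boxtimes$ together with continuous dependence of the unique fixed point on the data $(h_\mu,h_\nu)$.
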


 As in the additive case, point 3 tells us if we could compute (one of) the subordination functions $\omega_1$ and $\omega_2$, we could compute the function $\eta_{\mu\boxtimes \nu}$ of $uv$ in terms of $\eta_u$ or $\eta_v$. The function $\eta_{uv}$ determines the law of $uv$ by \eqref{MultInv}. The subordination functions are in general impossible to compute explicitly in the multiplicative case, as in the additive case. 
 
When any one of unitaries $u, v$ is a Haar unitary, one can check by the definition of free independence that all moments of $uv$ vanish and hence the distribution of $uv$ is always the Haar measure (uniform measure) on $\bT$. We denote by $h$ a Haar unitary and also by $h$ the Haar measure on $\mathbb{T}$. Note that $\eta_{h}(z)\equiv{0}$. When $u=h$ is a Haar unitary, the subordination function of $\eta_{h\boxtimes \nu}=\eta_h$ with respect to $\eta_\mu=\eta_h$ is not unique. However, we shall see that there is a canonical choice in our study. 

From now on, we fix a unitary operator $u$ that is free to the free unitary Brownian motion $u_t$; we do not restrict $u$ to be not a Haar unitary variable. The spectral distribution of $uu_t$ has been studied by the second author in \cite{Zhong2014, Zhong2015}. This is the multiplicative analogue of Biane's work presented in Section \ref{FAConv}. We shall now briefly review these results. Let $\mu$ be the distribution of $u$ and $\lambda_t$ the distribution of $u_t$. 
It is convenient for us to use the subordination function with respect to $u^*$ but not $u$ to describe our main results (essentially due to how the measure is recovered by the $\eta$-transform as shown in \eqref{MultInv}).
Then we define $\bar{\mu}$ by
$$d\bar{\mu}(e^{ix}) = d\mu(e^{-ix}),$$ 
which is the distribution of $u^*$. 
When $u$ is not a Haar unitary, denote by $\omega_t$ the subordination function of $\eta_{\bar{\mu}\boxtimes\lambda_t}$ with respect to $\eta_{\bar{\mu}}$ as in Theorem \ref{thm:sub-2}. That is,
\begin{equation}\label{eqn:eqn-subordination}
	\eta_{\bar{\mu}\boxtimes \lambda_t}(z)=\eta_{\bar{\mu}}(\omega_t(z)).
\end{equation}
When $u$ is a Haar unitary, the subordination function is chosen to be $\omega_t(z)=e^{-t/2}z$. The subordination relation (\ref{eqn:eqn-subordination}) also holds (since both sides are zero).

We now describe the left inverse function of $\omega_t$ (see \cite[Lemma 3.4]{Zhong2014} and \cite[Proposition 2.3]{Zhong2015}). 
Set 
\begin{equation}\label{eqn:St}
\St(z)=\exp\left(\frac{t}{2}\int_{\mathbb{T}}\frac{1+\xi z}{1-\xi z}d\bar{\mu}(\xi) \right)=\exp\left(\frac{t}{2}\int_{\mathbb{T}}\frac{\xi+ z}{\xi -z}d{\mu}(\xi) \right), \quad z\in \C\setminus \sigma(u). 
\end{equation}
{It is known that $\Sigma_{t,\bar{\mu}}(z)$ is the L\'{e}vy–Khintchine representation of a free infinitely divisible distribution on the unit circle \cite{BV1992}.}
When $u$ is a Haar unitary, (\ref{eqn:St}) is reduced to
\[
\Sigma_{t,\bar{h}}(z)=\Sigma_{t,{h}}(z)\equiv e^{t/2}. 
\]
By \cite{Zhong2015}, the left inverse of $\omega_t$ is the function 
\[
\Pt(z)=z\St(z).
\]
That is, $\Pt(\omega_{t}(z))=z$ for all $z\in \mathbb{D}$. 

Denote
\begin{equation}\label{def:Omega_t}
\Ot=\{\omega_{t}(z): z\in \mathbb{D} \}.
\end{equation}
The subordination function $\omega_t$ is a one-to-one conformal mapping from $\mathbb{D}$ onto $\Ot$ and can be extended to a homeomorphism from $\overline{\mathbb{D}}$ onto $\overline{\Ot}$.
It is known \cite[Lemma 3.2]{Zhong2014} that
\begin{equation}
\label{OmegaDef}
\Ot=\{z\in\mathbb{D}:|\Phi_{t, \bar{\mu}}(z)|<1 \}.
\end{equation}

We now describe the boundary of $\Ot$ and the density formula of $\mu\boxtimes \lambda_t$. More details can be found in Section \ref{some-sets}. Following \cite[Page 1361]{Zhong2015}, we set 
\begin{equation}\label{Ut-00}
\begin{split}
U_{t, \bar{\mu}}
=\left\{e^{i\theta}\in\mathbb{T} : \int_{-\pi}^{\pi}\frac{1}{|1-e^{i(\theta+x)}|^2}d\bar{\mu}(e^{ix})>\frac{1}{t}\right\}
\end{split}
\end{equation}
and $U_{t, \bar{\mu}}^c=[-\pi,\pi]\backslash U_{t, \bar{\mu}}$. 
We also define a function 
$r_t: [-\pi,\pi]\rightarrow (0,1]$ as
\begin{equation}
\begin{split}\label{def:r_t_theta}
r_t(\theta)
=\sup\left\{0< r<1: \frac{r^2-1}{2\log r}\int_{-\pi}^{\pi}\frac{1}{|1-re^{i(\theta+x)}|^2}d\bar{\mu}(e^{ix}) 
< \frac{1}{t}\right\}.
\end{split}
\end{equation}
Indeed, whenever $r_t(\theta)<1$, $r_t(\theta)$ is the unique $r\in(0,1)$ such that
\[
\frac{r^2-1}{2\log r}\int_{-\pi}^{\pi}\frac{1}{|1-re^{i(\theta+x)}|^2}d\bar{\mu}(e^{ix}) 
= \frac{1}{t}.
\]

\begin{remark}
	\label{rem:rtanalyticity}
We can prove that the function $r_t$ is analytic when $0<r_t(\theta)<1$ by the implicit function theorem applied to the function $F(r,\theta)=\log|\Phi_{t,\bar\mu}(re^{i\theta})|$. For each fixed $\theta$ such that $0<r_t(\theta)<1$, $r_t(\theta)$ is the unique solution in the unit disk such that $F(r_t(\theta),\theta) = 0$. By page 1360 of \cite{Zhong2015}, we can write
\[\log\vert\Phi_{t,\bar\mu}(re^{i\theta})\vert = (\log r) h_t(r,\theta)\]
for some function $h_t$ such that $h_t(r_t(\theta),0) = 0$ and $\partial h_t/\partial r<0$ for all $r<1$. (We will revisit this factorization of $\log\vert\Phi_{t,\bar\mu}(re^{i\theta})\vert$ in Section~\ref{some-sets}.) Then we can compute
\[\left.\frac{\partial}{\partial r}\right|_{r=r_t(\theta)}\log\vert\Phi_{t,\bar\mu}(re^{i\theta})\vert = \frac{h_t(r_t(\theta),\theta)}{r_t(\theta)}+(\log r_t(\theta))\left.\frac{\partial h_t}{\partial r}\right|_{r=r_t(\theta)}>0\]
since $h_t(r_t(\theta),\theta) = 0$ and $\partial h_t/\partial r<0$. Now, it follows from the implicit function theorem that $r_t$ is an analytic function in $\theta$.
\end{remark}

The following theorem summarizes the regularity of $r_t$.
\begin{proposition}[\cite{Zhong2015}]
	\label{characterization-0}
The function $r_t$ defined in (\ref{def:r_t_theta}) is continuous everywhere and analytic at $\theta$ whenever $r_t(\theta)<1$. 
The sets $\Omega_{t, \bar{\mu}}$ and $\partial\Omega_{t, \bar{\mu}}$ can be characterized by the function $r_t(\theta)$ as follows.
\begin{enumerate}
	\item $\Omega_{t, \bar{\mu}}=\{re^{i\theta}: 0\leq r<r_t(\theta), \theta\in[-\pi,\pi]\}$.
	\item $\partial\Ot=\{r_t(\theta)e^{i\theta}: \theta\in[-\pi,\pi] \}$ and $\partial\Ot$ is a continuous closed curve which encloses the origin. For $\theta\in U_{t,\bar{\mu}}$, the value $r_t(\theta)$ is the unique solution $r\in (0,1)$ of the following equation:
	\[
	      \frac{r^2-1}{2\log r}\int_{-\pi}^{\pi}\frac{1}{|1-re^{i(\theta+x)}|^2}d\bar{\mu}(e^{ix}) 
	    = \frac{1}{t}.
	\]
	\item The map $\Phi_{t,\bar{\mu}}(r_t(\theta)e^{i\theta})\mapsto r_t(\theta)e^{i\theta}$ is a homeomorphism from $\mathbb{T}$ onto $\partial\Omega_{t, \bar{\mu}}$. 
\end{enumerate}
\end{proposition}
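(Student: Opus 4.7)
The plan is to use the explicit formula $\Pt(z) = z\St(z)$ and the characterization \eqref{OmegaDef} that $\Ot = \{z \in \bbD : |\Pt(z)| < 1\}$. The key computation, for $z = re^{i\theta}$ with $0 < r < 1$, is
\[
\log|\Pt(re^{i\theta})| = \log r + \frac{t}{2}\,\mathrm{Re}\!\int_{\bT}\frac{1+\xi re^{i\theta}}{1-\xi re^{i\theta}}\,d\bar{\mu}(\xi) = \log r + \frac{t(1-r^{2})}{2}\int_{-\pi}^{\pi}\frac{d\bar{\mu}(e^{ix})}{|1-re^{i(\theta+x)}|^{2}},
\]
using the Poisson kernel identity $\mathrm{Re}\frac{1+w}{1-w} = \frac{1-|w|^{2}}{|1-w|^{2}}$ for $w \in \bbD$. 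Rearranging, the inequality $|\Pt(re^{i\theta})| < 1$ is equivalent to
\[
\frac{1-r^{2}}{-2\log r}\int_{-\pi}^{\pi}\frac{d\bar{\mu}(e^{ix})}{|1-re^{i(\theta+x)}|^{2}} < \frac{1}{t},
\]
which is exactly the defining inequality of $r_t(\theta)$ in \eqref{eq:3.0001}, and equality corresponds to $re^{i\theta}$ lying on $\partial\Ot$.

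Next, for each fixed $\theta$ I would show that the radial slice $\{r\in(0,1) : re^{i\theta}\in \Ot\}$ is a single interval $(0, r_t(\theta))$. Since $\eta_t(0)=0$ and $\eta_t$ is an open map onto $\Ot$, a neighborhood of $0$ lies in $\Ot$, so the interval starts at $0$. The crucial ingredient is strict monotonicity of $r\mapsto |\Pt(re^{i\theta})|$ along the ray inside $\Ot$, which forces the radial slice to be connected and produces uniqueness of the solution $r_t(\theta)$ once the threshold $1$ is attained. One route is to differentiate the displayed formula in $r$ and show the derivative is positive on $\Ot$; another, perhaps cleaner, is to invoke the univalence of $\Pt$ on $\Ot$ (it is the inverse of the conformal map $\eta_t : \bbD \to \Ot$) together with the boundary correspondence $\overline{\bbD} \to \overline{\Ot}$ recorded just above the proposition, to rule out a second radial crossing of $\partial\Ot$. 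Once monotonicity is in hand, the dichotomy is clear: either the left side above never reaches $1/t$ for $r\in(0,1)$, in which case $r_t(\theta)=1$ and $\theta\notin U_{t,\bar{\mu}}$, or $r_t(\theta)\in(0,1)$ is the unique solution of the equation in item (2), which puts $\theta\in U_{t,\bar{\mu}}$.

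Regularity of $r_t$ (continuity on $[-\pi,\pi]$ and analyticity on the open set where $r_t(\theta) < 1$) then follows from the implicit function theorem applied to the real-analytic defining equation, the nonvanishing of the $r$-derivative being exactly the strict monotonicity just established; continuity across points where $r_t$ jumps to the value $1$ requires a short separate argument using monotone convergence to control the integral as $r\uparrow 1$. Items (1) and (2) then follow immediately by writing $\Ot$ as the union of open radial segments of lengths $r_t(\theta)$ and using the identification of $\partial\Ot$ with the equality locus. For item (3), the fact recalled in the excerpt that $\eta_t$ extends to a homeomorphism $\overline{\bbD}\to\overline{\Ot}$ with inverse $\Pt$ restricts on boundaries to precisely the stated homeomorphism, since $\Pt(r_t(\theta)e^{i\theta})$ lies on $\bT$ by the first computation. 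I expect the monotonicity step to be the main obstacle: it is the single point where a purely function-theoretic argument is replaced by a somewhat delicate estimate, and it is exactly what forces $\Ot$ to be star-shaped about the origin, which in turn is what permits the boundary to be parameterized as a single-valued function of $\theta$.
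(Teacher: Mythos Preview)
The paper does not supply its own proof of this proposition: it is quoted from \cite{Zhong2015} (summarizing Theorem~3.2 and Proposition~3.7 there), so there is no in-paper argument to compare against directly. That said, your outline is correct and matches the approach of \cite{Zhong2015}. The monotonicity step you flag as the main obstacle is precisely Lemma~\ref{lemma:derivative} (Lemma~3.1 of \cite{Zhong2015}), which shows by elementary calculus that for each fixed $y\in[-1,1]$ the function $r\mapsto \frac{1-r^{2}}{-\log r}\,\frac{1}{1-2ry+r^{2}}$ is strictly increasing on $(0,1)$; integrating in $y$ against $d\bar{\mu}$ then gives strict monotonicity of your left-hand side in $r$, so the ``differentiate in $r$'' route you propose is exactly the one taken. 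With that lemma in hand, items~(1) and~(2) follow as you describe, analyticity of $r_t$ where $r_t(\theta)<1$ is the implicit function theorem applied at a point where the $r$-derivative is nonzero, and item~(3) is the boundary restriction of the homeomorphism $\eta_t:\overline{\bbD}\to\overline{\Ot}$ and its inverse $\Pt$.
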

\begin{proof}
	That $r_t$ is continuous follows from \cite[Proposition 3.7]{Zhong2015}. The analyticity of $r_t$ when $r_t(\theta)<1$ follows from Remark~\ref{rem:rtanalyticity}.
	
	Point 1 is \cite[Theorem 3.2(1)]{Zhong2015}. The display equation in Point 2 is just the one preceding Remark~\ref{rem:rtanalyticity}. That $\partial\Ot=\{r_t(\theta)e^{i\theta}: \theta\in[-\pi,\pi] \}$ follows from \cite[Corollary 3.3]{Zhong2015} since $\Phi_{t,\bar\mu}$ is the left inverse of $\omega_t$ (the function $\omega_t$ is called $\eta_t$ in \cite{Zhong2015}). The boundary curve $\partial\Ot$ encloses the origin because we always have $r_t(\theta)>0$. Point 3 also follows from \cite[Proposition 3.7]{Zhong2015}; by Proposition 3.7 in \cite{Zhong2015} we know that the maps $\theta\mapsto r_t(\theta)e^{i\theta}$ and $e^{i\theta}\mapsto \Phi_{t,\bar\mu}(r_t(\theta)e^{i\theta})$ are homeomorphisms.
	\end{proof}

\begin{remark}
	The paper \cite{Zhong2015} did not include the Haar unitary case. However, 	when $u$ is a Haar unitary, the above description for the boundary set also holds. 
Indeed, In this case $\Omega_{t, \bar{h}}$ is the disk centered at the origin with radius $e^{-t/2}$. In fact, one can verify that, for any $0<r<1$, 
\begin{align*}
\frac{r^2-1}{2\log r}\int_{-\pi}^{\pi} &\frac{1}{|1-re^{i(\theta+x)}|^2} dh (e^{ix})\\
   &=\frac{1}{2\pi}\frac{r^2-1}{2\log r}\int_{-\pi}^{\pi}\frac{1}{|1-re^{i x}|^2}dx =-\frac{1}{2\log r}. 
\end{align*}
Hence $r_t(\theta)=e^{-t/2}$ for all $\theta$ in this case by the definition (\ref{def:r_t_theta}) and results in Proposition \ref{characterization-0} are also valid. 
\end{remark}
As the inverse map of $\omega_t$, the restriction of the map $\Pt$ to $\Ot$ is conformal map and can be extended to a homeomorphism of $\overline{\Ot}$ onto $\overline{\mathbb{D}}$. 
We can write
\[
     \arg\left( \Phi_{t,\bar{\mu}}(r_t(\theta)e^{i\theta})\right)=\theta+t\int_{-\pi}^\pi \frac{r_{t}(\theta) \sin(\theta+x)}{|1-r_t(\theta)e^{i(\theta+x)}|^2}d\bar{\mu}(e^{ix}). 
\]
We can now obtain the following result, which is a slight modification from \cite[Theorem 3.8]{Zhong2015}.
\begin{theorem}\label{thm:mut}
	Let $\mu_t$ be the free multiplicative convolution of $\mu$ and $\lambda_t$, the spectral measure of $uu_t$.
	Then $\mu_t$ has a density $p_t$ with respect to the Haar measure given by 
	\begin{equation}\label{eq:3.17}
	p_t(\Phi_{t, \bar{\mu}} (r_t(\theta)e^{i\theta})) =-\frac{\log r_t(\theta)}{\pi t}.
	\end{equation}
\end{theorem}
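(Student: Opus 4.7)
My plan is to apply the Herglotz inversion formula \eqref{MultInv}, combine it with the subordination relation \eqref{eqn:eqn-subordination}, and transfer the result to $\mu_t$ via a symmetry. First, the spectral measure $\lambda_t$ of $u_t$ is invariant under $\xi\mapsto\bar\xi$ on $\bT$ (this follows either from the symmetry of the heat kernel on $U(N)$ under complex conjugation and passing to the limit, or directly from \eqref{eqn:UBM} using that $-s_t$ and $s_t$ are equidistributed). Consequently $\bar\mu\boxtimes\lambda_t=\overline{\mu\boxtimes\lambda_t}$, i.e.\ $\bar\mu\boxtimes\lambda_t$ is the reflection of $\mu_t$ across the real axis. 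It therefore suffices to compute the density of $\bar\mu\boxtimes\lambda_t$ at the reflected point and transfer back by conjugation.

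Applying \eqref{MultInv} to $\bar\mu\boxtimes\lambda_t$ with $\eta_{\bar\mu\boxtimes\lambda_t}(z)=\eta_{\bar\mu}(\eta_t(z))$, the density at $e^{-i\alpha}$ is
\[
\lim_{r\uparrow 1}\frac{1}{2\pi}\textrm{Re}\left(\frac{1+\eta_{\bar\mu}(\eta_t(re^{i\alpha}))}{1-\eta_{\bar\mu}(\eta_t(re^{i\alpha}))}\right).
\]
By Proposition \ref{characterization-0}, $\eta_t$ extends to a homeomorphism $\overline{\bbD}\to\overline{\Ot}$ whose boundary inverse is $\Pt$; in particular $\eta_t(e^{i\alpha})=r_t(\theta)e^{i\theta}$ whenever $e^{i\alpha}=\Pt(r_t(\theta)e^{i\theta})$. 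For $\theta\in U_{t,\bar\mu}$ one has $r_t(\theta)<1$, so $r_t(\theta)e^{i\theta}$ lies in the interior of $\bbD$ where $\eta_{\bar\mu}$ is continuous, and the limit is obtained by direct substitution.

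The final step is algebraic. Using $\eta_{\bar\mu}=\psi_{\bar\mu}/(1+\psi_{\bar\mu})$ one verifies
\[
\frac{1+\eta_{\bar\mu}(w)}{1-\eta_{\bar\mu}(w)}=1+2\psi_{\bar\mu}(w)=\int_{\bT}\frac{1+\xi w}{1-\xi w}\,d\bar\mu(\xi),
\]
whose real part at $w=r_t(\theta)e^{i\theta}$ is the Poisson-type integral $\int_{\bT}(1-r_t(\theta)^2)/|1-\xi r_t(\theta)e^{i\theta}|^2\,d\bar\mu(\xi)$. By the defining equation for $r_t(\theta)$ in Proposition \ref{characterization-0}(2), this integral equals $-2\log r_t(\theta)/t$; multiplying by the $1/(2\pi)$ factor and invoking the reflection symmetry yields $p_t(\Pt(r_t(\theta)e^{i\theta}))=-\log r_t(\theta)/(\pi t)$. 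The Haar unitary case fits within the same framework via the canonical choice $\eta_t(z)=e^{-t/2}z$ and $r_t(\theta)\equiv e^{-t/2}$, producing the flat value $1/(2\pi)$ consistent with $\mu_t=h$. The chief technical input is the homeomorphic extension of $\eta_t$ to $\overline{\bbD}$ with boundary inverse $\Pt$, supplied by Proposition \ref{characterization-0} from \cite{Zhong2015}; once this is in hand, the proof reduces to the substitution above.
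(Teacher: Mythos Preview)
Your argument is correct and follows essentially the same route as the paper. The paper's proof simply cites \cite[Theorem 3.8]{Zhong2015} for the density of $u^*u_t=\bar\mu\boxtimes\lambda_t$ at $\overline{\Phi_{t,\bar\mu}(r_t(\theta)e^{i\theta})}$ and then transfers to $\mu_t$ via the same conjugation symmetry $\overline{\lambda_t}=\lambda_t$ that you use; you have in effect unpacked that citation by writing out the Herglotz inversion \eqref{MultInv}, the subordination \eqref{eqn:eqn-subordination}, and the evaluation of the Poisson integral via the defining equation for $r_t(\theta)$ in Proposition~\ref{characterization-0}(2). So your proof is a self-contained version of the paper's, not a genuinely different approach.
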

\begin{proof}
	When $u$ is a Haar unitary, $r_t(\theta)=e^{-t/2}$ for all $\theta$. The formula (\ref{eq:3.17}) is reduced to the Haar measure on tbe unit circle $\bT$. 
	
	Next, we consider $u$ that is not a Haar unitary. Let $q_t$ be the density of $u^*u_t$. Directly applying the result in \cite[Theorem 3.8]{Zhong2015} (in which $u$ was playing the role of $u^*$ here) gives
	\[q_t(\overline{\Phi_{t,\bar{\mu}}(r_t(\theta)e^{i\theta})}) =-\frac{\log r_t(\theta)}{\pi t}.\]
	The spectral distribution $\lambda_t$ of $u_t$ is symmetric about the $x$-axis; hence $(uu_t)^* = u_t^*u^*$ has the same distribution as $u^*u_t^*$ and $u^*u_t$ in the tracial $W^*$-probability space $(\A, \tau)$. We then have
	 $$p_t(\Phi_{t, \bar{\mu}} (r_t(\theta)e^{i\theta})) = q_t(\overline{\Phi_{t,\bar{\mu}}(r_t(\theta)e^{i\theta})}) =-\frac{\log r_t(\theta)}{\pi t},$$
	 giving the desired result.
	\end{proof}

\begin{proposition}[Corollary 3.9 of \cite{Zhong2015}]
	\label{Mcomponent}
The support of the law $\mu_t$ of $uu_t$ is the closure of its interior and the number
of connected components of $U_{t,\mu}$ is a non-increasing function of $t$. 
\end{proposition}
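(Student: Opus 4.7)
The plan is to use the explicit density formula from Theorem \ref{thm:mut} together with the structural properties of $U_{t,\bar\mu}$ and $r_t$ established in Proposition \ref{characterization-0}; the argument parallels Biane's in the additive case (Proposition \ref{Acomponent}). We interpret the $U_{t,\mu}$ appearing in the statement as the set $U_{t,\bar\mu}$ defined in (\ref{Ut-00}).

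For the first claim (support is the closure of its interior), Theorem \ref{thm:mut} gives the density of $\mu_t$ at $\Phi_{t,\bar\mu}(r_t(\theta)e^{i\theta})$ as $-\log r_t(\theta)/(\pi t)$, which is strictly positive iff $r_t(\theta) < 1$, i.e.\ iff $\theta \in U_{t,\bar\mu}$. By Proposition \ref{characterization-0}(3) the map $\theta \mapsto \Phi_{t,\bar\mu}(r_t(\theta)e^{i\theta})$ is a homeomorphism $\mathbb{T}\to\mathbb{T}$, so it carries the open set $U_{t,\bar\mu}$ to an open subset $V_t\subseteq\mathbb{T}$ on which $\mu_t$ has strictly positive density and off which it vanishes. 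Since $\mu_t$ is absolutely continuous with respect to Haar measure, $\mathrm{supp}\,\mu_t = \overline{V_t}$, the closure of an open set.

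For the monotonicity claim, set
\[
f(\theta) = \int_{-\pi}^{\pi}\frac{d\bar\mu(e^{ix})}{|1-e^{i(\theta+x)}|^2} = \int_{-\pi}^{\pi}\frac{d\bar\mu(e^{ix})}{4\sin^2((\theta+x)/2)},
\]
so $U_{t,\bar\mu} = \{f > 1/t\}$. Since $f$ is independent of $t$ and $1/t$ is strictly decreasing in $t$, the family $\{U_{t,\bar\mu}\}_{t>0}$ is monotonically increasing. The crucial input is the convexity of $f$ on each maximal arc of $\mathbb{T}$ on which it is finite (equivalently, each connected arc of $\mathbb{T}\setminus\mathrm{supp}(\mu)$): writing $g(y) = 1/(4\sin^2(y/2))$, a short calculation gives $(\log g)''(y) = \tfrac{1}{2}\csc^2(y/2) > 0$ away from $2\pi\mathbb{Z}$, so $g$ is log-convex, hence convex, on any arc avoiding its singularity, and this convexity is preserved by integrating $\theta\mapsto g(\theta+x)$ against the positive measure $\bar\mu$.

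Since $f$ is convex on each such arc and blows up at both endpoints, the sublevel set $\{f\leq 1/t\}$ meets each arc in a single (possibly empty) closed subarc containing that arc's unique minimum. As $t$ increases these subarcs can only shrink or vanish — no new subarc can appear — so if $j(t)$ denotes their total number, $j(t)$ is non-increasing. Since $U_{t,\bar\mu}$ is $\mathbb{T}$ with these closed subarcs removed, it has $\max(j(t),1)$ connected components, a non-increasing function of $t$. The main technical step is the convexity calculation, which is a direct computation; granted that, the topological conclusion on the circle is elementary.
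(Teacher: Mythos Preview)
The paper does not give its own proof here; the proposition is simply quoted from \cite{Zhong2015}. Your argument for the first assertion is correct: $p_t$ is continuous and strictly positive exactly on the image of the open set $U_{t,\bar{\mu}}$ under the homeomorphism of Proposition~\ref{characterization-0}(3), so the support of $\mu_t$ is the closure of that open image.

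For the monotonicity, your log-convexity computation for $g(y)=1/(4\sin^2(y/2))$ is correct, and it does show that $f$ is convex on every open arc of $\mathbb{T}\setminus\mathrm{supp}(\mu)$. The gap is the parenthetical identification of the maximal arcs on which $f$ is finite with the connected arcs of $\mathbb{T}\setminus\mathrm{supp}(\mu)$, together with the claim that $f$ blows up at both endpoints of each such arc. Neither need hold. If $e^{ia}$ is a boundary point of $\mathrm{supp}(\mu)$ at which the density of $\mu$ vanishes to order greater than $1$, then the singularity of the integrand in $f(a)=\int d\bar{\mu}(e^{ix})/|1-e^{i(a+x)}|^2$ at $x=-a$ is integrable and $f(a)<\infty$. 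In that situation $\{f\le 1/t\}$ contains points of $\mathrm{supp}(\mu)$ for all sufficiently small $t$, so your description of $\{f\le 1/t\}$ as a disjoint union of subarcs sitting inside the complementary arcs $A_j$ breaks down, and the component count no longer follows as written. The convexity you established is the right ingredient, but turning it into monotonicity of the number of components still requires an argument that handles the possible finiteness of $f$ on parts of $\mathrm{supp}(\mu)$.
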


\subsection{The Brown Measure}
If $x\in \A$ is a normal random variable, then there exists a spectral measure $E_x$ such that
$$x = \int \lambda \;dE_x(\lambda),$$
by the spectral theorem. The law $\mu_x$ of $x$ is then computed as 
\[\mu_x(A) = \tau(E_x(A))\]
for each Borel set $A$.

However, if $x$ is not normal, then the spectral theorem does not apply. The Brown measure was introduced by Brown \cite{Brown1986} and is a natural candidate of the spectral distribution of a non-normal operator.

Given $x\in\A$, the Fuglede--Kadison determinant \cite{FugledeKadison1952} $\mathcal{D}(x)$ of $x$ is defined as
\[
\mathcal{D}(x)=\exp[\tau(\log(|x|))]\in [0,\infty).
\]
Define a function $L_x$ on $\C$ by
$$L_x(\lambda) = \log \mathcal{D}(x) = \tau[\log(|a-\lambda|)].$$
This function is subharmonic. For example, when $A\in M_n(\C)$, then
$$L_A(\lambda) = \log |\det (A-\lambda I)|^{1/n}.$$
The {\bf Brown measure} \cite{Brown1986} of $x$ is then defined to be the distributional Laplacian of $L_x$
$$\rho_x = \frac{1}{2\pi}\Delta L_x.$$

In this paper, we compute the Brown measures using the following strategy. We first regularize the function $L_x$ by looking at
\[\lambda\mapsto \tau[\log((\lambda-x)^*(\lambda-x)+\varepsilon)]\]
for $\varepsilon>0$. For any $\varepsilon>0$, the above quantity is always well-defined as a real number. We then take the limit as $\varepsilon\downarrow 0$ and attempt to take Laplacian. In the cases that we consider, the function $L_x$ is indeed analytic on an open set in which the Brown measure of $x$ has full measure. The Brown measure of $x$ is then the $1/(2\pi)$ multiple of the ordinary Laplacian of $L_x$; that is
\begin{equation}
\label{BrownAsLimit}
d\rho_x(\lambda) =\frac{1}{4\pi}\Delta_{\lambda} \lim_{\varepsilon\downarrow
	0}\tau[\log((\lambda-x)^*(\lambda-x)+\varepsilon)]\,d^2\lambda
\end{equation}
where $d^2\lambda$ denotes the Lebesgue measure on $\R^2$. See \cite[Chapter 11]{MingoSpeicherBook} for more details and \cite{HaagerupSchultz2007} for discussions on unbounded operators.

In this paper, we compute the Brown measures of a sum or a product of two freely independent random variables. The paper \cite{BSS2018} studied the Brown measure of polynomials of several free random variables using operator-valued free probability and linearization \cite{Anderson2013}; it does not appear to be easy to apply their framework to get analytic results in our case. 

\section{Free Circular Brownian Motion}
\label{CircularCase}
Let $(c_t)_{t\geq 0}$ be a free circular Brownian motion. We write $x_t = x_0+c_t$ and $x_{t,\lambda} = \lambda - x_t$, where $\lambda\in\C$.
Define
\begin{equation}
\label{SEq}
S(t,\lambda, \varepsilon) = \tau[\log(x_{t,\lambda}^*x_{t, \lambda}+\varepsilon)]
\end{equation}
for $\varepsilon>0$.
When $t=0$, since $x_0$ is self-adjoint,
\[S(0,\lambda, \varepsilon) = \tau[\log((x_0-\lambda)^*(x_0-\lambda)+\varepsilon)]\]
can be written as an integral, instead of a trace,
\[S(0,\lambda, \varepsilon) = \int_\R \log(|x-\lambda|^2+\varepsilon)\,d\mu(x)\]
where $\mu$ is the spectral distribution of $x_0$.

In order to compute the Brown measure of $x_0+c_t$, we need to compute the Laplacian $\Delta_\lambda S(t, \lambda, 0)$ of $S(t,\lambda,0)$ with respect to $\lambda$, where
\begin{equation}
\label{St0def}
S(t,\lambda,0) = \lim_{\varepsilon\downarrow
 0} S(t, \lambda, \varepsilon).
\end{equation}
 In Section \ref{addPDE}, we compute a first-order, nonlinear partial differential equation of Hamilton--Jacobi type of $S$. We then solve a system of ODEs that depends on $\lambda$ and the initial condition $\varepsilon_0$. We try to choose, for each $\lambda$, an initial condition $\varepsilon_0$ such that
\begin{enumerate}
	\item The lifetime of the solution is $t$, and
	\item  $\lim_{s\uparrow
 t} \varepsilon(s)=0.$
	\end{enumerate}
Then we use the solution of the Hamilton--Jacobi equation to compute $S(t,\lambda, 0)$, which is in terms of $\lambda$ and the initial condition $\varepsilon_0$, and compute the Laplacian $\Delta_\lambda S(t,\lambda,0)$.

\subsection{The Hamilton--Jacobi Equation}\label{addPDE}
In this section, we find a first-order, nonlinear partial differential equation of Hamilton--Jacobi type of the function $S$ defined in \eqref{SEq}
\[\frac{\partial S}{\partial t} = \varepsilon\left(\frac{\partial S}{\partial \varepsilon}\right)^2.\]
The variable $\varepsilon$ is positive. 

We remark that this PDE corresponds to the formal large-$N$ limit of the PDE computed in \cite[Eq. (27)]{BGNTW2015} by Nowak et al. after identifying $\varepsilon$ with $|w|^2$.

\subsubsection{The PDE of $S$}
We first compute the time-derivative of $S$, using free It\^{o} calculus. Free stochastic calculus was developed in the 1990s by Biane, K\"{u}mmerer, Speicher, and many others; see for example \cite{Biane1997b, BianeSpeicher1998,  KummererSpeicher1992}. 

Suppose that $f_t$ and $h_t$ are processes adapted to $c_t$. The following ``stochastic differentials" involving these processes can be computed and simplified as below (See \cite[Theorem 4.1.2]{BianeSpeicher1998}):
\begin{equation}
\label{eq:Ito}
\begin{split}
	dc_t \,f_t \,dc_t^* &= dc_t^* \,f_t \,dc_t = \tau(f_t)\,dt\\
	dc_t \,f_t \,dc_t &= dc_t^* \,f_t \,dc_t^* = 0\\
	dc_t \,dt &= dc_t^*\, dt = 0\\
	\tau(f_t\, dc_t\, h_t) &= \tau(f_t\, dc_t^*\, h_t) = 0.
	\end{split}
\end{equation}
We can use the free It\^{o} product rule for processes $a_t^{(1)},\ldots,a_t^{(n)}$ adapted to $c_t$:
\begin{equation}
\label{ItoForm}
\begin{split}
	d(a_t^{(1)}\cdots a_t^{(n)}) &= \sum_{j=1}^n(a_t^{(1)}\cdots a_t^{(j-1)})\,da_t^{(j)}(a_t^{(j+1)}\cdots a_t^{(n)}) \\
	&+ \sum_{1\leq j<k\leq n}(a_t^{(1)}\cdots a_t^{(j-1)})\,da_t^{(j)}(a_t^{(j+1)}\cdots a_t^{(k-1)})\,da_t^{(k)}(a_t^{(k+1)}\cdots a_t^{(n)}).
\end{split}
\end{equation}
\begin{lemma}
	The time-derivative of $S$ satisfies
	\begin{equation}\label{DiffEq}
\frac{\partial S}{\partial t}(t,\lambda,\varepsilon) =\varepsilon\tau[(x_{t,\lambda}^*x_{t,\lambda}+\varepsilon)^{-1}]\tau[(x_{t,\lambda}^*x_{t,\lambda}+\varepsilon)^{-1}].
	\end{equation}
	\end{lemma}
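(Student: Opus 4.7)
The plan is to apply free Itô calculus to $Y_t := x_{t,\lambda}^* x_{t,\lambda} + \varepsilon$ and then compute $d\tau[\log Y_t]$ to second order, using that $\log(Y + dY) = \log Y + Y^{-1}dY - \tfrac{1}{2}Y^{-1}dY\,Y^{-1}dY + \cdots$ so that, after taking the trace,
\[
d\tau[\log Y_t] = \tau[Y_t^{-1}\,dY_t] - \tfrac{1}{2}\tau[Y_t^{-1}\,dY_t\,Y_t^{-1}\,dY_t] + o(dt).
\]
First I would write $dx_{t,\lambda} = -dc_t$, $dx_{t,\lambda}^* = -dc_t^*$, and apply the product rule \eqref{ItoForm} to obtain
\[
dY_t = -dc_t^*\,x_{t,\lambda} - x_{t,\lambda}^*\,dc_t + dt,
\]
where the $dt$ term comes from the single nonzero contraction $dc_t^* dc_t = \tau(1)\,dt = dt$.

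Next I would handle the first-order trace: the martingale pieces $\tau[Y_t^{-1} dc_t^* x_{t,\lambda}]$ and $\tau[Y_t^{-1} x_{t,\lambda}^* dc_t]$ vanish (since $\tau(f\,dc_t\,h) = \tau(f\,dc_t^*\,h) = 0$), leaving simply $\tau[Y_t^{-1}\,dY_t] = \tau[Y_t^{-1}]\,dt$. For the second-order trace I would retain only the terms quadratic in $dc_t, dc_t^*$, expand
\[
\tau[Y_t^{-1}(dc_t^* x_{t,\lambda} + x_{t,\lambda}^* dc_t)Y_t^{-1}(dc_t^* x_{t,\lambda} + x_{t,\lambda}^* dc_t)],
\]
kill the two terms containing $dc_t\, f\, dc_t$ or $dc_t^*\,f\,dc_t^*$, and evaluate the remaining two terms by cyclicity and the contraction rule $dc_t\,f\,dc_t^* = \tau(f)\,dt$. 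Both surviving terms give $\tau(Y_t^{-1})\,\tau(x_{t,\lambda} Y_t^{-1} x_{t,\lambda}^*)\,dt$, so the second-order contribution to $d\tau[\log Y_t]$ is $-\tau(Y_t^{-1})\,\tau(x_{t,\lambda} Y_t^{-1} x_{t,\lambda}^*)\,dt$.

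To finish, I would use the standard identity $x_{t,\lambda}(x_{t,\lambda}^*x_{t,\lambda}+\varepsilon)^{-1} = (x_{t,\lambda}x_{t,\lambda}^*+\varepsilon)^{-1}x_{t,\lambda}$ to rewrite
\[
x_{t,\lambda} Y_t^{-1} x_{t,\lambda}^* = (x_{t,\lambda}x_{t,\lambda}^*+\varepsilon)^{-1}x_{t,\lambda}x_{t,\lambda}^* = 1 - \varepsilon(x_{t,\lambda}x_{t,\lambda}^*+\varepsilon)^{-1},
\]
which yields $1 - \tau(x_{t,\lambda}Y_t^{-1}x_{t,\lambda}^*) = \varepsilon\,\tau[(x_{t,\lambda}x_{t,\lambda}^*+\varepsilon)^{-1}]$. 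Combining with the first-order piece gives $\partial_t S = \varepsilon\,\tau[(x_{t,\lambda}^*x_{t,\lambda}+\varepsilon)^{-1}]\,\tau[(x_{t,\lambda}x_{t,\lambda}^*+\varepsilon)^{-1}]$, and finally the traciality identity $\tau[f(x^*x)] = \tau[f(xx^*)]$ lets me replace $xx^*$ by $x^*x$ in the second factor, matching \eqref{DiffEq}.

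The only real subtlety is the second-order trace computation: one has to be careful about which contractions are nonzero and how the cyclic property moves the stochastic differentials into the correct $dc_t\,f\,dc_t^*$ position before applying $dc_t f dc_t^* = \tau(f)\,dt$. The justification of the Taylor expansion of $\log$ at the operator level (treating $dY_t$ as formally small and keeping only terms contributing to $dt$) is standard in free stochastic calculus and can alternatively be argued via the resolvent representation $\log(Y) = \int_0^\infty\bigl(\tfrac{1}{1+s} - (Y+s)^{-1}\bigr)\,ds$ and differentiation under the integral, but I expect the direct Taylor approach to be the cleanest.
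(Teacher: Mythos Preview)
Your proof is correct and takes a genuinely different route from the paper. The paper avoids applying an It\^{o} formula to a transcendental function: it first expands $\tau[\log(x_{t,\lambda}^*x_{t,\lambda}+\varepsilon)]$ as a power series in $1/\varepsilon$ valid for $|\varepsilon|>\|x_{t,\lambda}^*x_{t,\lambda}\|$, applies the free It\^{o} product rule to each monomial $\tau[(x_{t,\lambda}^*x_{t,\lambda})^n]$, resums the resulting double series into the product $\varepsilon\,\tau[(x_{t,\lambda}^*x_{t,\lambda}+\varepsilon)^{-1}]^2$, and then invokes analyticity in $\varepsilon$ to extend the identity to all $\varepsilon>0$. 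Your approach bypasses the series manipulation by applying It\^{o} directly to $\tau[\log Y_t]$ and then using the algebraic identity $x\,Y^{-1}x^* = 1 - \varepsilon(xx^*+\varepsilon)^{-1}$ together with $\tau[f(x^*x)]=\tau[f(xx^*)]$; this is shorter and more conceptual, at the cost of needing to justify the functional It\^{o} step for $\log$ (your resolvent-representation remark is exactly the right way to make that rigorous, and it is worth carrying out rather than leaving as a heuristic). One small caveat: the operator-level expansion you wrote, $\log(Y+dY)=\log Y + Y^{-1}dY - \tfrac12 Y^{-1}dY\,Y^{-1}dY+\cdots$, is not literally correct for noncommuting $Y$ and $dY$ --- the Fr\'echet derivatives of $\log$ are more complicated --- but the identity you actually use, namely $\frac{d^2}{ds^2}\tau[\log(Y+sH)]\big|_{s=0}=-\tau[Y^{-1}HY^{-1}H]$, \emph{is} correct under the trace, and that is all your argument needs.
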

\begin{proof}
Fix $t\geq 0$ and $\lambda\in\mathbb{C}$. For any $\varepsilon$ with $\text{Re}(\varepsilon)>0$, the operator $x_{t,\lambda}^*x_{t,\lambda}+\varepsilon$ is invertible. We can express the function $S(t, \lambda, \cdot)$ defined in \eqref{SEq} as a power series of $\varepsilon$ and hence can be analytically continued to the right half plane.
	
	For each $|\varepsilon|>\|x_{t,\lambda}^*x_{t,\lambda}\|$, we can expand $S(t,\lambda, \varepsilon) = \tau(\log(x_{t,\lambda}^*x_{t,\lambda}+\varepsilon))$ into power series
	\begin{equation}
	\label{PowerSeries}
	\tau(\log(x_{t,\lambda}^*x_{t,\lambda}+\varepsilon)) = \log \varepsilon + \sum_{n=1}^\infty\frac{(-1)^{n-1}}{n\varepsilon^n}\tau[(x_{t,\lambda}^*x_{t,\lambda})^n].
	\end{equation}
If we apply \eqref{ItoForm} to $d((x_{t,\lambda}^*x_{t,\lambda})^n)$, we get
$$\frac{\partial}{\partial t}\tau[(x_{t,\lambda}^*x_{t,\lambda})^n] = n\sum_{j=1}^n \tau[(x_{t,\lambda}^*x_{t,\lambda})^j]\tau[(x_{t,\lambda}^*x_{t,\lambda})^{n-j-1}].$$
Thus, using \eqref{PowerSeries}, we have
\begin{align*}
	\frac{\partial S}{\partial t}(t,\lambda,\varepsilon) &= \varepsilon\left(\frac{1}{\varepsilon}\sum_{k=0}^\infty \frac{(-1)^k}{\varepsilon^k}\tau[(x_{t,\lambda}^*x_{t,\lambda})^k]\right)\left(\frac{1}{\varepsilon}\sum_{l=0}^\infty \frac{(-1)^l}{\varepsilon^l}\tau[(x_{t,\lambda}^*x_{t\lambda})^l]\right)\\
	& = \varepsilon\tau[(x_{t,\lambda}^*x_{t,\lambda}+\varepsilon)^{-1}]\tau[(x_{t,\lambda}^*x_{t,\lambda}+\varepsilon)^{-1}].
\end{align*}
Since all the quantities in \eqref{DiffEq} is analytic in $\varepsilon$, it indeed holds for all $\varepsilon$ in the right half plane of the complex plane; in particular, it holds for all $\varepsilon>0$. 
\end{proof}
\begin{proposition}
	\label{AddPDE}
	For each $\lambda\in\C$, the function $S(\cdot, \lambda, \cdot)$ satisfies the first-order nonlinear partial differential equation
	$$\frac{\partial S}{\partial t} = \varepsilon\left(\frac{\partial S}{\partial \varepsilon}\right)^2$$
	with initial condition
	\[S(0, \lambda, \varepsilon) = \tau(\log((x_0-\lambda)^*(x_0-\lambda)+\varepsilon)).\]
	The PDE does not depend on derivative of the real or imaginary parts of $\lambda$. The variables for the PDE are only $t$ and $\varepsilon$.
\end{proposition}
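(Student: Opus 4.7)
The plan is to combine the time-derivative formula already established in the preceding lemma with a direct computation of the $\varepsilon$-derivative. Concretely, the lemma gives
\[
\frac{\partial S}{\partial t}(t,\lambda,\varepsilon) \;=\; \varepsilon\,\tau[(x_{t,\lambda}^*x_{t,\lambda}+\varepsilon)^{-1}]\,\tau[(x_{t,\lambda}^*x_{t,\lambda}+\varepsilon)^{-1}],
\]
so all that remains is to show that each factor on the right equals $\partial S/\partial \varepsilon$.

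First, I would compute $\partial S/\partial \varepsilon$ by differentiating termwise in the power series \eqref{PowerSeries}, which is absolutely convergent for $|\varepsilon|>\|x_{t,\lambda}^*x_{t,\lambda}\|$. A short calculation yields
\[
\frac{\partial S}{\partial \varepsilon}(t,\lambda,\varepsilon) \;=\; \frac{1}{\varepsilon} + \sum_{n=1}^{\infty}\frac{(-1)^n}{\varepsilon^{n+1}}\,\tau[(x_{t,\lambda}^*x_{t,\lambda})^n] \;=\; \tau[(x_{t,\lambda}^*x_{t,\lambda}+\varepsilon)^{-1}],
\]
which one also sees directly from the functional calculus identity $\partial_\varepsilon\log(A+\varepsilon)=(A+\varepsilon)^{-1}$ combined with linearity and normality of $\tau$. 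Squaring this identity and multiplying by $\varepsilon$ gives exactly the right-hand side of \eqref{DiffEq}, so
\[
\frac{\partial S}{\partial t} \;=\; \varepsilon\!\left(\frac{\partial S}{\partial \varepsilon}\right)^{\!2}
\]
for all $\varepsilon>\|x_{t,\lambda}^*x_{t,\lambda}\|$. Since both sides are analytic in $\varepsilon$ on the right half-plane (as noted in the proof of the lemma), the identity extends to every $\varepsilon>0$ by the identity principle.

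For the initial condition, I would simply set $t=0$ so that $x_{0,\lambda}=\lambda-x_0$ and use that $x_0$ is self-adjoint, giving
\[
S(0,\lambda,\varepsilon) \;=\; \tau\bigl[\log\bigl((x_0-\lambda)^*(x_0-\lambda)+\varepsilon\bigr)\bigr] \;=\; \int_{\R}\log(|x-\lambda|^2+\varepsilon)\,d\mu(x),
\]
via the spectral theorem. Finally, I would record the observation that neither $t$ nor the power-series calculation introduces any derivatives in the real or imaginary parts of $\lambda$, so $\lambda$ appears only as a parameter in the PDE. There is no genuine obstacle here beyond the $\varepsilon$-differentiation and the appeal to analytic continuation; the main bookkeeping point is ensuring the interchange of $\tau$ and $\partial_\varepsilon$ is justified, which is handled by working first in the large-$|\varepsilon|$ regime where the power series converges uniformly.
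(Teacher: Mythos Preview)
Your proposal is correct and follows essentially the same approach as the paper: combine the preceding lemma's formula for $\partial S/\partial t$ with the identity $\partial S/\partial\varepsilon = \tau[(x_{t,\lambda}^*x_{t,\lambda}+\varepsilon)^{-1}]$. The only cosmetic difference is that the paper cites this $\varepsilon$-derivative identity from \cite[Lemma 1.1]{Brown1986} rather than deriving it via the power series as you do, and the paper omits the explicit verification of the initial condition and the analytic continuation remark that you spell out.
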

\begin{proof}
	Applying the fact (\cite[Lemma 1.1]{Brown1986}) that
	$$\frac{\partial S}{\partial \varepsilon} = \tau[(x_{t,\lambda}^*x_{t,\lambda}+\varepsilon)^{-1}]$$
	to \eqref{DiffEq} concludes the proof. 
\end{proof}
The equation in Proposition \ref{AddPDE} is a first-order, nonlinear PDE of
Hamilton--Jacobi type (see for example, Section 3.3 in
the book of Evans \cite{EvansBook}). We will use Hamilton--Jacobi method to \emph{analyze} the function $S$. (The solution, which is the function $S$ defined in \eqref{SEq}, already exists.) The Hamiltonian function 
\begin{equation}
\label{AddHamiltonian}
H(\varepsilon, p_\varepsilon) = -\varepsilon p_\varepsilon^2
\end{equation}
 satisfying
\[\frac{\partial S}{\partial t} = -H\left(\varepsilon, \frac{\partial S}{\partial \varepsilon}\right)\]
 is obtained by replacing the partial derivative in Proposition \ref{AddPDE} by the momentum variable $p_\varepsilon$ and adding a minus sign.

\subsubsection{Solving the Differential Equations}
We consider the Hamilton's equations for the Hamiltonian \eqref{AddHamiltonian}, which consists of the following system of two coupled ODE's:
\begin{equation}
\label{ODEs}
	\frac{d\varepsilon}{dt}=\frac{\partial H}{\partial p_\varepsilon},\qquad\frac{dp_\varepsilon}{dt}=-\frac{\partial H}{\partial \varepsilon}.
\end{equation}
To apply the Hamilton--Jacobi method, we take arbitrary initial condition $\varepsilon_0>0$ for $\varepsilon$ but choose initial condition $p_0$ for the momentum variable $p_\varepsilon$ as
\[p_{0}=p_{\varepsilon}(0) = \tau((|\lambda-x_0|^2+\varepsilon_0)^{-1}).\]
The initial momentum $p_0$ can be written as an integral as
\begin{equation}
\label{Initialp.Add}
p_{0} = \int\frac{1}{|\lambda-x|^2+\varepsilon_0}\,d\mu(x),
\end{equation}
where $\mu$ is the spectral distribution of $x_0$. 
The momentum $p_0$ cannot be chosen arbitrarily and it depends on the initial condition $\varepsilon_0$, as seen in the above formula. The Hamiltonian is a constant of motion and can be expressed as
\begin{equation}\label{eqn:Hamiltonian-additive}
  H(\varepsilon, p_\varepsilon)=H(\varepsilon_0, p_0)=-\varepsilon_0 p_0^2. 
\end{equation}

We first state the result from \cite[Proposition 6.3]{DHKBrown}.
\begin{proposition}
	\label{GeneralSol}
	Fix a function $H({\bf x}, {\bf p})$ defined for ${\bf x}$ in an open set $U\subseteq \R^n$ and ${\bf p}$ in $\R^n$. Consider a smooth function $S(t, {\bf x})$ on $[0, \infty)\times U$ satisfying 
	$$\frac{\partial S}{\partial t} = -H({\bf x}, \nabla_{\bf x}S).$$
	Suppose the pair $({\bf x}(t), {\bf p}(t))$ with values in $U\times \R^n$ satisfies the Hamilton's equations
	$$\frac{d x_j}{dt} = \frac{\partial H}{\partial p_j}({\bf x}(t), {\bf p}(t));\quad \frac{dp_j}{dt} = -\frac{\partial H}{\partial x_j}({\bf x}(t), {\bf p}(t))$$
	with initial conditions ${\bf x}(0) = {\bf x}_0$ and ${\bf p}(0) = (\nabla_{\bf x}S)(0, {\bf x}_0)$.
	Then we have
	\begin{equation}
	\label{FirstHJ}
	S(t, {\bf x}(t)) = S(0, {\bf x}_0)-H({\bf x}_0, {\bf p}_0)t+\int_0^t {\bf p}(s)\cdot \frac{d{\bf x}}{ds}\,ds
	\end{equation}
	and \begin{equation}\label{eq:2ndHJ}
	(\nabla_{\bf x}S)(t, {\bf x}(t))={\bf p}(t).
	\end{equation}
	These two formulas are valid only as long as the solution curve $({\bf x}(t), {\bf p}(t))$ exists in $U\times\R^n$.
\end{proposition}
We apply this result with $n=1$ and $U = (0,\infty)$ in the system \eqref{ODEs}.
\begin{proposition} In the system of coupled ODEs \eqref{ODEs} with $\varepsilon(0) = \varepsilon_0$ and $p_\varepsilon(0) = p_0$ given in \eqref{Initialp.Add}, for any $\lambda$, the first Hamilton--Jacobi formula \eqref{FirstHJ} reads
	\label{FormulaS}
	$$S(t, \lambda, \varepsilon(t)) = \tau(\log(|\lambda-x_0|^2+\varepsilon_0))-\varepsilon_0\tau((|\lambda-x_0|^2+\varepsilon_0)^{-1})^2t.$$
	This formula is valid only as long as the solution $(\varepsilon(t),p_\varepsilon(t))$ exists in $(0,\infty)\times \R$; the lifetime of the solution depends on the initial condition $\varepsilon_0$ of $\varepsilon$.
\end{proposition}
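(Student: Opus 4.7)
The plan is to apply Proposition \ref{GeneralSol} directly, with the single scalar variable $\varepsilon$ on $U=(0,\infty)$, the Hamiltonian $H(\varepsilon, p_\varepsilon)=-\varepsilon p_\varepsilon^2$ from \eqref{AddHamiltonian}, and the initial data $\varepsilon(0)=\varepsilon_0$, $p_\varepsilon(0)=p_0$. The compatibility hypothesis $p_0=(\partial S/\partial \varepsilon)(0,\lambda,\varepsilon_0)$ is built into our choice of $p_0$, because by Proposition \ref{AddPDE} we have $(\partial S/\partial \varepsilon)(t,\lambda,\varepsilon)=\tau((x_{t,\lambda}^*x_{t,\lambda}+\varepsilon)^{-1})$, which at $t=0$ reduces to the integral expression in \eqref{Initialp.Add}. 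Smoothness of $S(t,\lambda,\cdot)$ on $(0,\infty)$ follows from the power series expansion \eqref{PowerSeries}.

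The right-hand side of \eqref{FirstHJ} has three pieces. The initial term is $S(0,\lambda,\varepsilon_0)=\tau(\log(|\lambda-x_0|^2+\varepsilon_0))$ by definition of $S$. The Hamiltonian term is $-H(\varepsilon_0,p_0)\,t=\varepsilon_0 p_0^2\,t$ by \eqref{eqn:Hamiltonian-additive}. The only real computation is the action integral $\int_0^t p_\varepsilon(s)\,\frac{d\varepsilon}{ds}\,ds$. For this I use Hamilton's first equation, $\frac{d\varepsilon}{ds}=\frac{\partial H}{\partial p_\varepsilon}=-2\varepsilon p_\varepsilon$, to rewrite the integrand as $p_\varepsilon\frac{d\varepsilon}{ds}=-2\varepsilon p_\varepsilon^2=2H(\varepsilon,p_\varepsilon)$. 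Since $H$ is a constant of motion along the characteristic (standard, but follows immediately from Hamilton's equations), the integrand is the constant $2H(\varepsilon_0,p_0)=-2\varepsilon_0 p_0^2$, and the integral evaluates to $-2\varepsilon_0 p_0^2\,t$.

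Adding the three contributions yields
\[
S(t,\lambda,\varepsilon(t)) \;=\; \tau(\log(|\lambda-x_0|^2+\varepsilon_0))\;+\;\varepsilon_0 p_0^2\,t\;-\;2\varepsilon_0 p_0^2\,t\;=\;\tau(\log(|\lambda-x_0|^2+\varepsilon_0))\;-\;\varepsilon_0 p_0^2\,t,
\]
and substituting $p_0=\tau((|\lambda-x_0|^2+\varepsilon_0)^{-1})$ from \eqref{Initialp.Add} produces the claimed formula.

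There is essentially no obstacle at the level of the identity itself, since it is a direct instance of Proposition \ref{GeneralSol} together with conservation of $H$; the small care needed is to track that the identity holds only as long as the characteristic $(\varepsilon(s),p_\varepsilon(s))$ remains in $(0,\infty)\times\R$. The ODE system is autonomous and smooth there, so a classical ODE argument shows existence up to some maximal time depending on $\varepsilon_0$ and $\lambda$; the sharper analysis identifying the lifetime with the moment when $\varepsilon(s)\downarrow 0$ (which is precisely the input needed to recover the Brown density from the limit \eqref{St0def}) will be the real work in the subsequent subsection, not here.
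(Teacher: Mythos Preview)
Your proof is correct and follows essentially the same approach as the paper: both use Hamilton's equation $\frac{d\varepsilon}{ds}=-2\varepsilon p_\varepsilon$ to rewrite the integrand $p_\varepsilon\,\frac{d\varepsilon}{ds}$ as $2H(\varepsilon,p_\varepsilon)$, invoke conservation of the Hamiltonian to make it the constant $-2\varepsilon_0 p_0^2$, and then read off the result from \eqref{FirstHJ}. Your write-up is a bit more explicit about verifying the compatibility hypothesis $p_0=(\partial S/\partial\varepsilon)(0,\lambda,\varepsilon_0)$ and about the domain of validity, but the substance is identical.
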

\begin{proof}
	Since the Hamiltonian $H$ is given by (\ref{eqn:Hamiltonian-additive}),  by (\ref{ODEs}), we have 
	\begin{align*}
	 p_\varepsilon(s)\cdot\frac{d}{ds}\varepsilon(s)& =-2\varepsilon(s) p_\varepsilon(s)^2 
	    = 2H(\varepsilon, p_\varepsilon(s))\\
	    &=2H(\varepsilon_0, p_0)=-2\varepsilon_0\tau((|\lambda-x_0|^2+\varepsilon_0)^{-1})^2.
	\end{align*}
The expression of this proposition then follows from \eqref{FirstHJ}.
\end{proof}

\begin{proposition}
	\label{pSol}
	Let $p_0 = \tau((|\lambda-x_0|^2+\varepsilon_0)^{-1})$ be the initial value of $p_\varepsilon$. If the solution to the Hamiltonian system exists up to time $t_*$, then for all $t\in (0, t_*)$, the solution of $p_\varepsilon$ is
	\begin{equation}\label{eqn:p_varepsilon}
p_\varepsilon(t) = \frac{1}{\frac{1}{p_0}-t}.	
	\end{equation}
\end{proposition}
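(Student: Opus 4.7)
The plan is very short because the equation for $p_\varepsilon$ decouples from $\varepsilon$. First I would compute the right side of the second Hamilton equation in \eqref{ODEs} directly from the Hamiltonian $H(\varepsilon, p_\varepsilon) = -\varepsilon p_\varepsilon^2$ in \eqref{AddHamiltonian}. Since $-\partial H/\partial \varepsilon = p_\varepsilon^2$, we get the autonomous scalar ODE
\[
\frac{dp_\varepsilon}{dt} = p_\varepsilon(t)^2,
\]
which, crucially, involves only $p_\varepsilon$ and not $\varepsilon$. So although the full Hamiltonian system is coupled, the momentum equation on its own is a simple Riccati-type ODE that can be solved in closed form independently of $\varepsilon(t)$.

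Next I would integrate this ODE. The standard trick is to note $\frac{d}{dt}(1/p_\varepsilon) = -p_\varepsilon^{-2}(dp_\varepsilon/dt) = -1$, so $1/p_\varepsilon(t) = 1/p_0 - t$, which rearranges to the claimed formula $p_\varepsilon(t) = 1/(1/p_0 - t)$. The only thing one must check is that dividing by $p_\varepsilon$ is legitimate, i.e., that $p_\varepsilon$ does not vanish on the interval of existence; this is automatic because $p_0 = \tau((|\lambda-x_0|^2+\varepsilon_0)^{-1}) > 0$ and, by uniqueness for the Hamiltonian ODE system, $p_\varepsilon$ cannot hit zero (since $p \equiv 0$ paired with some $\varepsilon$ is itself a solution of the momentum ODE, so any trajectory starting with $p_0 > 0$ stays positive for as long as it exists).

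There is no real obstacle here; the proposition is essentially a direct integration, and the main content of the statement is the explicit formula \eqref{eqn:p_varepsilon}, which will later be used (together with the conservation of energy $\varepsilon p_\varepsilon^2 = \varepsilon_0 p_0^2$) to obtain the analogous closed form for $\varepsilon(t)$ and to identify the lifetime $t_* = 1/p_0$ of the trajectory.
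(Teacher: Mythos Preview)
Your proposal is correct and follows exactly the paper's approach: the paper's proof simply observes that $\frac{dp_\varepsilon}{dt} = -\frac{\partial H}{\partial \varepsilon} = p_\varepsilon^2$ and solves this ODE with the given initial value. Your write-up is in fact more detailed than the paper's (you spell out the $1/p_\varepsilon$ trick and the positivity check), but the underlying argument is identical.
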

\begin{proof}
	This follows directly from solving one equation in the system of ODEs (\ref{ODEs}) 
	$$\frac{dp_\varepsilon}{dt} = -\frac{\partial H}{\partial \varepsilon} = p_\varepsilon^2$$
	with the initial value $p_0 = \tau((|\lambda-x_0|^2+\varepsilon_0)^{-1})$.
\end{proof}
Since $\varepsilon(t) = -H(t)/p_\varepsilon(t)^2=-H_0/p_\varepsilon(t)^2$, we have the following corollary.
\begin{corollary}
	\label{epsilonFunc}
	Under the same hypothesis in Proposition \ref{pSol}, we can solve $\varepsilon(t)$ as
	$$\varepsilon(t) = -\left(\frac{1}{p_0}-t\right)^2 H_0 = \varepsilon_0(1-tp_0)^2.$$
\end{corollary}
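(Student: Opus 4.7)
The plan is to use conservation of energy together with the explicit formula for $p_\varepsilon(t)$ from Proposition \ref{pSol}. Since the Hamiltonian $H(\varepsilon, p_\varepsilon) = -\varepsilon p_\varepsilon^2$ is autonomous (no explicit $t$ dependence), it is a constant of motion along any solution of the Hamilton system \eqref{ODEs}. Thus for every $t$ in the lifetime of the solution,
\[
-\varepsilon(t)\,p_\varepsilon(t)^2 \;=\; H_0 \;=\; -\varepsilon_0 p_0^2.
\]
Solving algebraically for $\varepsilon(t)$ gives $\varepsilon(t) = -H_0/p_\varepsilon(t)^2$, which is precisely the first equality in the statement.

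For the second equality, I would substitute the formula from \eqref{eqn:p_varepsilon}, namely $p_\varepsilon(t) = 1/(1/p_0 - t)$, so that $1/p_\varepsilon(t)^2 = (1/p_0 - t)^2$. Then
\[
\varepsilon(t) \;=\; -H_0 \left(\frac{1}{p_0}-t\right)^2 \;=\; \varepsilon_0 p_0^2 \left(\frac{1}{p_0}-t\right)^2 \;=\; \varepsilon_0(1-t p_0)^2,
\]
which is the claimed closed form. Since nothing here goes beyond algebra and the already-established conservation law, there is no real obstacle; the only point to be mindful of is that the argument is only valid on the interval $(0, t_*)$ where the solution exists (equivalently, where $1/p_0 - t \neq 0$), so that the division by $p_\varepsilon(t)^2$ is legitimate — and this is exactly the hypothesis inherited from Proposition \ref{pSol}.
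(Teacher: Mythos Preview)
Your proof is correct and follows the same approach as the paper: use that the Hamiltonian $H=-\varepsilon p_\varepsilon^2$ is a constant of motion to write $\varepsilon(t)=-H_0/p_\varepsilon(t)^2$, then substitute the explicit formula for $p_\varepsilon(t)$ from Proposition~\ref{pSol}. The paper states this in a single line just before the corollary, and your version simply spells out the algebra.
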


The pair $(\varepsilon(t), p_\varepsilon(t))$ given by Proposition \ref{pSol} and Corollary \ref{epsilonFunc} gives a solution to the Hamiltonian system \eqref{ODEs} up to time 
\[t_*(\lambda, \varepsilon_0) = \frac{1}{p_0},\]
when the denominator in (\ref{eqn:p_varepsilon}) blows up. 
The value of $p_0$ given in (\ref{Initialp.Add}) and hence
the lifetime $t_*$ of the solution depends on $\lambda$ and the initial condition $\varepsilon_0$. 

\subsubsection{Two regimes and the connection to the subordination map}
To compute the Brown measure $\rho_t$ of $x_0+c_t$, we want to compute $\Delta S(t, \lambda, 0)$. Thus, we want to take $\varepsilon(t)=0$ for the formula of $S(t,\lambda, \varepsilon(t))$ in Proposition \ref{FormulaS}. By Corollary \ref{epsilonFunc}, the condition $\varepsilon(t) =0$ can be achieved by either letting $\varepsilon_0\downarrow
 0$, or considering $(1-tp_0) \downarrow
 0$ by making a suitable choice of the initial condition $\varepsilon_0$. 

Write $\lambda= a+ib$. As $\varepsilon_0$ decreases, the lifetime of the path
\begin{equation*}
\label{lifetimedef}
t_*(\lambda, \varepsilon_0)= \frac{1}{p_0} =\frac{1}{\int_{\mathbb{R}}\frac{d\mu(x)}{(a-x)^2+b^2+\varepsilon_0}}
\end{equation*}
decreases. Let
\begin{equation}
\label{lifetimeT}
T(\lambda) = \lim_{\varepsilon_0\downarrow
 0}t_*(\lambda, \varepsilon_0)=  \frac{1}{\int_{\mathbb{R}}\frac{d\mu(x)}{(a-x)^2+b^2}} 
\end{equation}
be the lifetime of the path in the limit as $\varepsilon_0\downarrow
 0$, where the second equality comes from an application of the Monotone Convergence theorem. 
 The first regime, $\varepsilon_0\downarrow
 0$, only works if the lifetime of the path is at least $t$ when $\varepsilon_0$ is very small. 
Hence we study the set where the lifetime of the path is at least $t$ as follows 
\[\{\lambda\in\C: T(\lambda)\geq t\}=\left\{\lambda=a+ib: \int_\mathbb{R}\frac{d\mu(x)}{(a-x)^2+b^2}\leq\frac{1}{t} \right\}.\]
 The second regime is the open set
\begin{equation}\label{eqn:Lambada-t-T}
\Lt = \{\lambda\in\C: T(\lambda)<t\}=\left\{\lambda=a+ib: \int_\mathbb{R}\frac{d\mu(x)}{(a-x)^2+b^2}>\frac{1}{t} \right\},
\end{equation}
where we can use letting $(1-tp_0) \downarrow
 0$, by taking a proper limit of the initial condition $\varepsilon_0$ in Section \ref{InsideLt}. 

We note that the first regime, letting $\varepsilon_0\downarrow 0$ will be used in Section \ref{OutsideLtC} that can be described using the subordination function $F_t$ in Proposition \ref{HFMap} and the function $v_t$ defined in \eqref{def:v_t}. 
We note that $T$ may not be continuous, depending on the choice of $\mu$. For example, if $\mu = \1_{[0,1]}(x) \cdot 3x^2\,dx$, then $T(0) = \frac{1}{3}$ but $T(a)=0$ for all $0<a<1$.

We now identify $\Lt$ and draw connection between the definition of $\Lt$ and the free additive convolution discussed in Section \ref{FAConv}. Recall from \eqref{def:v_t} that 
\begin{equation}
v_t(a) = \inf\left\{b>0: \int_\R\frac{d\mu(x)}{(a-x)^2+b^2}\leq \frac{1}{t}\right\}, \quad a\in\R.
\end{equation}
Using \eqref{lifetimeT}, by (\ref{eqn:Lambada-t-T}), we see that
\begin{equation}\label{Lambada-t-v-t}
\Lt = \{a+ib\in\C: |b|< v_t(a)\}.
\end{equation}
Thus, $\Lt$ agrees with the region defined in Point 1 of Theorem \ref{IntroTh1}. We will prove that the Brown measure $\rho_t$ has support inside $\LtC$. 

We recall, from Section \ref{FAConv} that, the subordination function $F_t$ satisfying
\[G_{x_0}(F_t(z)) = G_{x_0+s_t}(z), \qquad z\in\C^+\]
has left inverse 
\[H_t(z) = z+tG_{x_0}(z).\]
From Proposition \ref{HFMap}, the function $F_t$ can be continuously extended to $\overline{\C^+}$ (\cite[Lemmas 3 and 4]{Biane1997}) and
\[
   F_t(\mathbb{C}^+\cup \mathbb{R})=\{ a+ib\in \mathbb{C}^+: b\geq v_t(a) \}.
\]
 Its inverse $H_t$ maps
\[\{a+ib\in \C^+: b\geq v_t(a)\}\]
onto $\C^+\cup \mathbb{R}$. Due to (\ref{Lambada-t-v-t}), the set $\Lt$ can be written in terms of $F_t$ by
\[\Lt = \C\setminus\{F_t(z), \overline{F_t(z)}: z\in \mathbb{C}^+\cup \mathbb{R} \}.\]
The boundary of $\Lt$ is given by the closure of the set
$$\{ z, \overline{z}: z\in F_t(\mathbb{R})\cap \mathbb{C}^+ \}=\{z, \overline{z}: z \in \mathbb{C}^+ \textrm{ and } H_t(z) \in \mathbb{R}\}.$$
It is notable that the subordination function $F_t$ for $x_0+s_t$ also plays a role in the analysis $x_0+c_t$, where $c_t$ is the free circular Brownian motion.

\subsection{Computation of the Laplacian of $S$}
We now outline the strategy for the computation of the Brown measure in different regimes as follows. (i) If $T(\lambda)>t$, then the lifetime of the solution to Hamiltonian system remains greater than $t$ when $\varepsilon_0$ tends to zero. Hence, we could let $\varepsilon_0=0$ in \eqref{FormulaS}. (ii) If $T(\lambda)<t$, the value of $\varepsilon_0$ is chosen such that $p_0=1/t$. This condition is equivalent to $b^2+\varepsilon_0=v_t(a)^2$ as in Lemma \ref{DeriEpsilon} which allows us to calculate the Brown measure in this regime. (iii) Finally, we need to eliminate any mass on the set $\{\lambda\in\C: T(\lambda) = t\}$. We do this by showing that the \emph{restriction} of the Brown measure $\rho_t$ to $\Lt$ is a probability measure; therefore, using the fact that the Brown measure is a probability measure, we conclude that there is no mass on the boundary $\{\lambda\in\C: T(\lambda) = t\}$. Our approach for the boundary $\{\lambda\in\C: T(\lambda) = t\}$ is different from the approach for the boundary in \cite{DHKBrown}, where they showed directly the Brown measure of the free multiplicative Brownian motion has no mass on the boundary by showing that the partial derivatives of the corresponding $S$ are continuous.


\subsubsection{Outside $\LtC$}
\label{OutsideLtC}
By Corollary \ref{epsilonFunc}, 
$$\varepsilon(t) = \varepsilon_0\left(1-tp_0\right)^2$$
depends on the initial condition $\varepsilon_0>0$. In the case when $\lambda\not\in\LtC$, we use the regime of taking $\varepsilon_0\downarrow
 0$ to make $\varepsilon(t)\downarrow
 0$.
When $\lambda\not\in \LtC$ is fixed, by (\ref{Lambada-t-v-t}), we have 
$$p_0 = \int_{\mathbb{R}}\frac{d\mu(x)}{(a-x)^2+b^2+\varepsilon_0}\leq \int_{\mathbb{R}}\frac{d\mu(x)}{(a-x)^2+b^2}<\int_{\mathbb{R}}\frac{d\mu(x)}{(a-x)^2+v_t(a)^2}\leq\frac{1}{t},$$
where the definition (\ref{def:v_t}) of $v_t$ was used.
In this case, by \eqref{lifetimeT}, the lifetime of the solution 
\[t_*(\lambda, \varepsilon)>t, \quad\textrm{for all $\varepsilon_0>0$}.\]
The initial momentum $p_0$ can then be extended continuously at $\varepsilon_0=0$ by Monotone Convergence Theorem, and $1-tp_0\geq 0$, for all $\varepsilon_0\geq 0$. It is clear that for a fixed $\lambda=a+ib$ such that $|b|>v_t(a)$, $p_0$ is a decreasing function of $\varepsilon_0\geq0$ and hence $1-tp_0$ is increasing. This proves the following lemma.
\begin{lemma}
	\label{AddSurj}
	For a fixed $\lambda\not\in\LtC$, $\varepsilon=\varepsilon_0(1-tp_0)^2$ is an increasing continuous function of $\varepsilon_0>0$; $\varepsilon$ can be extended continuously as $\varepsilon_0\downarrow
 0$.  
We have 
\[
    \lim_{\varepsilon_0 \downarrow
 0}\varepsilon=0, \quad\text{and}\quad \lim_{\varepsilon_0\rightarrow \infty}\varepsilon=\infty. 
\]
	Thus, for every $\varepsilon\geq 0$, there is a unique $\varepsilon_0$ such that $\varepsilon = \varepsilon_0(1-tp_0)^2$.
\end{lemma}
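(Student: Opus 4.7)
The plan is to exploit the explicit formula
\[
p_0(\varepsilon_0)=\int_\R\frac{d\mu(x)}{(a-x)^2+b^2+\varepsilon_0}
\]
together with the hypothesis $\lambda=a+ib\notin\LtC$, which by \eqref{Lambada-t-v-t} is equivalent to $|b|>v_t(a)$. The key observation driving everything is that this hypothesis, combined with the defining inequality \eqref{def:v_t} of $v_t$ (applied to any $b'\in(v_t(a),|b|)$ and passing to the limit), already yields $p_0(0)\le 1/t$, and in fact $p_0(0)<1/t$ strictly because $(a-x)^2+b^2>(a-x)^2+v_t(a)^2$ pointwise. Thus $1-tp_0(\varepsilon_0)>0$ for every $\varepsilon_0\ge 0$, which is exactly the positivity required to make $\varepsilon=\varepsilon_0(1-tp_0)^2$ well-behaved.

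First I would verify the continuity and strict monotonicity of $\varepsilon_0\mapsto \varepsilon$. The integrand $1/((a-x)^2+b^2+\varepsilon_0)$ is a strictly decreasing, continuous function of $\varepsilon_0$ on $[0,\infty)$ and is dominated by the integrable function $1/((a-x)^2+b^2)$, so by the dominated convergence theorem $p_0(\varepsilon_0)$ is continuous on $[0,\infty)$ and strictly decreasing. Consequently $1-tp_0(\varepsilon_0)$ is continuous, positive, and strictly increasing, and the same holds for its square. Multiplying by the continuous, strictly increasing nonnegative factor $\varepsilon_0$ gives a continuous, strictly increasing function $\varepsilon(\varepsilon_0)$ on $[0,\infty)$, with $\varepsilon(0)=0$.

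Next I would read off the boundary limits. As $\varepsilon_0\downarrow 0$, continuity of $p_0$ at $0$ gives $(1-tp_0)^2\to (1-tp_0(0))^2\in(0,1]$, while $\varepsilon_0\to 0$, so $\varepsilon\to 0$. As $\varepsilon_0\to\infty$, dominated convergence (or the trivial bound $p_0(\varepsilon_0)\le 1/\varepsilon_0$) gives $p_0(\varepsilon_0)\to 0$, hence $(1-tp_0)^2\to 1$, so $\varepsilon\to\infty$. The intermediate value theorem together with strict monotonicity then yields that $\varepsilon_0\mapsto \varepsilon$ is a homeomorphism of $[0,\infty)$ onto $[0,\infty)$, producing a unique $\varepsilon_0$ for each $\varepsilon\ge 0$.

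There is no serious obstacle here; the argument is essentially soft. The only subtlety is the necessity of the strict positivity $1-tp_0>0$ throughout $[0,\infty)$: without it, the product $\varepsilon_0(1-tp_0)^2$ could fail to be monotone, which would break the uniqueness of $\varepsilon_0$. This is precisely the reason the hypothesis $\lambda\notin\LtC$ (equivalently $p_0(0)<1/t$) is imposed; the complementary regime $\lambda\in\Lt$ is handled separately in Section \ref{InsideLt} via a different limiting procedure.
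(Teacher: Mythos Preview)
Your proof is correct and follows essentially the same approach as the paper's. The paper establishes the key strict inequality $p_0<1/t$ via the same chain (using $|b|>v_t(a)$) and the monotonicity of $p_0$ in $\varepsilon_0$ in the paragraph immediately preceding the lemma, invoking monotone convergence where you invoke dominated convergence; your write-up is simply more explicit about the boundary limits and the use of the intermediate value theorem for surjectivity.
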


Fix $t$ and $\lambda$. As $\varepsilon$ is an increasing function of $\varepsilon_0$, we can express $\varepsilon_0$ as a function $\varepsilon_0(\varepsilon)$ of $\varepsilon$ such that with initial condition $\varepsilon_0(\varepsilon)$, we have $\varepsilon(t) = \varepsilon$. Hence, 
by Proposition \ref{FormulaS},
\begin{align}
S(t, \lambda, \varepsilon(t))& = \tau(\log(|\lambda-x_0|^2+\varepsilon_0))-\varepsilon_0\tau((|\lambda-x_0|^2+\varepsilon_0)^{-1})^2t\nonumber\\
&=\tau(\log(|\lambda-x_0|^2+\varepsilon_0(\varepsilon)))-\varepsilon_0(\varepsilon)\tau((|\lambda-x_0|^2+\varepsilon_0(\varepsilon))^{-1})^2t.\label{S_tformula}
\end{align}
Using also the fact that $\varepsilon_0(\varepsilon)\downarrow
 0$ as $\varepsilon\downarrow
 0$ gives the main result of this section.
\begin{theorem}
	\label{ThOut}
	For a fixed $\lambda\not\in\LtC$, 
	\begin{equation}
	\label{St0}
	S(t,\lambda,0) = \lim_{\varepsilon\downarrow
 0}S(t, \lambda, \varepsilon) = \int_{\mathbb{R}} \log((a-x)^2+b^2)\,d\mu(x)
	\end{equation}
	is real-valued.	Thus, for all $\lambda\not\in\LtC$, $S(t,\lambda, 0)$ is analytic and
	$$\Delta_\lambda S(t,\lambda, 0) = 0.$$
	In particular, the support of the Brown measure $\rho_t$ of $x_0+c_t$ is inside $\LtC$.
\end{theorem}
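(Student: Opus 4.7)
The plan is to directly substitute the formula \eqref{S_tformula} into the definition \eqref{St0def} of $S(t,\lambda,0)$ and pass to the limit $\varepsilon\downarrow 0$, using Lemma \ref{AddSurj} to control the initial data.

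First, fix $\lambda=a+ib\not\in\LtC$. By Lemma \ref{AddSurj}, the map $\varepsilon_0\mapsto \varepsilon_0(1-tp_0)^2$ is an increasing continuous bijection of $[0,\infty)$ onto itself, so the inverse map $\varepsilon\mapsto\varepsilon_0(\varepsilon)$ is continuous with $\varepsilon_0(\varepsilon)\downarrow 0$ as $\varepsilon\downarrow 0$. Using \eqref{S_tformula}, I would take $\varepsilon\downarrow 0$ termwise. Because $\lambda\notin\LtC$ forces $|b|>v_t(a)\geq 0$, we have $|\lambda-x|^2\geq b^2>0$ uniformly in $x\in\sigma(x_0)$, so $\log(|\lambda-x|^2+\varepsilon_0)\to \log((a-x)^2+b^2)$ uniformly on the (compact) spectrum of $x_0$. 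This yields the convergence $\tau(\log(|\lambda-x_0|^2+\varepsilon_0(\varepsilon)))\to \int_\R\log((a-x)^2+b^2)\,d\mu(x)$. For the second term, the bound $\tau((|\lambda-x_0|^2+\varepsilon_0)^{-1})\leq 1/b^2$ gives $\varepsilon_0(\varepsilon)\,\tau((|\lambda-x_0|^2+\varepsilon_0(\varepsilon))^{-1})^2 t\leq \varepsilon_0(\varepsilon)\,t/b^4\to 0$. Combining the two establishes \eqref{St0}.

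Next, I would note that the limit function can be written as $S(t,\lambda,0)=2\int_\R\log|\lambda-x|\,d\mu(x)$, where for each $x\in\R$ the integrand $\log|\lambda-x|=\Re\log(\lambda-x)$ is harmonic in $\lambda$ on $\C\setminus\{x\}$. Since $\C\setminus\LtC$ is open and disjoint from $\R\supseteq\sigma(x_0)$ (as $|b|>v_t(a)\geq 0$ forces $b\neq 0$), the bound $|\lambda-x|^2\geq b^2$ together with compactness of $\mathrm{supp}\,\mu$ justifies differentiating under the integral sign, giving analyticity of $S(t,\lambda,0)$ (as a harmonic function that is the real part of an analytic function on simply connected subdomains) and the pointwise identity $\Delta_\lambda S(t,\lambda,0)=0$ on $\C\setminus\LtC$.

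Finally, for the support statement, I would invoke the identification $\rho_t=\frac{1}{4\pi}\Delta_\lambda S(t,\lambda,0)$ as distributions, which follows from $\tau(\log|\lambda-x_t|)=\frac{1}{2}S(t,\lambda,0)$ and the definition of the Brown measure (or equivalently the regularization \eqref{BrownAsLimit}: on any test function supported in $\C\setminus\LtC$, the established pointwise convergence together with the uniform bound above allows one to commute the limit with the Laplacian via integration by parts against the test function). Since the classical Laplacian of the smooth function $S(t,\lambda,0)$ vanishes on the open set $\C\setminus\LtC$, its distributional Laplacian vanishes there as well, so $\rho_t(\C\setminus\LtC)=0$ and $\mathrm{supp}\,\rho_t\subseteq\LtC$.

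The main obstacle is the last step: justifying that the \emph{distributional} Laplacian of the $\varepsilon\downarrow 0$ limit of $S(t,\lambda,\varepsilon)$ agrees on the open set $\C\setminus\LtC$ with the \emph{classical} Laplacian computed pointwise. This is routine provided $S(t,\lambda,\varepsilon)\to S(t,\lambda,0)$ in $L^1_{\mathrm{loc}}$ on that open set (which follows from the uniform convergence established above, since the integrands have a common $\log b^2$-type lower bound and bounded upper envelope on any compact subset of $\C\setminus\LtC$), allowing one to transfer the Laplacian onto the test function and then pass to the limit.
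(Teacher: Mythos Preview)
Your approach is essentially the paper's: invert $\varepsilon\mapsto\varepsilon_0$ via Lemma~\ref{AddSurj}, let $\varepsilon_0\downarrow 0$ in \eqref{S_tformula}, recognize the limit as $2\int_\R\log|\lambda-x|\,d\mu(x)$, and conclude harmonicity on $\C\setminus\LtC$. Your write-up is in fact more careful than the paper's, which simply asserts that the Laplacian may be moved inside the integral and does not discuss the passage from pointwise to distributional vanishing.

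There is, however, one incorrect assertion on which you lean twice: that $\lambda\notin\LtC$ forces $|b|>v_t(a)$ and hence $b\neq 0$, so that $\C\setminus\LtC$ is disjoint from~$\R$. This is false. Any real $a$ with $v_t\equiv 0$ on a neighborhood of $a$ (for instance any $a$ far from the compact support of $\mu$) lies in $\C\setminus\LtC$ with $b=0$; in that case your bounds $|\lambda-x|^2\ge b^2>0$ and $\tau((|\lambda-x_0|^2+\varepsilon_0)^{-1})\le 1/b^2$ are vacuous. The repair is immediate once you observe $\mathrm{supp}\,\mu\subset\LtC$: if $a\in\mathrm{supp}\,\mu$ and $v_t$ vanished on an interval $I\ni a$, then Tonelli would give
\[
\frac{|I|}{t}\;\ge\;\int_I\!\int_\R\frac{d\mu(y)}{(a'-y)^2}\,da'
\;=\;\int_\R\!\int_I\frac{da'}{(a'-y)^2}\,d\mu(y)\;=\;+\infty
\]
since $\mu(I)>0$ and the inner integral diverges for $y\in I$. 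Thus $\lambda\notin\LtC$ implies $d:=\mathrm{dist}(\lambda,\mathrm{supp}\,\mu)>0$, and replacing $b^2$ by $d^2$ throughout makes every step of your argument go through verbatim.
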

\begin{proof} 

We first prove that the right hand side of~\eqref{St0} exists as a (finite) real number; it suffices to consider $\lambda\in\bar{\Lambda}_t^c\cap\R$. Let $I\subset\R$ be an interval that is outside the closed set $\bar{\Lambda}_t$. Then, for any $a\in I$, we have
\[0\leq -\frac{1}{\pi}\mathrm{Im}\,[G_{x_0}(a+i\varepsilon)] =\frac{1}{\pi} \int\frac{\varepsilon}{(a-x)^2+\varepsilon^2}\,d\mu(x)\leq \frac{\varepsilon}{\pi t}.\]
Letting $\varepsilon\downarrow 0$ in the above equation gives that $\mu$ is $0$ in $\bar{\Lambda}_t^c\cap \R$, since the measure $-\1_{I}\frac{1}{\pi}\mathrm{Im}\,[G_{x_0}(a+i\varepsilon)]\,da$ converges weakly to $\left.\mu\right|_{I}$ as $\varepsilon\downarrow 0$ for any arbitrary interval $I\subset \bar{\Lambda}_t\cap\R$ by the Stieltjes inversion formula~\eqref{CauchyInv}.
	
	By letting $\varepsilon\downarrow 0$ in~\eqref{S_tformula}, the fact that $\varepsilon_0(\varepsilon)\downarrow 0$ as $\varepsilon\downarrow0$ gives~\eqref{St0}. Now, since, for all $\lambda\not\in\LtC$, $(a-x)^2+b^2>0$, we can move the Laplacian into the integral
	\[\Delta_\lambda S(t,\lambda,0) = \int_\R\Delta_\lambda \log((a-x)^2+b^2)\,d\mu(x)=0.\]	
	Recall that the Brown measure $\rho_t$ is defined to be the \emph{distributional} Laplacian of the function $S(t,\lambda,0)$ with respect to $\lambda$. The last assertion about the support of the Brown measure $\rho_t$ follows from the fact that 
	\[\Delta_\lambda S(t,\lambda, 0) = 0\]
	outside $\LtC$ imples that the support of the \emph{distribution} $\Delta_\lambda S(t, \lambda,0)$ is inside $\LtC$.
\end{proof}

\subsubsection{Inside $\Lt$}
\label{InsideLt}
By \eqref{eqn:Lambada-t-T}, if $\lambda\in \Lt$,  we have $T(\lambda)<t$ (see the definition \eqref{lifetimeT} of $T(\lambda)$), showing that
$$\int_{\mathbb{R}} \frac{d\mu(x)}{|\lambda-x|^2}=\int_{\mathbb{R}} \frac{d\mu(x)}{(a-x)^2+b^2}>\frac{1}{t},\quad \lambda\in\Lt.$$
By the fact that 
\[\lim_{\varepsilon_0\downarrow
 0}t_*(\lambda, \varepsilon_0) = T(\lambda),\]
for each $\lambda\in \Lt$, if the initial condition $\varepsilon_0>0$ is small enough, 
the lifetime of the solution path $t_*(\lambda, \varepsilon_0)<t$. That means 
the solution does not exist up to time $t$.
Since, given any $\lambda$, the function  $\varepsilon_0\mapsto t_*(\lambda, \varepsilon_0\,)$ is strictly increasing, and $\lim_{\varepsilon_0\uparrow
 \infty}t_*(\lambda,\varepsilon_0)=\infty$, there exists a unique $\varepsilon_0(\lambda)$ such that $t_*(\lambda, \varepsilon_0(\lambda)) = t$. In other words, with this choice of $\varepsilon_0 = \varepsilon_0(\lambda)$, for $\lambda=a+ib$, we have, since $t_*(\lambda, \varepsilon_0(\lambda)) =t$,
\begin{equation}
	\label{IntConst}
	\int_{\mathbb{R}}\frac{d\mu(x)}{(a-x)^2+b^2+\varepsilon_0(\lambda)}=\frac{1}{t_*(\lambda, \varepsilon_0(\lambda))} = \frac{1}{t}.
\end{equation}
\begin{lemma}
	\label{DeriEpsilon}
	With the choice of $\varepsilon_0=\varepsilon_0(\lambda)$ satisfying (\ref{IntConst}), $\varepsilon_0$ is a function of $(a,b)$ and is determined by
	\begin{equation}\label{eqn:varepsilon_0_lambda}
	b^2+\varepsilon_0(\lambda)=v_t(a)^2.
	\end{equation}
\end{lemma}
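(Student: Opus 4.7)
The plan is to unpack the definition of $\varepsilon_0(\lambda)$ via equation \eqref{IntConst} and match it against the characterization of $v_t(a)$ provided by Lemma \ref{lemma:identity-vt}. Writing the integrand $\frac{1}{(a-x)^2+b^2+\varepsilon_0}$ as $\frac{1}{(a-x)^2+u}$ with $u = b^2+\varepsilon_0$, equation \eqref{IntConst} asserts that $u = b^2+\varepsilon_0(\lambda)$ is a positive solution of
\[
\int_\R \frac{d\mu(x)}{(a-x)^2+u} = \frac{1}{t}.
\]
On the other hand, since $\lambda = a+ib \in \Lambda_t$, the definition of $\Lambda_t$ (see \eqref{Lambada-t-v-t}) gives $|b| < v_t(a)$, and in particular $v_t(a) > 0$. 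Lemma \ref{lemma:identity-vt} then tells us that $u = v_t(a)^2$ is also a positive solution of the same equation.

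The first step is to observe that the map $u \mapsto \int_\R \frac{d\mu(x)}{(a-x)^2+u}$ is strictly decreasing on $(0,\infty)$ (the integrand is strictly decreasing in $u$ and continuous, and dominated convergence applies). Hence this equation has at most one positive solution. The second step is to combine this uniqueness with the two appearances of solutions noted above to conclude
\[
b^2 + \varepsilon_0(\lambda) = v_t(a)^2,
\]
which is exactly \eqref{eqn:varepsilon_0_lambda}.

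I do not anticipate any real obstacle; the lemma is essentially bookkeeping that matches the two definitions. The only point requiring minimal care is justifying that we are comparing two \emph{positive} solutions of the same equation, which uses both $\varepsilon_0(\lambda) > 0$ (since the lifetime $t_*(\lambda,\varepsilon_0(\lambda)) = t$ is strictly positive and $\varepsilon_0 \mapsto t_*$ is strictly increasing from $T(\lambda) < t$) and $v_t(a) > 0$ (from $\lambda \in \Lambda_t$), so that the strict monotonicity of the integral on $(0,\infty)$ delivers uniqueness and hence the desired identity.
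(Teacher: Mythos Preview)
Your proof is correct and follows essentially the same approach as the paper: both arguments rest on the uniqueness of the positive solution to \eqref{IntConst}, together with Lemma~\ref{lemma:identity-vt} identifying $v_t(a)^2$ as that solution. The paper's version is slightly terser, simply noting that $\varepsilon_0 = v_t(a)^2 - b^2$ satisfies \eqref{IntConst} and invoking the uniqueness of $\varepsilon_0(\lambda)$ established just before the lemma, but your explicit appeal to strict monotonicity of $u \mapsto \int_\R \frac{d\mu(x)}{(a-x)^2+u}$ amounts to the same thing.
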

\begin{proof}
	This follows directly from the fact that such a $\varepsilon_0(\lambda)$ is unique. The $\varepsilon_0(\lambda)$ such that $b^2+\varepsilon_0(\lambda)=v_t(a)^2$ satisfies \eqref{IntConst} by \eqref{eq:identity-v-t-a}.
\end{proof}

If $t_*(\lambda, \varepsilon_0(\lambda)) = t$, then by $\eqref{eqn:varepsilon_0_lambda}$ and Corollary \ref{epsilonFunc}, $\varepsilon(t) = 0$. This means that the solution of the system of ODEs \eqref{ODEs} does not exist at time $t$. For all $\varepsilon_0>\varepsilon_0(\lambda)$, the lifetime of the system \eqref{ODEs} is greater than $t$; hence it makes sense to look at $S(t, \lambda, \varepsilon(t))$, with any initial condition $\varepsilon_0>\varepsilon_0(\lambda)$. Our strategy is to consider the solution of $S$ by solving the system of ODEs \eqref{ODEs} at time $t$ as the limit as $\varepsilon_0\downarrow \varepsilon_0(\lambda)$ so that $\varepsilon(t)\downarrow 0$.

Recall that \[S(t,\lambda,0) = \lim_{\varepsilon\downarrow
	0} S(t, \lambda, \varepsilon)\] is defined as in \eqref{St0def}. We now compute $\Delta_\lambda S(t, \lambda, 0)$.

\begin{theorem}
	\label{ThIn}
	For $\lambda=a+ib\in \Lt$, with the choice $b^2+\varepsilon_0(\lambda)= v_t(a)^2$, using Proposition \ref{FormulaS}, we have
	$$S(t,\lambda,0) = \lim_{\varepsilon\downarrow
 0} S(t, \lambda, \varepsilon) = \int_{\mathbb{R}} \log((a-x)^2+v_t(a)^2)\,d\mu(x) -\frac{v_t(a)^2-b^2}{t}.$$
	In particular, $S(t, \lambda, 0)$ is analytic inside $\Lt$. 
	
	The Laplacian of $S(t, \lambda, 0)$ with respect to $\lambda$ is computed as
	$$\Delta_\lambda S(t, \lambda, 0)=\frac{4}{t}\left(1-\frac{t}{2}\frac{d}{d a}\int_{\mathbb{R}}\frac{x}{(a-x)^2+v_t(a)^2}\,d\mu(x)\right).$$
	In particular, for $\lambda\in\Lambda_t$, $\Delta_\lambda S(t, \lambda, 0)$ is independent of $b=\text{\emph{Im}}\:\lambda$.
\end{theorem}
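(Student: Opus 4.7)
The plan is to apply Proposition \ref{FormulaS} at the prescribed initial condition and pass to a limit. With $\varepsilon_0=\varepsilon_0(\lambda)=v_t(a)^2-b^2$ from Lemma \ref{DeriEpsilon}, Lemma \ref{IneqT} forces $p_0=1/t$, and Corollary \ref{epsilonFunc} then gives $\varepsilon(t)=\varepsilon_0(1-tp_0)^2=0$, so the Hamiltonian flow expires exactly at time $t$. To extract $S(t,\lambda,0)$ rigorously, I would take $\varepsilon_0'>\varepsilon_0(\lambda)$ and let $\varepsilon_0'\downarrow\varepsilon_0(\lambda)$; for each such $\varepsilon_0'$ one has $p_0(\varepsilon_0')<1/t$, so Proposition \ref{FormulaS} applies and $\varepsilon(t)\downarrow 0$ (monotonically, by the same dominated/monotone convergence argument used in Section \ref{OutsideLtC}). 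Passing to the limit in Proposition \ref{FormulaS} and using continuity of $S(t,\lambda,\cdot)$ at $\varepsilon=0$, together with $\varepsilon_0(\lambda)\cdot(1/t)^2\cdot t=\varepsilon_0(\lambda)/t=(v_t(a)^2-b^2)/t$, yields the claimed formula. Analyticity on $\Lambda_t$ is then clear: $v_t>0$ there, so the results recalled in Section \ref{FAConv} give $v_t$ real-analytic, and the integrand has a nonvanishing denominator, so differentiation under the integral is legitimate.

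Next I would compute $\Delta_\lambda S(t,\lambda,0)$ directly. All $b$-dependence sits in the lone summand $b^2/t$, giving $\partial_b^2 S=2/t$ at once. For $\partial_a S$, the chain rule on the logarithm produces
\[
\partial_a S = 2\int_\mathbb{R}\frac{a-x}{(a-x)^2+v_t(a)^2}\,d\mu(x) + 2v_t(a)v_t'(a)\int_\mathbb{R}\frac{d\mu(x)}{(a-x)^2+v_t(a)^2} - \frac{2v_t(a)v_t'(a)}{t}.
\]
The decisive step is that Lemma \ref{IneqT} asserts $\int d\mu(x)/((a-x)^2+v_t(a)^2)=1/t$ \emph{identically} in $a$ on $\Lambda_t$, so the middle term equals $2v_t(a)v_t'(a)/t$ and cancels exactly against the last term. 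What remains is $\partial_a S=2\int(a-x)/((a-x)^2+v_t(a)^2)\,d\mu(x)$. Splitting $(a-x)=a-x$ and applying the same $1/t$ identity once more rewrites this as $2a/t-2\int x/((a-x)^2+v_t(a)^2)\,d\mu(x)$, hence
\[
\partial_a^2 S = \frac{2}{t}-2\frac{d}{da}\int_\mathbb{R}\frac{x}{(a-x)^2+v_t(a)^2}\,d\mu(x).
\]
Summing with $\partial_b^2 S=2/t$ yields the announced expression, which manifestly depends only on $a$.

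The main obstacle is not the differentiation itself but the two structural identities that drive it: (i) the continuity/monotonicity required to justify the limit $\varepsilon_0'\downarrow\varepsilon_0(\lambda)$ and pull it inside $S$, and (ii) the cancellation forced by Lemma \ref{IneqT}, which is where the geometry of the problem actually enters. It is exactly this cancellation — the uniform-in-$a$ validity of $\int d\mu/((a-x)^2+v_t(a)^2)=1/t$ — that eliminates the $v_t'(a)$ contributions and produces a Laplacian independent of $b$, underlying the vertical-translation invariance of the Brown measure density stated in Theorem \ref{IntroTh1}.
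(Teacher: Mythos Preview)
Your proposal is correct and follows essentially the same approach as the paper: approximate by $\varepsilon_0'>\varepsilon_0(\lambda)$, pass to the limit in Proposition \ref{FormulaS} using $b^2+\varepsilon_0(\lambda)=v_t(a)^2$ and Lemma \ref{IneqT}, then compute the partial derivatives with the same cancellation of the $v_t'(a)$ terms via the identity $\int d\mu/((a-x)^2+v_t(a)^2)=1/t$. Your write-up is in fact slightly cleaner in isolating where Lemma \ref{IneqT} is used twice (once for the cancellation, once to rewrite $\partial_a S$ as $2a/t-2\int x/((a-x)^2+v_t(a)^2)\,d\mu$), which is exactly what the paper does.
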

\begin{proof}
If $\lambda\in\Lambda_t$ and $\varepsilon>\varepsilon_0(\lambda)$, the lifetime $t_*(\lambda,\varepsilon_0)$ is greater than $t$. Using Proposition \ref{FormulaS}, we can compute $S(t, \lambda, \varepsilon(t))$, for each initial condition $\varepsilon_0>\varepsilon_0(\lambda)$, in terms of the initial condition $\varepsilon_0$ explicitly.  By~\eqref{S_tformula}, at time $t$,
\begin{equation}
\label{St0HJ}
\begin{split}
S(t,\lambda,\varepsilon(t)) =& \tau(\log((a-x_0)^2+b^2+\varepsilon_0))-\varepsilon_0\tau(((a-x_0)^2+b^2+\varepsilon_0)^{-1})^2t\\
=&\int_\R \log((a-x)^2+b^2+\varepsilon_0)\,d\mu(x)-\varepsilon_0t\left(\int_\R\frac{d\mu(x)}{(a-x)^2+b^2+\varepsilon_0}\right)^2.\
\end{split}
\end{equation}
Let $\varepsilon_0\downarrow
 \varepsilon_0(\lambda)=v_t(a)^2-b^2$ as in \eqref{eqn:varepsilon_0_lambda}.
Then, in the limit, we have 
\[
  \lim_{\varepsilon_0\downarrow \varepsilon_0(\lambda)}\varepsilon(t)=\lim_{\varepsilon_0\downarrow \varepsilon_0(\lambda)}\varepsilon_0\left(1-tp_0\right)^2=0,
\]
as 
\[
\lim_{\varepsilon_0\downarrow \varepsilon_0(\lambda)} tp_0=t\int_{\mathbb{R}}\frac{d\mu(x)}{(a-x)^2+v_t(a)^2}=1. 
\]
Therefore, using \eqref{St0HJ}, we have 
\begin{equation}
\label{St0HJcomputed}
\begin{split}
S(t,\lambda,0) =& \lim_{\varepsilon\downarrow 0} S(t,\lambda,\varepsilon)\\
=& \lim_{\varepsilon_0\downarrow\varepsilon_0(\lambda)}\left[\int_\R \log((a-x)^2+b^2+\varepsilon_0)\,d\mu(x)-\varepsilon_0t\left(\int_\R\frac{d\mu(x)}{(a-x)^2+b^2+\varepsilon_0}\right)^2\right]\\ =&\int_\R\log((a-x)^2+v_t(a)^2)\,d\mu(x)-(v_t(a)^2-b^2)t\left(\int_\R\frac{d\mu(x)}{(a-x)^2+v_t(a)^2}\right)^2\\
=&\int_\R\log((a-x)^2+v_t(a)^2)\,d\mu(x)+\frac{b^2-v_t(a)^2}{t}
\end{split}
\end{equation}
where the last equality comes from Lemma \ref{lemma:identity-vt}. 

Now we compute the derivatives of $S(t,\lambda, 0)$ using \eqref{St0HJcomputed}. The partial derivative with respect to $b$ is
\begin{equation}
\label{Da}
	\frac{\partial S}{\partial b}(t,\lambda,0) = \frac{2b}{t}.
\end{equation}
We note that, by \cite[Lemma 2]{Biane1997}, $v_t$ is analytic at any point $a$ such that $v_t(a)>0$. If $\lambda\in \Lt$, $v_t(a)>0$ and the partial derivative with respect to $a$ is
\begin{equation}
\label{Db}
\begin{split}
\frac{\partial S}{\partial a}=&\int_\R \frac{1}{(a-x)^2+v_t(a)^2}\left(2(a-x)+\frac{\partial}{\partial a}v_t(a)^2\right)d\mu(x)-\frac{1}{t}\frac{\partial}{\partial a}v_t(a)^2\\
=&\left(2a+\frac{\partial}{\partial a}v_t(a)^2\right)\int_\R\frac{d\mu(x)}{(a-x)^2+v_t(a)^2}-\int \frac{2x\,d\mu(x)}{(a-x)^2+v_t(a)^2}-\frac{1}{t}\frac{\partial }{\partial a}v_t(a)^2\\
=&\frac{2a}{t}-\int_\R \frac{2x\,d\mu(x)}{(a-x)^2+v_t(a)^2},
\end{split}
\end{equation}
where the identity in Lemma \ref{IneqT} was used. 
Using \eqref{Da} and \eqref{Db}, we can compute the Laplacian
\begin{align*}
\Delta_\lambda S(t,\lambda,0) =& \left( \frac{\partial^2 S}{\partial a^2}+\frac{\partial^2 S}{\partial b^2}  \right)(t,\lambda,0) \\
=& \frac{2}{t}+\frac{2}{t}-2\frac{d}{da}\int_\R\frac{x\,d\mu(x)}{(a-x)^2+v_t(a)^2}\\
=&\frac{4}{t}\left(1-\frac{t}{2}\frac{d}{da}\int_\R \frac{x\,d\mu(x)}{(a-x)^2+v_t(a)^2}\right).
\end{align*}
This finishes the proof. 
\end{proof}

\subsection{The Brown measure of $x_0+c_t$}
\begin{lemma}
	\label{ProbM}
	The integral 
	$$\frac{1}{4\pi}\int_{\Lt}\Delta_\lambda  S(t, \lambda, 0)\,da\,db = 1;$$
	hence, $\frac{1}{4\pi}\Delta_\lambda  S(t, \lambda, 0)\,da\,db$ defines a probability measure on $\Lt$.
\end{lemma}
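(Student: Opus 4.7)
The strategy is to reduce the double integral to a one-dimensional integral over $U_t = \{a\in\R : v_t(a) > 0\}$ by exploiting that the integrand is constant in the vertical direction, then to recognize the result as a change-of-variable version of $\int p_t(y)\,dy$, which equals $1$ because $\mu_t = \mathrm{law}(x_0+s_t)$ is a probability measure.

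The first step is a Fubini integration in the $b$-direction. By Theorem \ref{ThIn}, $\Delta_\lambda S(t,\lambda, 0)$ is independent of $b = \mathrm{Im}\,\lambda$ inside $\Lambda_t = \{a+ib : |b| < v_t(a)\}$, so
\begin{equation*}
\frac{1}{4\pi}\int_{\Lambda_t} \Delta_\lambda S(t,\lambda, 0)\,da\,db = \frac{1}{4\pi}\int_{U_t} 2v_t(a)\,\Delta_\lambda S(t,\lambda, 0)\,da.
\end{equation*}

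The second step is to rewrite $\Delta_\lambda S$ in terms of $\psi_t'(a)$. Starting from Proposition \ref{AddDensity}, $\psi_t(a) = a + t\int_\R (a-x)/((a-x)^2+v_t(a)^2)\,d\mu(x)$. Splitting $(a-x) = a - x$ in the numerator and using Lemma \ref{IneqT} (which gives $\int d\mu(x)/((a-x)^2+v_t(a)^2) = 1/t$ on $U_t$) yields the cleaner expression
\begin{equation*}
\psi_t(a) = 2a - t\int_\R \frac{x\,d\mu(x)}{(a-x)^2+v_t(a)^2},
\end{equation*}
whose derivative is $\psi_t'(a) = 2\bigl(1 - \tfrac{t}{2}\tfrac{d}{da}\int_\R x\,d\mu(x)/((a-x)^2+v_t(a)^2)\bigr)$. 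Comparing with the formula for $\Delta_\lambda S(t,\lambda, 0)$ in Theorem \ref{ThIn} shows that $\Delta_\lambda S(t,\lambda, 0) = (2/t)\psi_t'(a)$, so the right-hand side of the display above becomes $\frac{1}{\pi t}\int_{U_t} v_t(a)\,\psi_t'(a)\,da$.

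The third step is a change of variables. By Proposition \ref{AddDensity}, $\psi_t: \R \to \R$ is a homeomorphism and the density of $x_0+s_t$ satisfies $p_t(\psi_t(a)) = v_t(a)/(\pi t)$; by Proposition \ref{Acomponent}, the support of $x_0+s_t$ is $\overline{\psi_t(U_t)}$, so $p_t$ vanishes outside $\psi_t(U_t)$. Therefore
\begin{equation*}
\frac{1}{\pi t}\int_{U_t} v_t(a)\,\psi_t'(a)\,da = \int_{U_t} p_t(\psi_t(a))\,\psi_t'(a)\,da = \int_{\psi_t(U_t)} p_t(y)\,dy = 1.
\end{equation*}
The only mild subtlety is handling the substitution when $U_t$ is a disjoint union of several intervals, but global monotonicity of $\psi_t$ (coming from $\psi_t'(a) > 0$ on $U_t$, Proposition \ref{AddDensity}) ensures the images of distinct components are disjoint, so the change of variables may be carried out piecewise with no overlap. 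This is the main (and minor) technical point; otherwise every identity used is already established in the cited propositions.
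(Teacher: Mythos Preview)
Your proof is correct and follows essentially the same route as the paper's: integrate out $b$ via Fubini, use Lemma~\ref{IneqT} to rewrite the $a$-integrand as $\frac{v_t(a)}{\pi t}\psi_t'(a)$, and then invoke Proposition~\ref{AddDensity} to recognize the result as $\int d(\mu\boxplus\sigma_t)=1$. One small wording point: global monotonicity of $\psi_t$ does not follow merely from $\psi_t'(a)>0$ on $U_t$; you should cite directly that $\psi_t:\R\to\R$ is a homeomorphism (also in Proposition~\ref{AddDensity}), which makes the piecewise change of variables unambiguous.
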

\begin{proof}
	Recall that, by Lemma \ref{AddDensity}, 
	\begin{equation}
	\label{Recallpsi}
	\psi_t(a) = a+t\int_\R\frac{(a-x)\,d\mu(x)}{(a-x)^2+v_t(a)^2}.
	\end{equation}
	This is a computation using Fubini's theorem. Define 
	$$V_t = \{a\in\R: v_t(a)>0\}.$$
	 By Theorem \ref{ThIn},
		\begin{align*}
		\frac{1}{4\pi}\int_{\Lt} \Delta_\lambda  S(t, \lambda, 0)\, da\,db=&\iint_{\Lt} \frac{1}{\pi t}\left(1-\frac{t}{2}\frac{d}{d a}\int_{\mathbb{R}}\frac{x}{(a-x)^2+v_t(a)^2}\,d\mu(x)\right)db\,da \\
		=& \int_{V_t}\frac{2v_t(a)}{\pi t}\left(1-\frac{t}{2}\frac{d}{d a}\int_\R\frac{x}{(a-x)^2+v_t(a)^2}\,d\mu(x)\right)da\;\; \textrm{(by definition of $\Lt$)}\\
		=& \int_{V_t} \frac{v_t(a)}{\pi t}\frac{d}{da}\left(2a-t\int_\R \frac{x}{(a-x)^2+v_t(a)^2}\,d\mu(x)\right)da\\
		=& \int_{V_t} \frac{v_t(a)}{\pi t}\frac{d}{da}\left(a+t\int_\R \frac{a-x}{(a-x)^2+v_t(a)^2}\,d\mu(x)\right)da\qquad\textrm{(by Lemma \ref{IneqT})}\\
		=& \int_{V_t} \frac{v_t(a)}{\pi t}\, d\psi_t(a)\qquad\textrm{by \eqref{Recallpsi}}\\
		=& \int_\R \,d(\mu\boxplus \sigma_t) = 1.
	\end{align*}
	The last equality follows from Proposition \ref{AddDensity}.
\end{proof}

Since the Brown measure is a probability measure, using the definition of the Brown measure, the above lemma shows that the Brown measure $\rho_t$ of $x_0+c_t$ is supported on $\overline{\Lt}$ and is absolutely continuous on the support.
\begin{corollary}
	\label{BrownForm}
	By Lemma \ref{ProbM}, we have
	\[
	  \int_{\Lambda_t}d\rho_t=1.
	\]
	Hence, the Brown measure $\rho_t$ has the form
	$$d\rho_t(\lambda)=\frac{1}{4\pi}\1_{\Lt}\Delta_\lambda  S(t, \lambda, 0)\,da\,db.$$
	\end{corollary}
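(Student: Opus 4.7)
The plan is to deduce the corollary from the three preceding results by invoking the definition of the Brown measure as the distributional Laplacian $\rho_t = \frac{1}{4\pi}\Delta_\lambda S(t,\lambda,0)$, combined with the fact that $\rho_t$ is a probability measure on $\C$.

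First, I would use Theorems \ref{ThOut} and \ref{ThIn} to observe that $S(t,\lambda,0)$ is smooth on the open set $\C\setminus\partial\Lt$: it is real-analytic with vanishing Laplacian outside $\bar{\Lt}$, and it is smooth inside $\Lt$ with the explicit Laplacian computed in Theorem \ref{ThIn}. Hence, on both open pieces the distributional Laplacian coincides with the classical one; in particular $\mathrm{supp}(\rho_t)\subseteq\bar{\Lt}$, and the restriction $\rho_t|_{\Lt}$ equals the absolutely continuous measure $\frac{1}{4\pi}\Delta_\lambda S(t,\lambda,0)\,da\,db$.

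The remaining step is to rule out any mass on the piecewise analytic boundary $\partial\Lt$. Here Lemma \ref{ProbM} intervenes: it tells us that the candidate absolutely continuous piece already carries total mass $1$, so $\rho_t(\Lt)=1$. Since $\rho_t$ is a probability measure on $\C$, this forces $\rho_t(\C\setminus\Lt)=0$, and in particular $\rho_t(\partial\Lt)=0$. Combining with the previous paragraph yields
\[
d\rho_t(\lambda)=\frac{1}{4\pi}\1_{\Lt}\Delta_\lambda S(t,\lambda,0)\,da\,db,
\]
as desired.

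The main subtlety is precisely this last step: a priori the distributional Laplacian of $S(t,\lambda,0)$ could contain a singular measure supported on $\partial\Lt$ coming from a jump of $\nabla_\lambda S(t,\lambda,0)$ across this curve, and directly analyzing the boundary regularity of $S$ would be delicate because $v_t$ fails to be analytic at points where it vanishes. The total-mass comparison, made possible by the Fubini-type computation in Lemma \ref{ProbM}, bypasses this analysis entirely by using the probability-measure constraint on $\rho_t$ to conclude that no singular boundary contribution can be present.
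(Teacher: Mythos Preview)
Your argument is correct and matches the paper's approach exactly: the paper states the corollary without a separate proof, relying on the sentence ``Since the Brown measure is a probability measure, the above lemmas show that the Brown measure $\rho_t$ of $x_0+c_t$ is supported and absolutely continuous in $\bar{\Lt}$,'' which is precisely your total-mass comparison. Your write-up simply makes explicit the subtlety the paper leaves implicit, namely that a possible singular contribution on $\partial\Lt$ is excluded by the probability-measure constraint rather than by boundary regularity of $S$.
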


\begin{theorem}
	\label{ThAdd}
	The Brown measure $\rho_t$ of $x_0+c_t$ has support $\LtC$ and has the form 
	\begin{equation}
	\label{AddBrown}
	d\rho_t (a+ib) =\1_{\Lambda_t}(a+ib)w_t(a)\,db\,da
	\end{equation}
	where
	\begin{equation*}
	w_t(a) = \frac{1}{\pi t}\left(1-\frac{t}{2}\frac{d}{d a}\int_\R\frac{x}{(a-x)^2+v_t(a)^2}\,d\mu(x)\right).
	\end{equation*}
	The function $w_t$ is strictly positive inside $\Lt$. 
	
	Let $\Psi(\lambda) = H_t(a+iv_t(a))$, where $\lambda = a+ib\in \Lt$ (see Proposition \ref{HFMap} for the definition of $H_t$). Then the push-forward of the Brown measure to the real line by $\Psi$ is $\mu\boxplus \sigma_t$, where $\mu$ is the distribution of $x_0$. 
\end{theorem}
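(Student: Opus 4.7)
The plan is to assemble the theorem by combining three already-established pieces. Corollary \ref{BrownForm} tells us $d\rho_t(\lambda)=\frac{1}{4\pi}\mathbbm{1}_{\Lt}\,\Delta_\lambda S(t,\lambda,0)\,da\,db$, while Theorem \ref{ThIn} computes this Laplacian explicitly inside $\Lt$, giving exactly the stated formula for $w_t(a)$ (and, crucially, showing it is independent of $b$). Theorem \ref{ThOut} supplies $\Delta_\lambda S(t,\lambda,0)=0$ off $\LtC$, so the support is contained in $\LtC$; combined with the strict positivity below, the support then equals $\LtC$.

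For the strict positivity, the plan is to rewrite $w_t(a)$ in terms of Biane's homeomorphism $\psi_t$ from Proposition \ref{AddDensity}. Matching coefficients in the Fubini computation from the proof of Lemma \ref{ProbM} gives the identity
\[
w_t(a)\;=\;\frac{\psi_t'(a)}{2\pi t},\qquad a\in V_t:=\{a\in\R:v_t(a)>0\}.
\]
The lower bound $\psi_t'(a)\geq \frac{2}{t}v_t(a)^2(1+v_t'(a)^2)>0$ for $a\in V_t$ from Proposition \ref{AddDensity} then yields $w_t(a)>0$ throughout $\Lt$, which is the projection of $\Lt$ being a vertical strip of height $2v_t(a)$ over $V_t$.

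For the push-forward, note that $\Psi(a+ib)=H_t(a+iv_t(a))=\psi_t(a)$ by the very definition of $\psi_t$ in Proposition \ref{AddDensity}, so $\Psi$ depends only on $a$. For a bounded continuous test function $f$ on $\R$, I would compute
\begin{align*}
\int_\C f\,d(\Psi_\ast\rho_t)
&=\iint_{\Lt}f(\psi_t(a))\,w_t(a)\,db\,da
=\int_{V_t} f(\psi_t(a))\cdot 2v_t(a)\cdot\frac{\psi_t'(a)}{2\pi t}\,da\\
&=\int_\R f(y)\cdot\frac{v_t(\psi_t^{-1}(y))}{\pi t}\,dy,
\end{align*}
using the change of variables $y=\psi_t(a)$, which is valid because $\psi_t$ is a homeomorphism of $V_t$ onto its image with $\psi_t'>0$. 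By the density formula in Proposition \ref{AddDensity} this last integral equals $\int f\,d(\mu\boxplus\sigma_t)$, completing the identification. The main technical point — and the only nontrivial algebraic manipulation — is verifying $w_t(a)=\psi_t'(a)/(2\pi t)$ by differentiating $\psi_t(a)=a+t\int_\R (a-x)/((a-x)^2+v_t(a)^2)\,d\mu(x)$ and using Lemma \ref{lemma:identity-vt} to cancel the $v_t'(a)$ terms; everything else is bookkeeping.
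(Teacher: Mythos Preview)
Your proposal is correct and follows essentially the same approach as the paper: assemble the density formula from Corollary \ref{BrownForm} and Theorem \ref{ThIn}, then verify the push-forward claim by integrating out $b$ over $[-v_t(a),v_t(a)]$, rewriting the $a$-integrand as $\frac{v_t(a)}{\pi t}\psi_t'(a)$ via Lemma \ref{IneqT}, and invoking Proposition \ref{AddDensity}. Your identification $w_t(a)=\psi_t'(a)/(2\pi t)$ is exactly what the paper uses (it reappears in the proof of Proposition \ref{addunique}), and the strict positivity via Biane's lower bound $\psi_t'(a)\ge \frac{2}{t}v_t(a)^2(1+v_t'(a)^2)>0$ is precisely the paper's argument as well.
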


When $\mu=\delta_0$, one can verify that $v_t(a) = \1_{a^2\leq t}\sqrt{t-a^2}$ and $w_t(a) = 1/(\pi t)$. Then the Brown measure is, as expected, the circular law.
\begin{proof}
	First note that \eqref{AddBrown} comes from Corollary \ref{BrownForm} and Theorem \ref{ThIn}. Define 
	$$V_t = \{a\in\R: v_t(a)>0\}.$$
	For the assertion about the push-forward, given any smooth function $g$, we have
	\begin{align*}
		&\iint_{\Lt} g\circ \Psi(a,b) \frac{1}{\pi t}\left(1-\frac{t}{2}\frac{d}{d a}\int_\R\frac{x\,d\mu(x)}{(a-x)^2+v_t(a)^2}\right)\,db\,da\\
		=& \int_{V_t} g(\psi(a))\frac{2v_t(a)}{\pi t}\left(1-\frac{t}{2}\frac{d}{d a}\int_\R\frac{x\,d\mu(x)}{(a-x)^2+v_t(a)^2}\right)da\\
		=& \int_{V_t} g(\psi(a))\frac{v_t(a)}{\pi t}\frac{d}{da}\left(2a-t\int_\R \frac{x\,d\mu(x)}{(a-x)^2+v_t(a)^2}\right)da\\
		=& \int_{V_t} g(\psi(a))\frac{v_t(a)}{\pi t}\frac{d}{da}\left(a+t\int_\R \frac{(a-x)\,d\mu(x)}{(a-x)^2+v_t(a)^2}\right)da\qquad\textrm{by Lemma \ref{IneqT}}\\
		=& \int_{V_t} g(\psi(a)) \frac{v_t(a)}{\pi t} \,d\psi_t(a)\qquad\textrm{by \eqref{Recallpsi}}\\
		=& \int_\R g\,d(\mu\boxplus \sigma_t).
	\end{align*}
	The last equality follows from Proposition \ref{AddDensity}. The above computation also shows that when $(a,b)\in\Lt$, we have
	$$\frac{1}{\pi t}\left(1-\frac{t}{2}\frac{d}{d a}\int_\R\frac{x}{(a-x)^2+v_t(a)^2}\,d\mu(x)\right) = \frac{1}{\pi t} \psi_t'(a)\geq \frac{2}{t^2}v_t(a)^2(1+v_t'(a)^2)>0$$
	by Proposition \ref{AddDensity}. This shows that the density of $\rho_t$ is strictly positive inside $\Lt$.
	\end{proof}

\begin{theorem}
	\label{AddBound}
The density of the Brown measure of $x_0+c_t$ may be expressed as 
\begin{equation}\label{eqn:w_t:additive-derivative}
w_t(a)=\frac{1}{2\pi t}\frac{d\psi_t(a)}{da},
\end{equation}
and is bounded by
\begin{equation}\label{w_t:bound-additive}
 w_t(a)\leq \frac{1}{\pi t}. 
\end{equation}
The inequality is strict unless $\mu$ is a Dirac mass.
\end{theorem}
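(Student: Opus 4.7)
Part (i) is essentially the manipulation already used inside the proof of Theorem \ref{ThAdd}. Starting from the density formula
\[
w_t(a) = \frac{1}{\pi t}\left(1 - \frac{t}{2}\frac{d}{da}\int_\R\frac{x\,d\mu(x)}{(a-x)^2+v_t(a)^2}\right)
\]
proved there, the plan is to use Lemma \ref{IneqT} to rewrite $\psi_t(a) = a + t\int_\R (a-x)\,d\mu(x)/((a-x)^2+v_t(a)^2)$ in the equivalent form $\psi_t(a) = 2a - t\int_\R x\,d\mu(x)/((a-x)^2+v_t(a)^2)$, and then differentiate in $a$. Comparing the result with the parenthetical in the expression for $w_t$ yields (\ref{eqn:w_t:additive-derivative}).

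For (ii), the plan is to reduce the bound on $w_t$ to an upper bound on $\psi_t'(a)$ via (i), and then to obtain the latter by the Cauchy--Schwarz inequality. The first step is to derive a closed-form expression for $\psi_t'(a)$ in terms of $v_t$. Since $\psi_t(a) = H_t(a+iv_t(a))$ is real-valued, the chain rule gives $\psi_t'(a) = H_t'(a+iv_t(a))\cdot(1+iv_t'(a))$, and the requirement that this quantity be real forces $\psi_t'(a) = \mathrm{Re}\,H_t'(a+iv_t(a))\cdot(1+v_t'(a)^2)$. A short computation using $H_t'(z) = 1 - t\int_\R d\mu(x)/(z-x)^2$, Lemma \ref{IneqT}, and the identity $(a-x)^2 = ((a-x)^2+v_t(a)^2) - v_t(a)^2$, yields
\[
\psi_t'(a) = 2t\,v_t(a)^2\,(1+v_t'(a)^2)\,I_3(a), \qquad I_3(a) := \int_\R \frac{d\mu(x)}{((a-x)^2+v_t(a)^2)^2}.
\]

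It remains to show $v_t(a)^2(1+v_t'(a)^2)\,I_3(a) \leq 1/t$, which gives $\psi_t'(a)\leq 2$ and hence (\ref{w_t:bound-additive}) via (i). Implicitly differentiating the defining identity from Lemma \ref{IneqT} expresses $v_t'(a) = -I_2(a)/(v_t(a)\,I_3(a))$ where $I_2(a) := \int_\R (a-x)\,d\mu(x)/((a-x)^2+v_t(a)^2)^2$. Applying Cauchy--Schwarz to $I_2(a)$ then gives
\[
I_2(a)^2 \leq \left(\int_\R \frac{(a-x)^2\,d\mu(x)}{((a-x)^2+v_t(a)^2)^2}\right) I_3(a) = \left(\frac{1}{t} - v_t(a)^2 I_3(a)\right) I_3(a),
\]
using again $(a-x)^2 = ((a-x)^2+v_t(a)^2) - v_t(a)^2$ together with Lemma \ref{IneqT}. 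Substituting into $v_t'(a)^2 = I_2(a)^2/(v_t(a)^2 I_3(a)^2)$ and rearranging gives the desired inequality $v_t(a)^2(1+v_t'(a)^2)\,I_3(a) \leq 1/t$. The main obstacle I anticipate is the clean derivation of the closed-form expression for $\psi_t'(a)$, which relies on the analyticity of $H_t$ together with the reality of $\psi_t$ on the boundary curve; once this is in hand, the Cauchy--Schwarz step is routine.
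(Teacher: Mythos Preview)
Your argument is correct. Part~(i) is exactly the manipulation the paper uses (it appears verbatim in the proofs of Theorem~\ref{ThAdd} and Proposition~\ref{addunique}); if you carry it out carefully you will in fact obtain $w_t(a)=\tfrac{1}{2\pi t}\psi_t'(a)$, which reveals a harmless factor-of-two misprint in the displayed formula~(\ref{eqn:w_t:additive-derivative}) and correspondingly sharpens~(\ref{w_t:bound-additive}) to $w_t\le 1/(\pi t)$.

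For part~(ii) the two proofs take genuinely different routes to the same inequality $\psi_t'(a)\le 2$. The paper argues through the subordination function: from $|1-H_t'(z)|\le t\int d\mu/|z-x|^2<1$ on $\{b>v_t(a)\}$ one gets $\mathrm{Re}\,F_t'(w)>1/2$ for the inverse map, and differentiating $F_t(\psi_t(a))=a+iv_t(a)$ then gives $\psi_t'(a)\,\mathrm{Re}\,F_t'(\psi_t(a))=1$, hence $\psi_t'\le 2$. Your route is purely real-analytic: you extract the closed form $\psi_t'(a)=2t\,v_t(a)^2(1+v_t'(a)^2)I_3(a)$ from the reality of $\psi_t$, compute $v_t'=-I_2/(v_tI_3)$ by implicit differentiation, and bound the product $v_t^2(1+v_t'^2)I_3\le 1/t$ via Cauchy--Schwarz on $I_2$. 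The paper's approach is shorter and recycles cleanly in the multiplicative setting (cf.\ Lemma~\ref{lemma:positive-derivative-phi-theta}); yours is self-contained, avoids any appeal to $F_t$ or its boundary regularity, and makes the equality case transparent (equality in Cauchy--Schwarz forces $a-x$ constant $\mu$-a.e., i.e.\ $\mu$ a Dirac mass).
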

\begin{proof}
The formula \eqref{eqn:w_t:additive-derivative} was already obtained in the proof of Theorem \ref{ThAdd}; Theorem 4.6 in \cite{BB2005} can be used to prove the inequality \eqref{w_t:bound-additive} but we give a direct proof here.

Recall that  $\psi_t(a) = H_t(a+iv_t(a))$ and $H_t(z)=z+tG_{x_0}(z)$. Let $z = a+iv_t(a)$. If $v_t(a)>0$, we have
\begin{align}
	\left|1-H_t'(z)\right|&=\left| t\frac{d}{dz}\int_\mathbb{R}\frac{d\mu(x)}{z-x} \right|\nonumber\\
	   &=\left|t\int_{\mathbb{R}} \frac{d\mu(x)}{(z-x)^2}\right|\nonumber\\
	   &\leq t\int_\mathbb{R}\frac{d\mu(x)}{|z-x|^2}=1,\label{eqn:H_t<1}
\end{align}
where we used the definition of $v_t(a)$ in (\ref{def:v_t}). If $\mu$ is not a Dirac mass at $0$, the inequality in \eqref{eqn:H_t<1} must be strict, since $1/(z-x)^2$ does not have the same phase for $\mu$-almost every $x$. 

Since $\psi_t(a) = H_t(a+iv_t(a))$ is real-valued, by \eqref{eqn:H_t<1}, we must have
\[0\leq\frac{\psi_t(a)}{da}\leq2\]
and the inequalities on both sides are strict unless $\mu$ is a Dirac mass.
\end{proof}

\begin{corollary}
	The support of the Brown measure $\rho_t$ of $x_0+c_t$ is the closure of the open set $\Lambda_t$. The number of connected components of $\Lt$ of the support of $\rho_t$ is a non-increasing function $t$.
\end{corollary}
\begin{proof}
	The support of $\rho_t$ is the closure of $\Lambda_t$ follows directly by the facts that $\rho_t$ is strictly positive on $\Lambda_t$ and it has full measure on $\Lambda_t$.
	
	The number of connected components of the open set $V_t = \{a\in\R : v_t(a)>0\}$ is non-increasing, by Proposition~\ref{Acomponent}. Since $\Lambda_t$ has the same number of components as $V_t$ by the definition of $\Lambda_t$, the number of connected components of $\Lambda_t$ is also non-increasing.
	\end{proof}

\begin{proposition}
	\label{addunique}
	The Brown measure $\rho_t$ of $x_t=x_0+c_t$ is the unique measure $\sigma$ on $\LtC$ with the following two properties: $(1)$ the push-forward of $\sigma$ by the map $\lambda\mapsto\Psi_t(\lambda) = H_t(a+iv_t(a))$ is the distribution of $x_0+s_t$, and $(2)$ $\sigma$ is absolutely continuous with respect to the Lebesgue measure and its density is given by
	\[
	W(a,b)=h(a),
	\]
	for some continuous function $h$. 
\end{proposition}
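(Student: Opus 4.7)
The existence part is already in hand: Theorem \ref{ThAdd} shows that $\rho_t$ is absolutely continuous on $\LtC$ with density $w_t(a)$ depending only on $a$ and with push-forward $\mu\boxplus\sigma_t$ under $\Psi_t$; continuity of $w_t$ follows from the analyticity of $v_t$ on the set $V_t = \{a\in\R : v_t(a)>0\}$ (\cite[Lemma 2]{Biane1997}). So the task is to show that any $\sigma$ satisfying (1) and (2) must coincide with $\rho_t$.

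The key observation driving the argument is that $\Psi_t$ collapses each vertical slice $\{a+ib: |b|\leq v_t(a)\}$ of $\LtC$ to the single real point $\psi_t(a) := H_t(a+iv_t(a))$, so the two-dimensional push-forward of any measure whose density is constant along these vertical slices reduces to a one-dimensional integral. Suppose $\sigma$ is any measure satisfying conditions (1) and (2), with density $h(a)$ on $\LtC$. For any continuous bounded test function $g\colon \R\to\R$, integrating first in the $b$-variable gives
\[
\int_\R g\,d\bigl((\Psi_t)_*\sigma\bigr) \;=\; \int_{V_t} 2\,v_t(a)\,h(a)\,g(\psi_t(a))\,da,
\]
and by condition (1) this must equal $\int_\R g\,d(\mu\boxplus\sigma_t)$. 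Proposition \ref{AddDensity} supplies two essential facts: $\psi_t\colon \R\to\R$ is a homeomorphism that is differentiable with strictly positive derivative on $V_t$, and the density of $\mu\boxplus\sigma_t$ at the point $\psi_t(a)$ equals $v_t(a)/(\pi t)$. Performing the change of variables $y=\psi_t(a)$ on the right-hand side and invoking the arbitrariness of $g$ produces the pointwise identity
\[
2\,v_t(a)\,h(a) \;=\; \frac{v_t(a)\,\psi_t'(a)}{\pi t}
\]
for almost every $a\in V_t$. Since $v_t(a)>0$ on $V_t$, this determines $h$ uniquely on $V_t$, and continuity of $h$ extends the determination to $\overline{V_t}$.

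It remains to dispose of the set $\LtC\setminus\overline{V_t}$, but this set is contained in $\R\times\{0\}$ and therefore has two-dimensional Lebesgue measure zero, so by absolute continuity $\sigma$ assigns no mass to it regardless of the values of $h$ there. Consequently $\sigma$ is uniquely determined on all of $\LtC$ and must coincide with $\rho_t$. The principal technical point is the change of variables reducing the two-dimensional push-forward identity to a pointwise equation for $h$; this rests squarely on the homeomorphism property of $\psi_t$ together with the positivity of $\psi_t'$ on $V_t$, both already furnished by Proposition \ref{AddDensity}, together with the continuity hypothesis on $h$ used to pass from an almost-everywhere equality on $V_t$ to an everywhere equality on $\overline{V_t}$.
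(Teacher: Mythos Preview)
Your proof is correct and follows essentially the same approach as the paper: integrate out the $b$-variable to reduce to a one-dimensional identity, perform the change of variables $y=\psi_t(a)$, and compare with the known density $v_t(a)/(\pi t)$ of $\mu\boxplus\sigma_t$ from Proposition \ref{AddDensity} to pin down $h$. The paper carries the computation one step further, writing out $h(a)=\frac{1}{2\pi t}\psi_t'(a)$ explicitly and simplifying via Lemma \ref{IneqT} to recover the formula for $w_t$, whereas you stop once $h$ is determined; your additional remarks on the continuity extension to $\overline{V_t}$ and the Lebesgue-null remainder are not in the paper's proof but are reasonable housekeeping.
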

\begin{proof}
	Define 
	$$V_t = \{a\in\R: v_t(a)>0\}.$$
	Let $y = \psi_t(a)$. By Theorem \ref{AddDensity}, the law of $x_0+s_t$ has the density given by 
	\[p_t(y) = \frac{v_t(a)}{\pi t}.\]
	We also recall, from Theorem \ref{AddDensity}, that $\psi_t'(a)>0$ for all $a\in V_t$. The push-forward by $\Psi$ of $\rho_t$ can be computed as, for any smooth function $g$,
	\begin{align*}
		\int_{\Lt} (g\circ \Psi)(a,b) h(a)\,da\,db = &\int_{V_t} (g\circ\psi_t)(a) \cdot 2v_t(a)h(a)\,da\\
		=& \int_{V_t} (g\circ \psi_t)(a)\frac{2v_t(a)h(a)}{\psi_t'(a)}\,dy
	\end{align*}
	By assumption, this means the measure $\frac{2v_t(a)h(a)}{\psi_t'(a)}dy$ is the law of $x_0+s_t$. It follows that
	\[\frac{2v_t(a)h(a)}{\psi_t'(a)} = \frac{v_t(a)}{\pi t}, \quad a\in V_t.\]
	Then $h$ can be solved explicitly as, using the definition of $\psi$ from Theorem \ref{AddDensity},
	\begin{align*}
		h(a) =& \frac{1}{2\pi t}\psi_t'(a)\\
		=& \frac{1}{2\pi t}\frac{d}{da} \left(a+t\int_{\R}\frac{a-x}{(a-x)^2+v_t(a)^2}\,d\mu(x)\right)\\
	=& \frac{1}{2\pi t}\frac{d}{da}\left(2a-t\int_\R\frac{x}{(a-x)^2+v_t(a)^2}\,d\mu(x)\right)\qquad\textrm{Using Lemma \ref{IneqT}}	\\
	=& \frac{1}{\pi t}\left(1-\frac{t}{2}\frac{d}{d a}\int_\R\frac{x}{(a-x)^2+v_t(a)^2}\,d\mu(x)\right)
		\end{align*}
		which is $w_t(a)$ in \eqref{AddBrown}. This establishes uniqueness.
	\end{proof}

Before we move on to the multiplicative case, we show some computer simulations. As indicated in Section \ref{AddIntro}, in this additive case, we can apply the result by \'Sniady \cite[Theorem 6]{Sniady2002}: if $X_N$ is a sequence of $N\times N$ self-adjoint deterministic matrix or random matrix classically independent of the Ginibre ensemble $Z_N(t)$ at time $t$, and if $x_0$ is the limit of $X_N$ in $\ast$-distribution, then the empirical eigenvalue distribution of $X_N+Z_N(t)$ converges weakly to the Brown measure of $x_0+c_t$.

The two figures use computer simulations to compare the push-forward of the Brown measure of $x_0+c_t$ under $\Psi$ and the spectral distribution of $x_0+s_t$. The latter distribution is stated in Proposition \ref{AddDensity}. In each of the figures, part (a) consists of the eigenvalue plot of $x_0+c_t$ at $t=0.8$, and $t = 2$; part (b) shows the histogram of the image of the eigenvalues in part (a) pushed-forward under $\Psi$, with the density of $x_0+s_t$ superimposed. We can see that both histograms in part (b) follow the theoretical distribution -- the spectral distribution of $x_0+s_t$.

\begin{figure}[h]
	\begin{center}
	\begin{subfigure}[h]{0.55\linewidth}
		\includegraphics[width=\linewidth]{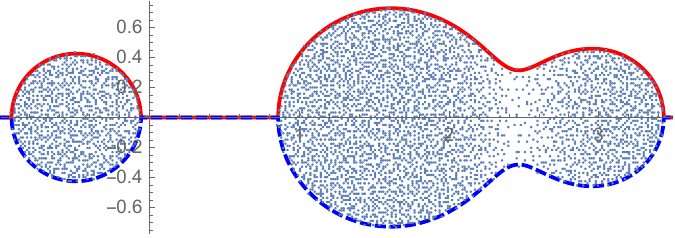}
		\caption{Eigenvalue plot for $x_0+c_{0.8}$ and the graphs $v_{0.8}(a)$ (red) and $-v_{0.8}(a)$ (blue dashed)}
		\end{subfigure}\\
	
		\begin{subfigure}[h]{0.5\linewidth}
		\includegraphics[width=\linewidth]{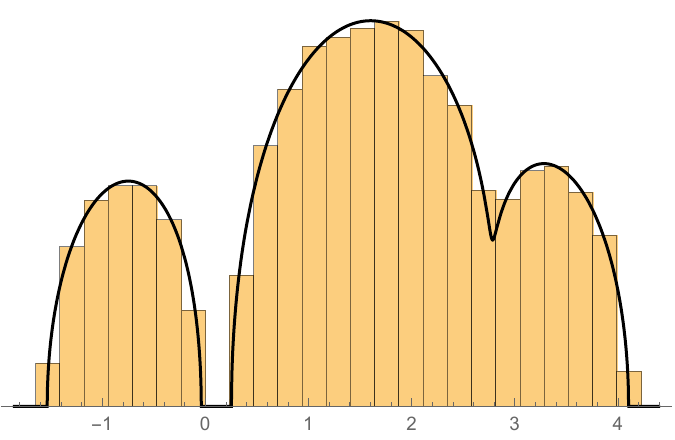}
		\caption{Push-forward of eigenvalues of $x_0+c_{0.8}$ by $\Psi_{0.8}$ and the density of $x_0+s_{0.8}$ superimposed}
		\end{subfigure}
	\end{center}
\begin{center}
	\caption{$2\,000\times 2\,000$ Matrix simulations at $t=0.8$ for $x_0$ distributed as $\frac{1}{5}\delta_{-0.5}+\frac{3}{5}\delta_{1.6}+\frac{1}{5}\delta_{3}$}
\end{center}
\end{figure}

\begin{figure}[h]
	\begin{center}
	\begin{subfigure}[h]{0.5\linewidth}
		\includegraphics[width=\linewidth]{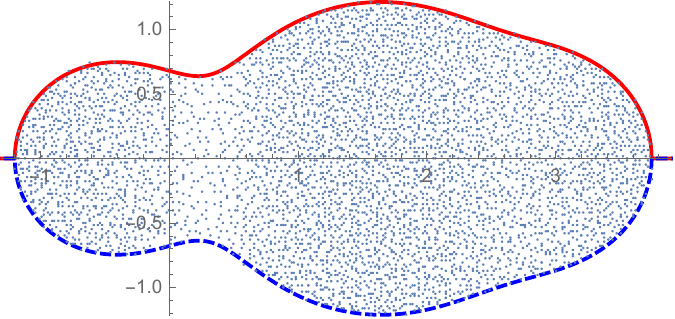}
		\caption{Eigenvalue plot for $x_0+c_2$ and the graphs $v_2(a)$ (red) and $  -v_2(a)$ (blue dashed)}
	\end{subfigure}\\

	\begin{subfigure}[h]{0.5\linewidth}
	\includegraphics[width=\linewidth]{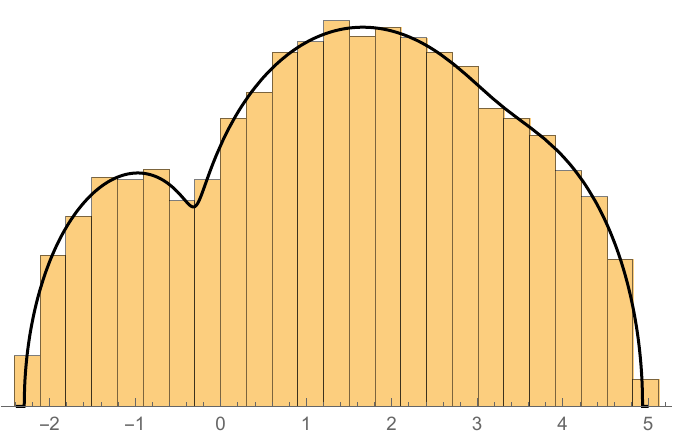}
	\caption{Push-forward of eigenvalues of $x_0+c_2$ by $\Psi_2$ and the density of $x_0+s_{2}$ superimposed}
\end{subfigure}
\end{center}
	\begin{center}
		\caption{$2\,000\times 2\,000$ Matrix simulations at $t=2$ for $x_0$ distributed as $\frac{1}{5}\delta_{-0.5}+\frac{3}{5}\delta_{1.6}+\frac{1}{5}\delta_{3}$}
	\end{center}
\end{figure}

\section{Free Multiplicative Brownian Motion}\label{MultCase}
In this section, we compute the Brown measure of the random variable $ub_t$ where $u$ is a unitary random variable freely independent from the free multiplicative Brownian motion $b_t$.  The strategy in the computation in the multiplicative case is similar to the additive case. We first compute the PDE of the corresponding $S$ which can be solved using the Hamilton--Jacobi method. We then find all the constants of motion of the Hamilton equation and solve a system of ODEs. Because the PDE of $S$ is the same as the one computed in \cite{DHKBrown}, the analysis is similar to the arguments in \cite{DHKBrown}, with initial conditions for our case.

The analysis in this case is much more technical than the additive case. Nevertheless, the idea is that given a point $\lambda$ on the complex plane, we want to find initial conditions $\lambda_0$ and $\varepsilon_0$ such that
\begin{enumerate}
	\item The solution of the Hamiltonian system exists up to time $t$;
	\item $\lim_{s\uparrow
 t}\varepsilon(s) = 0$ and $\lim_{s\uparrow
 t}\lambda(s) = \lambda$.
	\end{enumerate}

In the process, we will see that for $\lambda\not\in\overline{\Dt}$, where $\Dt$ is a certain open set described in Section \ref{some-sets}, the initial condition of $\varepsilon_0$ can be taken arbitrarily small and the above properties still hold. Thus, when $\lambda\not\in\overline{\Dt}$, we can make $\varepsilon(t)$ arbitrarily close to $0$ by making $\varepsilon_0$ arbitrarily close to $0$. For $\lambda\not\in\overline{\Dt}$, one can show that $\Delta_\lambda S(t,\lambda,0) = 0$ and hence the Brown measure is supported in $\overline{\Dt}$. In this paper, we use a different approach; we show that the Brown measure is $0$ outside $\overline{\Dt}$ by showing that the Brown measure has mass $1$ on $\Dt$.

For $\lambda\in \Dt$, we take a proper limit of the initial condition $\varepsilon_0$ such that the lifetime of the solution is up to time $t$. Then we use the Hamilton--Jacobi formulas to compute the Laplacian of $S$.

\subsection{The Differential Equations}
\label{sect:TDE}
Let $b_{t}$ be the free multiplicative
Brownian motion and $u$ be a unitary operator that is freely independent from $b_t$. 
Denote $g_t=ub_t$. 
We consider the function $S$ defined by%
\begin{equation}
S(t,\lambda,\varepsilon)=\tau\lbrack\log((g_{t}-\lambda)^{\ast}(g_{t}-\lambda)+\varepsilon)]
\label{Sdefinition}%
\end{equation}
and set 
\[
s_{t}(\lambda)=\lim_{\varepsilon\downarrow
0}S(t,\lambda,\varepsilon).
\]
Then ~\eqref{BrownAsLimit} shows that the density $W(t,\lambda)$ of the Brown
measure of $g_t$ is computed in terms of $s_{t}(\lambda)$ as
\begin{equation}
W(t,\lambda)=\frac{1}{4\pi}\Delta_\lambda s_{t}(\lambda). \label{WtDef}%
\end{equation}

By applying the free It\^{o} formula, one can prove the following result. The PDE in the following theorem is the same as the one in \cite{DHKBrown}, except that it is written in logarithmic polar coordinates, and has a different initial condition. We refer the interested reader to \cite{DHKBrown} for the proof for the case when $u=I$ and the same argument there works for arbitrary unitary operator $u$, because $g_t$ satisfies the same free SDE, but different initial condition as $b_t$.
\begin{theorem}
	\label{thePDE.thm}The function $S$ in (\ref{Sdefinition}) satisfies the
	following PDE in logarithmic polar coordinates:
	\begin{equation}
	\frac{\partial S}{\partial t}=\varepsilon\frac{\partial S}{\partial \varepsilon}\left(
	1+(\left\vert \lambda\right\vert ^{2}-\varepsilon)\frac{\partial S}{\partial \varepsilon}%
	-\frac{\partial S}{\partial \rho}\right)
	,\quad\lambda=e^{\rho}e^{i\theta}, \label{thePDE}%
	\end{equation}
	with the initial condition%
	\begin{equation}\label{SinitialCond}%
	S(0,\lambda,\varepsilon)=\tau[\log (u-\lambda)^*(u-\lambda)+\varepsilon ]=\int_\bT \log (|\xi-\lambda|^2+\varepsilon) d\mu(\xi),
	\end{equation}
	where $\mu$ is the spectral distribution of $u$. 
\end{theorem}

In \cite{DHKBrown}, Driver, Hall and Kemp studied the properties of the solutions for the PDE (\ref{thePDE}) under the case that the unitary initial condition $u=I$. Since we are solving the same PDE with a different initial condition, the properties of the solution for (\ref{thePDE}) with the initial condition (\ref{SinitialCond}) are similar to those obtained in their work. 

The equation (\ref{thePDE}) is a first-order, nonlinear PDE of
Hamilton--Jacobi type (see for example, Section 3.3 in
the book of Evans \cite{EvansBook}). We will use the polar coordinates and the logarithmic polar coordinates for $\lambda$; we write $\lambda = e^{\rho}e^{i\theta} = re^{i\theta}$. We define the Hamiltonian corresponding to (\ref{thePDE}) by
\begin{equation}
H(\rho,\theta,\varepsilon,p_{\rho},p_\theta,p_{\varepsilon})=-\varepsilon p_{\varepsilon}(1+r^2p_{\varepsilon}-\varepsilon p_\varepsilon-p_{\rho}).
\label{theHamiltonian}%
\end{equation}
Although the right hand side of~\eqref{theHamiltonian} is independent of $\theta$ and $p_\theta$, the function $S$ does depend on $\theta$. We consider $\theta$ and $p_{\theta}$ as variables of $H$ so that we can apply the second Hamilton--Jacobi formula \eqref{eq:2ndHJ} to compute $\frac{\partial S}{\partial \theta}$. We consider Hamilton's equations for this Hamiltonian; that is, we
consider this system of six coupled ODEs:%
\begin{align}
\frac{d\rho}{dt}  &  =\frac{\partial H}{\partial p_{\rho}};\quad~~~\frac{d\theta}{dt} = \frac{\partial H}{\partial p_\theta}\quad~~~\frac{d\varepsilon}{dt}=\frac{\partial
	H}{\partial p_{\varepsilon}};\nonumber\\
\frac{dp_{\rho}}{dt}  &  =-\frac{\partial H}{\partial \rho};\quad~~~\frac{dp_\theta}{dt} = -\frac{\partial H}{\partial\theta} \quad\frac{dp_{\varepsilon}}{dt}=-\frac{\partial
	H}{\partial \varepsilon}. \label{theODEs}%
\end{align}
Since the right hand side of~\eqref{theHamiltonian} is independent of $\theta$ and $p_\theta$, it is clear that $d\theta/dt = dp_\theta/dt = 0$. Hence, $\theta$ and $p_\theta$ are independent of $t$.

Here we require that $\varepsilon(t)$ be positive, while all other quantities are real-valued. For convenience, we write
\[
\lambda(t)=r(t)e^{i\theta} = e^{\rho(t)}e^{i\theta}.
\]
Note that we have used, in the above equation, that the Hamiltonian is independent of $p_\theta$. The initial conditions for $\rho$ and $\varepsilon>0$ are arbitrary. 
Given 
\begin{equation}
\rho(0)=\rho_{0};\quad \varepsilon(0)=\varepsilon_{0}, \label{initialConditions1}%
\end{equation}
We write $r(0) = r_0 = e^{\rho_0}$, and initial condition \eqref{SinitialCond} of $S$ as 
\begin{align}
  S(0,\lambda,\varepsilon)&=\int_\bT \log (|\xi-\lambda|^2+\varepsilon) d\mu(\xi)\nonumber\\
  &=\int_{-\pi}^\pi \log (|e^{i\alpha}-\lambda|^2+\varepsilon) d\mu(e^{i\alpha})\nonumber\\
  &=\int_{-\pi}^\pi \log\Big( 1+r_0^2-2r_0\cos(\theta-\alpha)+\varepsilon \Big)d\mu(e^{i\alpha}).
\end{align}
The initial
momenta $p_{\rho,0}=p_\rho(0)$, $p_\theta$ and $p_{0}=p_\varepsilon(0)$ are chosen as 
\[
  p_{\rho,0} = p_\rho(0)= \frac{\partial  S(0,\lambda_0,\varepsilon_0)}{\partial \rho_0},\quad p_{\theta} = \frac{\partial  S(0,\lambda_0,\varepsilon_0)}{\partial \theta},
  \quad p_0 = p_\varepsilon(0)=\frac{\partial  S(0,\lambda_0,\varepsilon_0)}{\partial \varepsilon_0}.
\]
With these choices of momenta, we will apply Proposition~\ref{GeneralSol} to see that the momenta correspond to the partial derivatives of $S$ along the curve $(\rho(t), \theta, \varepsilon(t))$.

Recall that $\mu$ is the spectral distribution of $u$, we can write the above definitions explicitly as follows.
\begin{align}
p_{\rho,0} = p_\rho(0)&= \frac{\partial  S(0,\lambda_0,\varepsilon_0)}{\partial \rho_0}
\nonumber\\
  &=\int_{-\pi}^\pi \frac{2r_0(r_0-\cos(\theta-\alpha))}{1+r_0^2-2r_0\cos(\theta-\alpha)+\varepsilon_{0}}d\mu(e^{i\alpha});
 \label{condition_a}
\end{align}

\begin{align}
	p_{\theta}&= \frac{\partial  S(0,\lambda_0,\varepsilon_0)}{\partial \theta}
	\nonumber\\
	&=\int_{-\pi}^\pi \frac{2r_0\sin(\theta-\alpha)}{1+r_0^2-2r_0\cos(\theta-\alpha)+\varepsilon_{0}}d\mu(e^{i\alpha});
	\label{condition_theta}
\end{align}

and 
\begin{align}
p_0 = p_\varepsilon(0)&=\frac{\partial  S(0,\lambda_0,\varepsilon_0)}{\partial \varepsilon_0} \nonumber\\
&=\int_{-\pi}^\pi \frac{1}{1+r_0^2-2r_0\cos(\theta-\alpha)+\varepsilon_{0}}d\mu(e^{i\alpha})\nonumber\\
&=\int_\bT \frac{1}{|\xi-\lambda_0|^2+\varepsilon_0}d\mu(\xi),
\label{condition_epsilon}
\end{align}
where $\lambda_0=r_0e^{i\theta}$.

After a change of variable to the rectangular coordinates, the Hamiltonian system \eqref{theHamiltonian} is the same as the one studied in \cite{DHKBrown} because the PDE of $S$ is the same. The solution of the system of coupled ODEs \eqref{theODEs}, given initial conditions, is very similar to \cite[Section 6]{DHKBrown}; if we do not write the initial momenta explicitly but leave as symbols $p_{\rho,0}$ and $p_0$, the solutions look pretty much the same. Adapted the new initial conditions, we will see how the Laplacian of $S$ changes and we will be able to analyze and identify the Brown measure of $ub_t$.

\begin{lemma}
	\label{H0.lem}The value of the Hamiltonian at $t=0$ is%
	\begin{equation}
	H_{0}=-\varepsilon_{0}p_{0}^{2}. \label{H0Formula}%
	\end{equation}
\end{lemma}
\begin{proof}
We calculate 
\begin{align*}
&\quad r_0^{2}p_0-\varepsilon_0p_0-p_{\rho,0}\\
    &=\int_{-\pi}^\pi \frac{-r_0^2+2r_0\cos(\theta_0-\alpha)-\varepsilon_0}{1+r_0^2-2r_0\cos(\theta_0-\alpha)+\varepsilon_0}d\mu(e^{i\alpha})\\
    &=-1+p_0.
\end{align*}
Hence, $H_0=-\varepsilon_0p_0(1+r_0^{2}p_0-\varepsilon_0p_0-p_{\rho,0})=-\varepsilon_0p_0^2$.
\end{proof}

We record the following result which is modified from \cite[Theorem 6.2]{DHKBrown} for our choice of initial conditions. 
\begin{theorem}
	\label{HJ.thm}Assume $\lambda_{0}\neq0$ and $\varepsilon_{0}>0.$ Suppose a solution to
	the system (\ref{theODEs}) with initial conditions 
	(\ref{initialConditions1}), and (\ref{condition_a})
	- (\ref{condition_epsilon}) exists with $\varepsilon(t)>0$ for $0\leq t<T.$ Then we
	have%
	\begin{align}
	S(t,\lambda(t),\varepsilon(t))   =&\int_\bT \log (|\xi-\lambda_0|^2+\varepsilon_0) d\mu(\xi)
	-\varepsilon_{0}t\left(\int_{\mathbb{T}}\frac{1}{|\xi-\lambda_0|^2+\varepsilon_0}d\mu(\xi)   \right)^2 \nonumber\\
	&  +\log\left\vert \lambda(t)\right\vert -\log\left\vert \lambda
	_{0}\right\vert \label{Sformula}%
	\end{align}
	for all $t\in\lbrack0,T).$ Furthermore, the derivatives of $S$ with respect to
	$\rho$ and $\varepsilon$ satisfy
	\begin{align}
	\frac{\partial S}{\partial \varepsilon}(t,\lambda(t),\varepsilon(t))  &  =p_{\varepsilon}(t);\nonumber\\
	\frac{\partial S}{\partial \rho}(t,\lambda(t),\varepsilon(t))  &  =p_{\rho}(t).
	\label{Sderivatives}%
	\end{align}
	
\end{theorem}
\begin{proof}
	We compute
	\begin{align*}
	p_\rho\frac{d\rho}{dt}+p_\varepsilon\frac{d\varepsilon}{dt}& = p_\rho\frac{\partial H}{\partial p_\rho}+p_\varepsilon\frac{\partial H}{\partial p_\varepsilon}\\
	& =\varepsilon p_\varepsilon-2\varepsilon p_\varepsilon(1+r^2p_\varepsilon-\varepsilon p_\varepsilon-p_\rho) \\
	&=\varepsilon p_\varepsilon+2 H=\varepsilon p_\varepsilon+2H_0.
	\end{align*}
By Proposition \ref{GeneralSol}, we have
\[
   S(t,\lambda(t), \varepsilon(t)) = S(0, \lambda_0, \varepsilon_0)+tH_0+\int_0^t \varepsilon(s)p_\varepsilon(s)\,ds.
\]
We also have 
\[\frac{d\rho}{dt} = \frac{\partial H}{\partial p_\rho} = \varepsilon p_\varepsilon,\]
which yields that 
\begin{equation}
\int_{0}^{t}\varepsilon(s)p_{\varepsilon}(s)\,ds=\log\left\vert \lambda(t)\right\vert
-\log\left\vert \lambda_{0}\right\vert . \label{logLambdaIntegral}%
\end{equation}
If we plug in the values of $S(0, \lambda_0, \varepsilon_0)$ in (\ref{SinitialCond}), $H_0$ and $p_0$ in (\ref{H0Formula}), we obtain (\ref{Sformula}).
\end{proof}

It is very important to understand the constants of motion of a Hamiltonian system. 
\begin{proposition}\label{conservation}
The following quantities remain constant along any solution of (\ref{theODEs}):
\begin{enumerate}
\item The Hamiltonian $H$,
\item The argument $\theta$ of $\lambda$ if $\lambda_0\neq 0$,
\item The momentum $p_\theta$ with respect to $\theta$,
\item The function defined by $\Psi:=\varepsilon p_\varepsilon+\frac{1}{2}p_\rho$.
\end{enumerate}
\end{proposition}
\begin{proof}
These are the same constants of motion in \cite[Propositions 6.4, 6.5]{DHKBrown}. It can also be checked directly using \eqref{theODEs}.
\end{proof}

\subsection{Solving the ODEs}
We will need the following values to solve the coupled ODEs. Using initial conditions (\ref{condition_a})-(\ref{condition_epsilon}) and the fact that $\Psi$ is a constant of motion, we have
\begin{align}
	\Psi&=\varepsilon p_\varepsilon+\frac{1}{2}p_\rho\nonumber\\
	   &=\int_{-\pi}^\pi \frac{|\lambda_0|^2-|\lambda_0|\cos(\theta_0-\alpha)+\varepsilon_0}{1+|\lambda_0|^2-2|\lambda_0|\cos(\theta_0-\alpha)+\varepsilon_0}d\mu(e^{i\alpha}). \label{eqn:Psi}
\end{align}	
We set
	   \begin{align}
	   C&= 2\Psi-1\nonumber\\
	   &=\int_{-\pi}^\pi \frac{|\lambda_0|^2-1+\varepsilon_0}{1+|\lambda_0|^2-2|\lambda_0|\cos(\theta_0-\alpha)+\varepsilon_0}d\mu(e^{i\alpha})\\\label{eqn:C}
	   &=p_0(r_0^2-1+\varepsilon_0).
	 \end{align}	
 and
\begin{equation}
	   \delta=	  \frac{r_0^2+1+\varepsilon_0}{r_0}\label{value-delta}.
\end{equation}
\begin{proposition}
	\label{xpx}
 For all $t$, we have
\begin{equation}\label{eqn:xp_x}
   \varepsilon(t)p_\varepsilon(t)^2=\varepsilon_0p_0^2e^{-Ct},
\end{equation}
where $C$ is given by $(\ref{eqn:C})$. 
\end{proposition}
\begin{proof}
This is established by solving the ODE that $\varepsilon(t)p_\varepsilon(t)^2$ satisfies. See \cite[Proposition 6.6]{DHKBrown} for details. The difference in our initial conditions does not play a role in the proof.
\end{proof}

We now adapt the analysis of the proof for \cite[Proposition 6.9]{DHKBrown} (which is by solving the system of coupled ODEs), 
under the initial conditions (\ref{initialConditions1}) and (\ref{condition_a})-(\ref{condition_epsilon}), to analyze the function $S$, defined in \eqref{Sdefinition}, in our case. We calculate, by \eqref{theHamiltonian} and \eqref{theODEs}, 
\[
\frac{dp_\varepsilon}{dt}=-\frac{\partial H}{\partial{\varepsilon}}=-\frac{H}{\varepsilon(t)}-\varepsilon(t)p_\varepsilon(t)^2.
\] 
The Hamiltonian $H$ is a constant of motion. By Lemma \ref{H0.lem} and Proposition \ref{xpx}, the above equation is expressed as
\begin{align}
\frac{dp_\varepsilon}{dt}&=-\frac{\varepsilon_0 p_0^2}{\varepsilon(t)}-\varepsilon_0 p_0^2e^{-Ct}\nonumber\\
  &=-p_\varepsilon(t)^2e^{Ct}-\varepsilon_0 p_0^2e^{-Ct}.\label{p-epsilon-t-ode}
\end{align}
The ODE \eqref{p-epsilon-t-ode} can be solved explicitly as the constants $C$, $\varepsilon_0$, and $p_0$ have been given. 
Following \cite[Proposition 6.9]{DHKBrown}, the solution of \eqref{p-epsilon-t-ode} is
the $p_\varepsilon$ component of the system (\ref{theODEs}) which is
\begin{equation}
p_{\varepsilon}(t)=p_0\frac{\cosh(kt)+\frac{2\left\vert \lambda_{0}\right\vert
		-\delta}{\sqrt{\delta^{2}-4}}\sinh(kt)}{\cosh(kt)-\frac{\delta}{\sqrt
		{\delta^{2}-4}}\sinh(kt)}e^{-Ct} \label{pxFormula}%
\end{equation}
where
\begin{align}
k&=\frac{1}{2}p_0\cdot r_0\cdot\sqrt{\delta^2-4}\label{value_a}
\end{align}
for as long as the solution to the system (\ref{theODEs}) exists, where we
use the same choice of $\sqrt{\delta^{2}-4}$ as in the definition of $k$ in (\ref{value_a}). If $\delta=2,$ we
interpret $\sinh(kt)/\sqrt{\delta^{2}-4}$ as $\frac{1}{2}%
p_0 r_0 t.$
In addition, if $\varepsilon_0\geq 0$, the numerator $\cosh(kt)-\frac{\delta}{\sqrt
	{\delta^{2}-4}}\sinh(kt)$ is positive for all $t$. Hence, the function $p_\varepsilon(t)$ is positive as long as the solution exists and its reciprocal $1/p_\varepsilon(t)$ is a real analytic function of $t$ defined for all $t\in \mathbb{R}$.  
Moreover, the first time the expression (\ref{pxFormula}) blows up is the time when the denominator is zero, which is
\begin{align}
t_{\ast}(\lambda_{0},\varepsilon_{0})  
&  =\frac{2}{p_0 r_0 }\frac{1}%
{\sqrt{\delta^{2}-4}}\tanh^{-1}\left(  \frac{\sqrt{\delta^{2}-4}}{\delta
}\right)\label{tStar1}\\
& = \frac{1}{p_0r_0 }\frac{1}%
{\sqrt{\delta^{2}-4}}\log\left(  \frac{\delta+\sqrt{\delta^{2}-4}}{\delta-\sqrt{\delta^{2}-4}
}\right)\label{tStar2}.
\end{align}
Here, the principal branch of the inverse
hyperbolic tangent should be used in (\ref{tStar1}), with branch cuts
$(-\infty,-1]$ and $[1,\infty)$ on the real axis, which corresponds to using
the principal branch of the logarithm in (\ref{tStar2}). When $\delta=2,$ we
interpret $t_{\ast}(\lambda_{0},\varepsilon_{0})$ as having its limiting value as
$\delta$ approaches $2$.

We now describe limit behaviors of $\varepsilon(t)$ and $\lambda(t)$ by adapting the arguments in \cite[Section 6.3]{DHKBrown}.
By (\ref{eqn:xp_x}), we have
\begin{equation}\label{eqn:xt}
    \varepsilon(t)=\frac{\varepsilon_0p_0^2e^{-Ct}}{p_\varepsilon(t)^2}.
\end{equation}
As $t$ approaches $t_*(\lambda_0, \varepsilon_0)$ from the left, $p_\varepsilon(t)$ remains positive until it blows up and $\varepsilon(t)$ approaches zero. 
That is to say, for any $\varepsilon_0$, we have
\begin{equation}
\lim_{t\rightarrow t_{\ast}(\lambda_{0},\varepsilon_{0})}\varepsilon(t)=0. \label{xBecomesZero}
\end{equation}

If $\varepsilon_{0}=0,$ from (\ref{eqn:xt}), we see that the solution has $\varepsilon(t)\equiv0$;
and we deduce that
$\lambda(t)\equiv\lambda
_{0}$ from (\ref{logLambdaIntegral}).
By \cite[Theorem 6.7]{DHKBrown} (our initial conditions are different from the system in \cite{DHKBrown}, but it only uses \eqref{eqn:xt} and the fact that $H$ is a constant of motion), if $\varepsilon(t)\to 0$ as $t\to t_\ast(\lambda_0,\varepsilon_0)$ with $\varepsilon_0> 0$, we have
\begin{equation}\label{lambda-life-time}
   \lim_{t\rightarrow t_{\ast}(\lambda_{0},\varepsilon_{0})}\rho(t)=\frac{Ct_{\ast}(\lambda_{0},\varepsilon_{0})}{2}.
\end{equation} 
\begin{proposition}
We have 
\begin{equation}
\lim_{t\rightarrow t_{\ast}(\lambda_{0},\varepsilon_{0})}p_\rho
=2\Psi=C+1=\lim_{t\rightarrow t_{\ast}(\lambda_{0},\varepsilon_{0})}%
\frac{2\rho(t)}{t}+1. \label{apa}%
\end{equation}
\end{proposition}
\begin{proof}
Note that $\lim_{t\to t_*(\lambda_0,\varepsilon_0)}\varepsilon(t) p_\varepsilon(t) =\lim_{t\to t_*(\lambda_0,\varepsilon_0)}\sqrt{\varepsilon(t)}\sqrt{\varepsilon(t) p_\varepsilon(t)^2} = 0$ by Proposition \ref{xpx} and \eqref{xBecomesZero}, 
we then deduce from (\ref{eqn:Psi}) to obtain  
\begin{align*}
 \lim_{t\rightarrow t_{\ast}(\lambda_{0},\varepsilon_{0})}p_\rho
   =2\Psi.
\end{align*}
Then \eqref{eqn:C} and \eqref{lambda-life-time} yield the result. 
\end{proof}


\subsection{Monotonicity of the lifetime}

We will show first that the lifetime of the Hamiltonian system is an increasing function of $\varepsilon_0$. To this end, let us recall the
following elementary lemma appeared in the proof of \cite[Proposition 6.16]{DHKBrown}.
\begin{lemma}\label{lemma:g}
Given $\theta\in (-\pi,\pi]$, the function $g_\theta$ defined by
\begin{equation}
g_{\theta}(x):=\frac{x-2\cos\theta}{\sqrt{x^{2}-4}}%
\log\left(  \frac{x+\sqrt{x^{2}-4}}{x-\sqrt{x^{2}-4}%
}\right). \label{gDelta}%
\end{equation}
is strictly increasing, non-negative, continuous function of $x$ for $x\geq 2$ and tends to $\infty$ as $x$ tends to infinity. 
\end{lemma}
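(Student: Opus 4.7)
The plan is to reduce the problem to a simpler one-variable analysis by substituting $x = 2\cosh(s)$ for $s \geq 0$. Under this substitution, $\sqrt{x^2-4} = 2\sinh(s)$, and the quantity $(x+\sqrt{x^2-4})/(x-\sqrt{x^2-4})$ simplifies to $e^{2s}$, so its logarithm is simply $2s$. Therefore
\[
g_\theta(x) = \frac{2s(\cosh(s) - \cos\theta)}{\sinh(s)},
\]
which I will denote $h_\theta(s)$. The parametrization $s \mapsto 2\cosh(s)$ is a homeomorphism from $[0, \infty)$ onto $[2, \infty)$ and is smooth and strictly increasing on $(0, \infty)$, so all the required properties of $g_\theta$ follow from the corresponding properties of $h_\theta$.

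Non-negativity of $h_\theta$ is clear since $\cosh(s) \geq 1 \geq \cos\theta$ for $s \geq 0$. Continuity at $s = 0$ (equivalently, at $x = 2$) follows from L'H\^opital or the Taylor expansion $\sinh(s) = s + O(s^3)$, giving $h_\theta(0^+) = 2(1 - \cos\theta)$. For the behavior at infinity, I will rewrite
\[
h_\theta(s) = 2s\coth(s) - \frac{2s\cos\theta}{\sinh(s)};
\]
the first term tends to $+\infty$ while the second tends to $0$, yielding $h_\theta(s) \to \infty$.

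The main step, and the one requiring actual work, is strict monotonicity. I would compute $h_\theta'(s)$ directly; after clearing the denominator $\sinh^2(s)$, the sign of $h_\theta'(s)$ is determined by
\[
N_\theta(s) := (\sinh(s)\cosh(s) - s) + \cos\theta \cdot (s\cosh(s) - \sinh(s)).
\]
Both $\sinh(s)\cosh(s) - s$ and $s\cosh(s) - \sinh(s)$ are strictly positive for $s > 0$ (each vanishes at $s=0$ and has a positive derivative). When $\cos\theta \geq 0$, positivity of $N_\theta(s)$ is immediate. When $\cos\theta < 0$, since $|\cos\theta| \leq 1$, it suffices to verify the worst case $\cos\theta = -1$, which reduces to the identity
\[
(\sinh(s)\cosh(s) - s) - (s\cosh(s) - \sinh(s)) = (\sinh(s) - s)(\cosh(s) + 1) > 0
\]
for $s > 0$. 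This gives $h_\theta'(s) > 0$ uniformly in $\theta$.

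The main obstacle is handling the range $\cos\theta < 0$ in the monotonicity step, since the two terms in $N_\theta(s)$ then have opposite signs and one must show the positive term dominates. The algebraic trick above, reducing to the $\theta = \pi$ case and using the elementary inequality $\sinh(s) > s$, is the cleanest way I see to close this gap; other approaches (Taylor expansion, convexity arguments) appear to be messier.
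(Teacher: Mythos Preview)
Your proof is correct. The substitution $x = 2\cosh(s)$ is exactly the right move: it turns the awkward logarithmic factor into $2s$ and reduces everything to elementary hyperbolic inequalities. Your computation of $N_\theta(s)$ is accurate, and the factorization
\[
(\sinh(s)\cosh(s) - s) - (s\cosh(s) - \sinh(s)) = (\sinh(s) - s)(\cosh(s) + 1)
\]
handles the worst case $\cos\theta = -1$ cleanly; since $N_\theta$ is affine in $\cos\theta$ and positive at both endpoints $\cos\theta = \pm 1$, it is positive for all $\theta$.

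As for comparison with the paper: the paper does not actually prove this lemma. It simply records the statement and cites \cite[Proposition~6.16]{DHKBrown}, where the argument appears in the course of a longer proof. Your self-contained argument is therefore more than the paper itself offers, and the hyperbolic substitution you use is essentially the same device employed in the cited reference.
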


\begin{proposition}
	\label{monotoneTstar.prop}For each $\lambda_{0},$ the function $t_{\ast
	}(\lambda_{0},\varepsilon_{0})$ is a strictly increasing function of $\varepsilon_{0}$ for
	$\varepsilon_{0}\geq0$ and
	\[
	\lim_{\varepsilon_{0}\rightarrow+\infty}t_{\ast}(\lambda_{0},\varepsilon_{0})=+\infty.
	\]
	
\end{proposition}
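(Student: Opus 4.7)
The plan is to re-express $t_\ast(\lambda_0, \varepsilon_0)$ as a single integral whose integrand is pointwise monotone in $\varepsilon_0$, using the auxiliary function $g_\theta$ from Lemma \ref{lemma:g}. First I would exploit the identity $|\lambda_0|\delta = |\lambda_0|^2 + 1 + \varepsilon_0$ from (\ref{value-delta}) to simplify the denominator in (\ref{condition_x_2}) to $|\lambda_0|(\delta - 2\cos(\theta_0-\alpha))$, giving
\[
p_\varepsilon(0)\,|\lambda_0| = \int_{-\pi}^\pi \frac{d\mu(e^{i\alpha})}{\delta - 2\cos(\theta_0-\alpha)}.
\]
Substituting this into (\ref{tStar2}) and pulling the $\alpha$-independent factor $\sqrt{\delta^2-4}/\log\!\bigl((\delta+\sqrt{\delta^2-4})/(\delta-\sqrt{\delta^2-4})\bigr)$ inside the integral yields
\begin{equation}\label{tStarInt}
\frac{1}{t_\ast(\lambda_0, \varepsilon_0)} = \int_{-\pi}^\pi \frac{d\mu(e^{i\alpha})}{g_{\theta_0-\alpha}(\delta)},
\end{equation}
with the degenerate case $\delta = 2$ interpreted via the common limit of both sides.

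For strict monotonicity, I would note that $\varepsilon_0 \mapsto \delta$ is a strictly increasing affine map into $[2,\infty)$, with $\delta = 2$ attained only at the single point $|\lambda_0| = 1$, $\varepsilon_0 = 0$. By Lemma \ref{lemma:g}, for each $\theta$ the function $x \mapsto g_\theta(x)$ is strictly increasing on $[2,\infty)$, so the integrand in (\ref{tStarInt}) is strictly decreasing in $\varepsilon_0$ pointwise in $\alpha$. When $1/t_\ast(\lambda_0, \varepsilon_0)$ is finite, dominated convergence then forces a strict decrease of the integral upon passing to any $\varepsilon_0' > \varepsilon_0$. The only scenario where the right-hand side of (\ref{tStarInt}) is $+\infty$ is $\varepsilon_0 = 0$, $|\lambda_0| = 1$ with $\mu$ concentrated enough at $e^{i\theta_0}$; in this case $t_\ast(\lambda_0,0)=0$, and for any $\varepsilon_0'>0$ we have $\delta > 2$ with $g_{\theta_0-\alpha}(\delta)$ bounded below uniformly in $\alpha$ by a positive constant, so $t_\ast(\lambda_0,\varepsilon_0')>0=t_\ast(\lambda_0,0)$ trivially.

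For the limit as $\varepsilon_0 \to \infty$, I would use the asymptotic behavior $g_\theta(x) \sim 2\log x$ as $x \to \infty$, uniformly in $\theta$: the prefactor $(x-2\cos\theta)/\sqrt{x^2-4}$ tends to $1$ uniformly in $\theta$, and the logarithmic factor satisfies $\log\!\bigl((x+\sqrt{x^2-4})/(x-\sqrt{x^2-4})\bigr) = 2\log(x+\sqrt{x^2-4}) - \log 4 \sim 2\log x$. Consequently, for $\delta$ sufficiently large there is a constant $c > 0$ (independent of $\alpha$) such that $g_{\theta_0-\alpha}(\delta) \geq c \log\delta$ for every $\alpha$, and (\ref{tStarInt}) yields $1/t_\ast(\lambda_0, \varepsilon_0) \leq 1/(c \log\delta) \to 0$, hence $t_\ast(\lambda_0, \varepsilon_0) \to \infty$. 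The only mild technical obstacle is the degenerate corner $\delta = 2$, which is dispatched by the bookkeeping above.
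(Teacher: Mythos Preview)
Your proposal is correct and follows essentially the same route as the paper: both rewrite $1/t_\ast(\lambda_0,\varepsilon_0)$ as $\int_{-\pi}^\pi g_{\theta_0-\alpha}(\delta)^{-1}\,d\mu(e^{i\alpha})$ via the identity $p_\varepsilon(0)|\lambda_0|=\int(\delta-2\cos(\theta_0-\alpha))^{-1}\,d\mu$, and then invoke Lemma~\ref{lemma:g}. Your treatment is in fact slightly more careful than the paper's, since you explicitly address the degenerate endpoint $\delta=2$ and the uniformity in $\alpha$ needed to conclude $t_\ast\to\infty$.
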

\begin{proof}
For $\lambda_0$ fixed, recall the definition of $\delta$ in (\ref{value-delta}) and $t_*(\lambda, \varepsilon)$ in (\ref{tStar2}), we define the function
\begin{equation}
    f_{\lambda_0}(\delta)=\frac{1}{ t_{\ast}(\lambda_{0},\varepsilon_{0})  }
       = {p_0r_0 }
      {\sqrt{\delta^{2}-4}}\log\left(  \frac{\delta-\sqrt{\delta^{2}-4}}{\delta+\sqrt{\delta^{2}-4}
      }\right)\label{function_f}.
\end{equation}
By the expression for $p_0$ in (\ref{condition_epsilon}), we obtain that
\[
 p_{0}r_0=\int_{-\pi}^\pi \frac{1}{\delta-2\cos(\theta_0-\alpha)}d\mu(e^{i\alpha}). 
\]
We then can rewrite $f_{\lambda_0}(\delta)$  as
\[
   f_{\lambda_0}(\delta)=\int_{-\pi}^\pi \frac{1}{g_{\theta_0-\alpha}(\delta)}d\mu(e^{i\alpha}),
\]
where $g_{\theta_0-\alpha}(\delta)$ is defined in (\ref{gDelta}).
Hence, as $t_*(\lambda_0,\varepsilon_0)$ is the reciprocal of $f_{\lambda_0}(\delta)$ as in (\ref{function_f}),
it then follows from Lemma \ref{lemma:g} that
 the function $t_*(\lambda_0,\varepsilon_0)$ is a strictly increasing, non-negative, continuous function of $\delta$ for $\delta\geq 2$ that tends to $\infty$ as $\delta$ tends to $\infty$. This finishes the proof. 
\end{proof}

Similar to the additive case, the complex plane $\mathbb{C}$ is naturally divided into two sets when we calculate the Brown measure of the free multiplicative Brownian motion. The following function $T$ determines how we divide $\C$ into two sets. The analysis to achieve $\varepsilon(t)=0$ is different in these two sets. We will see that one of these two sets give full measure of the Brown measure, and we omit the analysis of the other set in this paper. Interested readers can read the corresponding analysis on another set in \cite{DHKBrown} (for the case $u=I$).

Define the function $T:\mathbb{C}:\rightarrow [0,\infty)$
\begin{equation}\label{Tlambda}
T(\lambda_0) = 
\begin{cases}
\frac{1}{p_0}\frac{\log r_0^2}{r_0^2-1}, & \text{for } \lambda_0=r_0e^{i\theta_0}\quad \text{and}\quad r_0\neq 1 \\
\frac{1}{p_0}, & \text{for } \lambda_0=r_0e^{i\theta_0}\quad \text{and}\quad r_0=1. 
\end{cases}
\end{equation}
When $\mu =\delta_1$, the function $T$ reduces to the function in \cite{DHKBrown}. The following results are the analogous results in \cite[Section 6.4]{DHKBrown}. Roughly speaking, it says that the function $T$ is the lifetime of the solution ``when $\varepsilon_0=0$".
\begin{proposition}\label{life-time}
	\label{smallx0.prop} Recall that $t_{\ast}(\lambda_{0},\varepsilon_{0})$ is defined by
	\eqref{tStar2}. Then for all nonzero $\lambda_{0}$ we have%
	\begin{equation}
	t_{\ast}(\lambda_{0},0):=\lim_{\varepsilon_0\downarrow 0}t_\ast (\lambda_0,\varepsilon_0)=T(\lambda_{0}), \label{tStarZero}%
	\end{equation}
	where the function $T$ is defined in (\ref{Tlambda}). Furthermore, when
	$\varepsilon_{0}=0,$ we have
	\begin{equation}
	\lim_{t\rightarrow t_{\ast}(\lambda_{0},\varepsilon_{0})}\rho(t)=\rho_0. \label{rhoZero}%
	\end{equation}
	
\end{proposition}

\begin{proof}
	We first consider the case when $|\lambda_0|=1$. In this case, we have $ \lim_{\varepsilon_0\rightarrow 0} \delta=2$ and $\lim_{\varepsilon_0\to 0}\sqrt{\delta^2-4} = \pm\frac{|\lambda_0|^2-1}{|\lambda_0|}$.
	We can then compute the limit
	   \begin{equation}
	   \lim_{\delta\downarrow
2}\frac{1}{\sqrt{\delta^{2}-4}}\log\left(
	   \frac{\delta+\sqrt{\delta^{2}-4}}{\delta-\sqrt{\delta^{2}-4}}\right)  =1
	   \label{gLim}%
	   \end{equation}
and obtain
\[
  \lim_{\varepsilon_0\downarrow
 0}t_{*} (\lambda_0, \varepsilon_0)=\frac{1}{p_0},
\]	   
where $p_0$, given by (\ref{condition_epsilon}), can also be written as
\[
    p_0=\int_{\mathbb{T}}\frac{1}{|\xi-\lambda_0|^2}d\mu(\xi),\quad \text{with}\; \theta=\arg(\lambda_0).
\]
Hence $\lim_{\varepsilon_0\rightarrow 0}t_{*} (\lambda_0, \varepsilon_0)=T(\lambda_0)$ by \eqref{Tlambda}.

For the case $|\lambda_0|\neq 1$, we note that
\[
  \lim_{\varepsilon_0\downarrow
 0}  \frac{\delta+\sqrt{\delta^{2}-4}}{\delta-\sqrt{\delta^{2}-4}
  }=r_0^2.
\]
and
\[
  \lim_{\varepsilon_0\downarrow
 0} r_0\sqrt{\delta^2-4}=r_0^2-1.
\]
Hence, from (\ref{tStar2}), we deduce that
\[
  \lim_{\varepsilon_0\downarrow
 0} t_{*} (\lambda_0, \varepsilon_0)=\frac{1}{p_0}\frac{\log r_0^2}{r_0^2-1}=T(\lambda_0), \quad \text{where}\; \theta=\arg(\lambda_0),
\]
due to the expressions of $p_\varepsilon(0)$ as in (\ref{valueOFf}) and (\ref{condition_epsilon}) respectively.

The proof of (\ref{rhoZero}) follows from a similar calculation using \eqref{lambda-life-time}, and $\sqrt{\delta^2-4} = \pm\frac{|\lambda_0|^2-1}{|\lambda_0|}$.
\end{proof}

\subsection{The domains $\Dt$ and $\Ot$, and their relations to the lifetime}\label{some-sets}

Recall from Section~\ref{section:multiplicative_convolution} that the set $\Omega_{t,\overline{\mu}}=\{\omega_t(z): z\in\mathbb{D} \}$ defined in \eqref{def:Omega_t} is star-like with respect to the origin, by Proposition \ref{characterization-0}. Moreover, the graph of the function $r_t$ defined in~\eqref{def:r_t_theta} is exactly the boundary set $\partial(\Ot)$. That is, by Proposition~\ref{characterization-0}(1),
\[
    \Ot=\{  r e^{i\theta}: -\pi\leq \theta\leq \pi, 0\leq r< r_t(\theta)\}.
\]
See Figure 5 for an example. 
Define the open set
\[
\Dt:=  \left\{ z, 1/\overline{z}: z\in \overline{\mathbb{D}}\backslash \overline{\Ot} \right \}.
\]
We will prove in Theorem \ref{sLaplacian.main} that the closure $\overline{\Dt}$ of $\Dt$ is the support of the Brown measure of $ub_t$; thus, we call it $\Dt$ instead of $\Delta_{t,\bar{\mu}}$. All other notations are related to the subordination function of $u^*u_t$ with respect to $u^*$ so we use $\bar{\mu}$ as subscripts in those notations.

In this section, we first establish the following theorem which tells us the relation between the domain $\Dt$ and the function $T$ (which is the lifetime of the solution when $\varepsilon_0=0$).
\begin{theorem}\label{Dt-characterization}
For any $t>0$, the region $\Dt$ is invariant under $\lambda\mapsto 1/\overline{\lambda}$ and we have
\begin{equation}\label{eqn:Dt-1}
   \Dt=\{ re^{i\theta}: r_t(\theta)<r<1/r_t(\theta) , \theta\in U_{t,\bar{\mu}}\},
\end{equation}
where $r_t$ is the function defined in \eqref{def:r_t_theta}.
Moreover, $\Dt$ may be
expressed as%
\[
\Dt=\left\{  \left.  \lambda\in\mathbb{C}\right\vert T(\lambda
)<t\right\}.
\]

For any $\lambda\notin \overline{\Dt}$, $T(\lambda)> t$. That is,
\[
   ( \mathbb{C}\backslash \overline{\Dt})\subset \{\lambda\in\mathbb{C}| T(\lambda)>t \}.
\]
For any $\lambda\in \partial \Dt\cap \mathbb{T}$, we have $T(\lambda)\geq t$; and for any 
$\lambda\in \partial \Dt \backslash \mathbb{T}$, we have $T(\lambda)=t$. 
\end{theorem}

Theorem~\ref{Dt-characterization} characterizes the complement of $\overline{\Omega_{t,\mu}}$ using the function $T$. With this characterization, the strategy of letting $\varepsilon_0\to 0^+$ could work on $\C\setminus \overline{\Omega_{t,\mu}}$, using an argument parallel to that in \cite{DHKBrown}, where the case $\mu=\delta_1$ was considered. We do not analyze $\C\setminus \overline{\Omega_{t,\mu}}$ in this paper because we can show that the Brown measure has full measure in $\Omega_{t,\mu}$ (see Lemma \ref{ProbBrownMul}). 

 We will prove Theorem~\ref{Dt-characterization} later. In the following, we establish a corollary of this theorem.

\begin{corollary}
	\label{smallx0.cor}For $\lambda_{0}\in\Dt,$ we have $t_{\ast}%
	(\lambda_{0},0)<t,$ and for $\lambda_{0}\notin\overline{\Dt},$
	we have $t_{\ast}(\lambda_{0},0)>t$.
	For $\lambda_{0}\in\partial\Dt\backslash \mathbb{T},$ we have
	$t_{\ast}(\lambda_{0},0)=t$, and for $\lambda_0\in \partial\Dt \cap \mathbb{T}$, we have
	$t_*(\lambda_0, 0)\geq t$. 
\end{corollary}
\begin{proof}
It is due to \eqref{tStarZero} and Proposition \ref{life-time} and Theorem \ref{Dt-characterization}. 
\end{proof}
\begin{remark}
	\label{rem:disconT}
	We can only conclude
	$t_*(\lambda, 0)\geq t$ for $\lambda\in \partial\Dt \cap \mathbb{T}$ but   \emph{not} $t_*(\lambda,0)=t$, because $t_*(\lambda,0)=T(\lambda)$ may not be continuous at these points and $T>t$ can occur on $ \partial\Dt \cap \mathbb{T}$. For example, if $d\mu(e^{ix}) = \1_{[0,1]}3x^2\,d(e^{ix})$, then, for all $t<1/3$, $T(e^{i})=1/3>t$ and $e^{i}\in \partial\Dt \cap \mathbb{T}$.
	\end{remark}
	

	\begin{figure}[h]
	\label{figure-Ot}
	\begin{center}
		\begin{subfigure}[h]{0.25\linewidth}
			\includegraphics[width=\linewidth]{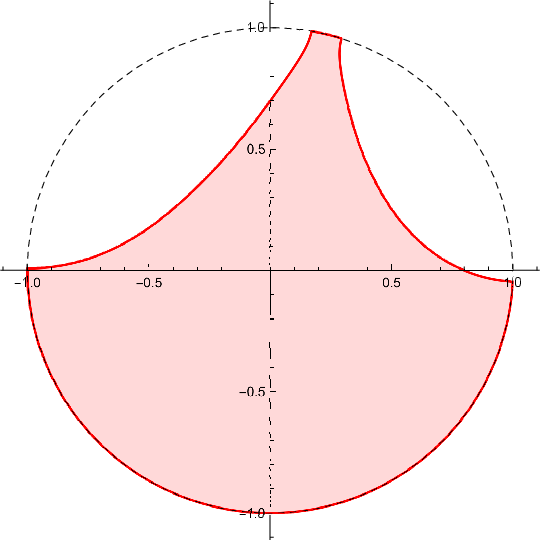}
			\caption{$\Ot$}
		\end{subfigure}\;
		\begin{subfigure}[h]{0.3\linewidth}
			\includegraphics[width=\linewidth]{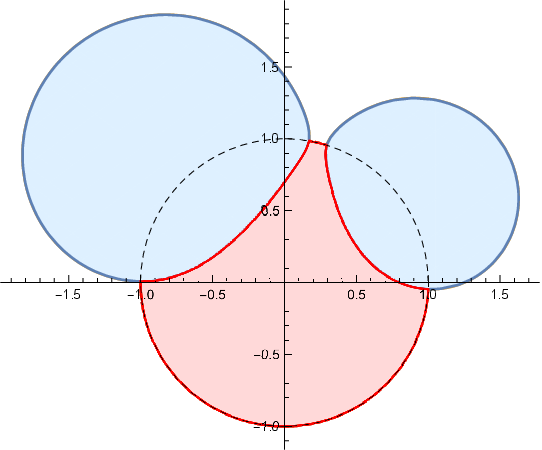}
		\caption{$\Ot$ and $\Dt$}
		\end{subfigure}
		\begin{subfigure}[h]{0.3\linewidth}
			\includegraphics[width=\linewidth]{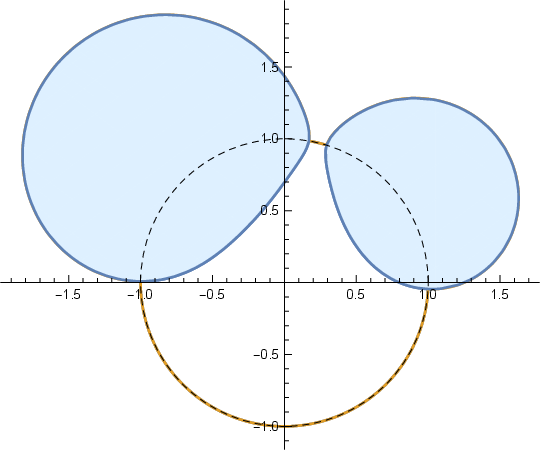}
			\caption{$\Dt$}
		\end{subfigure}
	\end{center}
	\caption{The sets $\Omega_{t,\bar\mu}$ and $\Delta_{t,\mu}$ when $u$ is distrbuted as $\frac{1}{3}\delta_{e^{\frac{\pi i}{6}}}+\frac{2}{3}\delta_{e^{\frac{3\pi i}{4}}}$ and \, $t=0.8$. Plotted with the unit circle (black-dashed).}
\end{figure}	

We now describe some important properties of the set $\Ot$ through subordination function. Recall that
\begin{equation}\label{eqn:Phi_t_mu_bar}
   \Phi_{t,\bar{\mu}}(z)=z	\St(z)=z\exp\left(\frac{t}{2}\int_{\mathbb{T}}\frac{1+\xi z}{1-\xi z}d\bar{\mu}(\xi) \right).
\end{equation}
We define the function
\begin{equation}\label{thetaPhi}
	\phi=\phi(\theta)= \theta+t\int_{-\pi}^\pi \frac{r_{t}(\theta) \sin(\theta-x)}{|1-r_t(\theta)e^{i(\theta-x)}|^2}d\mu(e^{ix}),\quad \theta\in\R. 
\end{equation}
Recall that the function $\omega_t$ extends to a homeomorphism from $\bar\bbD$ onto $\Omega_{t,\bar\mu}$; the function $\phi$ is a strictly increasing continuous function from $[a,a+2\pi)$ onto some interval $[b,b+2\pi)$. Then, given any $a\in\R$, $\phi$ is a continuous version of $\arg(\Phi_{t,\bar{\mu}}(r_t(\theta)e^{i\theta}))$ for any $\theta\in[a,a+2\pi)$; that is, $e^{i\phi}= \Phi_{t,\bar{\mu}}(r_t(\theta)e^{i\theta})$.

We write
\begin{equation}
  \log (|\Phi_{t, \bar{\mu}}(re^{i\theta})|)
  =\log r+\textrm{Re}\left(\frac{t}{2} \int_{\mathbb{T}}\frac{1+\xi re^{i\theta}}{1-\xi re^{i\theta}}d\bar{\mu}(\xi) \right) 
  =(\log r)h_t(r,\theta),
\end{equation}
where 
\[
h_t(r,\theta)=1-\frac{t}{2}\frac{r^2-1}{\log r}\int_{\mathbb{T}}\frac{1}{|1-re^{i\theta} \xi|^2}d\bar{\mu}(\xi),
\]
We let, for $\lambda\in \mathbb{C}$ but $|\lambda|\neq 0, 1$, 
\begin{align}
f(r,\theta)=\frac{1-h_t(r,\theta)}{t}&=\frac{1}{2}\frac{r^2-1}{\log r}\int_{\mathbb{T}}\frac{1}{|1-re^{i\theta} \xi|^2}d\bar{\mu}(\xi)\nonumber\\
  &=\frac{1}{2}\frac{r^2-1}{\log r}\int_{\mathbb{T}}\frac{1}{|1-re^{i(\theta-x)}|^2}d\mu(e^{ix})\label{valueOFf}\\
  &=\frac{1}{2}\frac{r^2-1}{\log r}\int_{-\pi}^\pi \frac{1}{1-2r\cos(\theta-x)+r^2}d\mu(e^{ix})\nonumber.
\end{align}

We will need the following elementary fact. 
\begin{lemma}[Lemma 3.1 of \cite{Zhong2015}]\label{lemma:derivative}
  Given $-1\leq y\leq 1$, define a function of $r$ by
\begin{equation}\nonumber
R_{y}(r)=\frac{r^2-1}{\log r}\frac{1}{1-2ry+r^2}
\end{equation}
on the interval $(0,1)$,
then $R_{y}'(r)>0$ for all $r\in(0,1)$.
\end{lemma}

\begin{corollary}\label{cor:limit}
For fixed $\theta$, the function $r\mapsto f(r,\theta)$ is increasing on $(0,1)$ and decreasing on $(1,\infty)$.
We have
\[
\lim_{r\rightarrow 0} f(r,\theta)=\lim_{r\rightarrow\infty}f(r,\theta)=0.
\]
Moreover, $f(r,\theta)=f(1/r,\theta)$.
\end{corollary}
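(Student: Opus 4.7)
The plan is to reduce all four assertions to the pointwise statement of Lemma \ref{lemma:derivative} together with a direct algebraic symmetry of the integrand.

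First, observe that if we set $y=\cos(\theta-x)\in[-1,1]$, then the integrand appearing in the last line of \eqref{valueOFf} is exactly $R_{\cos(\theta-x)}(r)$, so
\[
f(r,\theta)=\frac{1}{2}\int_{-\pi}^{\pi} R_{\cos(\theta-x)}(r)\,d\mu(e^{ix}),\qquad r\in(0,1)\cup(1,\infty).
\]
Since $|\cos(\theta-x)|\le 1$, Lemma \ref{lemma:derivative} applies to the integrand for every $x$ and gives $\partial_r R_{\cos(\theta-x)}(r)>0$ on $(0,1)$. The integrand and its $r$-derivative are locally bounded in $r$ uniformly in $x$ (we stay away from $r=0,1$), so differentiation under the integral is valid; this yields the monotonicity on $(0,1)$.

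Next, I would verify the functional equation $f(r,\theta)=f(1/r,\theta)$ by direct substitution. A short computation gives
\[
\frac{1-(1/r)^2}{-\log(1/r)}=\frac{1-r^2}{-r^{2}\log r},\qquad 1-2(1/r)\cos(\theta-x)+(1/r)^2=\frac{1-2r\cos(\theta-x)+r^2}{r^{2}},
\]
and multiplying these cancels the $r^{-2}$ with $r^{2}$, producing the integrand of $f(r,\theta)$. Integrating against $d\mu(e^{ix})$ yields the symmetry. The monotonicity on $(1,\infty)$ is then immediate: if $1<r_1<r_2$, then $0<1/r_2<1/r_1<1$, so by monotonicity on $(0,1)$ and the symmetry,
\[
f(r_2,\theta)=f(1/r_2,\theta)<f(1/r_1,\theta)=f(r_1,\theta).
\]

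For the limits, I would handle $r\to 0^+$ first. As $r\to 0^+$, the denominator $1-2r\cos(\theta-x)+r^{2}$ tends to $1$ uniformly in $x\in[-\pi,\pi]$, so the integral in \eqref{valueOFf} stays bounded, while the prefactor $(1-r^2)/(-\log r)$ tends to $0$. Therefore $\lim_{r\to 0^+}f(r,\theta)=0$. Applying the symmetry $f(r,\theta)=f(1/r,\theta)$ with $r\to\infty$ converts this into $\lim_{r\to\infty}f(r,\theta)=0$.

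No real obstacle is expected here; the only place requiring care is justifying differentiation under the integral sign on $(0,1)$, which is routine once one notes that $R_{y}(r)$ and $\partial_r R_{y}(r)$ are uniformly bounded on compact subsets of $(0,1)$ as $y$ ranges over $[-1,1]$.
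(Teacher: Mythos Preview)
Your proposal is correct and follows essentially the same approach as the paper: use Lemma~\ref{lemma:derivative} to get monotonicity on $(0,1)$, verify the symmetry $f(r,\theta)=f(1/r,\theta)$ by direct substitution, deduce the behavior on $(1,\infty)$ from the symmetry, and check the limits directly from the definition. You simply supply more detail (the explicit algebra for the symmetry, the justification for differentiation under the integral, and the limit argument) than the paper's terse proof.
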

\begin{proof}
By Lemma \ref{lemma:derivative}, we see directly that $\frac{\partial f(r,\theta)}{\partial r}>0$ for $r\in (0,1)$, so $r\mapsto f(r,\theta)$ is increasing for $r\in(0,1)$. It is straightforward to check that
$f(r,\theta)=f(1/r,\theta)$; thus, we deduce that the function $r\mapsto f(r,\theta)$ is
decreasing on $(1,\infty)$. 
The assertion about the limits can be checked directly from the definition of the function $f$. 
\end{proof}
Hence, it makes sense to define $f(1^-,\cdot):[-\pi, \pi]\rightarrow(0,\infty]$ as follows:
\begin{equation}\label{def-function-f}
 f(1^-,\theta)=\lim_{r\rightarrow 1^-}f(r,\theta)=\int_{-\pi}^\pi \frac{1}{|1-e^{i(\theta+x)}|^2}d\bar{\mu}(e^{ix})=\int_{-\pi}^\pi \frac{1}{|1-e^{i(\theta-x)}|^2}d\mu(e^{ix}). 
\end{equation}
Then the function $T:\mathbb{\mathbb{C}}\rightarrow [0,\infty)$ defined in \eqref{Tlambda} is expressed by $f$ as follows
\begin{equation}\label{Tlambda-f}
T(\lambda) = 
\begin{cases}
1/f(r,\theta), & \text{for } \lambda=re^{i\theta}\quad \text{and}\quad r\neq 1 \\
1/f(1^-,\theta), & \text{for } \lambda=re^{i\theta}\quad \text{and}\quad r=1. 
\end{cases}
\end{equation}
If $f(1^-, \theta) =\infty$, we understand that, for this $\theta$, $T(e^{i\theta}) = 0$. Analogous to the additive case (See~\eqref{lifetimeT}), the function $T$ may not be continuous on $\mathbb{T}$ as mentioned in Remark~\ref{rem:disconT}. If $\mu = \delta_1$, one can use \eqref{Tlambda-f} to check that the function $T$ reduces to the one defined in \cite{DHKBrown}. 

\begin{lemma}\label{lemma:T}
	For each $\theta\in [-\pi,\pi]$, the function $r\mapsto T(re^{i\theta})$ is strictly decreasing for $0<r<1$ and strictly increasing for $r>1$. For each $\theta$, the minimum value of $T(re^{i\theta})$ is achieved at $r=1$, which is $1/f(1^-,\theta)$, and we have
	\[
	\lim_{r\rightarrow 0}T(re^{i\theta})=\lim_{r\rightarrow \infty}T(re^{i\theta})=\infty. 
	\]
Moreover, $T(re^{i\theta})=T((1/r)e^{i\theta})$. 
\end{lemma}
\begin{proof}
	The claim follows from Corollary \ref{cor:limit} and the reciprocal relation \eqref{Tlambda-f}. 
\end{proof}

We now can use the functions $r_t$ defined in (\ref{def:r_t_theta}) and $T$ to describe the set $\Ot$ and its boundary. 
\begin{proposition}\label{characterization-2}
Let $\theta\in [-\pi,\pi]$.
\begin{enumerate}
\item When $r_t(\theta)<1$, we have $T(r_t(\theta)e^{i\theta})=t$. Moreover, 
   \begin{enumerate}
      \item for $r<r_t$, we have $T(r e^{i\theta})>t$;
      \item for $r_t<r\leq 1$, we have $T(r e^{i\theta})<t$.
   \end{enumerate}
\item When $r_t(\theta)=1$, we have $T(r_t(\theta)e^{i\theta})=T(e^{i\theta})\geq t$, and
$T(re^{i\theta})>t$ for $r<1$. If, in addition, $\theta\in [-\pi,\pi]\backslash \overline{U_{t,\bar{\mu}}}$, then $T(re^{i\theta})>t$ for all $r>0$.
\end{enumerate}
\end{proposition}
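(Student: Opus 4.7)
The plan is to reduce the whole proposition to the elementary monotonicity and convexity properties of $f(\cdot,\theta)$ already established. First, I would observe that the integral defining $r_t(\theta)$ in \eqref{eq:3.0001} coincides with $f(r,\theta)$ from \eqref{valueOFf}: indeed, using $d\bar\mu(e^{ix}) = d\mu(e^{-ix})$ together with the change of variables $x\mapsto -x$, the two integrals agree. This yields the clean reformulation
\[
T(re^{i\theta}) = \frac{1}{f(r,\theta)}, \qquad r\ne 1,
\]
and rewrites Proposition \ref{characterization-0}(2) as: when $r_t(\theta)<1$, the unique $r\in(0,1)$ solving $f(r,\theta)=1/t$ is $r_t(\theta)$.

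With this in hand, part (1) is immediate. We get $T(r_t(\theta)e^{i\theta}) = 1/f(r_t,\theta) = t$, and the sign of $T(re^{i\theta})-t$ on $(0,r_t)$ and on $(r_t,1)$ follows from the strict monotonicity of $r\mapsto f(r,\theta)$ on $(0,1)$ provided by Corollary \ref{cor:limit}. For the endpoint $r=1$, I would combine strict monotonicity with the limit $f(r,\theta)\to f(1^-,\theta)$ as $r\to 1^-$ to conclude $f(1^-,\theta)>1/t$ and hence $T(e^{i\theta})<t$.

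For part (2), when $r_t(\theta)=1$, the definition of the supremum together with the strict monotonicity of $f(\cdot,\theta)$ forces $f(r,\theta)<1/t$ for every $r\in(0,1)$, giving $T(re^{i\theta})>t$ for $r<1$; passing to the limit $r\to 1^-$ yields $f(1^-,\theta)\le 1/t$ and hence $T(e^{i\theta})\ge t$. No $r>1$ assertion appears in this first bullet.

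The main obstacle is the additional claim when $\theta\in[-\pi,\pi]\setminus\overline{U_{t,\bar\mu}}$: I must strengthen $T(e^{i\theta})\ge t$ to strict inequality and also handle $r>1$. The latter follows from the former, since Corollary \ref{cor:limit} gives $f(r,\theta)<f(1^-,\theta)$ strictly for $r>1$, so it suffices to show $f(1^-,\theta)<1/t$. My plan is to exploit the strict convexity of the kernel
\[
h(\alpha) := \frac{1}{|1-e^{i\alpha}|^2} = \frac{1}{2-2\cos\alpha},
\]
verified by the direct computation
\[
h''(\alpha) = \frac{4(2+\cos\alpha)(1-\cos\alpha)}{(2-2\cos\alpha)^3} > 0 \quad \text{for } \alpha\not\equiv 0\pmod{2\pi}.
\]
Since $g(\theta):=f(1^-,\theta)=\int h(\theta+x)\,d\bar\mu(e^{ix})$ then satisfies $g''(\theta)>0$ on every open arc where $g$ is finite, it is strictly convex there and cannot attain a local maximum at an interior point. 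The hypothesis $\theta\notin\overline{U_{t,\bar\mu}}$ places $\theta$ in the interior of an arc on which $g\le 1/t$, so strict convexity rules out the equality $g(\theta)=1/t$. Hence $g(\theta)<1/t$, which gives $T(e^{i\theta})>t$, and combining with the $r>1$ reduction above completes the proof.
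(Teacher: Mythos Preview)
Your treatment of part~(1) and the first half of part~(2) coincides with the paper's: both reduce everything to the strict monotonicity of $r\mapsto f(r,\theta)$ on $(0,1)$ (Corollary~\ref{cor:limit}, equivalently Lemma~\ref{lemma:T}) together with the characterization of $r_t(\theta)$ as the unique $r\in(0,1)$ solving $f(r,\theta)=1/t$ when $r_t(\theta)<1$.

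Where you diverge is in the last clause of part~(2). The paper simply writes ``by \eqref{Ut-00}'' to obtain the \emph{strict} inequality $f(1^-,\theta)<1/t$ for $\theta\notin\overline{U_{t,\bar\mu}}$, but the definition \eqref{Ut-00} only yields $f(1^-,\theta)\le 1/t$ when $\theta\notin U_{t,\bar\mu}$ and does not by itself rule out equality at an interior point of the complement. Your strict-convexity argument genuinely addresses this point and is the right idea. One technical caveat: asserting ``$g''(\theta)>0$ on every open arc where $g$ is finite'' presumes you may differentiate twice under the integral, which is not automatic here (the bound $g\le 1/t$ controls $\int h\,d\bar\mu$, not $\int h''\,d\bar\mu$, and $h''\asymp h^{2}$ near the singularity). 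A cleaner route is: first note, via Fubini and the non-integrability of $h$ at $0$, that $g\le 1/t$ on an open arc $J$ forces $\bar\mu$ to put zero mass on the reflected arc $\{e^{ix}:-x\in J\bmod 2\pi\}$; hence for $\bar\mu$-a.e.\ $x$ the map $\theta'\mapsto h(\theta'+x)$ has no singularity on $J$ and is strictly convex there by your computation of $h''$; integrating in $x$ then yields strict convexity of $g$ on $J$ directly, with no differentiation under the integral sign. With that adjustment your argument is complete, and in fact more careful than the paper's at this point.
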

\begin{proof}
When $r_t(\theta)<1$, by Part (2) of Proposition \ref{characterization-0}, we have 
\[
	      \frac{r_t(\theta)^2-1}{2\log r_t(\theta)}\int_{-\pi}^{\pi}\frac{1}{|1-r_t(\theta)e^{i(\theta+x)}|^2}d\bar{\mu}(e^{ix}) 
	= \frac{1}{t},
\]
which implies that $T(r_t(\theta)e^{i\theta})= t$ by the definition \eqref{Tlambda} of $T$. As for each $\theta$, the function $r\mapsto T(re^{i\theta})$ is strictly decreasing for $0<r<1$ by Lemma \ref{lemma:T}. Items (1a) and (1b) follows. 

By the definition of $r_t$ in (\ref{def:r_t_theta}), when $r_t(\theta)=1$, we have
\[
    \frac{r^2-1}{2\log r}\int_{-\pi}^{\pi}\frac{1}{|1-re^{i(\theta+x)}|^2}d\bar{\mu}(e^{ix}) 
  \leq \frac{1}{t}
\]
for all $0\leq r\leq r_t(\theta)=1$.
Hence 
$T(r_t(\theta)e^{i\theta})\geq t$, and $T(\lambda)>t$ when $|\lambda|<1$. 
If, in addition, $\theta\in [-\pi,\pi]\backslash \overline{U_{t,\bar{\mu}}}$, 
then by \eqref{Ut-00}, 
\[
\int_{-\pi}^{\pi}\frac{1}{|1-e^{i(\theta+x)}|^2}d\bar{\mu}(e^{ix})<\frac{1}{t},
\]
which yields that $T(e^{i\theta})>t$ due to \eqref{Tlambda}. Therefore, $T(re^{i\theta})>t$ for all $0<r\leq 1$ and $\theta\in [-\pi,\pi]\backslash \overline{U_{t,\bar{\mu}}}$.
\end{proof}

\begin{proposition}\label{characterization-U-t-T}
The open set $U_{t,\bar{\mu}}$ defined in (\ref{Ut-00}) may be characterized by
\begin{align}
U_{t,\bar{\mu}}
&=\{-\pi\leq \theta\leq\pi: r_t(\theta)<1 \}\label{Ut-2}\\
&=\{-\pi\leq \theta\leq\pi: T(e^{i\theta})<t \}\label{Ut-1}.
\end{align}
Moreover, $\partial\Ot\cap \bT=\bT \backslash \{e^{i\theta}: \theta\in U_{t,\bar{\mu}}\}$, where $\Ot$ is defined in \eqref{OmegaDef}. 
\end{proposition}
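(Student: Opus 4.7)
My plan is to derive the two identities (\ref{Ut-2}) and (\ref{Ut-1}) by unpacking definitions and invoking the monotonicity of $r \mapsto f(r,\theta)$, and then obtain the boundary statement by intersecting the parametrization of $\partial\Omega_{t,\bar{\mu}}$ from Proposition \ref{characterization-0} with the unit circle.

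For (\ref{Ut-2}), the key observation is that the integrand appearing inside the supremum defining $r_t(\theta)$ in (\ref{eq:3.0001}) is exactly $f(r,\theta)$ as rewritten in (\ref{valueOFf}). By Corollary \ref{cor:limit}, $r \mapsto f(r,\theta)$ is strictly increasing on $(0,1)$ and its left limit at $1$ is $f(1^-,\theta) = \int_{-\pi}^{\pi}|1-e^{i(\theta+x)}|^{-2}\,d\bar{\mu}(e^{ix})$ by (\ref{def-function-f}). Therefore, if $\theta \in U_{t,\bar{\mu}}$, i.e.\ $f(1^-,\theta) > 1/t$, the intermediate value theorem gives a unique $r \in (0,1)$ solving $f(r,\theta)=1/t$, and that $r$ is $r_t(\theta) < 1$. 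Conversely, if $\theta \notin U_{t,\bar{\mu}}$ then $f(r,\theta) < f(1^-,\theta) \leq 1/t$ for every $r \in (0,1)$, so the supremum in (\ref{eq:3.0001}) is $1$.

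For (\ref{Ut-1}), I would simply note that the second case of the definition (\ref{Tlambda}) of $T$ gives $T(e^{i\theta}) = 1/f(1^-,\theta)$, so $T(e^{i\theta}) < t$ is equivalent to $f(1^-,\theta) > 1/t$, which is the defining condition of $U_{t,\bar{\mu}}$ in (\ref{Ut-00}).

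Finally, for the boundary statement, Proposition \ref{characterization-0}(2) says $\partial\Omega_{t,\bar{\mu}} = \{r_t(\theta)e^{i\theta} : \theta \in [-\pi,\pi]\}$, so a point of this curve lies on $\mathbb{T}$ precisely when $r_t(\theta) = 1$. By the equivalence (\ref{Ut-2}) already established, this happens exactly when $\theta \notin U_{t,\bar{\mu}}$, yielding $\partial\Omega_{t,\bar{\mu}} \cap \mathbb{T} = \{e^{i\theta} : \theta \notin U_{t,\bar{\mu}}\} = \mathbb{T} \setminus \{e^{i\theta} : \theta \in U_{t,\bar{\mu}}\}$. The argument is essentially bookkeeping; there is no serious obstacle, since all the analytic content (monotonicity, continuity of $r_t$, and the parametrization of $\partial\Omega_{t,\bar{\mu}}$) has already been established in Corollary \ref{cor:limit} and Proposition \ref{characterization-0}.
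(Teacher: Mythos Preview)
Your proposal is correct and follows essentially the same route as the paper: both arguments reduce (\ref{Ut-2}) to the monotonicity of $r\mapsto f(r,\theta)$ from Corollary~\ref{cor:limit}, obtain (\ref{Ut-1}) from the definition $T(e^{i\theta})=1/f(1^-,\theta)$ (the paper phrases this step via Proposition~\ref{characterization-2}, but the content is identical), and derive the boundary statement by intersecting the parametrization $\partial\Omega_{t,\bar{\mu}}=\{r_t(\theta)e^{i\theta}\}$ from Proposition~\ref{characterization-0} with $\mathbb{T}$.
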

\begin{proof} 
We rewrite the definition of $U_{t,\bar\mu}$ in \eqref{Ut-00} in terms of the function $f$ as	
\[
   U_{t,\bar{\mu}}=\left\{ -\pi\leq \theta\leq \pi: f(1^-,\theta)>\frac{1}{t} \right\}.
\]
Hence, by Proposition \ref{cor:limit}, $\theta\in U_{t,\bar{\mu}}$ if and only if $r_t(\theta)<1$. 
From Proposition \ref{characterization-2}, we see that $r_t(\theta)<1$ if and only if $T(e^{i\theta})<t$. 
Now, \eqref{Ut-2} and \eqref{Ut-1} hold. 

Recall from Proposition \ref{characterization-0}, 
\[
    \partial\Ot=\{r_t(\theta)e^{i\theta}: -\pi\leq \theta\leq \pi \},
\]
which yields that 
\[
\partial\Ot\cap \bT=\{ e^{i\theta}: r_t(\theta)=1\}= \bT \backslash\{e^{i\theta}: \theta\in U_{t,\bar{\mu}}\}.
\]
This concludes the proof.
\end{proof}

We are now ready to prove our main result in this section. 
\begin{proof}[{\bf Proof of Theorem \ref{Dt-characterization}}]
Recall from Proposition \ref{characterization-0} that $\Ot=\{re^{i\theta}: 0\leq r< r_t(\theta),\theta\in[-\pi,\pi] \}$. Hence, by (\ref{Ut-2}) in Proposition \ref{characterization-U-t-T}, we have
\begin{align*}
  \overline{\mathbb{D}}\backslash \overline{\Ot}&=\{re^{i\theta}: r_t(\theta)< r\leq 1, r_t(\theta)<1  \}\\  
     &=\{re^{i\theta}: r_t(\theta)< r\leq 1, \theta\in U_{t, \bar{\mu}} \}
\end{align*}
which yields (\ref{eqn:Dt-1}).

We discuss the function $T(\lambda)$ according to the argument of $\lambda$ in the following cases:
\begin{enumerate}
\item When $\theta\in U_{t,\bar{\mu}}$, 
for $\lambda$ with $\arg(\lambda)=\theta$ and $r_t(\theta)<|\lambda|\leq 1$, we know that $T(\lambda)<t$ from (1b) of 
Proposition \ref{characterization-2}. 
Hence, $\Dt\subset\left\{    \lambda\in\mathbb{C}: T(\lambda
)<t\right\}$ by the definition of the set $\Dt$ and the symmetric property $T(z)=T(1/\overline{z})$ as in Lemma \ref{lemma:T}.
Since $r\mapsto T(re^{i\theta})$ is continuous and decreasing for $0<r<1$, 
 it follows that $$r_t(\theta)e^{i\theta}\in \partial \Ot \cap \partial \Dt$$ and $T(r_t(\theta)e^{i\theta})=t$ by Proposition \ref{characterization-2}. Moreover,  
$\lambda\notin \overline{\Dt}$ and $T(\lambda)>t$ if $|\lambda|<r_t(\theta)$ or $|\lambda|>1/r_t(\theta)$.
\item When $\theta\in [-\pi, \pi]\backslash \overline{ {U_{t,\bar{\mu}}}}$, we know that $T(\lambda)> t$ for all $\lambda$ in the ray with angle $\theta$  by Proposition \ref{characterization-2}. It is also clear that $\lambda\notin \overline{\Dt}$ for such $\lambda$.
\item Finally, when $\theta\in \partial U_{t,\bar{\mu}}$, we have $r_t(\theta)=1$, $e^{i\theta}\in \partial \Ot\cap \partial \Dt$ and $T(e^{i\theta})\geq t$. 
Since $r\mapsto T(re^{i\theta})$ is decreasing for $0<r<1$, and increasing for $r>1$, 
we have 
$\lambda\notin \overline{\Dt}$
and $T(\lambda)>t$ for any other $\lambda$ that is in the ray with angle $\theta$ except $e^{i\theta}$.
\end{enumerate}

By the above discussions, we see that $T(\lambda)>t$ if $\lambda\notin \overline{\Dt}$ for any $\theta$. In addition,  
$T(\lambda)\geq t$ if $\lambda\in \partial\Dt$, and $T(\lambda)=t$ if $\lambda \in \partial \Dt\backslash \mathbb{T}$. 
Therefore, $\lambda\in \Dt$ if and only if $T(\lambda)<t$. 
\end{proof}

\begin{remark}
For $\lambda\in \partial \Dt\cap \mathbb{T}$ (i.e., $\arg(\lambda)\in [-\pi,\pi]\backslash \overline{U_{t,\bar{\mu}}}$), it is not always true that $T(\lambda)=t$ as discussed in Remark~\ref{rem:disconT} (recall that $T(\lambda) = t_\ast(\lambda,0)$). When this is true, then
the boundary of $\Dt$ may be expressed as
\[
\partial\Dt=\left\{  \left.  \lambda\in\mathbb{C}\right\vert
T(\lambda)=t\right\} .
\]
\end{remark}

We end this section with the bound of $d\phi/d\theta$, where $\phi$ is defined in (\ref{thetaPhi}), which will be used to give a bound of the density of the Brown measure of $ub_t$ (see Theorem \ref{sLaplacian.main}). We first establish the following lemma that will be used in establishing the bounds.

\begin{lemma}
	\label{lem:rfunctionInc}
	The function
	\[r\mapsto\frac{-r\log r}{1-r^2}\]
	is strictly increasing for $0<r<1$.
\end{lemma}
\begin{proof}
	We will prove that $r\mapsto(1-r^2)/(-r\log r)$
	for $0<r<1$ is strictly decreasing, which is, by putting $x=-\log r$, if and only if the function
	\[x\mapsto\frac{1-e^{-2x}}{xe^{-x}} = \frac{2\sinh x}{x}\]
	for $x>0$ is strictly increasing. Since the Taylor coefficients of
	\[\frac{2\sinh x}{x} = \sum_{n=0}^\infty \frac{x^{2n}}{(2n+1)!}\]
	are all nonnegative, the function $x\mapsto 2\sinh x/x$ is strictly increasing.
\end{proof}

Recall from the discussion in Section~\ref{section:multiplicative_convolution} that $\omega_t$ is a function on $\bar{\bbD}$ whose left inverse is the function $\Phi_{t,\bar\mu}$. The function $\phi$ defined using the formula in (\ref{thetaPhi}) is a function on $\R$, and is differentiable at $\theta$ where $r_t(\theta)<1$. The next lemma gives an upper and a lower bound for $d\phi/d\theta$.

\begin{lemma}
	\label{lemma:positive-derivative-phi-theta}
	Given any $a\in\R$, the function $\phi$ defined in (\ref{thetaPhi}) is a strictly increasing continuous version of $\arg \Phi_{t,\bar\mu}(r_t(\theta)e^{i\theta})$ for any $\theta\in[a,a+2\pi)$ onto some interval of the form $[b,b+2\pi)$ for some $b\in\R$. If $r_t(\theta)<1$, $\phi$ is differentiable at $\theta$, and we have
	\[0<\frac{d\phi}{d\theta}<2.\]
\end{lemma}
\begin{proof}
	The first assertion follows from the paragraph following (\ref{thetaPhi}). Clearly, $\phi$ is differentiable at $\theta\in\R$ where $r_s(\theta)<1$, since $\mu$ is a measure on the unit circle.
	
	Write $w = r_s(\theta)e^{i\theta}$. Recall from Remark~\ref{rem:rtanalyticity} that $r_t$ is analytic at $\theta$ where $r_t(\theta)<1$. We take the derivative with respect to $\theta$ of the identity $e^{i\phi} = \Phi_{t,\bar\mu}(r_t(\theta)e^{i\theta})$; we have
	\begin{align*}
		ie^{i\phi}\frac{d\phi}{d\theta}& = \Phi_{t,\bar\mu}'(r_t(\theta)e^{i\theta})(r_t'(\theta)e^{i\theta}+ir_t(\theta)e^{i\theta})\\
		&= i \Phi_{t,\bar\mu}'(w) w\left(-i\frac{r_t'(\theta)}{r_t(\theta)}+1\right).
	\end{align*}
	If we divide both sides by $i\Phi_{t,\bar\mu}'(w)w$ and recall that $e^{i\phi}=\Phi_{t,\bar\mu}(w)$,
	\[\frac{\Phi_{t,\bar\mu}(w)}{w\Phi_{t,\bar\mu}'(w)}\frac{d\phi}{d\theta} = -i\frac{r_t'(\theta)}{r_t(\theta)}+1.\]
	Since $d\phi/d\theta$ is real, we may take the real part of both sides to get
	\[\mathrm{Re}\left[\frac{\Phi_{t,\bar\mu}(w)}{w\Phi_{t,\bar\mu}'(w)}\right]\frac{d\phi}{d\theta} = 1.\]
	Therefore,
	\begin{equation}
		\label{eq:dphidtheta}
		\frac{d\phi}{d\theta} = \frac{1}{\mathrm{Re}\left[\frac{\Phi_{t,\bar\mu}(w)}{w\Phi_{t,\bar\mu}'(w)}\right]}.
	\end{equation}
	We will show that $0<d\phi/d\theta<2$ by estimating $\mathrm{Re}\left[\frac{\Phi_{t,\bar\mu}(w)}{w\Phi_{t,\bar\mu}'(w)}\right]$.
	
	We compute 
	\begin{align*}
	\frac{w\Phi_{t,\bar\mu}'(w)}{\Phi_{t,\bar\mu}(w)} = 1+\frac{t}{2}\int\frac{2w\xi}{(\xi-w)^2}\,d\mu(\xi),
	\end{align*}
	so that if $r_t(\theta)<1$, we have
	\begin{equation}
		\label{eq:Lessthan2}
		\begin{split}
			\left\vert 1-\frac{w\Phi_{t,\bar\mu}'(w)}{\Phi_{t,\bar\mu}(w)} \right\vert &= \left\vert\frac{t}{2}\int\frac{2w\xi}{(\xi-w)^2}\,d\mu(\xi)\right\vert\\
			&\leq t r_t(\theta)\int\frac{1}{\left\vert \xi-w\right\vert^2}\,d\mu(\xi)\\
			&=\frac{-2r_t(\theta)\log(r_t(\theta))}{1-r_t(\theta)^2}<1,
		\end{split}
	\end{equation}
	where the last inequality follows from $r_t(\theta)<1$ and that $r\mapsto -2\log r/(1-r^2)$ is strictly increasing by Lemma~\ref{lem:rfunctionInc}.
	
	The M\"obius transform $z\mapsto 1/z$ maps $2$, $1+i$, $1-i$ to the vertical line with real part equal to $1/2$; it also maps $1$ to $1$. Thus, it maps the unit disk centered at $1$ to $\left.\{z\in\C\right\vert \mathrm{Re}\,z>1/2\}$. We conclude by (\ref{eq:Lessthan2}) that
	\begin{equation*}
		\mathrm{Re}\left[\frac{\Phi_{t,\bar\mu}(w)}{w\Phi_{t,\bar\mu}'(w)} \right] > \frac{1}{2}.
	\end{equation*}
	By~(\ref{eq:dphidtheta}), the above display equation shows that $0<d\phi/d\theta<2$.
\end{proof}

\subsection{Surjectivity}

The goal of this subsection is to prove the following surjectivity result
 which extends 
the result in \cite[Theorem 6.17]{DHKBrown} to our general setting. This result allows us to find unique initial conditions $\lambda_0\in \Dt$ and $\varepsilon_0>0$ for each $\lambda\in\Dt$ such that $\lim_{u\uparrow t}\lambda(u) = \lambda$ and $\lim_{u\uparrow t}\varepsilon(u) = 0$. These initial conditions will help us apply the Hamilton--Jacobi formulas to compute $S$ and $\Delta S$ at $\varepsilon = 0$.

\begin{theorem}\label{thm:surjectivity}
	\label{surjectivity} Given $t>0$, for all $\lambda\in\Dt,$ there
	exist unique $\lambda_{0}\in\mathbb{C}$ and $\varepsilon_{0}>0$ such that the
	solution to (\ref{theODEs}) with these initial conditions exists on $[0,t)$ where $t=t_\ast(\lambda_0, \varepsilon_0)$, 
	with $\lim_{u\uparrow
 t}\varepsilon(u)=0$ and $\lim_{u\uparrow
 t}%
	\lambda(u)=\lambda.$ For all $\lambda\in\Dt,$ the corresponding
	$\lambda_{0}$ also belongs to $\Dt.$
	
	Define functions $\lambda_{0}^{t}:\Dt\rightarrow\Dt$ and
	$\varepsilon_{0}^{t}:\Dt\rightarrow(0,\infty)$ by letting $\lambda_{0}%
	^{t}(\lambda)$ and $\varepsilon_{0}^{t}(\lambda)$ be the corresponding values of
	$\lambda_{0}$ and $\varepsilon_{0}$ for $\lambda\in\Dt$ respectively. Then $\lambda_{0}^{t}$ and $\varepsilon_{0}%
	^{t}$ extend to continuous maps from $\overline{\Dt}$ into $\overline
	{\Dt}$ and $[0,\infty)$, respectively, with the continuous extensions
	satisfying $\lambda_0^{t}(\lambda)=\lambda$ and $\varepsilon_{0}^{t}(\lambda)=0$ for
	$\lambda$ in the boundary of $\Dt.$
\end{theorem}

Given $\lambda_0\in \Dt$, we know from Corollary~\ref{smallx0.cor} that $t_*(\lambda_0,0)<t$. On the other hand, by Proposition~\ref{monotoneTstar.prop},  the function $\varepsilon\mapsto t_*(\lambda_0, \varepsilon_0)$ is strictly increasing and $\lim_{\varepsilon_{0}\rightarrow+\infty}t_{\ast}(\lambda_{0},\varepsilon_{0})=+\infty$. Hence, there is a unique value $\varepsilon_0^t(\lambda_0)$ such that $t_*(\lambda_0, \varepsilon_0^t(\lambda_0))=t$.
We set 
\[
    \lambda_t(\lambda_0)=\lim_{u\uparrow
 t}\lambda (u)
\]
where $\lambda(u)$ is computed with initial conditions $\lambda(0)=\lambda_0$ and $\varepsilon(0)=\varepsilon_0^t(\lambda_0)$. This defines a map
\[
     \varepsilon_0^t: \Dt \rightarrow [0,\infty), \qquad \text{by} \qquad \lambda_0\mapsto \varepsilon_0^t(\lambda_0);
\]
and
\[
   \lambda_t: \Dt \rightarrow \Dt, \qquad \text{by} \qquad \lambda_0\mapsto \lambda_t(\lambda_0).
\]

\begin{proposition}\label{extension-1}
The function $\varepsilon_{0}^{t}$ extends continuously from $\Dt$ to $\overline{\Dt}$. The extended map satisfies $\varepsilon_{0}^{t}(\lambda_{0})=0$ for $\lambda_{0}\in\partial\Dt.$
Moreover, the function $\varepsilon_0^t$ is expressed as
\[
 \varepsilon_{0}^{t}(\lambda_{0})=r_0\left(  \frac
{r_t(\theta)^{2}+1}{r_t(\theta)}-\frac{r_0^{2}+1}{r_0 }\right).
\]
\end{proposition}
The formula of $\varepsilon_0^t(\lambda_0)$ is similar to \cite[Eq. (6.67)]{DHKBrown}.
\begin{proof}
Our strategy is to find an explicit formula of $\varepsilon_0^t(\lambda_0)$ in terms of $r_0=|\lambda_0|$ and $\theta$. Recall that $\theta=\arg \lambda_0$, and that $r_t(\theta)e^{i\theta}\in \partial \Dt \cap \mathbb{D}$ satisfies $T(r_t(\theta)e^{i\theta})=t$ by Theorem \ref{Dt-characterization}. 
In light of Proposition \ref{life-time}, we have $t_*({r_t(\theta)e^{i\theta}}, 0)=t$. Thus, we have the following relation between $f_{\lambda_0}$ and $f_{r_t(\theta)e^{i\theta}}$
\begin{equation}\nonumber
f_{\lambda_0}(\delta_1)
=\int_{-\pi}^\pi \frac{1}{g_{\theta_0-\alpha}(\delta_1)}d\mu(e^{i\alpha})=\frac{1}{t}
  =\int_{-\pi}^\pi \frac{1}{g_{\theta_0-\alpha}(\delta_2)}d\mu(e^{i\alpha})
  =f_{r_t(\theta)e^{i\theta}}(\delta_2),
\end{equation}
where 
\begin{equation}
\nonumber
  \delta_1= \frac{r_0^2+1+\varepsilon_0^t(\lambda_0)}{r_0}, \qquad \text{and} \qquad
    \delta_2=\frac{r_t(\theta)^2+1}{r_t(\theta)}.
\end{equation}
This implies that we must have $\delta_1=\delta_2$ thanks to Proposition \ref{monotoneTstar.prop}. That is
\begin{equation}\label{delta-choice}
    \delta= \frac{r_0^2+1+\varepsilon_0^t(\lambda_0)}{r_0}=
    \frac{r_t(\theta)^2+1}{r_t(\theta)}.
\end{equation}
Hence, $\varepsilon_0^t(\lambda_0)$ is expressed as
\begin{equation}\label{x_0_t}
\varepsilon_{0}^{t}(\lambda_{0})=r_0\left(  \frac
{r_t(\theta)^{2}+1}{r_t(\theta)}-\frac{r_0^{2}+1}{r_0 }\right).
\end{equation}
The desired result then follows.
\end{proof}

\begin{proposition}\label{extension-2}
The function $\lambda_{t}$ extends continuously from $\Dt$ to $\overline{\Dt}$, with the extended maps satisfying $\lambda_t(\lambda_{0})=\lambda_0$ for $\lambda_{0}\in\partial\Dt.$
The extended map $\lambda_{t}$ is a homeomorphism of $\overline{\Dt}$
to itself.
\end{proposition}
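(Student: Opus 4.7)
The plan is to deduce the statement directly from the explicit formula (\ref{lambda-t-0}), $\lambda_t(\lambda_0)=(\lambda_0/|\lambda_0|)e^{Ct/2}$, together with Proposition \ref{extension-1} on the continuous extension of $\varepsilon_0^t$. The central observation is that $\lambda_t$ preserves $\arg\lambda_0$, so all three assertions reduce to one-dimensional statements on each radial segment
\[
I_\theta:=\{re^{i\theta}:r_t(\theta)\le r\le 1/r_t(\theta)\},\qquad \theta\in U_{t,\bar\mu},
\]
with the decomposition $\overline{\Dt}\setminus(\partial\Dt\cap\bT)=\bigcup_{\theta\in U_{t,\bar\mu}} I_\theta$ supplied by (\ref{eqn:Dt-1}).

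First I would establish the continuous extension. By Proposition \ref{extension-1}, $\varepsilon_0^t$ is continuous on $\overline{\Dt}\setminus(\partial\Dt\cap\bT)$. Substituting into (\ref{eqn:C}) and (\ref{CtStar}), using the principal branch of the logarithm and the limit (\ref{gLim}) at points where $\delta=2$, one sees that $Ct$ is continuous in $\lambda_0$. Formula (\ref{lambda-t-0}) then exhibits $\lambda_t$ as a composition of continuous maps. At a boundary point $\lambda_0\in\partial\Dt\setminus\bT$, Proposition \ref{extension-1} gives $\varepsilon_0^t(\lambda_0)=0$, and (\ref{rhoZero}) combined with the preservation of argument yields $\lambda_t(\lambda_0)=\lambda_0$.

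For the homeomorphism assertion, fix $\theta\in U_{t,\bar\mu}$. Along $I_\theta$ the quantity $\delta=(r_t(\theta)^2+1)/r_t(\theta)$ is constant, so (\ref{CtStar}) shows that $Ct$ is a strictly increasing continuous function of $|\lambda_0|$. Hence $r\mapsto |\lambda_t(re^{i\theta})|=e^{Ct/2}$ is strictly increasing and continuous on the compact interval $I_\theta$, and by the previous paragraph it fixes both endpoints. Thus $\lambda_t\bigl|_{I_\theta}$ is a homeomorphism of $I_\theta$ onto itself. Gluing these maps over $\theta\in U_{t,\bar\mu}$ produces a bijection of $\overline{\Dt}\setminus(\partial\Dt\cap\bT)$ onto itself. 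Continuity of the inverse follows because $r_t$ is continuous on $U_{t,\bar\mu}$ (Proposition \ref{characterization-0}), so the family $\{I_\theta\}$ varies continuously, and the segment-wise inverse is continuous by the monotonicity.

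The main obstacle I anticipate is continuity of $\lambda_t^{-1}$ near $\partial\Dt\setminus\bT$, where $\lambda_t$ degenerates to the identity and the formula (\ref{CtStar}) becomes singular (both $\varepsilon_0^t\to 0$ and $\delta\to 2$). The cleanest resolution is to avoid any pointwise Jacobian estimate and argue topologically instead: each closed segment $I_\theta$ is compact, so a strictly monotone continuous surjection $I_\theta\to I_\theta$ is automatically a homeomorphism there, and continuity of $\theta\mapsto r_t(\theta)$ glues these segment-wise inverses into a global continuous inverse, even though the ambient domain $\overline{\Dt}\setminus(\partial\Dt\cap\bT)$ itself is not compact.
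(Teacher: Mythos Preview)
Your proposal is correct and follows essentially the same route as the paper: both use the explicit formula (\ref{lambda-t-0}) together with (\ref{CtStar}) to see that $\arg\lambda_0$ is preserved, that $\delta$ is constant along each radial segment, and that $|\lambda_0|\mapsto|\lambda_t(\lambda_0)|$ is strictly increasing with the endpoints $r_t(\theta)$ and $1/r_t(\theta)$ fixed. Your handling of the continuity of $\lambda_t^{-1}$ via compactness of each $I_\theta$ and continuity of $r_t$ is in fact a bit more explicit than the paper's, which simply appeals to continuity of $\lambda_t$ and boundedness of $\Dt$.
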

\begin{proof}
To find the formula for $\lambda_t(\lambda_0)$, we use the facts that $t_*(\lambda_0, \varepsilon_0^t(\lambda_0))=t$ and that the argument $\theta$ of $\lambda$ remains unchanged along the path. We apply (\ref{lambda-life-time}) to obtain
\begin{equation}\label{lambda-t-0}
 \lambda_t(\lambda_0)=\frac{\lambda_0}{|\lambda_0|} e^{Ct/2}.
\end{equation}
By the formula for $C$ given by (\ref{eqn:C}) and the formula for $t_*(\lambda_0, \varepsilon_0)$ given 
by (\ref{tStar2}), we have
\begin{equation}\label{CtStar}
Ct=Ct_*(\lambda_0, \varepsilon_0^t(\lambda_0))
  =\frac{\delta-2/|\lambda_0|}{\sqrt{\delta^2-4}}\log \left( \frac{\delta+\sqrt{\delta^2-4}}{\delta-\sqrt{\delta^2-4}} \right),
\end{equation}
where $\delta=(r_t(\theta)^2+1)/r_t(\theta)$ as in \eqref{delta-choice}. We remark that the formula of $\lambda_t(\lambda_0)$ is similar to \cite[Eq. (6.64)]{DHKBrown}.

By \eqref{delta-choice}, $\delta$ only depends on $\theta$ but not $r_0$; hence, $Ct$ is strictly increasing in $r_0$ by \eqref{CtStar}. From the expression (\ref{lambda-t-0}), we then deduce that, fixing a $\theta\in U_{t,\mu}$, the function $|\lambda_0|\mapsto |\lambda_t(\lambda_0)|$ is a strictly increasing function for $\lambda_0\in \{ re^{i\theta}: r_t(\theta)<r<1/r_t(\theta) \}$. Moreover, when $\lambda_0=r_t(\theta)e^{i\theta}$, we have
\[
  \lambda_t(r_t(\theta)e^{i\theta})=e^{i\theta}e^{Ct/2}=r_t(\theta)e^{i\theta}
\]
and $\lambda_t(e^{i\theta}/r_t(\theta))=e^{i\theta}/r_t(\theta)$
due to (\ref{lambda-life-time}) and (\ref{rhoZero}) (which can also be verified directly). In other words,
$\lambda_t$ maps the interval $\{ re^{i\theta}: r_t(\theta)<r<1/r_t(\theta) \}$ bijectively to itself and fixes the endpoints. Since this holds for any $\theta\in U_{t,\bar{\mu}}$, we hence conclude that $\lambda_t$ maps $\overline{\Dt}\setminus(\partial \Dt\cap \mathbb{T})$ bijectively to itself and fix any $\lambda\in (\partial\Dt\setminus \mathbb{T})$. As $\lambda_t$ is continuous and $r_t(\theta)\to 1$ as $e^{i\theta}$ approaches $\partial \Lambda_{t,\mu}\cap \mathbb{T}$, we then conclude that inverse of $\lambda_t$ extends to a homeomorphism on $\overline\Dt$
\end{proof}

\begin{proof}[{\bf Proof of Theorem \ref{thm:surjectivity}}]
 It is a direct consequence of Propositions \ref{extension-1} and \ref{extension-2}.
\end{proof}


\subsection{The Brown measure of $g_t$ and its connection to $uu_t$}
We will show in Theorem \ref{sLaplacian.main} that the Brown measure of $ub_t$ is supported on the closure $\overline{\Dt}$ by showing that the Brown measure has mass $1$ on $\Dt$. We prove this by first showing that a certain measure on $\Dt$ can be pushed forward to the spectral distribution of $uu_t$ in Lemma \ref{ProbBrownMul}; hence this measure is a probability measure. Then, in Theorem \ref{sLaplacian.main}, we show that the Brown measure has the same formula on $\Dt$ as the one defined in Lemma \ref{ProbBrownMul}. It then follows that the Brown measure has mass $1$ on $\Dt$. Thus, it suffices to focus on the computation for $\lambda\in \Dt$ in the section. 

Our first goal is to calculate the Laplacian of the function $s_t(\lambda)$ defined in Section~\ref{sect:TDE} for fixed $t$ by using Hamilton--Jacobi analysis. Recall that
\[
s_t(\lambda):=\lim_{\varepsilon\downarrow
 0} S(t,\lambda, \varepsilon)
\]
for $t$ fixed. 
By Theorem \ref{surjectivity}, for each $t>0$ and $\lambda\in\Dt$, we can choose $\varepsilon_0>0$ and $\lambda_0\in\Dt$ such that $\lim_{u\rightarrow t}\varepsilon(u)=0$, $\lim_{u\rightarrow t}\lambda(u)=\lambda$ where $t=t_*(\lambda_0, \varepsilon_0)$. 
Moreover, as the argument of $\lambda$ remains unchanged along the flow by Proposition \ref{conservation}, we have $\arg \lambda=\arg\lambda_0$.
We note that the Laplacian in logarithmic polar coordinates has the form
\[
\Delta=\Delta_\lambda
=\frac{1}{r^2}\left( \frac{\partial^2}{\partial \rho^2} +\frac{\partial^2}{\partial \theta^2}\right),
\]
where we recall that $r=|\lambda|$, $\rho=\log|\lambda|$, and $\theta=\arg \lambda$. 

We consider the function on $U_{t,\bar{\mu}}=\Delta_{t,\mu}\cap\mathbb{T}$ defined by
\begin{equation}\label{equation:wt-mt-derivative}
w_t(\theta)= \frac{1}{4\pi}\left(\frac{2}{t}+\frac{d}{d\theta}m_{t}(\theta)\right)
\end{equation}
where
\begin{equation}
	m_t(\theta)=	\int_{-\pi}^{\pi}  \frac{2r_t(\theta)\sin(\theta-\alpha)}{r_{t}(\theta)^{2}+1-2r_{t}(\theta
		)\cos(\theta-\alpha)}d\mu(e^{i\alpha})\label{function_mt}.
\end{equation}

Recall that $r_t(\theta)$ is defined in $(\ref{def:r_t_theta})$, which is the smaller one of the radii  
where the ray with angle
$\theta$ intersects the boundary of $\Dt$, and $\mu$ is the spectral distribution of $u$.
The functions $w_t$ and $m_t$ will play a main role in computing the Brown measure of $ub_t$.

\begin{proposition}
	\label{dPhiDtheta.prop}
	For any $\theta\in \Delta_{t,\mu}\cap\mathbb{T}$, the function $w_{t}$ may be computed as%
	\begin{equation}
		w_{t}(\theta)=\frac{1}{2\pi t}\frac{d\phi}{d\theta}, \label{wtFromPhi}%
	\end{equation}
	where $\phi = \arg(\Phi_{t,\bar\mu}(r_t(\theta)e^{i\theta}))$ is defined by (\ref{thetaPhi}).
\end{proposition}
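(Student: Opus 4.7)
The plan is to observe that the proposition reduces to a one-line algebraic identity between the two defining integrals. Specifically, recall from \eqref{thetaPhi} that
\[
\phi(\theta) = \theta + t\int_{-\pi}^{\pi} \frac{r_t(\theta)\sin(\theta-x)}{|1-r_t(\theta)e^{i(\theta-x)}|^2}\,d\mu(e^{ix}),
\]
and expanding $|1-r_t(\theta)e^{i(\theta-x)}|^2 = 1 - 2r_t(\theta)\cos(\theta-x) + r_t(\theta)^2$, the integrand in $\phi(\theta)-\theta$ is precisely $\tfrac{1}{2}$ times the integrand in $m_t(\theta)$. Thus
\[
\phi(\theta) = \theta + \frac{t}{2}\,m_t(\theta).
\]

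The heart of the proof is then to differentiate this identity with respect to $\theta$. Since $\theta\in U_{t,\bar\mu}$ ensures $r_t(\theta)<1$ and $r_t$ is analytic there (by Proposition~\ref{characterization-0}), both $m_t$ and $\phi$ are differentiable, giving
\[
\frac{d\phi}{d\theta} = 1 + \frac{t}{2}\frac{dm_t}{d\theta}.
\]
Dividing by $2\pi t$ produces
\[
\frac{1}{2\pi t}\frac{d\phi}{d\theta} = \frac{1}{2\pi t} + \frac{1}{4\pi}\frac{dm_t}{d\theta} = \frac{1}{4\pi}\left(\frac{2}{t} + \frac{dm_t}{d\theta}\right),
\]
which is exactly $w_t(\theta)$ as defined in \eqref{equation:wt-mt-derivative}.

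There is essentially no obstacle here; the only thing to verify carefully is the algebraic matching of the two integrands and that one may legitimately differentiate under the integral sign (or equivalently, differentiate $\phi$ directly). The latter is automatic because $\phi$ is already known to be smooth on $U_{t,\bar\mu}$ from Lemma~\ref{lemma:positive-derivative-phi-theta}, where in fact the positivity and bound $0<\tfrac{d\phi}{d\theta}\leq 2$ were already established. So this proposition should be recorded as an immediate corollary of the definitions together with the observation $\phi(\theta) = \theta + (t/2)m_t(\theta)$.
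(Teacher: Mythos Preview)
Your proof is correct and follows essentially the same approach as the paper's own proof: both recognize that $\phi(\theta)=\theta+\tfrac{t}{2}m_t(\theta)$ by matching the integrands, differentiate, and divide by $2\pi t$ to recover the definition \eqref{equation:wt-mt-derivative} of $w_t$. Your version is slightly more explicit about the justification for differentiability, but the argument is the same.
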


\begin{proof}
	Recall that by \eqref{thetaPhi}, $\phi$ is given by 
	\begin{equation}\nonumber
 \phi=\theta+t\int_{-\pi}^\pi \frac{r_{t}(\theta) \sin(\theta-x)}{|1-r_t(\theta)e^{i(\theta-x)}|^2}d\mu(e^{ix}).
	\end{equation}
	Hence,
	\begin{align*}
		d\phi &=\left(  1+t\frac{d}{d\theta}\int_{-\pi}^{\pi}
		\frac{r_t(\theta)\sin(\theta-x)}{|1-r_t(\theta)e^{i(\theta-x)}|^2}d\mu(e^{ix})\right)  d\theta\\
		&= \left(1+\frac{t}{2}\frac{d}{d\theta}m_t(\theta)\right)d\theta\\
		&=2\pi t w_t(\theta)d\theta. 
	\end{align*}
	and the formula (\ref{wtFromPhi}) follows.
\end{proof}

We next show that the measure $w_t(\theta)/r^2$ has mass $1$ on $\Dt$. Later, we will show that $w_t(\theta)/r^2$ is indeed the density of the Brown measure of $ub_t$ inside $\Dt$. That it has mass $1$ on $\Dt$ will be used to show that the Brown measure is $0$ outside $\Dt$.
\begin{lemma}
	\label{ProbBrownMul}
We have 
	\[
	\int_{\Dt} \frac{1}{r^2} w_t(\theta) r\,dr\,d\theta=1.
	\]
\end{lemma}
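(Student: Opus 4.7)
The plan is to convert to polar coordinates, evaluate the radial integral in closed form, change variables from $\theta$ to $\phi$ using Proposition \ref{dPhiDtheta.prop}, and recognize the resulting quantity as the total mass of the probability measure $\mu\boxtimes\lambda_t$.

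By the characterization of $\Dt$ in \eqref{eqn:Dt-1}, I parametrize $\Dt$ by $\theta \in U_{t,\bar{\mu}}$ and $r_t(\theta) < r < 1/r_t(\theta)$. Using $da\,db = r\,dr\,d\theta$, the integrand $\frac{1}{r^2}w_t(\theta)\cdot r = w_t(\theta)/r$, and the radial integral evaluates to
\[
\int_{r_t(\theta)}^{1/r_t(\theta)}\frac{dr}{r} = -2\log r_t(\theta),
\]
reducing the claim to $\int_{U_{t,\bar{\mu}}}-2w_t(\theta)\log r_t(\theta)\,d\theta = 1$. Substituting the identity $w_t(\theta) = \frac{1}{2\pi t}\frac{d\phi}{d\theta}$ from Proposition \ref{dPhiDtheta.prop} recasts the problem as showing
\[
\frac{-1}{\pi t}\int_{U_{t,\bar{\mu}}}\log r_t(\theta)\,\frac{d\phi}{d\theta}\,d\theta = 1.
\]

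Next I would perform the change of variables $\phi = \phi(\theta)$. By Lemma \ref{lemma:positive-derivative-phi-theta}, $\theta \mapsto \phi(\theta)$ is strictly increasing on each connected component of $U_{t,\bar{\mu}}$, and by Proposition \ref{characterization-0}(3), the correspondence $e^{i\phi} \leftrightarrow r_t(\theta)e^{i\theta}$ extends to a homeomorphism of $\mathbb{T}$ onto $\partial \Omega_{t,\bar{\mu}}$. Since $r_t(\theta) = 1$ (and thus $\log r_t(\theta) = 0$) on $[-\pi,\pi]\setminus U_{t,\bar{\mu}}$, the integrand vanishes outside $U_{t,\bar{\mu}}$, so after the substitution the integral can be taken over all of $[-\pi,\pi]$ in the $\phi$-variable:
\[
\frac{1}{\pi t}\int_{-\pi}^{\pi}\bigl(-\log r_t(\theta(\phi))\bigr)\,d\phi.
\]

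Finally, Theorem \ref{thm:mut} identifies $-\log r_t(\theta(\phi))/(\pi t)$ as the density $p_t(e^{i\phi})$ of $\mu\boxtimes\lambda_t$ (the spectral distribution of $uu_t$) with respect to $d\phi$, so the integral equals $\int_{-\pi}^{\pi}p_t(e^{i\phi})\,d\phi = 1$ because $\mu\boxtimes\lambda_t$ is a probability measure. The only mild subtlety is justifying the global change of variables when $U_{t,\bar{\mu}}$ decomposes into several disjoint arcs; this is handled by the global homeomorphism property in Proposition \ref{characterization-0}(3) together with the vanishing of $\log r_t$ on the complement of $U_{t,\bar{\mu}}$, which allows the separate contributions from each arc to be reassembled into one integral over $[-\pi,\pi]$.
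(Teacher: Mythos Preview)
Your proposal is correct and follows essentially the same approach as the paper's proof: parametrize $\Dt$ via \eqref{eqn:Dt-1}, integrate out the radial variable to obtain $-2\log r_t(\theta)$, use Proposition~\ref{dPhiDtheta.prop} to write $w_t(\theta)\,d\theta = \frac{1}{2\pi t}\,d\phi$, and identify the resulting integrand as the density $p_t$ from Theorem~\ref{thm:mut}. Your additional remarks on handling multiple components of $U_{t,\bar{\mu}}$ via the homeomorphism in Proposition~\ref{characterization-0}(3) and the vanishing of $\log r_t$ on the complement are a welcome clarification that the paper leaves implicit.
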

\begin{proof}
	Using the characterization of $\Dt$ in Theorem \ref{Dt-characterization}, we have 
	\begin{align*}
		\int_{\Dt}\frac{1}{r^2} w_t(\theta) r\,dr\,d\theta &=\int_{U_{t,\bar{\mu}}}\int_{r_t(\theta)}^{1/r_t(\theta)}\frac{1}{r^2} w_t(\theta) r\,dr\,d\theta\\
		&=\int_{U_{t,\bar{\mu}}} -2\log (r_t(\theta)) w_t(\theta)\,d\theta\\
		&=\int_{U_{t,\bar{\mu}}} -2\log (r_t(\theta)) \frac{1}{2\pi t}\frac{d\phi}{d\theta}\,d\theta\quad\textrm{(by Proposition \ref{dPhiDtheta.prop})}\\
		&=\int_{\mathbb{T}} -\frac{1}{\pi t}\log (r_t(\theta))\,d\phi\\
		&=\int_{\mathbb{T}} p_t(e^{i\phi})\,d\phi\quad \qquad\qquad \qquad\textrm{(by Theorem \ref{thm:mut})}\\
		&=1.
	\end{align*}
The lemma is established.
\end{proof}

Given any $\lambda\in\Dt$, by Theorem \ref{surjectivity}, there are unique $\lambda_0$ and $\varepsilon_0$ such that
$$\lim_{u\to t}(u, \lambda(u), \varepsilon(u)) = (t, \lambda, 0).$$
We attempt to compute the Brown measure by the limit $(1/4\pi)\Delta S(u, \lambda(u), \varepsilon(u))$ as $u\uparrow t$. However, the definition of the Brown measure of $ub_t$ is
$$\frac{1}{4\pi}\Delta\left( \lim_{\varepsilon\downarrow
 0}S(t, \lambda, \varepsilon)\right),$$
with $t$ and $\lambda$ fixed in the limiting process.
We want to show that this limit is the same as the limit 
\[\lim_{u\uparrow t}\frac{1}{4\pi}\Delta S(u, \lambda(u), \varepsilon(u)).\] 
To achieve this, we want to show that the limit is independent of path approaching $(t,\lambda,\varepsilon)$. We will see that the analogue of \cite[Theorem 7.4]{DHKBrown} holds for our $S$. More precisely, we want to see that, given any $(\sigma, \omega)\in\R^+\times \Dt$, the function
\begin{equation}
\label{eq:originStilde}
\tilde{S}(t,\lambda, z) = S(t,\lambda, z^2), \quad z>0
\end{equation}
extends to a real analytic function in a neighborhood of $(\sigma, \lambda, 0)$ inside $\R\times \C\times \R$. The key here is that the reglarity holds even in the triple $(t, \lambda, z)$, not just in the pair $(\lambda, z)$; the Laplacian of $S$ at $(t,\lambda, 0)$ is then equal to the limit of the Laplacian along the path $(u, \lambda(u), \varepsilon(u))$ since there is no partial derivative with respect to $\varepsilon$ involved. We will give the main lines below. For more details, readers are encouraged to read \cite[Section 7.4]{DHKBrown}.

\begin{theorem}
	\label{thm:regularity}
	The function $\tilde{S}$ extends to a real anaytic function in a neighborhood of $(t, \lambda, 0)$ in $\R^+\times \Dt$.
	\end{theorem}
\begin{remark}
	The function $S$ itself does not have a smooth extension of the same sort that $\tilde{S}$ does. Indeed, using that $\sqrt{\varepsilon}p_\varepsilon$ is a constant of motion, the second Hamilton--Jacobi formula~\eqref{eq:2ndHJ} tells us that $\partial S/\partial\varepsilon$ must blow up like $1/\sqrt{\varepsilon}$ as we approach $(t,\lambda,0)$ along a solution of the ODEs. The same reasoning tells us that the extended $\tilde{S}$ does not satisfy $\tilde{S}(t,\lambda,z) = S(t,\lambda,z^2)$ for $z<0$. Because $\sqrt{\varepsilon}p_\varepsilon$ is a constant of motion, 
	\[\frac{\partial\tilde{S}}{\partial z}(t,\lambda,z) = 2\sqrt{\varepsilon}\frac{\partial S}{\partial\varepsilon}(t,\lambda,z^2)\]
	has a nonzero limit as $z\to 0$. Thus, $\tilde{S}$ cannot have a smooth extension that is even in $z$.
	\end{remark}
\begin{proof}
	Denote by $\lambda(t;\lambda_0,\varepsilon_0)$ the solution of $\lambda$-variable  of the ODEs at time $t$ given initial conditions $\lambda_0$ and $\varepsilon_0$. Write $z(t; \lambda_0, \varepsilon_0) = \sqrt{\varepsilon(t;\lambda_0,\varepsilon_0)}$ where $\varepsilon(t;\lambda_0,\varepsilon_0)$ is the solution of $\varepsilon$-variable of the ODEs at time $t$ given initial conditions $\lambda_0$ and $\varepsilon_0$.
	
	Define the map
	$$V(t, \lambda_0, \varepsilon_0) = (t, \lambda(t;\lambda_0, \varepsilon_0), z(t; \lambda_0, \varepsilon_0)).$$
	We first show that $V$ can be extended to $t\in\R$, given any $\lambda_0, \varepsilon_0$. The main observation is that, given any $\varepsilon_0>0$, $1/p_\varepsilon(t; \lambda_0, \varepsilon_0)$ extends to a real analytic function for all $t\in\R$ (See~\eqref{pxFormula} for the formula of $p_\varepsilon$). Given any $\lambda_0\in \Dt$ and $\varepsilon_0>0$, we can extend $z(t;\lambda_0,\varepsilon_0)$ by the same formula
\begin{equation}\label{eqn:Zt}
z(t; \lambda_0, \varepsilon_0) = \sqrt{\varepsilon_0}p_0e^{-Ct/2}\frac{1}{p_\varepsilon(t; \lambda_0, \varepsilon_0)}
\end{equation}
to \emph{all} $t\in\R$. By~\eqref{eqn:Zt}, $z(t;\lambda_0, \varepsilon_0)^2 = \varepsilon(t;\lambda_0, \varepsilon_0)$ and so $z(t; \lambda, \varepsilon_0) = 0$ when $t = t_*(\lambda_0, \varepsilon_0)$. The function $z(t; \lambda_0, \varepsilon_0)$ is positive when $t<t_*(\lambda_0, \varepsilon_0)$; it is negative when $t>t_*(\lambda_0, \varepsilon_0)$. Since, $1/p_\varepsilon(t; \lambda_0, \varepsilon_0)$ extends to a real analytic function for all $t\in\R$, by (\ref{logLambdaIntegral})
and Proposition \ref{xpx}, 
$$\lambda(t) = \lambda_0\exp\left(\int_0^t \varepsilon(s)p_\varepsilon(s)\,ds\right)= \lambda_0\exp\left(\int_0^t \frac{\varepsilon_0p_0^2e^{-Cs}}{p_\varepsilon(s)}ds\right)$$
also extends to an analytic function of $t\in\R$. 

Fix any $\lambda\in\Dt$. Next, we show that we can extend $\tilde{S}$ to a neighborhood of $(t, \lambda, 0)$, where $\lambda\in \Dt$, by applying the inverse function theorem to the map
$$V(t, \lambda_0, \varepsilon_0) = (t, \lambda(t;\lambda_0, \varepsilon_0), z(t; \lambda_0, \varepsilon_0))$$
which was shown to be extended to $t\in\R$ in the preceding paragraph. Once we show that the Jacobian matrix of $V$ at $(t, \lambda_0^t(\lambda), \varepsilon_0^t(\lambda))$ is invertible (See Theorem~\ref{surjectivity} for the definitions of $\lambda_0^t$ and $\varepsilon_0^t$), there exists a local inverse $V^{-1}$ defined around a neighborhood of $(t, \lambda, 0)$ which satisfies
\begin{equation}
\label{S:tilde}
\tilde{S}(\sigma, \omega, z) = HJ(V^{-1}(\sigma, \omega, z))
\end{equation}
where $HJ$ is the right hand side of \eqref{Sformula}. Note that \eqref{S:tilde} gives a (real) analytic extension of $\tilde{S}$ around a neighborhood of $(t,\lambda, 0)$; recall that $\tilde{S}$ is originally defined only for $z>0$ in~\eqref{eq:originStilde}.

Thus, it remains to show that the Jacobian matrix of $V$ at $(t,\lambda_0^t(\lambda),\varepsilon_0^t(\lambda))$ is invertible. The trick here is to do a change of variable to view the map $V$ as a function of $(t, \theta, \rho, \delta)$, since, by~\eqref{value-delta}, the map $(t,\lambda,\varepsilon)\mapsto (t,\theta,\rho,\delta)$ is smoothly invertible. Because the formula of $t_*(\lambda_0, \varepsilon_0)$ is independent of $\rho_0 = \log r_0$ when $\delta$ and $\theta$ are fixed, when $t_\ast(\lambda_0,\varepsilon_0)=t$, $z(t;\lambda_0,\varepsilon_0)$ remains $0$ if $r_0$ is varied with $\delta$ and $\theta$ fixed; this shows $\partial z/\partial r_0=0$.
Furthermore, by \eqref{lambda-t-0} and \eqref{CtStar}, 
$$\lim_{t\to t_*(\lambda_0, \varepsilon_0)}\rho(t)=\frac{\delta-2/r_0}{2\sqrt{\delta^2-4}}\log \left( \frac{\delta+\sqrt{\delta^2-4}}{\delta-\sqrt{\delta^2-4}} \right)$$
whose partial derivative with respect to $r_0$ is positive. Since $\frac{\partial \rho}{\partial r_0}>0$ as shown in the proof of Proposition~\ref{extension-2}, it remains to check $\frac{\partial z}{\partial \delta}>0$ to prove that the Jacobian matrix of the form
$$\begin{pmatrix}
	I_{2\times 2}& 0&0\\
	\ast&  \frac{\partial\rho}{\partial r_0}&\frac{\partial \rho}{\partial \delta}\\
	\ast&  \frac{\partial z}{\partial r_0}&\frac{\partial z}{\partial \delta}\\
	\end{pmatrix}=\begin{pmatrix}
	I_{2\times 2}& 0&0\\
	\ast&  \frac{\partial\rho}{\partial r_0}&\frac{\partial \rho}{\partial \delta}\\
	\ast& 0&\frac{\partial z}{\partial \delta}\\
	\end{pmatrix}$$
	is invertible. To this end, we write 
	$$\frac{\partial z}{\partial \delta} = \frac{\partial z}{\partial \varepsilon_0}\frac{\partial \varepsilon_0}{\partial \delta} = -r_0\frac{\partial z}{\partial t}\frac{\partial t_*(\lambda_0, \varepsilon_0)}{\partial \varepsilon_0}$$
	where the last equality comes from differentiating $z(t_*(\lambda_0, \varepsilon_0), \lambda_0, \varepsilon_0)) = 0$ with respect to $\varepsilon_0$ using chain rule and $\frac{\partial \varepsilon_0}{\partial \delta} = r_0$. Now, by (\ref{pxFormula}) and (\ref{eqn:Zt}), we have 
	$$z(t) = \frac{\sqrt{\varepsilon_0}e^{Ct/2}}{\cosh(kt)+\frac{2r_0-\delta}{\sqrt{\delta^2-4}}\sinh(kt)}\left(\cosh(kt)-\frac{\delta}{\sqrt{\delta^2-4}}\sinh(kt)\right).$$
Recall that $t_*$ given in (\ref{tStar1}) is chosen such that the denominator in (\ref{pxFormula}) is zero. When $t=t_*$, we have 
	$$\left.\frac{\partial z(t, \lambda_0, \varepsilon_0)}{\partial t}\right|_{t = t_*(\lambda_0, \varepsilon_0)} = \frac{\sqrt{\varepsilon_0}e^{Ct/2}}{\cosh(kt)+\frac{2r_0-\delta}{\sqrt{\delta^2-4}}\sinh(kt)}k\left(\sinh(kt)-\frac{\delta}{\sqrt{\delta^2-4}}\cosh(kt)\right)<0$$
because the denominator is positive (see discussions after (\ref{pxFormula}) for the definition of $p_{\varepsilon}(t)$) and 
$\delta/\sqrt{\delta^2-4}>0$.
Finally, using $t_*(\lambda_0, \varepsilon_0) = 1/\int_{-\pi}^\pi\frac{1}{g_{\theta_0-\alpha}(\delta)}d\mu(e^{i\alpha})$, Proposition \ref{monotoneTstar.prop} and the definition (\ref{value-delta}) for $\delta$, we obtain
	$$\frac{\partial t_*(\lambda_0, \varepsilon_0)}{\partial \varepsilon_0} = \frac{\partial t_*(\lambda_0, \varepsilon_0)}{\partial \delta}\frac{\partial \delta}{\partial \varepsilon_0} > 0.$$
We conclude that $\frac{\partial z}{\partial \delta}>0$ and our proof is established.
\end{proof}
Recall that we want to compute the distributional Laplacian of the function
\[s_t(\lambda) = \lim_{\varepsilon\downarrow
	0}S(t, \lambda, \varepsilon).\]
Theorem~\ref{thm:regularity} shows that $s_t$ is indeed analytic on $\Delta_{t,\mu}$ and hence the distributional Laplacian of $s_t$ is indeed the ordinary Laplacian.

The following is our main theorem in the multiplicative case, which generalizes \cite[Theorem 2.2]{DHKBrown}.
\begin{theorem}
	\label{sLaplacian.main}
	Given any $t>0$, for any $\lambda\in\Dt$ with $\lambda=e^\rho e^{i\theta} = re^{i\theta}$,
	we have%
	\begin{equation}
	\frac{\partial s_{t}}{\partial\rho}(t,\lambda)=\frac{2\rho}{t}+1.
	\label{dsdRho}%
	\end{equation}
	Furthermore, $\partial s_{t}/\partial\theta$ is independent of $\rho$ and
	can be expressed as
	\[
	\frac{\partial s_{t}}{\partial\theta}=m_{t}(\theta)
	\]
	where $m_t$ is defined in~\eqref{function_mt}.

	 The Brown measure $\mu_{g_t}$ of $ub_t$ is supported on $\overline{\Dt}$ and can be expressed as 
	\begin{equation}
W_t(r,\theta)=\frac{1}{4\pi}\Delta s_t(\lambda)
		=\frac{1}{r^{2}%
	}w_t(\theta).
	\label{sLaplacian}%
	\end{equation}
Moreover, in $\Dt,$ the density
	$W_{t}$ of $\mu_{g_{t}}$ with respect to the Lebesgue measure is strictly
	positive and real analytic. We also have
\begin{equation}\label{w_t:bound}
	w_t(\theta)< \frac{1}{\pi t}.
\end{equation}	
\end{theorem}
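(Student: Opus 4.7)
The plan is to combine the Hamilton--Jacobi identities of Theorem \ref{HJ.thm} with the conservation laws of Proposition \ref{conservation}, pass to the limit $\varepsilon\downarrow 0$ along the characteristic flow, and then convert to the logarithmic polar coordinates $(\rho,\theta)=(\log|\lambda|,\arg\lambda)$ in which the Laplacian factorises as $\Delta=r^{-2}(\partial_\rho^2+\partial_\theta^2)$. For each $\lambda\in\Dt$, Corollary \ref{surjectivity} selects the unique initial data $(\lambda_0,\varepsilon_0)=(\Lambda_0^t(\lambda),E_0^t(\lambda))$ so that the flow satisfies $\lambda(u)\to\lambda$ and $\varepsilon(u)\to 0$ as $u\uparrow t$; the real-analytic extension of $\tilde S$ across $\varepsilon=0$ discussed around \eqref{S:tilde} then allows me to take \eqref{Sderivatives} in the limit, giving $\partial_a s_t=p_a(t)$ and $\partial_b s_t=p_b(t)$ at the endpoint.

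I would first show that $\partial s_t/\partial\theta=ap_b(t)-bp_a(t)$ equals $m_t(\theta)$ and is independent of $\rho$. Since the angular momentum $ap_b-bp_a$ is conserved, evaluating it at time $0$ using \eqref{condition_a}--\eqref{condition_b} gives
\[
 ap_b(t)-bp_a(t)=\int_{-\pi}^{\pi}\frac{2\sin(\theta-\alpha)}{\delta-2\cos(\theta-\alpha)}\,d\mu(e^{i\alpha}),\qquad \delta=\frac{|\lambda_0|^2+1+\varepsilon_0}{|\lambda_0|}.
\]
The key point is that the prescription \eqref{x_0_t} for $E_0^t(\lambda)$ forces $\delta=(r_t(\theta)^2+1)/r_t(\theta)$, so $\delta$ depends only on $\theta$; multiplying numerator and denominator by $r_t(\theta)$ then identifies the integral with $m_t(\theta)$.

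For $\partial s_t/\partial\rho=ap_a(t)+bp_b(t)$, I would use the conserved quantity $\Psi=\varepsilon p_\varepsilon+\tfrac12(ap_a+bp_b)$ to rewrite this as $2\Psi-2\varepsilon(t)p_\varepsilon(t)$. By Proposition \ref{xpx}, $\varepsilon p_\varepsilon=\sqrt{\varepsilon}\cdot\sqrt{\varepsilon_0}\,p_\varepsilon(0)\,e^{-Cu/2}\to 0$ as $u\uparrow t$, so the limit equals $2\Psi=C+1$ by \eqref{eqn:C}. Formula \eqref{lambda-t-0} yields $|\lambda|=e^{Ct/2}$, i.e., $C=2\rho/t$, which gives \eqref{dsdRho}. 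Combining the two derivative formulas produces
\[
 \Delta s_t(\lambda)=\frac{1}{r^2}\left(\frac{2}{t}+\frac{d m_t(\theta)}{d\theta}\right)=\frac{4\pi}{r^2}\,w_t(\theta),
\]
which is \eqref{sLaplacian}.

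For the support assertion, I would argue as in the additive case: when $\lambda\notin\overline{\Dt}$, Corollary \ref{smallx0.cor} yields $t_\ast(\lambda,0)>t$, so one may take $\varepsilon_0=0$, in which case $\varepsilon(u)\equiv 0$ and the characteristic is constant; Theorem \ref{HJ.thm} then collapses to $s_t(\lambda)=\int_{\mathbb{T}}\log|\xi-\lambda|^2\,d\mu(\xi)$. This is harmonic off $\mathbb{T}$, and for $\lambda=e^{i\theta}\notin\overline{\Dt}$ the condition $\theta\notin\overline{U_{t,\bar\mu}}$ forces $\mu$ to vanish on a neighborhood of $\lambda$ (by finiteness of the integral in \eqref{Ut-00}), so $s_t$ is still locally harmonic there; hence $\Delta s_t=0$ outside $\overline{\Dt}$. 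Lemma \ref{ProbBrownMul} shows that $r^{-2}w_t(\theta)\,dA$ already carries unit mass on $\Dt$, so the probability-measure constraint rules out any residual mass on $\partial\Dt$. The inequality \eqref{w_t:bound} and strict positivity follow immediately from Proposition \ref{dPhiDtheta.prop} together with the estimate $0<d\phi/d\theta\le 2$ supplied by Lemma \ref{lemma:positive-derivative-phi-theta}. The main technical obstacle is justifying the passage $\varepsilon\downarrow 0$ in the Hamilton--Jacobi derivative identities, which is precisely what the real-analytic extension of $\tilde S$ built around \eqref{S:tilde} is designed to supply.
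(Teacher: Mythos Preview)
Your proposal is correct and follows essentially the same route as the paper: conservation of angular momentum gives $\partial s_t/\partial\theta=m_t(\theta)$ via the identification $\delta=(r_t(\theta)^2+1)/r_t(\theta)$ from \eqref{delta-choice}; the conserved $\Psi$ together with $\varepsilon p_\varepsilon\to 0$ and \eqref{lambda-t-0} gives \eqref{dsdRho}; the Laplacian in $(\rho,\theta)$-coordinates then yields \eqref{sLaplacian}; and Lemma \ref{ProbBrownMul} plus the probability constraint pins down the support, while Lemma \ref{lemma:positive-derivative-phi-theta} with Proposition \ref{dPhiDtheta.prop} gives positivity and the bound \eqref{w_t:bound}.

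One remark: the extra harmonicity argument you supply for $\lambda\notin\overline{\Dt}$ is not needed (the paper omits it entirely, relying only on the mass count), and the step ``$\theta\notin\overline{U_{t,\bar\mu}}$ forces $\mu$ to vanish on a neighbourhood of $e^{i\theta}$'' is not justified---finiteness of $\int|e^{i\theta}-\xi|^{-2}\,d\mu(\xi)$ does not by itself exclude $e^{i\theta}$ from $\operatorname{supp}\mu$. Since you also invoke Lemma \ref{ProbBrownMul}, the conclusion survives, but you should either drop that paragraph or replace it by the $\varepsilon_0\downarrow 0$ limiting argument analogous to Lemma \ref{AddSurj}.
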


\begin{proof}
For any $\lambda\in \Dt$, choose $\lambda_0=\lambda_0^t(\lambda)$ and $\varepsilon_0=\varepsilon_0^t(\lambda)$ as in Theorem \ref{surjectivity}. Hence $t=t_*(\lambda_0, \varepsilon_0)$. Recall that $S$ is a function in $t, \rho, $ and $\theta$. Also recall that while the Hamiltonian $H$ does not depend on $\theta$ and $p_{\theta}$, we can still regard $\theta$ and $p_\theta$ as constants of motion. Over the trajectory of $S$ which solves the system  \eqref{theODEs} over the interval $[0,t)$, the momentum $p_\theta$ remains unchanged by Proposition \ref{conservation}. 
Write $\lambda_0=r_0e^{i\theta}$. 
Hence, we have
\begin{align*}
   \frac{\partial S}{\partial\theta} (u, \lambda(u), \varepsilon(u))
     =p_\theta
	&=\int_{-\pi}^\pi \frac{2r_0\sin(\theta-\alpha)}{1+r_0^2-2r_0\cos(\theta-\alpha)+\varepsilon_{0}}d\mu(e^{i\alpha})\\
     &=2\int_{-\pi}^\pi \frac{\sin(\theta_0-\alpha)}{\delta-2\cos(\theta_0-\alpha)} d\mu(e^{i\alpha}).
\end{align*}
Using the fact that $\varepsilon_0$ is chosen so that $\delta=(r_t(\theta)^2+1)/r_t(\theta)$ as in (\ref{delta-choice}), the above expression is independent of $|\lambda|$ and only depends on $\theta$. Hence, the function $m_t$ defined in~\eqref{function_mt} can be written as
\begin{align*}
  m_t(\theta) = &2\int_{-\pi}^\pi \frac{r_t(\theta)\sin (\theta-\alpha)}
     {r_t(\theta)^2+1-2\cos(\theta-\alpha)}d\mu(e^{i\alpha})\\
     =&2\int_{-\pi}^\pi \frac{\sin(\theta_0-\alpha)}{\delta-2\cos(\theta_0-\alpha)} d\mu(e^{i\alpha}),
\end{align*}
which shows
\begin{equation}\nonumber
   \frac{\partial^2 S}{\partial \theta^2}(u, \lambda(u), \varepsilon(u))=\frac{d}{d\theta} m_t(\theta).
\end{equation}
Because $s_t(\lambda) = \lim_{z\to 0}\tilde{S}(t,\lambda,z)$ (see \eqref{S:tilde} for definition of $\tilde{S}$), Theorem~\ref{thm:regularity} implies that taking the limit $u\rightarrow t$ from the above display equation gives us
\begin{equation}
  \frac{\partial^2 s_t}{\partial \theta^2}=\frac{d}{d\theta} m_t(\theta)
\end{equation}

Similarly, we use (\ref{apa}) and Theorem~\ref{thm:regularity} to get
	\begin{align*}
\frac{\partial s_t}{\partial \rho}
  &=\lim_{u\rightarrow t}
\frac{2\log\left\vert \lambda(t)\right\vert }{t}+1\\
&=\frac{2\rho}{t}+1.
	\end{align*}
It follows that
\begin{equation}\nonumber
  \frac{\partial^2 s_t}{\partial \rho^2}=\frac{2}{t}.
\end{equation}

So, the restriction of the Brown measure $\mu_{g_t}$ to $\Dt$ is given by
	\begin{equation}
d(\left.\mu_{g_t}\right|_{\Dt})(\lambda)=\frac{1}{4\pi}\Delta s_t(\lambda)
=\frac{1}{4\pi}\frac{1}{r^2}\left( \frac{\partial^2}{\partial \rho^2} +\frac{\partial^2}{\partial \theta^2}\right) s_t(\lambda)
		=\frac{1}{r^2}w_t(\theta).
	\end{equation}
	As the Brown measure $\mu_{g_t}$ is a probability measure, it then follows from Lemma \ref{ProbBrownMul} that $\mu_{g_t}$ is supported on $\overline{\Dt}$. 
Recall that $r_t(\theta)<1$ and is analytic for all $\theta\in U_{t,\bar{\mu}}$. We conclude that
$W_t$ is strictly positive (by Proposition~\ref{dPhiDtheta.prop} and $\frac{d\phi}{d\theta}>0$) and analytic for all $\lambda\in \Dt$. Finally, the upper bound (\ref{w_t:bound}) follows from Lemma~\ref{lemma:positive-derivative-phi-theta} and Proposition~\ref{dPhiDtheta.prop}. 
\end{proof}

We point out that it is possible to express the formula (\ref{equation:wt-mt-derivative}) for $w_t$ in an alternative formula so that there is no derivative involved. Indeed, when $\theta\in U_{t,\bar{\mu}}$, recall that $r_t(\theta)<1$ and $r_t(\theta)$ is analytic by Proposition~\ref{characterization-0}. We first calculate 
\begin{align}
\frac{1}{2}\frac{d}{d\theta}m_t(\theta)&=\frac{dr_t(\theta)}{d\theta}\cdot \int_{-\pi}^{\pi} \frac{\sin(\theta-x)(1-r_t(\theta)^2)}{(r_{t}(\theta)^{2}+1-2r_{t}(\theta
	)\cos(\theta-x))^2}d\mu(e^{i x})\nonumber \\
&+\int_{-\pi}^{\pi}\frac{\cos(\theta-x)(r_t(\theta)^3+r_t(\theta))-2r_t(\theta)^2}{(r_{t}(\theta)^{2}+1-2r_{t}(\theta
	)\cos(\theta-x))^2}d\mu(e^{i x}).\label{identity-derivative-mt-1}
\end{align}
Next, we recall, from Proposition \ref{characterization-0} and the definition of $f(r,\theta)$ given in (\ref{valueOFf}), that when $r_t(\theta)<1$, $r_t(\theta)$ satisfies the identity
\begin{equation}\label{identity-vt-mu-1}
f(r_t(\theta),\theta)=\frac{1-r_t(\theta)^2}{-2\log r_t(\theta)}\int_{-\pi}^{\pi}\frac{1}{|1-r_t(\theta)e^{i(\theta+x)}|^2}d\bar{\mu}(e^{ix}) 
= \frac{1}{t}.
\end{equation} 
Then $\frac{dr_t(\theta)}{d\theta}$ can be calculated by the differential of the above implicit function
\[
 \left. \frac{dr_t(\theta)}{d\theta}=-\frac{\partial f(r,\theta)/\partial\theta}{\partial f(r,\theta)/\partial r}\,\right\vert_{r=r_t(\theta)},
\]
where the denominator is strictly positive whenever $r_t(\theta)<1$, by Corollary~\ref{cor:limit}. The expression of the above display equation is rather complicated. The numerator can be computed as
\begin{align*}
  \frac{\partial f(r,\theta)}{\partial \theta}
    =\frac{1-r^2}{-2\log r} \int_{-\pi}^{\pi}\frac{2r\sin(\theta+x)}{|1-r_t(\theta)e^{i(\theta+x)}|^4}d\bar{\mu}(e^{ix}) ,
\end{align*}
while the denominator can be computed as
\begin{align*}
  \frac{\partial f(r,\theta)}{\partial r}
    =\left(\frac{1-r^2}{2r(\log r)^2}+\frac{r}{\log r}\right)&
   \int_{-\pi}^{\pi}\frac{1}{|1-r_t(\theta)e^{i(\theta+x)}|^2}d\bar{\mu}(e^{ix}) \\
  - &\frac{1-r^2}{-2\log r} \int_{-\pi}^{\pi}\frac{2r+2\cos(\theta+x)}{|1-r_t(\theta)e^{i(\theta+x)}|^4}d\bar{\mu}(e^{ix}).
\end{align*}
Plugging the above formulas for $\frac{dr_t(\theta)}{d\theta}$ to (\ref{identity-derivative-mt-1}) and (\ref{equation:wt-mt-derivative}), we can obtain an alternative expression for $w_t$. We remark that, in the special case when $u=I$, this expression is a very elegant formula (see \cite[Theorem 8.2]{DHKBrown}).

\begin{corollary}
	The support of the Brown measure $\mu_{g_t}$ of $g_t = ub_t$ is the closure of the open set $\Delta_{t,\mu}$. The number of connected components of interior $\Dt$ of the support of $\mu_{g_t}$ is a non-increasing function $t$.
\end{corollary}
\begin{proof}
The first assertion follows from $\mu_{g_t}$ has mass $1$ on $\Dt$. The second assertion follows from the fact that $\Delta_{t,\mu}$ has the same number of connected components of $U_{t,\bar\mu}$. (See~\eqref{Ut-00} for the definition of $U_{t,\bar\mu}$.) Now, the number of connected components of $U_{t,\bar\mu}$ is non-increasing by Proposition~\ref{Mcomponent}.
	\end{proof}

We now describe the connection between the Brown measure of $g_t = ub_t$ with the density function of the spectral distribution of $uu_t$ obtained in \cite{Zhong2015} by the second author. The following two results generalize Propositions 2.5 and 2.6 in \cite{DHKBrown}.

\begin{corollary}
The distribution of the
argument of $\lambda$ with respect to $\mu_{g_{t}}$ has a density given by
\begin{equation}
a_{t}(\theta)=-2\log[r_{t}(\theta)]w_{t}(\theta).
\end{equation}
Furthermore, the push-forward of $\mu_{g_t}$ under the map $\lambda\mapsto \Phi_{t,\bar{\mu}}(r_t(\theta)e^{i\theta})$ is the distribution of $uu_t$.
\end{corollary}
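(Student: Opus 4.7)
The plan is to prove both assertions by direct computation using the density formula from Theorem \ref{sLaplacian.main} together with the boundary description of the support given in Theorem \ref{Dt-characterization}, and then compare with the density formula for $uu_t$ provided in Theorem \ref{thm:mut}.

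First I would obtain the density $a_t(\theta)$ of the argument by integrating the density of $\mu_{g_t}$ along radial slices. By Theorem \ref{Dt-characterization}, for each $\theta\in U_{t,\bar{\mu}}$ the radial section of $\overline{\Dt}$ is the interval $r_t(\theta)\leq r\leq 1/r_t(\theta)$, and by \eqref{sLaplacian} the density with respect to Lebesgue measure equals $(1/r^2)w_t(\theta)$. Since Lebesgue area measure in polar coordinates is $r\,dr\,d\theta$, this gives
\[
a_t(\theta)=w_t(\theta)\int_{r_t(\theta)}^{1/r_t(\theta)}\frac{dr}{r}=-2\log r_t(\theta)\,w_t(\theta),
\]
which is the claimed formula. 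For $\theta\notin U_{t,\bar{\mu}}$ (where $r_t(\theta)=1$) the radial slice collapses and $a_t(\theta)=0$, consistent with the statement.

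Next I would identify the pushforward. The map $\lambda\mapsto\Phi_{t,\bar{\mu}}(r_t(\theta)e^{i\theta})$ depends only on the argument $\theta=\arg\lambda$ and sends it to $e^{i\phi(\theta)}$, where $\phi(\theta)$ is defined in \eqref{thetaPhi}. Hence for any continuous test function $g$ on $\mathbb{T}$,
\[
\int g\bigl(\Phi_{t,\bar{\mu}}(r_t(\theta)e^{i\theta})\bigr)\,d\mu_{g_t}=\int_{U_{t,\bar{\mu}}}g(e^{i\phi(\theta)})\,a_t(\theta)\,d\theta.
\]
Using Proposition \ref{dPhiDtheta.prop}, which gives $w_t(\theta)=\frac{1}{2\pi t}\frac{d\phi}{d\theta}$, I would change variables from $\theta$ to $\phi$ to obtain
\[
a_t(\theta)\,d\theta=-2\log r_t(\theta)\cdot\frac{1}{2\pi t}\frac{d\phi}{d\theta}\,d\theta=-\frac{\log r_t(\theta)}{\pi t}\,d\phi.
\]
By Theorem \ref{thm:mut}, the density of $uu_t$ at the point $e^{i\phi}=\Phi_{t,\bar{\mu}}(r_t(\theta)e^{i\theta})$ is exactly $p_t(e^{i\phi})=-\frac{\log r_t(\theta)}{\pi t}$. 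Comparing the two expressions immediately yields that the pushforward of $\mu_{g_t}$ under $\lambda\mapsto\Phi_{t,\bar{\mu}}(r_t(\theta)e^{i\theta})$ is the law of $uu_t$.

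The only delicate point is justifying the change of variables from $\theta$ to $\phi$ over $U_{t,\bar{\mu}}$, but this is handled by Lemma \ref{lemma:positive-derivative-phi-theta}, which guarantees $0<d\phi/d\theta\leq 2$ on each component of $U_{t,\bar{\mu}}$, so $\phi$ restricts to a homeomorphism from each component of $U_{t,\bar{\mu}}$ onto its image in $\mathbb{T}$; combined with the total-mass check from Lemma \ref{ProbBrownMul} (which already shows the two probability measures have the same total mass), this ensures the identity holds globally on $\mathbb{T}$ (the complement $\mathbb{T}\setminus\{e^{i\phi(\theta)}:\theta\in U_{t,\bar{\mu}}\}$ carries zero mass for both measures).
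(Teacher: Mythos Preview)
Your proof is correct and follows essentially the same approach as the paper: integrate the radial density $(1/r)w_t(\theta)\,dr$ from $r_t(\theta)$ to $1/r_t(\theta)$ to obtain $a_t(\theta)$, then use the change of variables $d\phi = 2\pi t\,w_t(\theta)\,d\theta$ from Proposition \ref{dPhiDtheta.prop} together with the density formula in Theorem \ref{thm:mut} to identify the push-forward (the paper simply points to the computation in Lemma \ref{ProbBrownMul}). Your extra paragraph justifying the change of variables via Lemma \ref{lemma:positive-derivative-phi-theta} is more detail than the paper provides, but entirely in the same spirit.
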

\begin{proof}
The Brown measure in
the domain is computed in polar coordinates as $(1/r^{2})w_{t}(\theta
)r\,dr\,d\theta$. Integrating with respect to $r$ from $r_{t}(\theta)$ to
$1/r_{t}(\theta)$ then gives the claimed density for $\theta$.
The last assertion follows from a computation similar to Lemma \ref{ProbBrownMul}.
\end{proof}

\begin{proposition}
	\label{multunique}
	The Brown measure of $ub_t$ is the unique measure $\sigma$ on $\overline{\Delta_{t,\mu}}$ with the following two properties: $(1)$ the push-forward of $\sigma$ by the map $\lambda\mapsto \Phi_{t,\bar{\mu}}(r_t(\theta)e^{i\theta})$ is the distribution of $uu_t$ where $\theta=\arg (\lambda)$, and $(2)$ $\sigma$ is absolutely continuous with respect to the Lebesgue measure and its density is given by
	\[
	     W(r,\theta)=\frac{1}{r^2}g(\theta)
	\]
in polar coordinates, where $g$ is a continuous function. 
\end{proposition}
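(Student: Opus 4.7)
The plan mirrors the uniqueness argument for the additive case in Proposition \ref{addunique}. Suppose $\sigma$ is any measure on $\overline{\Delta_{t,\mu}}$ satisfying properties (1) and (2). By property (2) and the description of the support in Theorem \ref{Dt-characterization}, the measure has the form
\[
d\sigma(re^{i\theta})=\frac{1}{r^2}g(\theta)\,r\,dr\,d\theta\quad\text{on}\quad \{re^{i\theta}:r_t(\theta)\leq r\leq 1/r_t(\theta),\ \theta\in U_{t,\bar{\mu}}\}.
\]
Since the push-forward map $\lambda\mapsto \Phi_{t,\bar{\mu}}(r_t(\theta)e^{i\theta})$ depends only on $\theta=\arg\lambda$, first integrate out the radial variable. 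For any continuous test function $h$ on $\mathbb{T}$, I would compute
\[
\int h(\Phi_{t,\bar{\mu}}(r_t(\theta)e^{i\theta}))\,d\sigma(\lambda)
= \int_{U_{t,\bar{\mu}}} h(\Phi_{t,\bar{\mu}}(r_t(\theta)e^{i\theta}))\left(\int_{r_t(\theta)}^{1/r_t(\theta)}\frac{dr}{r}\right)g(\theta)\,d\theta,
\]
and the inner integral evaluates to $-2\log r_t(\theta)$.

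Next, I would change variables from $\theta$ to $\phi=\arg\Phi_{t,\bar{\mu}}(r_t(\theta)e^{i\theta})$ as in \eqref{thetaPhi}. By Lemma \ref{lemma:positive-derivative-phi-theta} the map $\theta\mapsto\phi$ is a local $C^1$ homeomorphism on $U_{t,\bar{\mu}}$ with $d\phi/d\theta>0$, and by Proposition \ref{characterization-0}(3) it extends to a homeomorphism of the relevant boundaries. The resulting equation, after matching with the density of $uu_t$ given by Theorem \ref{thm:mut}, namely $p_t(e^{i\phi})=-\log r_t(\theta)/(\pi t)$, reads
\[
g(\theta)\cdot(-2\log r_t(\theta))\cdot\frac{d\theta}{d\phi}=\frac{-\log r_t(\theta)}{\pi t}
\]
for every $\theta\in U_{t,\bar{\mu}}$. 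Solving yields
\[
g(\theta)=\frac{1}{2\pi t}\frac{d\phi}{d\theta},
\]
which by Proposition \ref{dPhiDtheta.prop} coincides with $w_t(\theta)$ as identified in Theorem \ref{sLaplacian.main}. Hence $\sigma$ agrees with the Brown measure $\mu_{g_t}$ on the open set $\Delta_{t,\mu}$, and since $\overline{\Delta_{t,\mu}}\setminus\Delta_{t,\mu}$ has Lebesgue measure zero, the two measures coincide.

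The routine points to verify are that $-\log r_t(\theta)>0$ on $U_{t,\bar{\mu}}$ so we may divide by it (this follows from Proposition \ref{characterization-U-t-T}, since $r_t(\theta)<1$ there), and that $\theta\mapsto\phi$ indeed supplies a valid change of variable. The only mild obstacle is keeping track of the boundary behavior: on $\partial\Delta_{t,\mu}\cap\mathbb{T}$ the map $\Gamma_t$ degenerates (where $r_t(\theta)=1$), but this boundary has Lebesgue measure zero in $\mathbb{C}$ and carries no mass for an absolutely continuous measure, so it contributes nothing to either integral. Thus the two defining properties together with the inverse homeomorphism between $\theta$ and $\phi$ pin down $g$ uniquely, completing the proof.
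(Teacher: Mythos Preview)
Your proposal is correct and follows essentially the same approach as the paper's proof: integrate out the radial variable to obtain $-2\log r_t(\theta)\,g(\theta)$, change variables from $\theta$ to $\phi$ via Proposition \ref{dPhiDtheta.prop}, and compare with the density formula for $uu_t$ from Theorem \ref{thm:mut} to conclude $g(\theta)=w_t(\theta)$. Your version is in fact slightly more careful about the boundary and the positivity of $-\log r_t(\theta)$, but the argument is the same.
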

\begin{proof}
Suppose $\sigma$ is a measure on $\overline{\Delta_{t,\mu}}$ satisfying the above two properties. We have 
\[
  \int_{r_t(\theta)}^{1/r_t(\theta)}\frac{1}{r^2}g(\theta)dr=-2\log [r_t(\theta)] g(\theta). 
\]
Hence, using (\ref{wtFromPhi}), we deduce the density of the push-forward measure of $\sigma$ as
\[
  -2\log[r_t(\theta)]g(\theta)\frac{d\theta}{d\phi}d\phi=\frac{-\log[r_t(\theta)]}{\pi t}
  \frac{g(\theta)}{w_t(\theta)}d\phi. 
\]
By comparing the density formula for $uu_t$ in Theorem \ref{thm:mut}, and noticing that $r_t(\theta)<1$ for $\theta\in U_{t,\bar{\mu}}$, we must have $g(\theta)=w_t(\theta)$. 
\end{proof}


\begin{figure}[htp]
	\begin{center}
		\begin{subfigure}[h]{0.45\linewidth}
			\includegraphics[width=\linewidth]{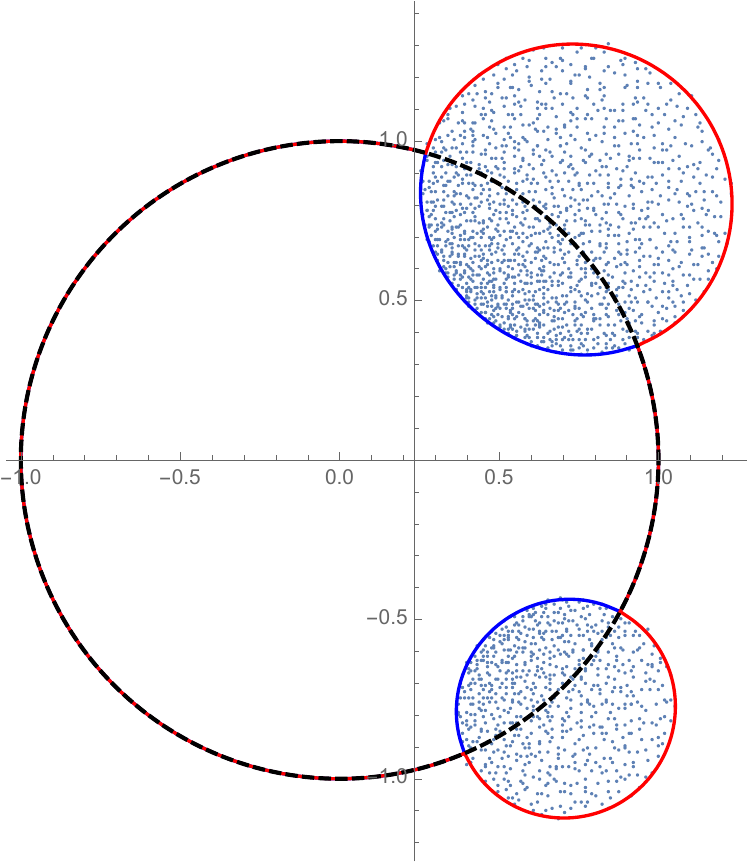}
			\caption{Eigenvalue plot for $ub_t$ and the curves $\omega_t(e^{i\phi})$ (blue), $1/\overline{\omega_t(e^{i\phi})}$ (red) and the unit circle (dashed black)}
		\end{subfigure}\;
		\begin{subfigure}[h]{0.45\linewidth}
			\includegraphics[width=\linewidth]{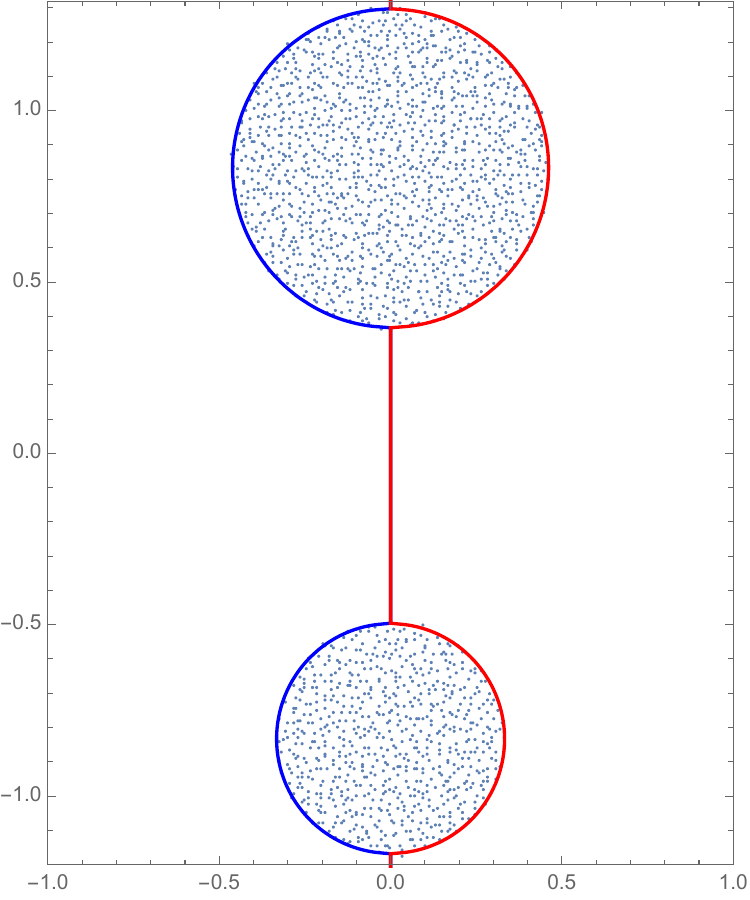}
			\caption{Push-forward of eigenvalues $ub_t$ by principal logarithm and the curves $\log(\omega_t(e^{i\phi}))$, $-\log(\overline{\omega_t(e^{i\phi})})$ and the unit circle}
		\end{subfigure}
		\begin{subfigure}[h]{0.45\linewidth}
			\includegraphics[width=\linewidth]{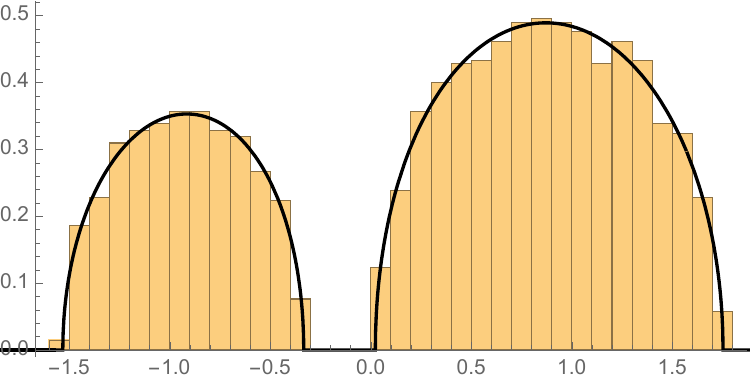}
			\caption{Histogram of the argument of the eigenvalues $ub_t$ push-forward by$\lambda\mapsto \Phi_{t,\bar{\mu}}(r_t(\theta)e^{i\theta})$. The distribution of $uu_t$ is superimposed.}
		\end{subfigure}
	\end{center}
	\caption{$2100\times 2100$ Matrix simulations for $uu_t$ at $t=0.3$, where $u$ is distributed as $\frac{1}{3}\delta_{e^{\frac{-4\pi i}{15}}}+\frac{2}{3}\delta_{e^{\frac{4\pi i}{15}}}$}
\end{figure}

\begin{figure}[htp]
	\begin{center}
		\begin{subfigure}[h]{0.45\linewidth}
			\includegraphics[width=\linewidth]{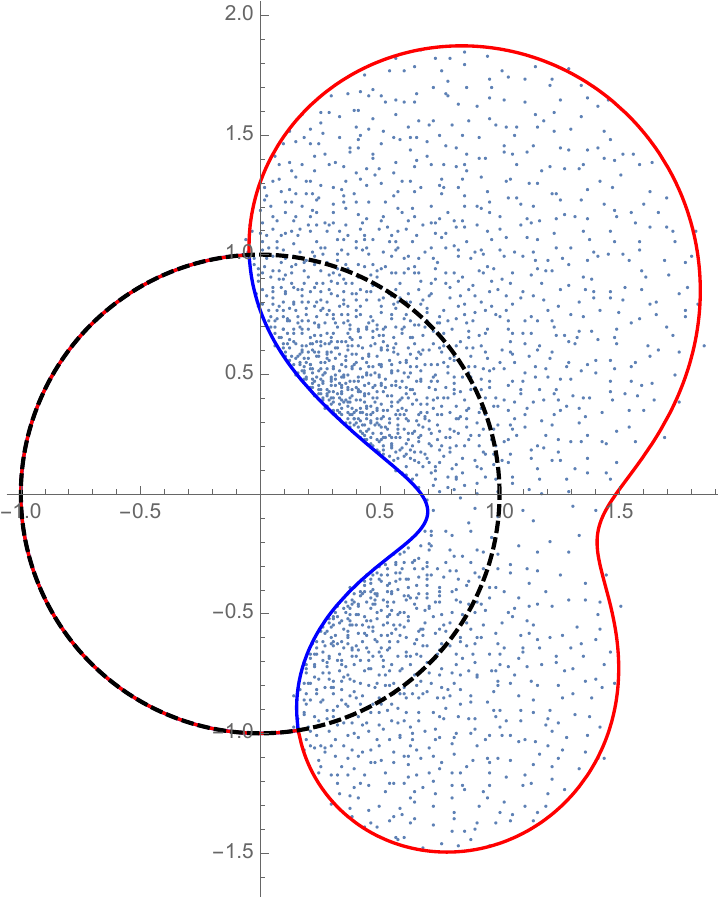}
			\caption{Eigenvalue plot for $ub_t$ and the curves $\omega_t(e^{i\phi})$ (blue), $1/\overline{\omega_t(e^{i\phi})}$ (red) and the unit circle (dashed black)}
		\end{subfigure}\;
		\begin{subfigure}[h]{0.45\linewidth}
			\includegraphics[width=\linewidth]{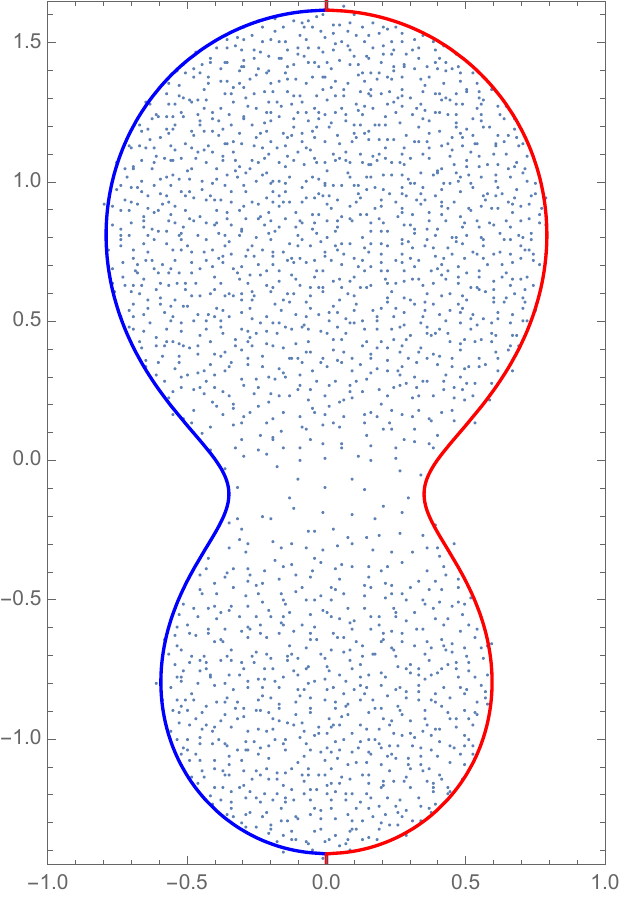}
			\caption{Push-forward of eigenvalues $ub_t$ by logarithm (with branch cut $(0,\infty)$
				and the curves $\log(\omega_t(e^{i\phi}))$, $-\log(\overline{\omega_t(e^{i\phi})})$ and the unit circle}
		\end{subfigure}
		\begin{subfigure}[h]{0.45\linewidth}
			\includegraphics[width=\linewidth]{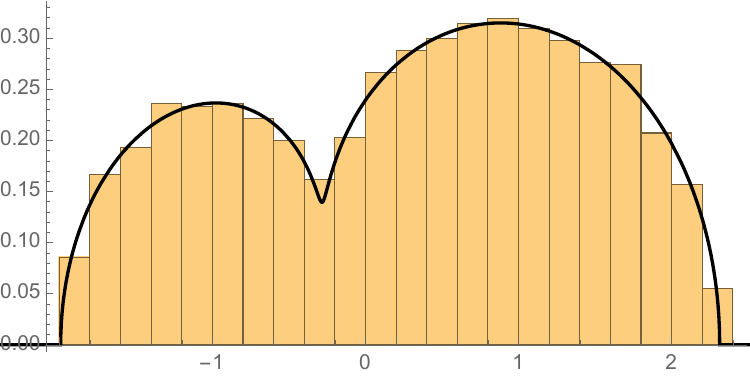}
			\caption{Histogram of the argument of the Eigenvalues $ub_t$ push-forward by$\lambda\mapsto \Phi_{t,\bar{\mu}}(r_t(\theta)e^{i\theta})$. The distribution of $uu_t$ is superimposed.}
		\end{subfigure}
	\end{center}
	\caption{$2100\times 2100$ Matrix simulations for $uu_t$ at $t=0.8$, where $u$ is distributed as $\frac{1}{3}\delta_{e^{\frac{-4\pi i}{15}}}+\frac{2}{3}\delta_{e^{\frac{4\pi i}{15}}}$}
\end{figure}

We include the matrix simulations of $ub_t$ at time $t=0.2$ and $t=0.8$ where $u$ has spectral distribution $$\mu = \frac{1}{3}\delta_{e^{\frac{-4\pi i}{15}}}+\frac{2}{3}\delta_{e^{\frac{4\pi i}{15}}}.$$
We again emphasize that it is still an open problem to prove mathematically that the Brown measure of $ub_t$ is the weak limit of the empirical eigenvalue distribution of $U_NG_N(t)$ where $G_N(t)$ is the Brownian motion on $GL(N)$ and $U_N$ is a deterministic (or random but independent from $G_N(t)$) unitary matrix whose empirical eigenvalue distribution has weak limit $\mu$.

In each of the two figures, part (a) plots the eigenvalues of $ub_t$ against the unit circle, and the curves $\omega_t(e^{i\phi})$, $1/\overline{\omega_t(e^{i\phi})}$. Part (b) of the figures shows the eigenvalues of $ub_t$ and the curves $\omega_t(e^{i\phi})$, $1/\overline{\omega_t(e^{i\phi})}$ after pushing-forward by the principal logarithm. We can see that the density of the points are constant along the horizontal direction. Part (c) shows the histogram of the argument of eigenvalues after pushing-forward by the map $\lambda\mapsto \Phi_{t,\bar{\mu}}(r_t(\theta)e^{i\theta})$ against the theoretical distribution --- the spectral distribution of $uu_t$ --- in argument. We remark that the spectral distribution of $uu_t$ is established in Theorem \ref{thm:mut}.


\subsection{The Brown measure of $hb_t$.}
In this section, we calculate the Brown measure of $hb_t$ where $h$ is Haar unitary as an example. Recall from Section \ref{section:multiplicative_convolution} that when $u$ is a Haar unitary $h$, we have
\begin{equation}
	\Sigma_{t,\bar{h}}(z)=e^{t/2}, \quad\text{and}\quad \Phi_{t, \bar{h}}(z)=ze^{t/2},
\end{equation}
and $r_t(\theta)=e^{-t/2}$ for all $\theta$. Hence $\Phi_{t, \bar{h}}(r_t(\theta)e^{i\theta})=e^{i\theta}$ and  
\[
  \phi=\phi(\theta)=\arg\left(\Phi_{t, \bar{h}}(r_t(\theta)e^{i\theta}) \right)=\theta. 
\]
In addition, 
\[
  \Omega_{t, \bar{h}}=\{z: |z|<e^{-t/2} \},
\]
and the set $\Delta_{t,\bar{h}}$ is the annulus
\begin{equation}
\Delta_{t,\bar{h}}=\{z: e^{-t/2}<|z|<e^{t/2} \}.
\end{equation}
Therefore, by (\ref{wtFromPhi}), we have
\[
 w_t(\theta)=\frac{1}{2\pi t}
\]
Finally, by Theorem \ref{sLaplacian.main}, we have the following formula for the Brown measure of $hb_t$. 
\begin{theorem}
	\label{HaarFormula}
	The Brown measure  $\mu_{hb_t}$ of $hb_t$ is the annulus law. It is supported in the annulus $A_t = \{e^{-t/2}\leq |z|\leq e^{t/2} \}$, rotationally invariant, and the density $W_t$ of $\mu_{hb_t}$ in polar coordinates is given by
	\begin{equation}
	W_t(r,\theta) = \frac{1}{2\pi t}\frac{1}{r^2}
	\end{equation}
	which is independent of $\theta$.
\end{theorem}

\begin{remark}
The name \emph{annulus law} was named by Driver, Hall and Kemp. 
	It is expected that the solution of 
	\[ G_N(t) = G_N(t) \,dZ_N(t)\]
	under the initial condition that $G_N(0)$ is the Haar measure on the unitary group has the limiting eigenvalue distribution equal to the Brown measure of $hb_t$. One can run a computer program to see that the eigenvalues of $G_N(t)$ are distributed on an annulus $A_t$ with inner radius $e^{-t/2}$ and outer radius $e^{t/2}$. The support $A_t$ also occurs in the discussion of the free Hall transform as $s\to\infty$ \cite[Corollary 4.26]{Ho2016}. 
\end{remark}

{\bf Acknowledgment.} 
We would like to thank Hari Bercovici, Brian Hall and Todd Kemp for helpful discussions. We are grateful to Brian Hall for reading the first draft of the manuscript and giving valuable suggestions. Brian also helped us prepare computer simulations. 
The project started when the two authors met at CRM in the University of Montreal when we participated in the program \enquote{New Developments in Free Probability and Applications} in 2019. Part of the work was done when the second author was visiting Prof. Alexandru Nica in the Department of Pure Mathematics at the University of Waterloo. The support of CRM and UWaterloo and their very inspiring environments are gratefully acknowledged. The second author was supported in part by a start-up grant from University of Wyoming and the Collaboration Grants for Mathematicians from Simons Foundation. Finally, we want to thank the anonymous referees for their insightful comments and constructive suggestions.

\bibliographystyle{acm}
\bibliography{HZ_Brown1}

\begin{appendices}
\section{The Brown measure of $hb_t$ and $R$-diagonal operators.}\label{section:haar}
In this section, we provide an alternative way to calculate the Brown measure of $hb_t$, the free multiplicative Brownian motion with the Haar unitary as the initial condition using Haagerup and Larsen's formula. The operator $hb_t$ is an $R$-diagonal operator \cite[Corollary 15.9]{SpeicherNicaBook} and its Brown measure can be characterize by the $S$-transform of $b_tb_t^*$ \cite[Theorem 4.4]{HaagerupLarsen2000}. (See~\eqref{eq:Stransform} for the definition of $S$-transform.) The $S$-transform of $b_tb_t^*$ has been calculated by Biane in \cite{Biane1997JFA} using different (but equivalent) normalization. We put these work together to give an explicit formula for the Brown measure of $hb_t$. 

For every $t\in\mathbb{R}$, it is known by \cite[Lemma 6.3 and Lemma 7.1]{BV1992} that the functions 
\[
S(z)=\exp\left( t\left(z+\frac{1}{2}\right)\right)
\]
are the $S$-transform of probability measures $\gamma_t$ (See~\eqref{eq:Stransform} for the definition of $S$-transform). The corresponding probability measures are supported on the unit circle $\mathbb{T}$ for $t>0$, and the corresponding probability measures are supported on the positive half-line $\mathbb{R}^+$ for $t<0$.
For a probability measure $\mu$ on $\mathbb{T}$ or on $\mathbb{R}^+$, the $S$-transform $S_\mu$ of $\mu$ is equivalent to the $\Sigma$-transform $\Sigma_\mu$ by
\[
\Sigma_\mu(z)=S_\mu\left(\frac{z}{1-z}\right).
\]
Converting the $S$-transform to the $\Sigma$-transform, we have a family of probability measures $\gamma_t$ ($t\in\mathbb{R}$) such that
\begin{equation}\label{eqn:gamma_t}
\Sigma_{\gamma_t}(z)=\exp\left(\frac{t}{2}\frac{1+z}{1-z}\right).
\end{equation}
When $t>0$, the measure $\gamma_t$ is known \cite{Biane1997b} to be the spectral distribution of the free unitary Brownian motion $u_t$ defined in (\ref{eqn:UBM}). When $t=0$, the measure $\gamma_0$ is the Dirac measure at $x=1$. When $t<0$, the measure is known as
the positive Brownian motion on $\mathbb{R}^+$ in free probability literature. 
The formulas for the densities and moments of $\gamma_t$ can be found in \cite{Biane1997b, Biane1997JFA, Zhong2015}. 

Let us recall Haagerup--Larsen's formula \cite{HaagerupLarsen2000} for the Brown measure of $R$-diagonal operators reformulated in \cite[Theorem 2]{BSS2018}. 
\begin{theorem}[Haagerup--Larsen \cite{HaagerupLarsen2000}]\label{thm:formula-R-diagonal}
	Let $a$ be an operator that is $*$-free from the Haar unitary $h$ in a $W^*$-probability space $(\A, \tau)$.
	The Brown measure $\mu_x$ of the $R$-diagonal operator $x=ha$ is the unique rotationally invariant probability measure such that
	\begin{align*}
	\mu_x( \{z\in\mathbb{C}:|z|\leq r \}) =\begin{cases}
	0   & \text{for}\quad r\leq [\tau(xx^*)^{-1}]^{-1/2}; \\
	1+S_{xx^*}^{\langle-1\rangle}(r^{-2}) &	\text{for}\quad [\tau(xx^*)^{-1}]^{-1/2}\leq r\leq [\tau(xx^*)]^{1/2};\\
	1  & \text{for}\quad r\geq [\tau(xx^*)]^{1/2}.
	\end{cases}
	\end{align*} 
\end{theorem}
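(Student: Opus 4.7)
The plan has four parts: rotational invariance, identification of the two extreme radii, an $R$-diagonal $S$-transform computation on the intermediate range, and uniqueness.

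First I would establish that $\mu_x$ is rotationally invariant. Since $h$ is a Haar unitary, $e^{i\theta}h$ and $h$ have the same $*$-distribution for every $\theta\in\R$; because $h$ is $*$-free from $a$, the operator $e^{i\theta}x=(e^{i\theta}h)a$ has the same $*$-distribution as $x$, hence the same Fuglede--Kadison determinant function $\lambda\mapsto\tau[\log|x-\lambda|]$. Taking the distributional Laplacian, $\mu_x$ is rotation invariant and is therefore determined by its radial cumulative $F(r):=\mu_x(\{|z|\leq r\})$.

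Next I would pin down the extreme radii. For $|\lambda|>\|x\|$, expanding $(\lambda-x)^{-1}=\sum_{n\geq 0}\lambda^{-n-1}x^n$ gives $G(\lambda):=\tau((\lambda-x)^{-1})=1/\lambda$, because rotation invariance of the $*$-distribution of $x$ forces $\tau(x^n)=0$ for all $n\geq 1$. Via the classical formula $\mu_x=\tfrac{1}{\pi}\bar\partial_\lambda G(\lambda)$ combined with the moment bound $\int|z|^2\,d\mu_x(z)\leq\|x\|_2^2=\tau(xx^*)$, this yields $F(r)=1$ for $r\geq[\tau(xx^*)]^{1/2}$. When $\tau((xx^*)^{-1})<\infty$, the element $x$ is invertible and $x^{-1}=a^{-1}h^*$ is again $R$-diagonal; since $\mu_{x^{-1}}$ is the push-forward of $\mu_x$ under $z\mapsto 1/z$, applying the outer-radius argument to $x^{-1}$ yields $F(r)=0$ for $r<[\tau((xx^*)^{-1})]^{-1/2}$.

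The main obstacle is computing $F(r)$ on the intermediate range. By polar-coordinate Laplacian and rotation invariance, $rF'(r)=\tfrac{1}{2\pi}(r\,\partial_r)^2\Phi(r)$ in a distributional sense, where $\Phi(r)=\tau[\log|x-r|]$, which I would regularize by $\Phi_\varepsilon(r)=\tfrac{1}{2}\tau[\log((x-r)^*(x-r)+\varepsilon)]$ and then let $\varepsilon\downarrow 0$. The $R$-diagonal structure enters via the self-adjoint linearization $X=\bigl(\begin{smallmatrix}0&x\\x^*&0\end{smallmatrix}\bigr)\in M_2(\A)$: over the diagonal scalar subalgebra the only nonvanishing free cumulants of $X$ are those on alternating entries $x,x^*,x,x^*,\ldots$, so the operator-valued Cauchy transform of $X-r\cdot I_2$ can be computed explicitly, and after taking the scalar trace this expresses the $\Sigma$-transform (equivalently the $S$-transform) of $(x-r)^*(x-r)$ in closed form through $S_{xx^*}$ and $r$. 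Integrating the resulting expression for $\Phi'(r)$ and matching to the boundary values $F=0$ at the inner radius (using $S_{xx^*}(-1)=\tau((xx^*)^{-1})$) and $F=1$ at the outer radius (using $S_{xx^*}(0)=1/\tau(xx^*)$) identifies $F(r)=1+S_{xx^*}^{\langle-1\rangle}(r^{-2})$ on the intermediate range.

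Uniqueness is then automatic: a rotationally invariant probability measure on $\C$ is determined by its radial CDF. The technical heart is the $S$-transform identification in the third step, which is essentially the content of Haagerup--Larsen's original computation; the remainder consists of standard manipulations of Brown measures and free-probability transforms.
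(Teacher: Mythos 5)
The paper does not prove this statement; it is cited verbatim as Haagerup--Larsen's theorem (\cite{HaagerupLarsen2000}), in the reformulation of \cite{BSS2018}, so there is no in-paper proof to compare your attempt against.

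As a blind outline of a proof of the cited theorem, your sketch is broadly sensible — rotational invariance, reduction to a radial CDF, and an $S$-transform computation on the intermediate range are the right ingredients — but two points would not survive as written. First, your step for the outer and inner radii is not a valid way to determine the support: the inequality $\int |z|^2\,d\mu_x(z)\leq\tau(xx^*)$ is a second-moment bound and does not force the support into the disk of radius $[\tau(xx^*)]^{1/2}$; for a general operator the Brown measure can have support strictly larger than that disk even while satisfying the moment bound. The annulus support is itself part of the theorem and, in Haagerup--Larsen's argument, comes out of the same computation that yields the CDF formula (after one checks that $1+S^{\langle-1\rangle}_{xx^*}(r^{-2})$ runs from $0$ to $1$ across the stated range, the probability-measure normalization eliminates any mass outside). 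Similarly, the inversion argument for the inner radius presupposes that $x$ is invertible, and the case $\tau((xx^*)^{-1})=\infty$ needs a separate treatment. Second, the Hermitization via $\bigl(\begin{smallmatrix}0&x\\x^*&0\end{smallmatrix}\bigr)$ and operator-valued free probability is really the route of later reformulations such as \cite{BSS2018}; Haagerup--Larsen's original argument works directly with $\tau[\log((x-\lambda)^\ast(x-\lambda)+\varepsilon)]$ and exploits that for an $R$-diagonal $x=ha$, the law of $|x-\lambda|^2$ depends only on $|\lambda|$ and can be expressed via a free multiplicative convolution with a free Poisson element, from which the $S$-transform identity follows. Either route can work, but step 3 is where the entire content lives, and your sketch defers it to ``after taking the scalar trace this expresses the $\Sigma$-transform in closed form,'' which is exactly the nontrivial computation.
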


We then have to calculate the $S$-transform of $b_tb_t^*$. 
It is known in Biane's work \cite{Biane1997b, Biane1997JFA} (see also \cite{Kemp2016}) that probability measures $\gamma_t$ that connects to the spectral distribution of $b_tb_t^*$ are the case $t<0$. As the parametrizations in \cite{Biane1997b, Biane1997JFA} are slightly different from ours, we provide details here for convenience.  

For $t>0$, we denote $Q_t=e^{-t}b_tb_t^*$ and set the generating function
\[
\psi(t,z)=\sum_{n=1}^{\infty}z^n \tau(Q_t^n).
\]
The following result is essentially from \cite[Lemma 13]{Biane1997JFA}.
\begin{lemma}
	The generating function $\psi(t,z)$ satisfies 
	\begin{equation}\label{eqn:diff-psi}
	\frac{\partial \psi(t,z)}{\partial t}=2z\psi(t,z)\frac{\partial\psi(t,z)}{\partial z}. 
	\end{equation}
\end{lemma}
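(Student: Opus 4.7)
The plan is to derive a recursion for the un-normalized moments $M_n(t) := \tau((b_tb_t^*)^n)$ using free It\^o calculus, rescale to the normalized moments $\widetilde{M}_n(t) := \tau(Q_t^n) = e^{-nt}M_n(t)$, and then recognize that the resulting infinite system of ODEs is precisely the PDE (\ref{eqn:diff-psi}) written in generating-function form.

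First I would compute $d(b_tb_t^*)$ from $db_t = b_t\,dc_t$ and the free It\^o rule $dc_t\,dc_t^* = dt$ (the special case $G=1$ of $dc_t\,G\,dc_t^* = \tau(G)\,dt$):
\[
d(b_tb_t^*) = b_t\,dc_t\,b_t^* \;+\; b_t\,dc_t^*\,b_t^* \;+\; b_tb_t^*\,dt.
\]
Next, apply the free It\^o product rule (\ref{ItoForm}) to $(b_tb_t^*)^n$ and take the trace. All first-order terms containing a single $dc_t$ or $dc_t^*$ die under $\tau$. Two contributions survive: the drift $b_tb_t^*\,dt$ in each of the $n$ first-order slots gives $n M_n\,dt$, and the quadratic pairings use $dc_t\,G\,dc_t^* = dc_t^*\,G\,dc_t = \tau(G)\,dt$ while $dc_t\,G\,dc_t = dc_t^*\,G\,dc_t^* = 0$. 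A direct calculation of the generic quadratic term yields
\[
(b_tb_t^*)^{j-1}\,d(b_tb_t^*)\,(b_tb_t^*)^{k-j-1}\,d(b_tb_t^*)\,(b_tb_t^*)^{n-k} \;\longmapsto\; 2\,M_{k-j}\,(b_tb_t^*)^{n-(k-j)}\,dt
\]
after using the trace identity to absorb the inner block. Re-indexing the double sum $1 \leq j < k \leq n$ by $p = k-j$ produces the recursion
\[
\frac{dM_n}{dt} \;=\; n\,M_n \;+\; 2\sum_{p=1}^{n-1}(n-p)\,M_p\,M_{n-p}.
\]

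Passing to $\widetilde{M}_n = e^{-nt}M_n$ the linear $n M_n$ term is exactly cancelled by $-n\widetilde{M}_n$ coming from the derivative of the normalizing factor, leaving the clean quadratic recursion
\[
\frac{d\widetilde{M}_n}{dt} \;=\; 2\sum_{p=1}^{n-1}(n-p)\,\widetilde{M}_p\,\widetilde{M}_{n-p}.
\]
Finally, multiplying by $z^n$ and summing $n\geq 2$, the right-hand side is precisely the Cauchy product expansion of $2z\,\psi(t,z)\,\partial_z\psi(t,z)$, since $\partial_z\psi = \sum_{m\geq 1}m z^{m-1}\widetilde{M}_m$ produces the weight $n-p$, while the left-hand side is $\partial_t\psi$. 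This gives (\ref{eqn:diff-psi}).

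The main obstacle is purely the bookkeeping of the quadratic It\^o correction: one must verify that among the four cross pairings of $(dc_t + dc_t^*)$ with itself only the two mixed ones contribute, each yielding the same factor $\tau(b_t^* G b_t) = \tau(G b_tb_t^*)$, so that the inner block $G = (b_tb_t^*)^{k-j-1}$ combines with one $b_tb_t^*$ to give $M_{k-j}$ and the surrounding powers combine with the other $b_tb_t^*$ to give $(b_tb_t^*)^{n-(k-j)}$. Once this identification is correctly in hand, the reindexing $p = k-j$ and the Cauchy-product recognition of $2z\psi\partial_z\psi$ are routine.
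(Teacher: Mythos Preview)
Your proposal is correct and follows essentially the same route as the paper: a free It\^o computation of the moments of $b_tb_t^*$ leading to the quadratic recursion, then identification with the Cauchy product $2z\psi\partial_z\psi$. The only cosmetic difference is that the paper applies the It\^o formula directly to the normalized process $Q_t=e^{-t}b_tb_t^*$, so the drift cancels already in $dQ_t=e^{-t}b_t(dc_t+dc_t^*)b_t^*$ and the recursion $\frac{d}{dt}\tau(Q_t^n)=2\sum_{k=1}^{n-1}k\,\tau(Q_t^k)\tau(Q_t^{n-k})$ comes out clean immediately, whereas you carry the drift term $nM_n$ through the unnormalized computation and cancel it afterwards via $\widetilde{M}_n=e^{-nt}M_n$; the two sums (with weight $k$ versus weight $n-p$) agree by the substitution $k\mapsto n-k$.
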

\begin{proof}
	Recall that $b_t$ is the solution of (\ref{eqn:GBM}), by the free It\^{o} formula~\eqref{eq:Ito}, we have
	\[
	dQ_t=e^{-t}b_t(dc_t+dc_t^*)b_t^*.
	\]
	By induction, we obtain by the free It\^o formula
	\[
	dQ_t^n=\sum_{k=0}^{n-1}Q_t^kdQ_tQ_t^{n-k-1}+2\sum_{k=1}^{n-1}kQ_t^k\tau(Q_t^{n-k})dt.
	\]
	Integrating it and take the trace, we deduce
	\begin{equation}\label{Q_t-induction}
	\tau(Q_t^n)=2\int_0^t\sum_{k=1}^{n-1}k\tau(Q_s^k)\tau(Q_s^{n-k})ds+1,
		\end{equation}
	which yields
	\[
	\psi(t,z)=2\int_0^t z\psi(t,z)\frac{\partial \psi(t,z)}{\partial z} ds+\frac{z}{1-z}.
	\]
	The identity then follows by taking the derivative with respect to $t$. 
\end{proof}

The following result was mentioned implicitly in \cite{Biane1997b} and the proof can be adapted from the proof of \cite[Lemma 1]{Biane1997b}.
\begin{proposition}
	The spectral distribution of $b_tb_t^*$ is the measure $\gamma_{-2t}$ defined in (\ref{eqn:gamma_t}). The $S$-transform of $b_tb_t^*$ is given by
	\[
	S_{b_tb_t^*}(z)=e^{-t-2tz}. 
	\]
\end{proposition}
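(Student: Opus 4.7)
The plan is to solve the PDE \eqref{eqn:diff-psi} for $\psi(t,z)$ by the method of characteristics, extract from the solution the $\Sigma$-transform (equivalently, the $S$-transform) of $Q_t = e^{-t}b_tb_t^*$, and then recover the claim for $b_tb_t^*$ by the scaling rule $S_{cX}(z) = c^{-1}S_X(z)$ for $c>0$.

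First I would fix the initial data. Since $b_0 = I$, we have $Q_0 = I$ and hence
\[
\psi(0,z) = \sum_{n\geq 1} z^n = \frac{z}{1-z}.
\]
Next I would run the method of characteristics on $\partial_t\psi = 2z\psi\,\partial_z\psi$: along a curve $z = z(t)$ satisfying $z'(t) = -2z(t)\,\psi(t,z(t))$, the chain rule gives $\frac{d}{dt}\psi(t,z(t)) = \partial_t\psi - 2z\psi\,\partial_z\psi = 0$. Hence $\psi$ is constant along characteristics. Starting from $(0,z_0)$ with $\psi_0 := \psi(0,z_0) = z_0/(1-z_0)$, the characteristic is $z(t) = z_0\,e^{-2t\psi_0}$, and $\psi(t,z(t)) = \psi_0$ for all $t\geq 0$. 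Setting $w = \psi_0$ (so that $z_0 = w/(1+w)$), one obtains the implicit formula
\[
z \;=\; \frac{w}{1+w}\,e^{-2tw}, \qquad w = \psi(t,z),
\]
valid in a neighborhood of $z=0$ where $\psi(t,\cdot)$ is analytic with $\psi(t,0)=0$.

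From this formula I would extract the relevant transforms. Using $\eta_{Q_t} = \psi/(1+\psi)$ and writing $\eta = w/(1+w)$ (so $w = \eta/(1-\eta)$), the identity becomes
\[
z \;=\; \eta\,\exp\!\left(-\frac{2t\,\eta}{1-\eta}\right),
\]
which is exactly the compositional inverse $\eta_{Q_t}^{\langle -1\rangle}(\eta)$. Hence
\[
\Sigma_{Q_t}(w) \;=\; \frac{\eta_{Q_t}^{\langle -1\rangle}(w)}{w} \;=\; \exp\!\left(-\frac{2tw}{1-w}\right), \qquad S_{Q_t}(z) \;=\; \Sigma_{Q_t}\!\left(\tfrac{z}{1+z}\right) \;=\; e^{-2tz}.
\]
Finally, since $b_tb_t^* = e^{t}Q_t$ and the $S$-transform satisfies $S_{cX}(z) = c^{-1}S_X(z)$ for $c>0$, one concludes
\[
S_{b_tb_t^*}(z) \;=\; e^{-t}\,S_{Q_t}(z) \;=\; e^{-t-2tz},
\]
and correspondingly $\Sigma_{b_tb_t^*}(z) = \exp\!\left(-t\,\tfrac{1+z}{1-z}\right)$. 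Comparing with the defining formula \eqref{eqn:gamma_t} with parameter $-2t$ identifies the spectral measure of $b_tb_t^*$ as $\gamma_{-2t}$.

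The main obstacle I expect is the standard one in running characteristics for a quasilinear PDE of this kind: verifying that $\psi(t,\cdot)$ is analytic on a disk large enough for the identification of $\Sigma$ and $S$ to make sense, and that the characteristic map $z_0 \mapsto z_0 e^{-2t\psi_0(z_0)}$ is a local biholomorphism near the origin for every $t>0$ (so that the implicit formula really determines $\psi$ uniquely). This can be handled by the inverse function theorem at $z_0=0$, together with the a priori analyticity of $\psi$ near $0$ coming from boundedness of $Q_t$; once this is in place, the remaining computations are routine algebraic manipulations of the $\Sigma$- and $S$-transforms.
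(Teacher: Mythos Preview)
Your argument is correct and arrives at exactly the same implicit relation
\[
\frac{\psi(t,z)}{1+\psi(t,z)}\,e^{-2t\psi(t,z)} = z
\]
that the paper uses, but you run the logic in the opposite direction. The paper \emph{starts} from $\gamma_{-2t}$, defines $\rho(t,z)=\psi_{\gamma_{-2t}}(e^{-t}z)$, derives the implicit relation above from the known $\Sigma$-transform of $\gamma_{-2t}$, and then checks by implicit differentiation that $\rho$ satisfies the same PDE \eqref{eqn:diff-psi} with the same initial condition, concluding $\rho=\psi$ by uniqueness. You instead \emph{solve} the PDE directly by characteristics, obtain the implicit relation, read off $\Sigma_{Q_t}$ and $S_{Q_t}$, and only then match with $\gamma_{-2t}$ via the scaling rule $S_{cX}=c^{-1}S_X$. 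Both routes rest on the same analytic input (local invertibility of $z_0\mapsto z_0 e^{-2t\psi_0(z_0)}$ near $0$, equivalently uniqueness of analytic solutions to the quasilinear PDE near the origin), which you flag explicitly and the paper uses tacitly. Your approach has the advantage of not requiring the answer in advance; the paper's guess-and-verify is shorter once the candidate $\gamma_{-2t}$ is on the table.
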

\begin{proof}
	Let $\rho(t,z)=\psi_{\gamma_{-2t}}(e^{-t}z)$, we shall prove that $\rho(t,z)=\psi(t,z)$. Fix $t>0$, by (\ref{eqn:gamma_t}) and the definition of $\Sigma$-transform, a simple calculation yields 
	\[
	\frac{\psi_{\gamma_{-2t}}(z)}{1+\psi_{\gamma_{-2t}}(z)}\exp\left(-t-2t\psi_{\gamma_{-2t}}(z)\right)=z.
	\]
	By replacing $z$ by $e^{-t}z$, we obtain
	\begin{equation}\label{eqn:ind-rho}
	\frac{\rho(t,z)}{1+\rho(t,z)}\exp\left(-2t\rho(t,z)\right)=z.
	\end{equation}
	
	We next show that $\rho(t,z)$ satisfies the same differential equation (\ref{eqn:diff-psi}) for $\psi(t,z)$. To this end, from (\ref{eqn:ind-rho}), we calculate implicit differentials
	\[
	\frac{\partial \rho(t,z)}{\partial t}=\frac{2z\rho(t,z)(1+\rho(t,z))e^{2t\rho(t,z)}}{1-ze^{2t\rho(t,z)}-2tz(1+\rho(t,z))e^{2t\rho(t,z)}},
	\]
	and
	\[
	\frac{\partial \rho(t,z)}{\partial z}=\frac{(1+\rho(t,z))e^{2t\rho(t,z)}}{1-ze^{2t\rho(t,z)}-2tz(1+\rho(t,z))e^{2t\rho(t,z)}},
	\]
	which implies that $\rho(t,z)$ satisfies the same differential equation (\ref{eqn:diff-psi}) for $\psi(t,z)$. Since $\psi$ is a power series in $z$, the differential equation \eqref{eqn:diff-psi} is equivalent to the differential equations \eqref{Q_t-induction} for the coefficients $\{\tau(Q_t^n)\}_{n=1}^\infty$ of $\psi(t,z)$. Because $\rho$ is an analytic function in $z$ satisfying the same differential equation as $\psi$, the coefficients $a_{t,n}$ of
	\[\rho(t,z) = \sum_{n=1}^\infty a_{t, n}z^n\]
	satisfy the same differential equations as $\tau(Q_t^n)$ in~\eqref{Q_t-induction}.  Since $\rho(0,z)=z/(1-z)=\psi(0,z)$, we have the initial conditions $\tau(Q_0^n) = a_{0,n}$. The differential equations for $\tau(Q_t^n)$ (and $a_{t,n}$) given in ~\eqref{Q_t-induction} have a unique solution. Consequently, $\tau(Q_t^n) = a_{t,n}$ and $\rho=\psi$. Therefore,
	\[
	S_{b_tb_t^*}(z)=S_{\gamma_{-2t}}(z)=e^{-t-2tz},
	\]
	which concludes the proof.
\end{proof}
\begin{remark}
	Indeed, the moments $\tau(Q_t^n)$ can be computed expicitly. Interested readers are referred to \cite[Lemma 1 and Proposition 5]{Biane1997b}.
	\end{remark}

We can calculate $\tau(b_tb_t^*)$ directly from the $S$-transform of $b_tb_t^*$ as
\[
\tau(b_tb_t^*)=\psi_{b_tb_t^*}'(0)=\frac{1}{S_{b_tb_t^*}(0)}=e^t.
\]
It is known in \cite{Zhong2015} that the support of $\gamma_{-2t}$ is $\text{supp}(\gamma_{-2t})=[r_-(2t),r_+(2t)]$, where
\[
r_{\pm}(2t)=\left[(1-t)\pm \sqrt{t(t-2)}e^{-\sqrt{t(t-2)}} \right].
\]
In particular, we have $r_-(2t){r_+(2t)}=1$ and so  $b_tb_t^*$ is strictly positive. By the free It\^{o} formula, we can check that
\[
db_t^{-1}=-dc_t b_t^{-1}.
\]
Hence it is routine to check that $(b_tb_t^*)^{-1}$ has the same distribution as $b_tb_t^*$. Therefore, $\tau((b_tb_t^*)^{-1})=\tau(b_tb_t^*)=e^t$. We are now ready to apply Theorem \ref{thm:formula-R-diagonal} to deduce the formula for the Brown measure of $hb_t$. For any $e^{-t/2}\leq r\leq e^{t/2}$, we have 
\begin{equation}
\label{cumulativemu}
\mu_{hb_t}(\{|z|\leq r \}) =1+S_{b_tb_t^*}^{\langle -1\rangle}\left(\frac{1}{r^2}\right)=\frac{1}{2}+\frac{\log r}{t}.
\end{equation}
This leads to an alternation derivation of the density for the Brown measure of $hb_t$ computed in Theorem~\ref{HaarFormula}. 
\begin{theorem}
	The Brown measure  $\mu_{hb_t}$ of $hb_t$ is the annulus law. It is supported in the annulus $A_t = \{e^{-t/2}\leq |z|\leq e^{t/2} \}$, rotationally invariant, and the density $W_t$ of $\mu_{hb_t}$ in polar coordinates is given by
	\begin{equation}
	\label{densityhaar}
	W_t(r,\theta) = \frac{1}{2\pi t}\frac{1}{r^2}
	\end{equation}
	which is independent of $\theta$.
\end{theorem}
\begin{proof}
	Since the density of the Brown measure of the $R$-diagonal operator $hb_t$ is rotational invariant, we assume the density of the Brown measure is $f(r)$. Then~\eqref{cumulativemu} shows that
	\[\frac{1}{2}+\frac{\log r}{t} = \mu_{hb_t}(\{|z|\leq r\}) = \int_0^r\int_0^{2\pi} f(R)\,R\,dR\,d\theta = 2\pi\int_0^rf(R)\,R\,dR.\]
	Differentiating the above equation with respect to $r$ gives us $1/(rt) = 2\pi f(r)r$. This proves
	\[f(r) = \frac{1}{2\pi t}\frac{1}{r^2}\]
	which is the claimed density~\eqref{densityhaar} of the Brown measure of $hb_t$.
	\end{proof}

\end{appendices}

\bigskip

\author{
	\noindent\begin{tabular}{ll}
		CHING-WEI HO & \quad\quad PING ZHONG\\
		Institute of Mathematics & \quad\quad	Department of Mathematics and Statistics \\
		Academia Sinica & \quad\quad	University of Wyoming \\
		Taipei 10617, Taiwan; and& \quad\quad Laramie, WY 82070, USA\\
		Department of Mathematics \\
		University of Notre Dame \\
		Notre Dame, IN 46556, USA\\
		Email: \href{mailto:cho2@nd.edu}{cho2@nd.edu} & \quad\quad 	Email: \href{mailto:pzhong@uwyo.edu}{pzhong@uwyo.edu}
	\end{tabular}
}


\end{document}